\documentclass[11pt]{amsart}
\usepackage[margin=1.35in]{geometry}
\usepackage{amscd,amsmath,amsxtra,amsthm,amssymb,stmaryrd,xr,mathrsfs,mathtools,enumerate,commath, comment}
\usepackage{thmtools} 
\usepackage{stmaryrd}
\usepackage{xcolor}
\usepackage{commath}
\usepackage{comment}
\usepackage{tikz-cd}
\usepackage{float}
\usepackage{longtable} 
\usepackage{pdflscape} 
\usepackage{booktabs}
\usepackage{hyperref}
\usepackage[capitalize,nameinlink,noabbrev]{cleveref}
\usepackage[normalem]{ulem}

\usepackage{multirow}

\newcommand{\DV}[1]{\textcolor{blue}{#1}}
\newcommand{\RG}[1]{\textcolor{red}{#1}}

\usepackage[utf8]{inputenc}

\definecolor{vegasgold}{rgb}{0.77, 0.7, 0.35}
\definecolor{darkgoldenrod}{rgb}{0.72, 0.53, 0.04}
\definecolor{gold(metallic)}{rgb}{0.83, 0.69, 0.22}
\hypersetup{
 colorlinks=true,
 linkcolor=darkgoldenrod,
 filecolor=brown,      
 urlcolor=gold(metallic),
 citecolor=darkgoldenrod,
 pdftitle={Iwasawa theory for branched \texorpdfstring{$\mathbb{Z}_{p}$}{}-towers of finite graphs and Ihara zeta and \texorpdfstring{$L$}{}-functions},
 }

\usepackage[all,cmtip]{xy}


\usetikzlibrary{shapes.geometric}
\usetikzlibrary{decorations.markings}
\usetikzlibrary{shapes.geometric}
\tikzset{every loop/.style={min distance=10mm,looseness=10}}

\DeclareFontFamily{U}{wncy}{}
\DeclareFontShape{U}{wncy}{m}{n}{<->wncyr10}{}
\DeclareSymbolFont{mcy}{U}{wncy}{m}{n}
\DeclareMathSymbol{\Sh}{\mathord}{mcy}{"58}
\usepackage[T2A,T1]{fontenc}
\usepackage[OT2,T1]{fontenc}

\newtheorem{theorem}{Theorem}[section]
\newtheorem{lemma}[theorem]{Lemma}

\newtheorem{proposition}[theorem]{Proposition}
\newtheorem{corollary}[theorem]{Corollary}
\newtheorem{definition}[theorem]{Definition}

\numberwithin{equation}{section}


\theoremstyle{remark}
\newtheorem{remark}[theorem]{Remark}

\theoremstyle{definition}
\newtheorem{example}[theorem]{Example}

\font\manual=manfnt

\newcommand\xqed[1]{%
  \leavevmode\unskip\penalty9999 \hbox{}\nobreak\hfill
  \quad\hbox{#1}}
\newcommand\demo{\xqed{\manual\char'170}}

\begin{document}
\title[Iwasawa theory for graphs and Ihara zeta and $L$-functions]{Iwasawa theory for branched $\mathbb{Z}_{p}$-towers of finite graphs and Ihara zeta and $L$-functions}

\author[R.~Gambheera]{Rusiru Gambheera}
\address{Rusiru Gambheera\newline Department of Mathematics, University of California Santa Barbara, CA 93106-3080, USA}
\email{rusiru@ucsb.edu}

\author[D.~Valli\`{e}res]{Daniel Valli\`{e}res}
\address{Daniel Valli\`{e}res\newline Mathematics and Statistics Department, California State University, Chico, CA 95929, USA}
\email{dvallieres@csuchico.edu}

\begin{abstract}
We revisit the theory of Ihara $L$-functions in the context initially studied by Bass and Hashimoto and more recently by Zakharov.  In particular, we study if the Artin formalism is satisfied by these $L$-functions.  As an application, we give a proof of the analogue of Iwasawa's asymptotic class number formula for the $p$-part of the number of spanning trees in branched $\mathbb{Z}_{p}$-towers of finite connected graphs using Ihara zeta and $L$-functions.  Moreover, we relate a generator for the characteristic ideal of the finitely generated torsion Iwasawa module that governs the growth of the $p$-part of the number of spanning trees in such towers with Ihara $L$-functions.
\end{abstract}

\subjclass[2020]{Primary: 11M41; Secondary: 11R23, 05C25, 05C31} 
\date{\today} 
\keywords{Ihara zeta and $L$-functions, Graph theory, Spanning trees, Iwasawa theory}

\maketitle

\tableofcontents

\section{Introduction} \label{Introduction}
Throughout this paper, we use Serre's formalism to describe graphs as detailed in \cite{Serre:1977}, and we let $p$ denote a fixed rational prime.  In \cite{Gambheera/Vallieres:2024}, we initiated the study of Iwasawa theory for \emph{branched} $\mathbb{Z}_{p}$-towers of finite connected graphs arising from voltage assignments as explained in \cite[\S 4.3]{Gambheera/Vallieres:2024}.  If 
\begin{equation*}
X = X_{0} \leftarrow X_{1} \leftarrow X_{2} \leftarrow \ldots \leftarrow X_{n} \leftarrow \ldots 
\end{equation*}
is such a $\mathbb{Z}_{p}$-tower, \cite[Theorem 5.6]{Gambheera/Vallieres:2024} shows that there exist $\mu,\lambda,n_{0} \in \mathbb{Z}_{\ge 0}$ and $\nu \in \mathbb{Z}$ such that
\begin{equation} \label{iwasawa}
{\rm ord}_{p}(\kappa(X_{n})) = \mu p^{n} + \lambda n + \nu, 
\end{equation}
when $n \ge n_{0}$, where $\kappa(X_{n})$ denotes the number of spanning trees of $X_{n}$, and ${\rm ord}_{p}$ denotes the usual $p$-adic valuation on the field of rational numbers.  This result can be seen as being a graph theoretical analogue of Iwasawa's asymptotic class number formula in algebraic number theory (see \cite{Iwasawa:1959, Iwasawa:1973}).  Our approach in \cite{Gambheera/Vallieres:2024} was module theoretical.  To every finite connected graph $X$ is associated a finite abelian group ${\rm Pic}^{0}(X)$, called the Picard group of degree zero of $X$, whose cardinality is the number of spanning trees of $X$.  We obtained the result (\ref{iwasawa}) above by applying the usual structure theorem for finitely generated Iwasawa modules to the Iwasawa module, denoted by ${\rm Pic}_{\Lambda}^{0}$ in \cite[\S 4.3]{Gambheera/Vallieres:2024}, obtained by taking the projective limit of the Sylow $p$-subgroups of the finite abelian groups ${\rm Pic}^{0}(X_{n})$.  In fact, the Iwasawa invariants $\mu$ and $\lambda$ appearing in (\ref{iwasawa}) above are precisely the Iwasawa invariants of the Iwasawa module ${\rm Pic}_{\Lambda}^{0}$.  We built upon previous works of Gonet in \cite{Gonet:2021a, Gonet:2022}, of Kleine and M\"{u}ller in \cite{Kleine/Muller:2023}, and of Kataoka in \cite{Kataoka:2024} all done in the \emph{unramified} situation to obtain the result above in the \emph{branched} situation.

On the other hand, in \cite{Vallieres:2021,mcgownvallieresII,mcgownvallieresIII}, the authors obtained the result (\ref{iwasawa}) above in the unramified situation by using the Ihara zeta and $L$-functions and their special value at $u=1$.  The goal of this paper is to adapt those ideas to the branched situation to obtain another proof of (\ref{iwasawa}) that makes use of Ihara zeta and $L$-functions.  This approach is more in line with Iwasawa's proof of his asymptotic class number formula for the minus part of the class number in cyclotomic $\mathbb{Z}_{p}$-extensions of some cyclotomic number fields using his construction of $p$-adic $L$-functions and the analytic class number formula as detailed in \cite[\S 7]{Iwasawa:1972}.  If $X$ is a finite graph, then its Ihara zeta function is denoted by $Z_{X}(u)$ and it is known to be the reciprocal of a polynomial with integer coefficients.  The theory of Ihara zeta and $L$-functions for branched covers of finite graphs is more subtle than for unramified covers.  In the unramified situation, there are clear notions of Galois covers and Galois groups as groups of deck transformations.  Moreover, there is a Galois correspondence and a well-established theory of Ihara $L$-functions that satisfy the usual Artin formalism (see for instance the series of papers \cite{Stark/Terras:1996,Stark/Terras:2000,Stark/Terras:2007} by Stark and Terras).  Any unramified Galois cover of finite graphs can be obtained by starting with a finite connected graph $Y$ on which a finite group $G$ acts without inversion and freely on \emph{vertices}.  Since the action is without inversion, the quotient graph $Y_{G} = G \backslash Y$ makes sense, and since the action of $G$ is free on vertices, the natural morphism of graphs $Y \rightarrow Y_{G}$ is an unramified Galois cover with group of deck transformations isomorphic to $G$.  It is therefore natural to consider more general covers of finite graphs by relaxing the condition that $G$ acts freely on vertices.  When a group $G$ acts freely on vertices, it also necessarily acts freely on directed edges, so a first step would be to consider the situation where $G$ acts freely on \emph{directed edges}, but not necessarily on vertices.  This leads to the notion of \emph{branched cover}, and this is the type of covers we studied in \cite{Gambheera/Vallieres:2024} (branched covers are examples of harmonic morphisms as studied in \cite{Baker/Norine:2009} for instance, but not all harmonic morphisms are branched covers).  In this situation, one also has the data of a stabilizer group at each vertex.  Now, Corry pointed out for instance in \cite{Corry:2012} that the group of deck transformations of what ought to be a Galois cover in the branched setting is usually too big to get a neat Galois correspondence between subgroups of the group of deck transformations and (equivalence classes) of intermediate covers.  Moreover, in \cite{Malmskog/Manes:2010}, it was pointed out that the graph theoretical analogue of Dedekind's conjecture in algebraic number theory does not hold for branched covers of finite graphs.  Indeed, Malmskog and Manes gave examples of branched covers $Y \rightarrow X$ for which
$$Z_{X}(u)^{-1}  \nmid Z_{Y}(u)^{-1}.$$
As far as we know, in the branched and even more general situation in which a group $G$ acts without inversion but otherwise arbitrarily on a graph $Y$, Ihara $L$-functions have been introduced in \cite{Bass:1992} and \cite{Hashimoto:1992}.  

Bass's starting point is a group $\Gamma$ acting without inversion and discretely (meaning that the stabilizer at each vertex is finite) on a locally finite tree $T$ for which the quotient $\Gamma \backslash T$ is a finite graph.  He does not assume though that $\Gamma$ acts freely on directed edges.  Such a group $\Gamma$ is necessarily virtually free, and it seems to us that although this approach is quite general, it does not quite capture what we need for our purposes in this paper for a few different reasons.  For instance, the natural graph $X_{\infty}$ that resides at the top of the tower (\ref{iwasawa}), obtained by taking the projective limit of the finite graphs $X_{n}$, is a \emph{profinite} graph and is not locally finite when the tower is branched.  Moreover, the profinite group $\mathbb{Z}_{p}$ acts on $X_{\infty}$, but the stabilizers at the vertices of $X_{\infty}$ are not finite groups.  We point out in passing that the Bass-Serre theory in geometric group theory has been extended to the profinite situation by Ribes and his collaborators (see for instance \cite{Ribes:2017}) and this might well be the natural setup one should consider here. 

Hashimoto's starting point is one of a finite group acting on a finite graph just like we will consider in this paper with no assumption on the action whatsoever.  On the other hand, there is no study of the Artin formalism which is needed for our purposes.  Moreover, Ihara's determinant formula (also known as the three-term determinant formula) is also not considered in \cite{Hashimoto:1992}.

More recently, Zakharov explained in \cite{Zakharov:2021} how to adapt the theory of Ihara $L$-functions, as treated in \cite{Terras:2011} for instance, when a finite group $G$ acts freely on the directed edges of a finite graph which is not necessarily a tree (and allowing inversion so that the notion of legs has to be taken into account as well).  This approach is well suited for our purposes and can be applied at every level $n$ of a branched $\mathbb{Z}_{p}$-tower.  There is one caveat though.  The Ihara $L$-functions satisfy the usual additivity and induction properties of the Artin formalism, but \emph{not} the inflation property in general.  In retrospect, this failure of the inflation property is closely related to the fact that the analogue of Dedekind's conjecture alluded to above is not satisfied in general for branched covers of graphs.  Nonetheless, we show in this paper that one can study the Iwasawa theory of branched $\mathbb{Z}_{p}$-towers of finite connected graphs arising from voltage assignments using Ihara zeta and $L$-functions despite the failure of the inflation property. 

We could have used Zakharov's approach directly, since \cite[Proposition 4.14 and Corollary 4.15]{Zakharov:2021} give the needed product formula for the Ihara zeta function of the top graph $Y$ in terms of Ihara $L$-functions associated to characters of $G$.  But while thinking about this question, we wanted to understand how Bass's approach was related to Zakharov's approach.  Therefore, instead of directly using the definition of Ihara $L$-functions included in \cite{Zakharov:2021}, we present here our own approach to Ihara $L$-functions which is a hybrid between Bass's approach in \cite{Bass:1992} and Zakharov's approach in \cite{Zakharov:2021}.  In contrast to Bass's approach, we will restrict ourselves to the abelian situation in order to avoid non-commutative determinants.  Moreover, instead of having a (usually) infinite discrete group acting on a (usually) infinite tree, we will have a finite abelian group acting on a finite graph.  This is sufficient for the applications we have in mind to branched $\mathbb{Z}_{p}$-towers of finite connected graphs arising from voltage assignments.  In contrast to Zakharov's approach, our (finite) groups acting on (finite) graphs will be abelian, but we do not assume that $G$ acts freely on directed edges.  Moreover we do not study the situation where legs are allowed.  Allowing legs should be possible with minor modifications throughout.  Although allowing legs is advantageous and desirable for a few different reasons, we choose here to not include them, since our goal is to give a complementary approach to the results contained in \cite{Gambheera/Vallieres:2024} in which legs were not studied.  We take the shortest path by having as our starting point Ihara's determinant formula and reexpressing everything in an equivariant way.  We obtain in this way a satisfactory theory of Ihara $L$-functions for the situation of an arbitrary finite abelian group acting on a finite graph without inversion, which we can apply to the study of branched $\mathbb{Z}_{p}$-towers of finite connected graphs coming from voltage assignments.  Everything is concrete enough that calculations can readily be made, but our exposition is incomplete in the sense that the non-commutative setting, the connections with the fundamental group of graphs of groups and its abelianization, the analogue of the Artin reciprocity map, the analogue of Frobenius automorphisms, and the connection with the work of Hashimoto in \cite{Hashimoto:1992} are not studied here.  Thus, there is still room for further studies and clarifications.  

The paper is organized as follows.  In \cref{section:pre}, we gather together some results from commutative algebra, representation theory, and on the $p$-adic valuation that we shall need thoughout the paper.  \cref{section:gra} is dedicated to the notions in graph theory, such as Serre's formalism, the Euler characteristic, the laplacian operator, and the Ihara zeta function, that we will make use of throughout.  In \cref{equ}, we revisit the previously introduced notions in graph theory, but taking into account the action of a finite abelian group; the theory then becomes an equivariant one. Using the equivariant Ihara zeta function of \cref{section:theequiv}, we associate to any finite dimensional $\mathbb{C}$-linear representation $V$ of the underlying finite abelian group an Ihara $L$-function denoted by $L_{V}(u)$.  The Artin formalism satisfied by these functions is studied in \cref{af} and is viewed as a consequence of the Artin formalism studied previously in an equivariant setting in \cref{ind_inf}.  In \cref{br_section}, we apply the theory of Ihara $L$-functions detailed in \cref{equ,section:art} to reprove the analogue of Iwasawa's asymptotic class number formula for branched $\mathbb{Z}_{p}$-towers of finite connected graphs arising from voltage assignments.  This result is contained in \cref{ana_sec}.  In \cref{section:revi}, we relate a generator for the characteristic ideal of the finitely generated torsion Iwasawa module that governs the growth of the $p$-part of the number of spanning trees in such towers with Ihara $L$-functions.  At last, we revisit the three examples of \cite[\S 6]{Gambheera/Vallieres:2024} from this point of view in \cref{exa}.

\subsection*{Acknowledgments}
The second author would like to thank John Lind for several very useful and interesting discussions.  Moreover, the second author also acknowledges support from an AMS-Simons Research Enhancement Grant for PUI Faculty.  Furthermore, both authors would like to thank Dmitry Zakharov for providing comments and references pertaining to this work.

\section{Preliminaries} \label{section:pre}

\subsection{Rings and modules} \label{section:ring}
Throughout this paper, by a ring we shall always mean a unital ring.  If $R$ is a commutative ring, and $A$ is an (not necessarily commutative) $R$-algebra, then it will also always be assumed to be a unital $R$-algebra.  If $R$ and $S$ are rings, then by a morphism $\varphi:R \rightarrow S$ of rings, we mean a ring morphism that satisfies $\varphi(1_{R}) = 1_{S}$, and similarly for $R$-algebras.  If $A$ is an $R$-algebra, then by an $A$-module we always mean a left $A$-module.  When $R$ is commutative, if necessary we will view an $R$-module as an $(R,R)$-bimodule with the right multiplication by elements of $R$ given by $mr := rm$.  

For us, the commutative ring $R$ will often be the group algebra $\mathbb{C}[G]$, the polynomial ring $\mathbb{C}[G][u]$, or still the power series ring $\mathbb{C}[G]\llbracket u \rrbracket$, where $G$ is a finite abelian group.  We remind the reader about polynomials and power series below.  The $R$-algebra $A$ will often be an endomorphism ring ${\rm End}_{R}(M)$, where $M$ will typically be a finitely generated projective $R$-module.

\subsubsection{Polynomials and power series} \label{section:pol}
If $R$ is a commutative ring, we let $R[u]$ be the commutative ring consisting of polynomials with coefficients in $R$; it consists of expressions of the form
$$\sum_{i=0}^{\infty} r_{i}u^{i}, $$
where $r_{i}\in R$ for all indices $i \ge 0$ and $r_{i}=0$ for all but finitely indices $i$.  Polynomials are added and multiplied in the usual way.  

More generally, if $R$ is a commutative ring and $A$ is an (not necessarily commutative) $R$-algebra, we let $A[u]$ denote the $R[u]$-algebra of polynomials with coefficients in $A$.  If $\varphi:A \rightarrow B$ is a morphism of $R$-algebras, then it induces a morphism $\varphi:A[u] \rightarrow B[u]$ of $R[u]$-algebras by applying $\varphi$ to each coefficient of a polynomial in $A[u]$.  We use the same letter $\varphi$ to denote this morphism.

Similarly, if $M$ is an $R$-module for a commutative ring $R$, then we let $M[u]$ be the abelian group consisting of polynomials with coefficients in $M$; it is an $R[u]$-module, the action of $R[u]$ being given as usual by
\begin{equation} \label{module_st}
\left(\sum_{i=0}^{\infty}r_{i}u^{i} \right) \cdot \left( \sum_{j=0}^{\infty} m_{j}u^{j}\right) = \sum_{k=0}^{\infty}\left(\sum_{i+j = k} r_{i} m_{j} \right) u^{k}.
\end{equation}
If $\varphi:M \rightarrow N$ is a morphism of $R$-modules, then it induces yet another morphism $M[u] \rightarrow N[u]$ of $R[u]$-modules which is obtained by applying $\varphi$ to the coefficients of a polynomial in $M[u]$, and which we denote by the same symbol $\varphi$ as well.  In terms of tensor products, we have a natural isomorphism 
$$R[u] \otimes_{R}M \simeq  M[u] $$
of $R[u]$-modules, where the structure of $R[u]$-module on the tensor product is induced from the natural structure of $(R[u],R)$-bimodule on $R[u]$.  Clearly, if $M$ is a free $R$-module of finite rank, then $M[u]$ is also a free $R[u]$-module of the same rank over $R[u]$. 
\begin{theorem} \label{conv_iso_pol_vs}
Let $R$ be a commutative ring, and $M$ a finitely generated $R$-module.  The function 
$$\omega_{M}:{\rm End}_{R}(M)[u] \rightarrow {\rm End}_{R[u]}(M[u])$$ 
defined via $P \mapsto \omega_{M}(P)$, where
$$\omega_{M}\left( \sum_{i=0}^{\infty}f_{i}u^{i}\right)\left(\sum_{j=0}^{\infty}m_{j}u^{j}\right) = \sum_{k=0}^{\infty} \left(\sum_{i+j = k}f_{i}(m_{j})\right)u^{k} $$
is an isomorphism of $R[u]$-algebras.
\end{theorem}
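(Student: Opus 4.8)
The plan is to exhibit $\omega_M$ as the natural candidate isomorphism coming from base change, and to check it is a bijective $R[u]$-algebra homomorphism by a direct, formal computation with coefficients. First I would record the structural facts already in hand: $M[u] \simeq R[u] \otimes_R M$ as $R[u]$-modules, and the analogous identification ${\rm End}_R(M)[u] \simeq R[u] \otimes_R {\rm End}_R(M)$ as $R[u]$-algebras (the latter using that multiplication of polynomials with coefficients in the possibly noncommutative algebra ${\rm End}_R(M)$ is the Cauchy product, which matches the algebra structure on the tensor product $R[u] \otimes_R {\rm End}_R(M)$ coming from the $(R[u],R)$-bimodule structure on $R[u]$). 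From this point of view $\omega_M$ is, up to these identifications, the canonical map
\begin{equation*}
R[u] \otimes_R {\rm End}_R(M) \longrightarrow {\rm End}_{R[u]}(R[u] \otimes_R M)
\end{equation*}
sending a pure tensor $g \otimes f$ to the $R[u]$-linear endomorphism $h \otimes m \mapsto gh \otimes f(m)$.

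Next I would verify directly from the displayed formula that $\omega_M$ is well defined (each $\omega_M(P)$ is genuinely $R[u]$-linear on $M[u]$ — this is an immediate check on Cauchy products since the $f_i$ are $R$-linear and scalars from $R$ commute past them in the relevant sense once one uses the bimodule convention $mr := rm$ fixed in \S\ref{section:ring}), that it is additive, that it is multiplicative (comparing the triple Cauchy product $\omega_M(PQ)$ with the composite $\omega_M(P)\circ\omega_M(Q)$ coefficient by coefficient, which reduces to the identity $\sum_{i+j+\ell=k} f_i(g_j(m_\ell))$ computed two ways), and that it sends the constant polynomial $1 = \mathrm{id}_M$ to $\mathrm{id}_{M[u]}$. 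These are all routine bookkeeping with the index convention $\sum_{i+j=k}$ and I would not spell them out in full.

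The substantive point is bijectivity, and here I would use the hypothesis that $M$ is finitely generated. Fix a surjection $R^{n} \twoheadrightarrow M$; this is where finite generation enters, because for a free module $F = R^n$ one has ${\rm End}_R(F) \simeq M_n(R)$ and ${\rm End}_{R[u]}(F[u]) \simeq M_n(R[u])$, and under these identifications $\omega_F$ is visibly the entrywise isomorphism $M_n(R)[u] \to M_n(R[u])$, so $\omega_F$ is bijective. To pass from $F$ to the quotient $M$, I would invoke functoriality of $\omega$ in the module variable (the diagram relating $\omega_F$ and $\omega_M$ along a split-free presentation, or more cleanly: choose $F$ free of finite rank with $M$ a direct summand of $F$ if $M$ is projective, and otherwise use that ${\rm End}_R(-)$ applied to a finite presentation together with exactness of $-\otimes_R M$-type arguments pins down ${\rm End}_R(M)[u]$ and ${\rm End}_{R[u]}(M[u])$ compatibly). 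The main obstacle is precisely this last step: for a general finitely generated but non-projective $M$ one must argue a little carefully, because ${\rm End}_R$ is not exact; the clean route is to use the identification ${\rm End}_R(M)[u] \simeq R[u]\otimes_R {\rm End}_R(M)$ and ${\rm End}_{R[u]}(M[u]) = {\rm End}_{R[u]}(R[u]\otimes_R M)$, note that $R[u]$ is free (hence flat) as an $R$-module so that base change commutes with ${\rm Hom}_R(M,-)$ for finitely presented $M$ — or, since the paper's applications only ever use $M$ finitely generated \emph{projective}, simply remark that in the generality actually needed $M$ is a summand of a finite free module and the summand argument applies verbatim. Either way, combining the free case with this reduction yields that $\omega_M$ is an isomorphism of $R[u]$-algebras.
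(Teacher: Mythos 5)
Your verification that $\omega_M$ is a well-defined $R[u]$-algebra homomorphism matches the paper, which likewise dismisses these checks as routine. The divergence is in bijectivity, and here your proposal does not actually close the argument in the stated generality. You reduce to the free case $F=R^n$ (where $\omega_F$ is indeed the visible isomorphism $M_n(R)[u]\to M_n(R[u])$) and then try to descend to $M$ along a presentation; but ${\rm End}_R(-)$ is not functorial along a surjection $R^n\twoheadrightarrow M$ (an endomorphism of $R^n$ need not preserve the kernel), so the "diagram relating $\omega_F$ and $\omega_M$" you invoke does not exist without a splitting. Your two fallbacks each prove a weaker statement than the theorem: the direct-summand argument needs $M$ projective, and the flat base-change isomorphism $S\otimes_R{\rm Hom}_R(M,N)\simeq{\rm Hom}_S(S\otimes_R M,S\otimes_R N)$ needs $M$ finitely \emph{presented}. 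The theorem as stated assumes only finite generation over an arbitrary commutative ring, where finitely generated does not imply finitely presented.

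The paper avoids all of this with a one-step direct argument for surjectivity (injectivity being immediate by evaluating on constants): given $f\in{\rm End}_{R[u]}(M[u])$, write $f(m)=\sum_{i\ge 0}f_i(m)u^i$ for $m\in M$; each $f_i$ is $R$-linear because $f$ is $R[u]$-linear, and finite generation of $M$ forces $f_i=0$ for all but finitely many $i$, since $f$ applied to each of finitely many generators is a polynomial. Then $\omega_M\bigl(\sum_i f_iu^i\bigr)=f$ by $R[u]$-linearity. Note that this is exactly what rescues your "clean route": because $R[u]$ is not merely flat but a countable direct sum of copies of $R$, the relevant base-change map is $\bigoplus_i{\rm Hom}_R(M,M)\to{\rm Hom}_R\bigl(M,\bigoplus_i M\bigr)$, and this is bijective for merely finitely generated $M$ because any homomorphism out of a finitely generated module into a direct sum factors through a finite sub-sum. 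If you insist on the base-change formulation, replace the finitely-presented citation with that direct-sum observation; otherwise the coefficient-extraction argument is both shorter and sufficient.
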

\begin{proof}
Everything is routine, except for the surjectivity which is also the only place where the hypothesis that $M$ is a finitely generated $R$-module is used:  If $f \in {\rm End}_{R[u]}(M[u])$, and $m \in M$, then
$$f(m) = \sum_{i=0}^{\infty}f_{i}(m)u^{i} $$
for some $f_{i}(m) \in M$ that are all zero except for finitely many indices $i$.  Since $f$ is a morphism of $R[u]$-module, one readily checks that $f_{i} \in {\rm End}_{R}(M)$.  Moreover the hypothesis that $M$ is finitely generated over $R$ implies that $f_{i}=0$ for all but finitely many indices $i$; therefore,
$$\sum_{i=0}^{\infty}f_{i}u^{i} \in {\rm End}_{R}(M)[u]. $$
The result follows after noticing that $\omega_{M} \left( \sum_{i=0}^{\infty}f_{i}u^{i}\right) = f$.
\end{proof}

Back to a commutative ring $R$, we let $R\llbracket u \rrbracket$ denote the commutative ring of formal power series with coefficients in $R$; it consists of expressions of the form
$$\sum_{i=0}^{\infty} r_{i}u^{i}, $$
with no condition on the $r_{i} \in R$ and are added and multiplied in the usual way.  The commutative ring $R\llbracket u \rrbracket$ can be thought of as the completion of $R[u]$ with respect to the $u$-adic topology. 

More generally, if $R$ is a commutative ring and $A$ is an (not necessarily commutative) $R$-algebra, we let $A \llbracket u \rrbracket$ denote the $R\llbracket u \rrbracket$-algebra of formal power series with coefficients in $A$.  Just as for polynomials, if $\varphi:A \rightarrow B$ is a morphism of $R$-algebras, then it induces a morphism $\varphi:A \llbracket u \rrbracket \rightarrow B \llbracket u \rrbracket$ of $R\llbracket u \rrbracket$-algebras by applying $\varphi$ to each coefficient of a power series in $A \llbracket u \rrbracket$, which we also denote by the same symbol $\varphi$.

If $M$ is an $R$-module, then we let $M\llbracket u \rrbracket$ be the $R\llbracket u \rrbracket$-module consisting of formal power series with coefficients in $M$, the action of $R\llbracket u \rrbracket$ being given as above by (\ref{module_st}).  Again, if $\varphi:M \rightarrow N$ is a morphism of $R$-modules, then it induces yet another morphism $M\llbracket u \rrbracket \rightarrow N\llbracket u \rrbracket$ of $R\llbracket u \rrbracket$-modules which is obtained by applying $\varphi$ to the coefficients of a power series in $M\llbracket u \rrbracket$, and which we denote by the same symbol $\varphi$.  The $R\llbracket u \rrbracket$-module $M\llbracket u \rrbracket$ can be thought as the $u$-adic completion of $M[u]$, since one has a natural isomorphism
$$M\llbracket u \rrbracket \stackrel{\simeq}{\longrightarrow} \varprojlim_{n \ge 1} M[u]/u^{n}M[u]. $$  
In terms of tensor products, we have the following proposition.
\begin{proposition}\label{comp_base_change}
If $M$ is a finitely generated $R$-module and $R$ is noetherian, then we have an isomorphism
$$ R \llbracket u \rrbracket \otimes_{R[u]}M[u] \simeq M\llbracket u \rrbracket $$
of $R\llbracket u \rrbracket$-modules, where the structure of $R\llbracket u \rrbracket$-module on the tensor product is induced from the natural structure of $(R\llbracket u \rrbracket,R[u])$-bimodule on $R \llbracket u \rrbracket$.  
\end{proposition}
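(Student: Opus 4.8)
The plan is to reduce the claim to the standard fact that $u$-adic completion commutes with base change for finitely generated modules over a noetherian ring. First I would invoke the natural isomorphism $R[u]\otimes_R M \simeq M[u]$ of $R[u]$-modules recorded above, together with associativity of the tensor product along $R \to R[u] \to R\llbracket u \rrbracket$, to rewrite
$$R\llbracket u \rrbracket \otimes_{R[u]} M[u] \;\simeq\; R\llbracket u \rrbracket \otimes_{R[u]}\bigl(R[u]\otimes_R M\bigr) \;\simeq\; \bigl(R\llbracket u \rrbracket \otimes_{R[u]} R[u]\bigr)\otimes_R M \;\simeq\; R\llbracket u \rrbracket \otimes_R M,$$
so it is enough to exhibit an $R\llbracket u \rrbracket$-linear isomorphism $R\llbracket u \rrbracket \otimes_R M \simeq M\llbracket u \rrbracket$. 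The natural candidate is the map $\Theta_M$ sending $\bigl(\sum_{i \ge 0} r_i u^i\bigr)\otimes m$ to $\sum_{i \ge 0}(r_i m)u^i$; one checks from the formula (\ref{module_st}) that it is well defined, $R\llbracket u \rrbracket$-linear, and natural in $M$.

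Next I would verify that $\Theta_M$ is an isomorphism in the elementary cases. For $M = R$ both sides are literally $R\llbracket u \rrbracket$ and $\Theta_R$ is the identity; since both functors $R\llbracket u \rrbracket\otimes_R(-)$ and $(-)\llbracket u \rrbracket$ commute with finite direct sums, it follows that $\Theta_{R^n}$ is an isomorphism for every $n \ge 0$. This is the point at which the hypotheses are used: because $R$ is noetherian and $M$ is finitely generated, $M$ admits a \emph{finite} presentation $R^a \to R^b \to M \to 0$.

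To conclude, apply the two functors to this presentation and compare. The top row $R\llbracket u \rrbracket\otimes_R R^a \to R\llbracket u \rrbracket\otimes_R R^b \to R\llbracket u \rrbracket\otimes_R M \to 0$ is exact by right exactness of the tensor product, and the bottom row $(R^a)\llbracket u \rrbracket \to (R^b)\llbracket u \rrbracket \to M\llbracket u \rrbracket \to 0$ is exact as well, since applying $(-)\llbracket u \rrbracket$ amounts to forming power series coefficientwise (a power series over $M$ lifts termwise to one over $R^b$, and a power series over $R^b$ maps to $0$ in $M\llbracket u \rrbracket$ exactly when each of its coefficients comes from $R^a$). Naturality of $\Theta$ makes the resulting ladder commute; the two outer vertical maps $\Theta_{R^a}$ and $\Theta_{R^b}$ are isomorphisms, and a routine diagram chase (the right-exact version of the five lemma) then forces $\Theta_M$ to be an isomorphism. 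I do not expect any genuine obstacle here; the one point to keep an eye on is that finiteness really is needed — $(-)\llbracket u \rrbracket$ does not commute with arbitrary direct sums, so the argument breaks for an infinitely presented $M$, which is precisely why the noetherian hypothesis appears. Alternatively, one may simply quote the result: $R[u]$ is noetherian by the Hilbert basis theorem, $M[u] \simeq R[u]\otimes_R M$ is finitely generated over it, and $R\llbracket u \rrbracket$ and $M\llbracket u \rrbracket$ are the $uR[u]$-adic completions of $R[u]$ and $M[u]$, so the proposition is the classical statement that completion commutes with base change for finitely generated modules over noetherian rings.
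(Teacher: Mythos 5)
Your proposal is correct. The paper's ``proof'' is nothing more than a citation of Atiyah--Macdonald, Proposition 10.13 (completion commutes with base change for finitely generated modules over a noetherian ring), which is exactly the alternative you offer in your closing sentence; your worked-out argument --- reduce to $R\llbracket u\rrbracket\otimes_R M\simeq M\llbracket u\rrbracket$ via associativity, check the natural map on free modules, pass to a finite presentation (this is where noetherianness enters), and compare the two right-exact rows --- is the standard proof of that cited result and is sound, including the key observation that $(-)\llbracket u\rrbracket$ preserves the exactness of the presentation because power series are formed coefficientwise.
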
 
\begin{proof}
See \cite[Proposition 10.13]{Atiyah/MacDonald:1969}.
\end{proof}
Just as for polynomials, if $M$ is a free $R$-module of finite rank, then $M\llbracket u \rrbracket$ is also a free $R\llbracket u \rrbracket$-module of the same rank over $R \llbracket u \rrbracket$.  

\begin{theorem} \label{conv_iso}
Let $R$ be a commutative ring, and $M$ an $R$-module, then the function 
$$\omega_{M}:{\rm End}_{R}(M)\llbracket u \rrbracket \rightarrow {\rm End}_{R\llbracket u \rrbracket}(M \llbracket u \rrbracket)$$ 
defined via $P \mapsto \omega_{M}(P)$, where
$$\omega_{M}\left( \sum_{i=0}^{\infty}f_{i}u^{i}\right)\left(\sum_{j=0}^{\infty}m_{j}u^{j}\right) = \sum_{k=0}^{\infty} \left(\sum_{i+j = k}f_{i}(m_{j})\right)u^{k} $$
is an isomorphism of $R\llbracket u \rrbracket$-algebras.
\end{theorem}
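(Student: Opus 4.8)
The plan is to mimic the proof of \Cref{conv_iso_pol_vs}, checking first that $\omega_{M}$ is a well-defined morphism of $R\llbracket u \rrbracket$-algebras and then establishing bijectivity, but now \emph{without} any finiteness hypothesis on $M$. First I would verify that for a fixed $P = \sum_{i} f_{i} u^{i} \in {\rm End}_{R}(M)\llbracket u \rrbracket$ the formula defining $\omega_{M}(P)$ makes sense: applied to $\sum_{j} m_{j} u^{j}$, the coefficient of $u^{k}$ is the \emph{finite} sum $\sum_{i+j=k} f_{i}(m_{j})$, so no convergence issue arises and $\omega_{M}(P)$ is an honest element of $M\llbracket u \rrbracket$. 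One then checks $R\llbracket u \rrbracket$-linearity of $\omega_{M}(P)$ and the fact that $P \mapsto \omega_{M}(P)$ respects addition, the unit, and the Cauchy product (i.e.\ is a ring morphism intertwining the $R\llbracket u \rrbracket$-structures); all of this is the same bookkeeping with Cauchy products as in the polynomial case.

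For injectivity, suppose $\omega_{M}(P) = 0$ with $P = \sum_{i} f_{i} u^{i}$. Evaluating on the constant series $m \in M \subseteq M\llbracket u \rrbracket$ gives $\sum_{i} f_{i}(m) u^{i} = 0$, hence $f_{i}(m) = 0$ for every $i$ and every $m$, so each $f_{i} = 0$ and $P = 0$. For surjectivity, given $f \in {\rm End}_{R\llbracket u \rrbracket}(M\llbracket u \rrbracket)$, I would define $f_{i} \in {\rm End}_{R}(M)$ by writing, for $m \in M$, $f(m) = \sum_{i} f_{i}(m) u^{i}$ — this is legitimate because $f(m)$ is some element of $M\llbracket u \rrbracket$, and $R\llbracket u \rrbracket$-linearity of $f$ forces each $f_{i}$ to be $R$-linear. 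Here is the crucial difference from \Cref{conv_iso_pol_vs}: without finite generation, there is \emph{no} reason for all but finitely many $f_{i}$ to vanish, but that is exactly why the target of $\omega_{M}$ in the present statement is ${\rm End}_{R}(M)\llbracket u \rrbracket$ rather than ${\rm End}_{R}(M)[u]$ — so $\sum_{i} f_{i} u^{i}$ is a perfectly good element of ${\rm End}_{R}(M)\llbracket u \rrbracket$ with no further argument needed. It then remains to check $\omega_{M}\bigl(\sum_{i} f_{i} u^{i}\bigr) = f$; this follows from $R\llbracket u \rrbracket$-linearity and $u$-adic continuity of $f$ (or, concretely, by matching coefficients of $u^{k}$ after applying both sides to an arbitrary $\sum_{j} m_{j} u^{j}$ and reducing to the constant-input case already handled).

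The main point to get right — and the only place the argument differs substantively from the polynomial case — is the surjectivity step: one must observe that the Cauchy-product structure built into the definition of $\omega_{M}$ means $f$ is determined by its restriction to constant series $M \subseteq M\llbracket u \rrbracket$ together with $R\llbracket u \rrbracket$-linearity, so that $\omega_{M}\bigl(\sum_{i} f_{i} u^{i}\bigr)$ and $f$ agree on all of $M\llbracket u \rrbracket$. I do not expect any genuine obstacle: dropping the finite-generation hypothesis is precisely compensated by enlarging the source from $[u]$ to $\llbracket u \rrbracket$, so the proof is essentially the one already given for \Cref{conv_iso_pol_vs} with the finiteness-driven step deleted.
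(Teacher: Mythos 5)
Your proposal is correct and follows exactly the route the paper intends: the paper's proof of this theorem is simply "similar to the proof of \cref{conv_iso_pol_vs} and left to the reader," and you have filled in the details faithfully, correctly identifying that dropping finite generation is compensated by enlarging the domain to ${\rm End}_{R}(M)\llbracket u \rrbracket$ and that the only new point is verifying $\omega_{M}\bigl(\sum_i f_i u^i\bigr)=f$ on all of $M\llbracket u \rrbracket$ (which, as you note, follows from the automatic $u$-adic continuity of any $R\llbracket u \rrbracket$-linear endomorphism together with density of $M[u]$). One trivial slip: ${\rm End}_{R}(M)\llbracket u \rrbracket$ is the \emph{source} of $\omega_{M}$, not its target.
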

\begin{proof}
The proof is similar to the proof of \cref{conv_iso_pol_vs} and left to the reader.
\end{proof}

\subsubsection{Exponential and logarithm} \label{section:exp}
If $A$ is assumed to be a (not necessarily commutative) $\mathbb{Q}$-algebra, then recall that we have the usual exponential power series given by
$$E = \sum_{n=0}^{\infty} \frac{u^{n}}{n!} \in 1 + u\mathbb{Q}\llbracket u \rrbracket. $$
We then get the exponential function ${\rm exp}_{A}: u A\llbracket u \rrbracket \rightarrow 1 + u A \llbracket u \rrbracket$ defined via $P \mapsto E(P)$.  If we let 
$$\lambda = \sum_{n=1}^{\infty}(-1)^{n-1} \frac{u^{n}}{n} \in u \mathbb{Q} \llbracket u \rrbracket, $$
then the logarithm function $\log_{A}:1 + u A \llbracket u \rrbracket \rightarrow u A\llbracket u \rrbracket$ is defined via $1 + Q \mapsto \log_{A}(1+Q) = \lambda(Q)$.
We list in the following proposition a few properties of $\exp_{A}$ and $\log_{A}$ that we shall need in this paper.
\begin{proposition} \label{basic_prop} 
The exponential and logarithm functions 
$$\exp_{A}:u A\llbracket u \rrbracket \rightarrow 1 + u A\llbracket u \rrbracket \text{ and } \log_{A}:1 + u A \llbracket u \rrbracket \rightarrow u A\llbracket u \rrbracket$$ 
satisfy the following properties:
\begin{enumerate}
\item They are inverses of one another and are homeomorphisms for the $u$-adic topology.
\item If $B$ is another $\mathbb{Q}$-algebra, and if $\varphi:A\llbracket u \rrbracket \rightarrow B\llbracket u \rrbracket$ is a morphism of $\mathbb{Q}\llbracket u \rrbracket$-algebras, then
$$\varphi \circ {\rm exp}_{A} = {\rm exp}_{B} \circ \varphi \text{ and } \varphi \circ {\rm log}_{A} = {\rm log}_{B} \circ \varphi.$$
\item If $P_{1}, P_{2} \in u A \llbracket u \rrbracket$ commute, one has
$${\rm exp}_{A}(P_{1} + P_{2}) = {\rm exp}_{A}(P_{1}) \cdot {\rm exp}_{A}(P_{2}). $$
\end{enumerate}
\end{proposition}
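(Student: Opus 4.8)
The plan is to deduce all three statements from the classical one-variable theory of formal power series over $\mathbb{Q}$ by transporting the scalar identities along substitution maps. For each $P\in uA\llbracket u\rrbracket$ consider the evaluation map $\mathrm{ev}_{P}\colon\mathbb{Q}\llbracket u\rrbracket\to A\llbracket u\rrbracket$, $\sum_{n}c_{n}u^{n}\mapsto\sum_{n}c_{n}P^{n}$. It is well defined because $P$ has zero constant term, so $P^{n}\in u^{n}A\llbracket u\rrbracket$ and the partial sums are $u$-adically Cauchy in the complete ring $A\llbracket u\rrbracket$; it is a $\mathbb{Q}$-algebra morphism because the $c_{n}$ are central; and it is $u$-adically continuous because it carries $u^{n}\mathbb{Q}\llbracket u\rrbracket$ into $u^{n}A\llbracket u\rrbracket$. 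By construction $\exp_{A}(P)=\mathrm{ev}_{P}(E)$ and $\log_{A}(1+Q)=\mathrm{ev}_{Q}(\lambda)$ for $Q\in uA\llbracket u\rrbracket$, and since $\mathrm{ev}_{P}$ is a continuous ring morphism it commutes with substitution: $\mathrm{ev}_{P}(\lambda(S))=\lambda(\mathrm{ev}_{P}(S))$ and $\mathrm{ev}_{P}(E(S))=E(\mathrm{ev}_{P}(S))$ for every $S\in u\mathbb{Q}\llbracket u\rrbracket$, with $\lambda$ and $E$ on the right now evaluated over $A$.

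For part (1), I would first record the two classical scalar identities $\lambda(E-1)=u$ and $E(\lambda)=1+u$ in $\mathbb{Q}\llbracket u\rrbracket$, the formal-power-series incarnations of $\log\circ\exp=\mathrm{id}$ and $\exp\circ\log=\mathrm{id}$; each follows from a one-line comparison of formal derivatives. Applying $\mathrm{ev}_{P}$ to the first identity gives
\[
\log_{A}(\exp_{A}(P))=\lambda\bigl(\mathrm{ev}_{P}(E)-1\bigr)=\mathrm{ev}_{P}\bigl(\lambda(E-1)\bigr)=\mathrm{ev}_{P}(u)=P,
\]
and applying $\mathrm{ev}_{Q}$ to the second gives $\exp_{A}(\log_{A}(1+Q))=1+Q$; hence $\exp_{A}$ and $\log_{A}$ are mutually inverse bijections. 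Both maps are $u$-adically continuous---substituting a series with zero constant term into a fixed power series is continuous, since an argument congruent mod $u^{n}$ produces a value congruent mod $u^{n}$---so each is a homeomorphism.

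For part (2), I would observe that a morphism $\varphi$ of $\mathbb{Q}\llbracket u\rrbracket$-algebras is in particular $\mathbb{Q}\llbracket u\rrbracket$-linear, so $\varphi(u^{n}x)=u^{n}\varphi(x)$; hence $\varphi$ sends $uA\llbracket u\rrbracket$ into $uB\llbracket u\rrbracket$ and is automatically $u$-adically continuous. Applying $\varphi$ termwise to the defining series and using that $\varphi$ is a $\mathbb{Q}$-algebra morphism then gives $\varphi(\exp_{A}(P))=\sum_{n}\varphi(P)^{n}/n!=\exp_{B}(\varphi(P))$ and, with $\lambda$ replacing $E$, $\varphi(\log_{A}(1+Q))=\log_{B}(1+\varphi(Q))$. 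For part (3), when $P_{1}$ and $P_{2}$ commute one may expand $(P_{1}+P_{2})^{n}=\sum_{k}\binom{n}{k}P_{1}^{k}P_{2}^{n-k}$, and reorganizing the resulting double series---permissible $u$-adically because only finitely many index pairs contribute to any given power of $u$---rewrites $\exp_{A}(P_{1}+P_{2})$ as the Cauchy product $\exp_{A}(P_{1})\exp_{A}(P_{2})$.

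I do not expect a genuine obstacle; the whole argument is formal. The two points that call for care are the bookkeeping of the substitution calculus when $A$ is noncommutative---handled throughout by the fact that the numerical coefficients of $E$ and $\lambda$ are central, so they commute past everything---and the two classical scalar identities underlying part (1), which are standard and can be proved by the formal-derivative argument indicated or quoted from any reference on formal power series.
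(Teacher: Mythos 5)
Your proof is correct. The paper does not actually prove this proposition — it simply cites Bourbaki, \emph{Lie Groups and Lie Algebras}, II, \S 6, no.~1 — and your argument is the standard one found there: reduce everything to the two classical scalar identities $\lambda(E-1)=u$ and $E(\lambda)=1+u$ in $\mathbb{Q}\llbracket u\rrbracket$ by transporting them along the continuous $\mathbb{Q}$-algebra morphism $\mathrm{ev}_{P}$, whose image lies in the closed commutative subalgebra generated by $P$, so that noncommutativity of $A$ never intervenes. The points needing care (well-definedness and multiplicativity of $\mathrm{ev}_{P}$ via centrality of the rational coefficients, compatibility of $\mathrm{ev}_{P}$ with substitution, and the $u$-adic continuity statements) are all handled correctly, so your write-up is a complete, self-contained substitute for the citation.
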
 
\begin{proof}
See \cite[II, \S 6, no.1]{Bourbaki:1998}. 
\end{proof}
Note in particular that if $P \in 1 + uA\llbracket u \rrbracket$, and $a \in A$, then the expression $P^{a} $ makes sense and is defined as usual via the equality
\begin{equation} \label{exponent}
P^{a} = \exp_{A} \left( a \cdot \log_{A}(P) \right) \in 1 + uA \llbracket u \rrbracket. 
\end{equation}
If $B$ is another $\mathbb{Q}$-algebra and $\varphi:A \rightarrow B$ is a morphism of $\mathbb{Q}$-algebras, then it induces another $\mathbb{Q}$-algebra morphism $A\llbracket u \rrbracket \rightarrow B\llbracket u \rrbracket$ which we shall denote by the same symbol $\varphi$.  Recall that $\varphi$ is simply defined via
$$\sum_{i=0}^{\infty} a_{i}u^{i} \mapsto \varphi \left( \sum_{i=0}^{\infty} a_{i}u^{i}\right) = \sum_{i=0}^{\infty} \varphi(a_{i})u^{i}.$$
\begin{proposition} \label{exp_beh}
Let $A$ be a commutative $\mathbb{Q}$-algebra, and let $P \in 1 + uA\llbracket u \rrbracket$.
\begin{enumerate}
\item If $a_{1}, a_{2} \in A$, then 
$$P^{a_{1} + a_{2}}  = P^{a_{1}} \cdot P^{a_{2}}. $$
\item If $B$ is another commutative $\mathbb{Q}$-algebra, and if $\varphi:A \rightarrow B$ is a morphism of $\mathbb{Q}$-algebras, then for all $a \in A$, one has
$$\varphi \left(P^{a} \right) = \varphi(P)^{\varphi(a)}.$$
\end{enumerate}
\end{proposition}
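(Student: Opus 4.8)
The plan is to deduce both assertions directly from the definition $P^{a} = \exp_{A}\bigl(a \cdot \log_{A}(P)\bigr)$ recorded in (\ref{exponent}), together with the three properties of $\exp_{A}$ and $\log_{A}$ collected in \cref{basic_prop}. The key structural observation is that since $A$ is a \emph{commutative} $\mathbb{Q}$-algebra, the ring $A\llbracket u \rrbracket$ is commutative as well, so any two elements of $u A\llbracket u \rrbracket$ commute; this is exactly the hypothesis needed to invoke the additivity property \cref{basic_prop}(3) in the proof of part (1).

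For part (1), I would set $Q = \log_{A}(P) \in u A\llbracket u \rrbracket$ and write
\[
P^{a_{1}+a_{2}} = \exp_{A}\bigl((a_{1}+a_{2})Q\bigr) = \exp_{A}\bigl(a_{1}Q + a_{2}Q\bigr).
\]
Since $a_{1}Q$ and $a_{2}Q$ lie in $uA\llbracket u \rrbracket$ and commute, \cref{basic_prop}(3) gives $\exp_{A}(a_{1}Q + a_{2}Q) = \exp_{A}(a_{1}Q)\cdot \exp_{A}(a_{2}Q) = P^{a_{1}}\cdot P^{a_{2}}$, as claimed.

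For part (2), I would first note that $\varphi(P) \in 1 + uB\llbracket u \rrbracket$ (apply $\varphi$ coefficient-wise), so that $\varphi(P)^{\varphi(a)}$ is well defined, and I would write $\varphi$ also for the induced morphism $A\llbracket u \rrbracket \to B\llbracket u \rrbracket$ of $\mathbb{Q}\llbracket u \rrbracket$-algebras. Applying the $\log$-part of \cref{basic_prop}(2), then multiplicativity of $\varphi$, then the $\exp$-part of \cref{basic_prop}(2), one obtains
\[
\varphi(P^{a}) = \varphi\bigl(\exp_{A}(a\cdot \log_{A}(P))\bigr) = \exp_{B}\bigl(\varphi(a\cdot\log_{A}(P))\bigr) = \exp_{B}\bigl(\varphi(a)\cdot\log_{B}(\varphi(P))\bigr) = \varphi(P)^{\varphi(a)}.
\]

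I do not expect any genuine obstacle: the argument is essentially bookkeeping. The only points that demand attention are to keep the morphism $\varphi$ on $A$ distinct from its extension to power series rings, and to recognize that the commutativity assumption on $A$ is precisely what makes \cref{basic_prop}(3) applicable in part (1) — without it, $a_{1}\log_{A}(P)$ and $a_{2}\log_{A}(P)$ need not commute and the first argument breaks down.
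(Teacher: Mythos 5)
Your proof is correct and is exactly the argument the paper has in mind: the paper's own proof simply says the proposition is ``a direct consequence of \cref{basic_prop},'' and your write-up fills in precisely those details (part (3) of \cref{basic_prop} for additivity, part (2) for compatibility with $\varphi$). No issues.
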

\begin{proof}
This is a direct consequence of \cref{basic_prop}.
\end{proof}

\subsection{Commutative algebra} \label{com_alg}
Let $R$ be a ring.  If $n$ is a positive integer, we let $M_{n}(R)$ denote the ring consisting of $n \times n$ matrices with entries in $R$.  If $\varphi:R \rightarrow S$ is a ring morphism, then it induces a ring morphism $M_{n}(R) \rightarrow M_{n}(S)$ by applying $\varphi$ to the entries of a matrix in $M_{n}(R)$.  We denote this latter ring morphism by the same symbol $\varphi$.  

Assume now that $R$ is a commutative ring.  The trace and determinant functions
$${\rm tr}_{R}:M_{n}(R) \rightarrow R \text{ and } {\rm det}_{R}:M_{n}(R) \rightarrow R $$
are defined as usual.  The trace is $R$-linear and the determinant is multiplicative.  The trace, determinant and exponential are related via the following identity.  For a proof, we refer the reader to \cite[Theorem 9.2]{Sambale:2023} in which an argument is presented when $R$ is a field of characteristic zero, but the reader can check that the exact same proof works more generally when $R$ is any commutative $\mathbb{Q}$-algebra.
\begin{theorem} \label{det_tr_exp}
Let $R$ be a commutative $\mathbb{Q}$-algebra.  Then, the following diagram
\begin{equation*}
\begin{tikzcd}
uM_{n}(R\llbracket u \rrbracket) \arrow["{\rm exp}",d] \arrow["{\rm tr}_{R\llbracket u \rrbracket}",r] & uR \llbracket u \rrbracket \arrow["{\rm exp}_{R}",d]\\
I+uM_{n}(R\llbracket u \rrbracket) \arrow["{\rm det}_{R\llbracket u \rrbracket}",r] & 1+ uR \llbracket u \rrbracket
\end{tikzcd}
\end{equation*}
commutes, where the left vertical arrow is the usual exponential of matrices, and $I$ is the $n\times n$ identity matrix.
\end{theorem}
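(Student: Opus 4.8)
The plan is to imitate the classical proof of $\det e^{A}=e^{\operatorname{tr}A}$ over a field of characteristic zero: introduce an auxiliary scalar variable, differentiate, and solve the resulting first-order linear ODE, taking care that every manipulation is legitimate over a general commutative $\mathbb{Q}$-algebra. Fix $N\in uM_{n}(R\llbracket u\rrbracket)$, and set $S=R[t]$, which is again a commutative $\mathbb{Q}$-algebra containing $R$. Since $R\llbracket u\rrbracket\subseteq S\llbracket u\rrbracket$, we may view $N$, and hence $tN$, as an element of $uM_{n}(S\llbracket u\rrbracket)$, and because $N^{k}\in u^{k}M_{n}(R\llbracket u\rrbracket)$ the exponential $\Phi:=\exp_{M_{n}(S)}(tN)=\sum_{k\ge 0}t^{k}N^{k}/k!$ is a well-defined element of $I+uM_{n}(S\llbracket u\rrbracket)$ (modulo $u^{m+1}$ only the terms with $k\le m$ contribute), whose matrix entries are polynomials in $t$. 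Equip $S\llbracket u\rrbracket$ with the derivation $\partial_{t}$ acting on the $u$-adic coefficients; it is $u$-adically continuous, is $R\llbracket u\rrbracket$-linear, and extends entrywise to a derivation of $M_{n}(S\llbracket u\rrbracket)$.

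Next I would record the two differential identities. Differentiating the series for $\Phi$ term by term — legitimate because modulo each power of $u$ it is a finite sum — gives $\partial_{t}\Phi=N\Phi=\Phi N$, the commutation being immediate. By \cref{basic_prop}(3) applied to $A=M_{n}(S)$ and the commuting pair $tN,-tN$, the matrix $\Phi$ is invertible with $\Phi^{-1}=\exp_{M_{n}(S)}(-tN)$, so $\Phi^{-1}\partial_{t}\Phi=N$. For the determinant I would invoke Jacobi's formula $\partial_{t}\bigl(\det_{S\llbracket u\rrbracket}\Phi\bigr)=\operatorname{tr}_{S\llbracket u\rrbracket}\!\bigl(\operatorname{adj}(\Phi)\,\partial_{t}\Phi\bigr)$, which holds over any commutative ring carrying a derivation and follows directly from the Leibniz expansion of the determinant; combining it with the universal identity $\operatorname{adj}(\Phi)=(\det_{S\llbracket u\rrbracket}\Phi)\,\Phi^{-1}$ and cyclicity of the trace yields $\partial_{t}\phi=\phi\cdot\operatorname{tr}_{R\llbracket u\rrbracket}(N)$, where $\phi:=\det_{S\llbracket u\rrbracket}(\Phi)\in 1+uS\llbracket u\rrbracket$.

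Then I would solve the ODE by a uniqueness argument. The element $\exp_{S}\!\bigl(t\cdot\operatorname{tr}_{R\llbracket u\rrbracket}(N)\bigr)$ satisfies the same equation (again by term-by-term differentiation, $\operatorname{tr}(N)$ being central) and the same initial condition $\phi|_{t=0}=\det I=1$. Setting $\psi:=\phi\cdot\exp_{S}(-t\operatorname{tr}N)$, one computes $\partial_{t}\psi=0$; since a power series in $u$ over $R[t]$ annihilated by $\partial_{t}$ must have each $u$-coefficient constant in $t$ (this is where characteristic zero enters, via $jc_{j}=0\Rightarrow c_{j}=0$), $\psi$ lies in $R\llbracket u\rrbracket$ and hence equals its value at $t=0$, namely $1$. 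Therefore $\det_{S\llbracket u\rrbracket}(\exp tN)=\exp_{S}(t\operatorname{tr}N)$ in $S\llbracket u\rrbracket$. Applying the $\mathbb{Q}$-algebra morphism $\operatorname{ev}_{1}\colon R[t]\to R$, $t\mapsto 1$ — which induces $S\llbracket u\rrbracket\to R\llbracket u\rrbracket$ and, under the identification $M_{n}(S)\llbracket u\rrbracket=M_{n}(S\llbracket u\rrbracket)$, a morphism of $\mathbb{Q}\llbracket u\rrbracket$-algebras — and using that ring morphisms commute with $\det$ and with $\exp$ (the latter by \cref{basic_prop}(2)), together with $\operatorname{ev}_{1}(tN)=N$ and $\operatorname{ev}_{1}(t\operatorname{tr}N)=\operatorname{tr}N$, we obtain $\det_{R\llbracket u\rrbracket}(\exp N)=\exp_{R}(\operatorname{tr}_{R\llbracket u\rrbracket}N)$, i.e. commutativity of the diagram.

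The only step that is more than bookkeeping is Jacobi's formula over a general commutative ring, together with the care needed to handle $\partial_{t}$, the adjugate, and term-by-term differentiation of $\exp(tN)$ inside the topological ring $M_{n}(S\llbracket u\rrbracket)$; everything else is a standard linear-ODE uniqueness argument, and at no point is $R$ required to be reduced, noetherian, or a field. I expect most of the write-up to consist of making those routine interactions precise. As an alternative route avoiding the ODE altogether, one may note that the coefficient of $u^{m}$ on each side of the desired identity is a universal polynomial with rational coefficients in the entries of the first $m$ matrix coefficients of $N$; it therefore suffices to verify the identity after specializing those entries in $\mathbb{C}$, where $\det\exp=\exp\operatorname{tr}$ is the classical identity for complex matrices and the agreement of the two resulting entire functions of $u$ forces the agreement of all their Taylor coefficients.
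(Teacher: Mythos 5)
Your argument is correct, and it fills a gap the paper deliberately leaves open: the paper offers no proof of \cref{det_tr_exp}, only a citation to Sambale's Theorem 9.2 (stated there for a field of characteristic zero) together with the assertion that "the exact same proof works" for an arbitrary commutative $\mathbb{Q}$-algebra. Your write-up is therefore a genuinely different contribution — a self-contained verification rather than an appeal to a reference. The route you choose is sound: introducing the auxiliary variable $t$ is the right move (differentiating in $u$ itself would fail, since $N$ and $\partial_u N$ need not commute, so $\partial_u\exp(N)\neq (\partial_u N)\exp(N)$ in general); Jacobi's formula $\partial(\det M)=\operatorname{tr}(\operatorname{adj}(M)\,\partial M)$ is indeed a purely formal consequence of the Leibniz expansion of the determinant and the derivation property, hence valid over any commutative ring; and the uniqueness step $\partial_t\psi=0\Rightarrow\psi\in R\llbracket u\rrbracket$ isolates exactly where the $\mathbb{Q}$-algebra hypothesis is used. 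The auxiliary inputs you invoke (invertibility of $\exp$, compatibility of $\exp_A$ with morphisms of $\mathbb{Q}\llbracket u\rrbracket$-algebras, base change for the determinant) are all available from \cref{basic_prop} and the discussion in \cref{com_alg}. Your alternative route at the end — each $u$-coefficient of both sides is a universal polynomial over $\mathbb{Q}$ in the entries of the first $m$ coefficients of $N$, so the identity may be checked after specializing to $\mathbb{C}$, where it is classical — is also valid, and is arguably the quickest honest justification of the paper's parenthetical claim that the field case implies the general case.
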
 

If $F$ is a free $R$-module of finite rank and $f \in {\rm End}_{R}(F)$, then the trace and determinant functions
\begin{equation} \label{tr_and_det}
{\rm tr}_{R}:{\rm End}_{R}(F) \rightarrow R \text{ and } {\rm det}_{R}:{\rm End}_{R}(F) \rightarrow R
\end{equation}
are defined as usual.  Those definitions are extended to finitely generated projective modules over $R$ and we now recall one way to do that (see for instance \cite{Goldman:1961}).  Let $M$ be a finitely generated projective $R$-module and let $f \in {\rm End}_{R}(M)$.  Since $M$ is projective over $R$, there exists a free $R$-module $F$ of finite rank and another $R$-module $N$ such that $F = M \oplus N$.  Consider now $f \oplus 0_{N}, f \oplus {\rm id}_{N} \in {\rm End}_{R}(F)$, where $0_{N}$ and ${\rm id}_{N}$ are the zero and identity map on $N$, respectively.  The trace and determinant of $f$ are defined to be
$${\rm tr}_{R}(f) = {\rm tr}_{R}(f \oplus 0_{N}) \text{ and } {\rm det}_{R}(f) = {\rm det}_{R}(f \oplus {\rm id}_{N}).$$
It can be shown that those are well defined, meaning that the choices of $F$ and $N$ are irrelevant, and also that both the trace and the determinant satisfy the usual base change property with respect to a ring morphism $R \rightarrow S$.  Moreover if $M_{1}$ and $M_{2}$ are two finitely generated projective $R$-modules and $f_{i} \in {\rm End}_{R}(M_{i})$ for $i=1,2$, then for $f_{1} \oplus f_{2} \in {\rm End}_{R}(M_{1} \oplus M_{2})$, one has
\begin{equation} \label{det_prop}
{\rm tr}_{R}(f_{1} \oplus f_{2}) = {\rm tr}_{R}(f_{1}) + {\rm tr}_{R}(f_{2}) \text{ and } {\rm det}_{R}(f_{1} \oplus f_{2}) = {\rm det}_{R}(f_{1}) \cdot {\rm det}_{R}(f_{2}).
\end{equation}
If $F$ is a free $R$-module of rank one, then the function 
\begin{equation}\label{simple_but_useful}
{\rm End}_{R}(F) \rightarrow R
\end{equation}
given by $f \mapsto r_{f}$, where $r_{f}$ is the unique element in $R$ satisfying $f(m) = r_{f} \cdot m$ for all $m \in F$ is an isomorphism of $R$-algebras, and it coincides with both ${\rm tr}_{R}$ and ${\rm det}_{R}$.

We will also make use of the (Hattori-Stallings) rank of a finitely generated projective $R$-module $M$ which is defined to be
$${\rm rank}_{R}(M) = {\rm tr}_{R}({\rm id}_{M}) \in R. $$
The rank is additive, meaning that if 
$$0 \rightarrow M_{1} \rightarrow M_{2} \rightarrow M_{3} \rightarrow 0 $$
is a short exact sequence of projective $R$-modules, then
$${\rm rank}_{R}(M_{2}) = {\rm rank}_{R}(M_{1}) + {\rm rank}_{R}(M_{3}), $$
which follows from (\ref{det_prop}).

At last, we will make use of the following result on determinants of block matrices, for which the reader can find a proof in \cite{Silvester:2000}.
\begin{theorem} \label{silvester}
Let $R$ and $S$ be commutative rings, and assume that we are given a ring morphism $\rho: S \rightarrow M_{n}(R)$.  Then the diagram
\begin{equation*}
\begin{tikzcd}
M_{m}(S) \arrow[,d] \arrow["{\rm det}_{S}",r] & S  \arrow["\rho",r] & M_{n}(R) \arrow["{\rm det}_{R}",d]\\
M_{m}(M_{n}(R))  \arrow["\simeq",r] & M_{mn}(R) \arrow["{\rm det}_{R}",r] & R
\end{tikzcd}
\end{equation*}
commutes, where the first vertical arrow is the one induced by $\rho$ as explained in the first paragraph of \cref{com_alg}.
\end{theorem}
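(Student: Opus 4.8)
The plan is to unwind both legs of the diagram on a matrix $B=(b_{ij})\in M_{m}(S)$. Since $S$ is commutative and $\rho$ is a ring morphism, the matrices $B_{ij}:=\rho(b_{ij})\in M_{n}(R)$ all lie in the commutative $R$-subalgebra $\mathcal{C}:=R[\rho(S)]$ of $M_{n}(R)$, and expanding the Leibniz formula identifies the image along the top leg, $\rho\bigl({\rm det}_{S}(B)\bigr)$, with the element $\Delta:=\sum_{\sigma}{\rm sgn}(\sigma)\,B_{1\sigma(1)}\cdots B_{m\sigma(m)}\in\mathcal{C}$ (the sum running over the permutations $\sigma$ of $\{1,\dots,m\}$), which is nothing but ${\rm det}_{\mathcal{C}}$ of the matrix $(B_{ij})$. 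The bottom leg sends $B$ to the $mn\times mn$ matrix $\widehat{B}$ over $R$ obtained by reading $(B_{ij})$ as a block matrix. Hence the theorem is equivalent to the identity ${\rm det}_{R}(\widehat{B})={\rm det}_{R}(\Delta)$, valid for \emph{any} square block matrix over $M_{n}(R)$ all of whose blocks pairwise commute --- the classical ``determinant of a block matrix with commuting blocks'' formula --- and I would prove it by induction on the number $m$ of block rows, uniformly over all commutative rings $R$.

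The case $m=1$ is trivial. For the inductive step, split $\widehat{B}$ into blocks $\left(\begin{smallmatrix}B_{11}&Q\\U&V\end{smallmatrix}\right)$, where $B_{11}\in M_{n}(R)$ sits in the upper-left corner, $V$ is the $(m-1)n\times(m-1)n$ matrix attached to the lower-right $(m-1)\times(m-1)$ sub-block-matrix $B'$ of $(B_{ij})$, and $Q,U$ are the matrix forms over $R$ of the strips $b$ and $c$ obtained from the first block-row and first block-column of $(B_{ij})$ by removing their $(1,1)$-entries. Suppose first that $B_{11}$ is invertible in $M_{n}(R)$. Then $B_{11}^{-1}$ still commutes with every element of $\mathcal{C}$, so $\mathcal{C}':=R[\mathcal{C},B_{11}^{-1}]$ is again a commutative subring of $M_{n}(R)$. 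Block Gaussian elimination over the commutative ring $R$ gives ${\rm det}_{R}(\widehat{B})={\rm det}_{R}(B_{11})\cdot{\rm det}_{R}(V-UB_{11}^{-1}Q)$, and the Schur complement $V-UB_{11}^{-1}Q$ is exactly the block-matrix form of the $(m-1)\times(m-1)$ matrix $B'-cB_{11}^{-1}b$ over $\mathcal{C}'$. The induction hypothesis then rewrites ${\rm det}_{R}(V-UB_{11}^{-1}Q)$ as ${\rm det}_{R}\bigl({\rm det}_{\mathcal{C}'}(B'-cB_{11}^{-1}b)\bigr)$, whereas the ordinary commutative Schur-complement identity inside $\mathcal{C}'$ reads $\Delta={\rm det}_{\mathcal{C}'}(B)=B_{11}\cdot{\rm det}_{\mathcal{C}'}(B'-cB_{11}^{-1}b)$; multiplicativity of ${\rm det}_{R}$ on $M_{n}(R)$ then closes this case.

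To drop the invertibility hypothesis on $B_{11}$, I would use the standard indeterminate trick: pass to $R[t]$, replace $B_{11}$ by $B_{11}+tI_{n}$ (which still commutes with everything $B_{11}$ commuted with), and call the resulting block matrix and its Leibniz determinant $B_{t}$ and $\Delta_{t}$. The characteristic polynomial $\chi(t)={\rm det}_{R[t]}(B_{11}+tI_{n})$ is monic, hence a nonzerodivisor in $R[t]$, so over the localization $R[t]\bigl[\chi(t)^{-1}\bigr]$ the pivot is invertible and the previous paragraph applies. Since ${\rm det}_{R[t]}(\widehat{B_{t}})$ and ${\rm det}_{R[t]}(\Delta_{t})$ already lie in $R[t]$ and $R[t]\hookrightarrow R[t]\bigl[\chi(t)^{-1}\bigr]$ is injective, the identity holds in $R[t]$, and specializing $t\mapsto 0$ recovers ${\rm det}_{R}(\widehat{B})={\rm det}_{R}(\Delta)$. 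The step I expect to demand the most care is the bookkeeping in the inductive step, above all the point that the Schur complement never leaves a commutative subring of $M_{n}(R)$: though $B_{11}^{-1}$ need not belong to $\rho(S)$, it commutes with all of $\rho(S)$, so enlarging $\mathcal{C}$ to $\mathcal{C}'$ is harmless. Alternatively, one may simply appeal to \cite{Silvester:2000}.
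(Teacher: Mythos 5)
Your argument is correct. Note that the paper itself offers no proof of this theorem: it simply refers the reader to \cite{Silvester:2000}, so any self-contained argument is by definition a different route. What you give is in effect a complete proof of the classical ``determinant of a block matrix with pairwise commuting blocks'' identity, of which the stated theorem is the special case where the blocks all come from the image of $\rho$ (augmented by the central scalars $R\cdot I_{n}$, so that $\mathcal{C}$ is indeed commutative). The two points that genuinely need care are both handled correctly: first, that $B_{11}^{-1}$ (and later $(B_{11}+tI_{n})^{-1}$) remains in the commutant of $\mathcal{C}$, so the Schur complement lives in a commutative subring $\mathcal{C}'$ and the inductive hypothesis, quantified over all commutative base rings, applies to it; second, that the non-invertible pivot is dealt with by passing to $R[t]$ and inverting the monic (hence nonzerodivisor) polynomial $\chi(t)={\rm det}_{R[t]}(B_{11}+tI_{n})$, after which specialization $t\mapsto 0$ is legitimate because determinants commute with ring morphisms. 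You also implicitly use that the determinant of a block-triangular matrix over a commutative ring is the product of the determinants of its diagonal blocks (to evaluate both the elimination matrix and the eliminated matrix); this is elementary but worth flagging, as it is precisely Silvester's Theorem 1. Compared with Silvester's own treatment, which clears the first block column by multiplying rows through by $B_{11}$ (picking up powers of ${\rm det}(B_{11})$ that must then be cancelled via a genericity argument), your localization at $\chi(t)$ achieves the same reduction a little more cleanly; either way the paper's citation could be replaced verbatim by your argument.
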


\subsection{Representation theory} \label{section:rep}
Let $G$ be a finite abelian group.  We remind the reader that the group algebra $\mathbb{C}[G]$ is semisimple, and thus in particular any $\mathbb{C}[G]$-module is projective.  We choose to work with $\mathbb{C}$, but later in \cref{ana_sec} we will use $\mathbb{C}_{p}$.  If the readers desire, they can pretend $\mathbb{C}$ is an arbitrary algebraically closed field of characteristic zero throughout.  In this paper, we use the module theoretical approach to representation theory of finite groups.  Thus we view a finite dimensional linear representation of $G$ over $\mathbb{C}$ as a finitely generated $\mathbb{C}[G]$-module which amounts to the same as a finite dimensional $\mathbb{C}$-linear representation of the $\mathbb{C}$-algebra $\mathbb{C}[G]$, namely a $\mathbb{C}$-algebra morphism
\begin{equation} \label{rep_pt_view}
\mathbb{C}[G] \rightarrow {\rm End}_{\mathbb{C}}(V), 
\end{equation}
for some finite dimensional $\mathbb{C}$-vector space $V$.  Moreover, we will refer to any such finitely generated $\mathbb{C}[G]$-module $V$ as an Artin representation of $G$.  If $V$ is an Artin representation of $G$, then we let $\psi_{V}:\mathbb{C}[G] \rightarrow \mathbb{C}$ denote its character which is obtained by composing (\ref{rep_pt_view}) with the trace map ${\rm tr}_{\mathbb{C}}: {\rm End}_{\mathbb{C}}(V) \rightarrow \mathbb{C}$ from (\ref{tr_and_det}).  Two Artin representations are isomorphic as $\mathbb{C}[G]$-modules if and only if they have the same characters.  

A character is called irreducible if it comes from an irreducible representation.  The collection of all irreducible characters will be denoted by ${\rm Ir}(G)$.  It is known to be a finite set, and since $G$ is assumed to be abelian, there is a one-to-one correspondence between ${\rm Ir}(G)$ and $\widehat{G}$, where $\widehat{G}$ denotes the collection of group morphisms from $G$ into $\mathbb{C}^{\times}$.  Recall that for an arbitrary Artin representation, $\psi_{V}$ is only a $\mathbb{C}$-linear morphism, and is not necessarily a $\mathbb{C}$-algebra morphism.  On the other hand, if $V$ is assumed to be an irreducible representation, then its character $\psi_{V}$ is a morphism of $\mathbb{C}$-algebras.  Given $\psi \in \widehat{G}$, we will use the same symbol $\psi$ to denote the corresponding morphism of $\mathbb{C}$-algebras $\psi:\mathbb{C}[G] \rightarrow \mathbb{C}$.  This should not cause any confusion, since the restriction of the latter to $G$ is the same as the former.  

If $\psi \in \widehat{G}$, we let 
$$e_{\psi} = \frac{1}{|G|} \sum_{\sigma \in G}\psi(\sigma) \cdot \sigma^{-1} \in \mathbb{C}[G] $$
be the usual primitive idempotent of $\mathbb{C}[G]$ associated with the character $\psi$.  These idempotents are orthogonal, meaning that $e_{\psi_{1}} \cdot e_{\psi_{2}} = 0$, when $\psi_{1} \neq \psi_{2}$, as a simple calculation using the usual orthogonality relation
\begin{equation}\label{ortho}
\frac{1}{|G|} \sum_{\sigma \in G} \psi_{1}(\sigma) \cdot \psi_{2}(\sigma^{-1})=
\begin{cases}
0, &\text{ if } \psi_{1} \neq \psi_{2}; \\
1, &\text{ otherwise},
\end{cases}
\end{equation}
shows.  We will also make use of the following (usually non-primitive) idempotents:  If $H \le G$, we let
\begin{equation} \label{idem}
e_{H} = \frac{1}{|H|}N_{H} \in \mathbb{C}[G], 
\end{equation}
where
$$N_{H} = \sum_{h \in H} h. $$  
The following lemma will be used often, and it is simple enough that we omit its proof.
\begin{lemma} \label{useful_dec}
Let $G$ be a finite abelian group and let $H \le G$.  Consider the natural ring morphism $\pi_{H}:\mathbb{C}[G] \rightarrow \mathbb{C}[G/H]$ and let $I(H)$ be its kernel so that we have a short exact sequence
\begin{equation} \label{ses_subgroup}
0 \rightarrow I(H) \rightarrow \mathbb{C}[G] \stackrel{\pi_{H}}{\longrightarrow} \mathbb{C}[G/H] \rightarrow 0 
\end{equation}
of $\mathbb{C}[G]$-modules.  The $\mathbb{C}$-linear function $\iota_{H}:\mathbb{C}[G/H] \rightarrow \mathbb{C}[G]$ defined on cosets via
$$\sigma H \mapsto \iota_{H}(\sigma H) = \sigma e_{H} $$
is a well-defined morphism of $\mathbb{C}[G]$-modules which gives a splitting of the short exact sequence (\ref{ses_subgroup}) so that one has $\mathbb{C}[G] = I(H) \oplus \iota_{H}(\mathbb{C}[G/H])$.  Moreover,
$$\iota_{H}(\mathbb{C}[G/H]) = \mathbb{C}[G] e_{H} \text{ and } I(H) = \mathbb{C}[G](1-e_{H}). $$
\end{lemma}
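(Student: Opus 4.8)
The plan is to reduce everything to two elementary identities about the element $e_{H}$ and then unwind definitions. First I would record that for every $h \in H$ one has $h N_{H} = N_{H}$, since left multiplication by $h$ permutes $H$; dividing by $|H|$ this gives $h e_{H} = e_{H}$ (and symmetrically $e_{H}h = e_{H}$), and consequently $N_{H}^{2} = \sum_{h \in H} h N_{H} = |H| N_{H}$, so that $e_{H}^{2} = \tfrac{1}{|H|^{2}}N_{H}^{2} = e_{H}$, i.e. $e_{H}$ is an idempotent. I would also note that $\pi_{H}(e_{H}) = \tfrac{1}{|H|}\sum_{h \in H}\pi_{H}(h) = 1$ in $\mathbb{C}[G/H]$, because $\pi_{H}$ sends every $h \in H$ to the trivial coset.

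Next I would check that $\iota_{H}$ is well defined and $\mathbb{C}[G]$-linear. If $\sigma' H = \sigma H$, write $\sigma' = \sigma h$ with $h \in H$; then $\sigma' e_{H} = \sigma(h e_{H}) = \sigma e_{H}$ by the identity above, so the assignment $\sigma H \mapsto \sigma e_{H}$ depends only on the coset and extends $\mathbb{C}$-linearly to $\iota_{H}\colon \mathbb{C}[G/H] \to \mathbb{C}[G]$. For $\tau \in G$ one has $\tau \cdot \iota_{H}(\sigma H) = \tau\sigma e_{H} = \iota_{H}(\tau\sigma H) = \iota_{H}(\tau \cdot \sigma H)$, and linearity in the argument upgrades this to $\mathbb{C}[G]$-linearity of $\iota_{H}$.

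Then I would show that $\iota_{H}$ is a section of $\pi_{H}$: on a coset, $\pi_{H}(\iota_{H}(\sigma H)) = \pi_{H}(\sigma)\,\pi_{H}(e_{H}) = (\sigma H)\cdot 1 = \sigma H$, so $\pi_{H}\circ\iota_{H} = {\rm id}_{\mathbb{C}[G/H]}$. Hence the short exact sequence \eqref{ses_subgroup} splits and $\mathbb{C}[G] = I(H) \oplus \iota_{H}(\mathbb{C}[G/H])$, the projection onto the second summand being $p := \iota_{H}\circ\pi_{H}$. Evaluating on a basis element $\sigma \in G$ gives $p(\sigma) = \iota_{H}(\sigma H) = \sigma e_{H}$, hence $p(x) = x e_{H}$ for all $x \in \mathbb{C}[G]$ by $\mathbb{C}$-linearity of right multiplication by $e_{H}$; therefore $\iota_{H}(\mathbb{C}[G/H]) = {\rm im}\, p = \mathbb{C}[G]e_{H}$. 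The complementary projection is $x \mapsto x - x e_{H} = x(1-e_{H})$, with image $\mathbb{C}[G](1-e_{H})$, and it is precisely the projection onto $\ker\pi_{H} = I(H)$, so $I(H) = \mathbb{C}[G](1-e_{H})$; alternatively one checks the two inclusions directly, namely $\pi_{H}(x(1-e_{H})) = \pi_{H}(x)(1 - \pi_{H}(e_{H})) = 0$, and $x \in I(H) \Rightarrow x e_{H} = p(x) = \iota_{H}(\pi_{H}(x)) = 0 \Rightarrow x = x(1-e_{H})$.

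As for the main obstacle: there really is no hard step. The only things that demand a moment's care are using the identity $h e_{H} = e_{H} = e_{H}h$ consistently, so that the image of $\iota_{H}$ is genuinely the principal ideal $\mathbb{C}[G]e_{H}$ and so that $\iota_{H}\circ\pi_{H}$ really is multiplication by $e_{H}$, and keeping left versus right multiplication straight — both harmless here because $\mathbb{C}[G]$ is commutative. Everything else is a direct verification from the definitions, which is presumably why the authors suppressed the proof.
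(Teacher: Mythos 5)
Your proof is correct and complete. The paper deliberately omits a proof of this lemma ("it is simple enough that we omit its proof"), so there is nothing to compare against; the verification you give — $he_{H}=e_{H}$ for $h\in H$, hence $e_{H}^{2}=e_{H}$ and $\pi_{H}(e_{H})=1$, then well-definedness and $\mathbb{C}[G]$-linearity of $\iota_{H}$, the section property $\pi_{H}\circ\iota_{H}={\rm id}$, and the identification of the two summands as the images of the idempotent projections $x\mapsto xe_{H}$ and $x\mapsto x(1-e_{H})$ — is exactly the routine argument the authors had in mind.
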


\subsection{Permutation modules} \label{permutation}
Let $T$ be a finite set on which a finite abelian group $G$ acts, and let $S = T_{G}$, where we write $T_{G}$ instead of $G \backslash T$ for the orbits.  For $t \in T$, we let $G_{t} = {\rm Stab}_{G}(t)$.  Since $G$ is abelian, $G_{t}$ depends only on $s = Gt \in S$, and thus we shall often write $G_{s}$ instead of $G_{t}$, where $s = Gt$.  Let  $\mathbb{C}T$ denote the $\mathbb{C}$-vector space whose basis is given by the elements in $T$.  In this section, we study the structure of $\mathbb{C}T$ as a $\mathbb{C}[G]$-module (it is a permutation module).  

We fix a labeling $S = \{s_{1},\ldots,s_{g} \}$ of $S$ and for each $i = 1,\ldots,g$, we let $t_{i}$ be a fixed element of $T$ such that $Gt_{i} = s_{i}$.  For $i=1,\ldots,g$, we write $G_{i}$ instead of $G_{s_{i}}$, and similarly we write $e_{i}$ instead of $e_{G_{i}}$ for the idempotent corresponding to the subgroup $G_{i}$ that was defined above in (\ref{idem}).  Since 
$$\mathbb{C}T = \bigoplus_{i=1}^{g}\mathbb{C}[G] t_{i}, $$
we have to understand the $\mathbb{C}[G]$-modules $\mathbb{C}[G] t_{i}$.  For that purpose, let $i \in \{1,\ldots,g \}$, and consider the natural surjective $\mathbb{C}[G]$-module morphism $\phi:\mathbb{C}[G] \rightarrow \mathbb{C}[G] t_{i}$ given by $\lambda \mapsto \phi(\lambda) = \lambda t_{i}$.  The map $\phi$ leads to a short exact sequence
\begin{equation} \label{ses_us}
0 \rightarrow {\rm Ann}_{\mathbb{C}[G]}(t_{i}) \rightarrow \mathbb{C}[G] \stackrel{\phi}{\rightarrow} \mathbb{C}[G] t_{i} \rightarrow 0   
\end{equation}
of $\mathbb{C}[G]$-modules.  In order to understand ${\rm Ann}_{\mathbb{C}[G]}(t_{i})$, we shall make use of the following lemma.
\begin{lemma} \label{useful_lemma}
With the notation as above, if $\psi \in \widehat{G}$ is a non-trivial character satisfying $G_{i} \not \subseteq {\rm ker}(\psi)$, then in $\mathbb{C}T$, we have $e_{\psi}  t_{i} = 0$.
\end{lemma}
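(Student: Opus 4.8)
The plan is to reduce the statement to the single identity $e_{\psi} e_{G_{i}} = 0$ in $\mathbb{C}[G]$, which then follows from one routine reindexing computation together with the vanishing of a nontrivial character summed over a finite group.

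First I would record that $e_{G_{i}} t_{i} = t_{i}$ in $\mathbb{C}T$. Indeed, since $G_{i} = {\rm Stab}_{G}(t_{i})$, every $h \in G_{i}$ fixes $t_{i}$, so $N_{G_{i}} t_{i} = \sum_{h \in G_{i}} h t_{i} = |G_{i}|\, t_{i}$, and dividing by $|G_{i}|$ gives $e_{G_{i}} t_{i} = t_{i}$. Consequently $e_{\psi} t_{i} = e_{\psi} e_{G_{i}} t_{i}$, and it suffices to show $e_{\psi} e_{G_{i}} = 0$. To compute $e_{\psi} N_{G_{i}}$, expand $e_{\psi} N_{G_{i}} = \tfrac{1}{|G|}\sum_{\sigma \in G}\sum_{h \in G_{i}} \psi(\sigma)\,\sigma^{-1} h$, and for each fixed $h$ substitute $\tau = \sigma^{-1} h$; as $\sigma$ runs over $G$ so does $\tau$, with $\sigma = h\tau^{-1}$, hence $\psi(\sigma) = \psi(h)\psi(\tau^{-1})$ and $\sigma^{-1} h = \tau$. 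Collecting terms and recognizing $\tfrac{1}{|G|}\sum_{\tau \in G}\psi(\tau^{-1})\tau = e_{\psi}$ (substitute $\tau \mapsto \tau^{-1}$), one obtains $e_{\psi} N_{G_{i}} = \bigl(\sum_{h \in G_{i}}\psi(h)\bigr) e_{\psi}$, so that $e_{\psi} e_{G_{i}} = \tfrac{1}{|G_{i}|}\bigl(\sum_{h \in G_{i}}\psi(h)\bigr) e_{\psi}$. This is the one computation in the argument, and it is the only place the explicit formula for $e_{\psi}$ is used.

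Finally I would invoke the hypothesis $G_{i} \not\subseteq {\rm ker}(\psi)$: it says precisely that $\psi|_{G_{i}}$ is a nontrivial character of the finite abelian group $G_{i}$ (in particular this already forces $\psi$ itself to be nontrivial, so the nontriviality assumption on $\psi$ is automatic from this one). Hence $\sum_{h \in G_{i}}\psi(h) = 0$, which is the degenerate case of the orthogonality relation (\ref{ortho}) with the trivial character in the second slot, or simply the standard fact that a nontrivial character sums to zero over a finite group. Therefore $e_{\psi} e_{G_{i}} = 0$, and combining with the first step gives $e_{\psi} t_{i} = e_{\psi} e_{G_{i}} t_{i} = 0$, as claimed. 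I do not anticipate a genuine obstacle here: the entire content is the reindexing identity $e_{\psi} N_{G_{i}} = \bigl(\sum_{h \in G_{i}}\psi(h)\bigr) e_{\psi}$, and everything else is bookkeeping. (An even shorter alternative would be to note $e_{G_{i}} = \sum_{\psi'\,:\,G_{i}\subseteq{\rm ker}(\psi')} e_{\psi'}$ — apply $1 = \sum_{\psi'} e_{\psi'}$ and the identity above to see each $e_{\psi'}e_{G_{i}}$ equals $e_{\psi'}$ or $0$ — and then use orthogonality of the $e_{\psi'}$; since $\psi$ is absent from that sum when $G_{i}\not\subseteq{\rm ker}(\psi)$, we get $e_{\psi}e_{G_{i}}=0$ at once.)
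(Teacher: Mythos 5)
Your proof is correct and rests on exactly the same two facts as the paper's: that $G_{i}$ stabilizes $t_{i}$, and that $\sum_{h\in G_{i}}\psi(h)=0$ because $\psi|_{G_{i}}$ is nontrivial. The only difference is organizational — you isolate the group-algebra identity $e_{\psi}e_{G_{i}}=0$ and then apply it to $t_{i}$, whereas the paper splits the sum defining $e_{\psi}t_{i}$ directly over cosets of $G_{i}$ — so this is essentially the paper's argument.
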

\begin{proof}
Let ${\sigma_{1},\ldots,\sigma_{r}}$ be a complete set of representatives of $G/G_{i}$.  Then, we calculate
\begin{equation*}
\begin{aligned}
e_{\psi} t_{i} &= \frac{1}{|G|} \sum_{\sigma \in G}\psi(\sigma) \sigma^{-1} t_{i} \\
&= \frac{1}{|G|} \sum_{j = 1}^{r}\sum_{h \in G_{i}} \psi(\sigma_{j}h) (\sigma_{j}h)^{-1} t_{i} \\
&= \frac{1}{|G|} \left(\sum_{h \in G_{i}} \psi(h) \right)\sum_{j=1}^{r}  \psi(\sigma_{j}) \sigma_{j}^{-1} t_{i} \\
&= 0,
\end{aligned}
\end{equation*}
by the orthogonality relation (\ref{ortho}).
\end{proof}
This last lemma allows us to describe ${\rm Ann}_{\mathbb{C}[G]}(t_{i})$.
\begin{corollary}
With the notation as above, for each $i \in \{1,\ldots,g \}$, one has
$${\rm Ann}_{\mathbb{C}[G]}(t_{i}) = \mathbb{C}[G](1-e_{i}). $$
\end{corollary}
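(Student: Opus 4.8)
The plan is to prove the two inclusions separately: the inclusion $\mathbb{C}[G](1-e_{i}) \subseteq {\rm Ann}_{\mathbb{C}[G]}(t_{i})$ by a one-line computation, and the reverse inclusion by a dimension count based on the short exact sequence (\ref{ses_us}). For the easy inclusion, I would first check that $e_{i}t_{i} = t_{i}$ in $\mathbb{C}T$: every $h \in G_{i} = {\rm Stab}_{G}(t_{i})$ fixes $t_{i}$, so
$$e_{i}t_{i} = \frac{1}{|G_{i}|}\sum_{h \in G_{i}} h t_{i} = \frac{1}{|G_{i}|}\sum_{h \in G_{i}} t_{i} = t_{i},$$
whence $(1-e_{i})t_{i} = 0$; multiplying on the left by an arbitrary element of $\mathbb{C}[G]$ then gives $\mathbb{C}[G](1-e_{i}) \subseteq {\rm Ann}_{\mathbb{C}[G]}(t_{i})$.

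Next I would compare $\mathbb{C}$-dimensions. From the short exact sequence (\ref{ses_us}) one gets $\dim_{\mathbb{C}}{\rm Ann}_{\mathbb{C}[G]}(t_{i}) = |G| - \dim_{\mathbb{C}}(\mathbb{C}[G]t_{i})$, and since $\mathbb{C}[G]t_{i}$ is the $\mathbb{C}$-span of the orbit $Gt_{i}$ inside $\mathbb{C}T$ — whose distinct elements form a $\mathbb{C}$-basis — its dimension equals $|Gt_{i}| = [G:G_{i}]$ by the orbit--stabilizer theorem. On the other hand, \cref{useful_dec} applied with $H = G_{i}$ gives $\mathbb{C}[G](1-e_{i}) = I(G_{i})$, which by the short exact sequence (\ref{ses_subgroup}) has $\mathbb{C}$-dimension $|G| - |G/G_{i}| = |G| - [G:G_{i}]$. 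Thus $\mathbb{C}[G](1-e_{i})$ and ${\rm Ann}_{\mathbb{C}[G]}(t_{i})$ are $\mathbb{C}$-subspaces of $\mathbb{C}[G]$ of the same finite dimension, one contained in the other, hence equal.

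I do not expect a genuine obstacle here: the whole point is to recognize that the two modules have matching dimensions, which \cref{useful_dec} together with the basis property of $\mathbb{C}T$ supplies directly. One could instead argue the reverse inclusion element-wise: writing $\lambda \in {\rm Ann}_{\mathbb{C}[G]}(t_{i})$ as $\lambda = \sum_{\psi \in \widehat{G}} c_{\psi}e_{\psi}$ via the primitive idempotents and applying $\lambda$ to $t_{i}$, \cref{useful_lemma} kills the terms $c_{\psi}e_{\psi}t_{i}$ with $G_{i} \not\subseteq {\rm ker}(\psi)$, leaving $\sum_{G_{i} \subseteq {\rm ker}(\psi)} c_{\psi}e_{\psi}t_{i} = 0$; combined with the identity $1 - e_{i} = \sum_{G_{i} \not\subseteq {\rm ker}(\psi)} e_{\psi}$ (itself a consequence of \cref{useful_dec}) this would finish the argument — but it still needs the linear independence of the $e_{\psi}t_{i}$ over the relevant $\psi$, so it rests on the same dimension input and the first route is cleaner.
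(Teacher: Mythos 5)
Your proof is correct and follows essentially the same route as the paper: establish the inclusion $\mathbb{C}[G](1-e_{i}) \subseteq {\rm Ann}_{\mathbb{C}[G]}(t_{i})$ and then conclude by the dimension count coming from the short exact sequence (\ref{ses_us}) together with \cref{useful_dec}. The only (harmless) difference is that you get the easy inclusion from the direct observation $e_{i}t_{i}=t_{i}$, whereas the paper deduces it from the decomposition $e_{i}=\sum_{G_{i}\subseteq\ker\psi}e_{\psi}$ and \cref{useful_lemma}; your computation is if anything slightly more elementary.
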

\begin{proof}
One has
$$e_{i} = \sum_{\substack{\psi \in \widehat{G} \\ G_{i} \subseteq {\rm ker}(\psi)}}e_{\psi}, $$
and thus \cref{useful_lemma} implies $\mathbb{C}[G](1-e_{i}) \subseteq {\rm Ann}_{\mathbb{C}[G]}(t_{i})$.  Noticing that ${\rm dim}_{\mathbb{C}}(\mathbb{C}[G]t_{i}) = (G:G_{i})$, a dimension count combined with the short exact sequence (\ref{ses_us}) gives the desired result.
\end{proof}
Combining this last corollary with \cref{useful_dec} shows also that $\mathbb{C}[G]t_{i}$ is isomorphic to $\mathbb{C}[G/G_{i}]$ as $\mathbb{C}[G]$-modules.  An explicit isomorphism is given by the map $\mathbb{C}[G]t_{i} \rightarrow \mathbb{C}[G/G_{i}]$ defined via
$$\lambda t_{i} \mapsto \pi_{i}(\lambda), $$
where $\pi_{i} = \pi_{G_{i}}:\mathbb{C}[G] \rightarrow \mathbb{C}[G/G_{i}]$ is the natural projection map.  Moreover, we also have that $\mathbb{C}[G]t_{i}$ is isomorphic to $\mathbb{C}[G]e_{i}$ by \cref{useful_dec}, where an explicit isomorphism $\mathbb{C}[G]t_{i} \rightarrow \mathbb{C}[G]e_{i} $ is given by
\begin{equation} \label{iso_u}
\lambda t_{i} \mapsto \lambda e_{i}. 
\end{equation}
It follows that if we let 
\begin{equation} \label{useful_sup}
M = \bigoplus_{i=1}^{g}\mathbb{C}[G](1-e_{i}),
\end{equation}
then $\mathbb{C}T \oplus M \simeq \mathbb{C}[G]^{g}$ and this gives a convenient way to view the projective $\mathbb{C}[G]$-module $\mathbb{C}T$ as a direct summand of a free $\mathbb{C}[G]$-module of finite rank.

We end this section with the following proposition that gives the dimension of the isotypic components of the $\mathbb{C}[G]$-module $\mathbb{C}T$.  For that purpose, if $\psi \in \widehat{G}$, then we let
\begin{equation} \label{the_r}
r_{T}(\psi) = |\{s \in S:G_{s} \not \subseteq {\rm ker}(\psi) \}|. 
\end{equation}
\begin{proposition} \label{dim_iso_component}
Let $\psi \in \widehat{G}$.  One has
\begin{equation*}
\begin{aligned}
{\rm dim}_{\mathbb{C}} (e_{\psi} \mathbb{C}T) &= |\{s \in S : G_{s} \subseteq {\rm ker}(\psi) \}| \\
&= |S| - r_{T}(\psi).
\end{aligned}
\end{equation*}
A $\mathbb{C}$-basis for the $\mathbb{C}$-vector space $e_{\psi}\mathbb{C}T$ is given by $\{e_{\psi} t_{i}: G_{i} \subseteq {\rm ker}(\psi) \}$.
\end{proposition}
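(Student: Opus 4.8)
The plan is to exploit the direct sum decomposition $\mathbb{C}T = \bigoplus_{i=1}^{g}\mathbb{C}[G]t_{i}$ recorded above. Since $G$ is abelian, $e_{\psi}$ is a central idempotent of $\mathbb{C}[G]$, so multiplication by $e_{\psi}$ respects this decomposition and gives
\begin{equation*}
e_{\psi}\mathbb{C}T = \bigoplus_{i=1}^{g} e_{\psi}\mathbb{C}[G]t_{i}.
\end{equation*}
Thus everything reduces to determining, for each $i$, the $\mathbb{C}$-vector space $e_{\psi}\mathbb{C}[G]t_{i}$.

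First I would observe that for every $\sigma \in G$ one has $e_{\psi}\sigma = \psi(\sigma)e_{\psi}$, a one-line computation from the definition of $e_{\psi}$ and the orthogonality relation (\ref{ortho}); extending $\mathbb{C}$-linearly gives $e_{\psi}\lambda = \psi(\lambda)e_{\psi}$ for all $\lambda \in \mathbb{C}[G]$, where $\psi$ denotes the linear extension of the character. Consequently $e_{\psi}(\lambda t_{i}) = \psi(\lambda)\,e_{\psi}t_{i}$, so $e_{\psi}\mathbb{C}[G]t_{i} = \mathbb{C}\cdot e_{\psi}t_{i}$ is spanned by the single vector $e_{\psi}t_{i}$, and hence has dimension $0$ or $1$ according to whether $e_{\psi}t_{i}$ vanishes or not.

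Next I would pin down exactly when $e_{\psi}t_{i}=0$. If $G_{i}\not\subseteq\ker(\psi)$, then in particular $\psi$ is non-trivial (the trivial character has kernel $G$), and \cref{useful_lemma} gives $e_{\psi}t_{i}=0$. If instead $G_{i}\subseteq\ker(\psi)$, I would transport the question through the isomorphism $\mathbb{C}[G]t_{i}\to\mathbb{C}[G]e_{i}$, $\lambda t_{i}\mapsto\lambda e_{i}$ of (\ref{iso_u}) (which is an isomorphism of $\mathbb{C}[G]$-modules by \cref{useful_dec}), under which $e_{\psi}t_{i}\mapsto e_{\psi}e_{i}$; writing $e_{i}=\sum_{\psi'\in\widehat{G},\,G_{i}\subseteq\ker(\psi')}e_{\psi'}$ and using orthogonality of the $e_{\psi'}$, the assumption $G_{i}\subseteq\ker(\psi)$ forces $e_{\psi}e_{i}=e_{\psi}\neq 0$, hence $e_{\psi}t_{i}\neq 0$. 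Summing the local contributions then yields
\begin{equation*}
e_{\psi}\mathbb{C}T = \bigoplus_{\substack{1\le i\le g\\ G_{i}\subseteq\ker(\psi)}}\mathbb{C}\cdot e_{\psi}t_{i},
\end{equation*}
which simultaneously shows that $\{e_{\psi}t_{i}:G_{i}\subseteq\ker(\psi)\}$ is a $\mathbb{C}$-basis and that $\dim_{\mathbb{C}}(e_{\psi}\mathbb{C}T)=|\{i:G_{i}\subseteq\ker(\psi)\}|=|\{s\in S:G_{s}\subseteq\ker(\psi)\}|$, the last equality because $s\mapsto G_{s}$ is well defined on $S$ and the $s_{i}$ enumerate $S$. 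Finally $|S|-r_{T}(\psi)=|\{s\in S:G_{s}\subseteq\ker(\psi)\}|$ is immediate from the definition (\ref{the_r}) of $r_{T}(\psi)$, since $S$ is partitioned by the condition $G_{s}\subseteq\ker(\psi)$.

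The argument involves no real obstacle; the only point requiring a little care is the non-vanishing of $e_{\psi}t_{i}$ when $G_{i}\subseteq\ker(\psi)$, which is cleanest to establish via the identification $\mathbb{C}[G]t_{i}\simeq\mathbb{C}[G]e_{i}$ and the idempotent decomposition of $e_{i}$, rather than by a direct computation inside the permutation module.
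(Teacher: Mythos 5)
Your proof is correct and follows essentially the same route as the paper's: both reduce to the summands $\mathbb{C}[G]t_{i}\simeq\mathbb{C}[G]e_{i}$ and decide the vanishing of $e_{\psi}$ on each via the idempotent decomposition $e_{i}=\sum_{\psi'\,:\,G_{i}\subseteq\ker(\psi')}e_{\psi'}$, so that $e_{\psi}\mathbb{C}[G]e_{i}$ is $0$ or $\mathbb{C}$ according to whether $G_{i}\subseteq\ker(\psi)$. The only difference is that you make explicit the one-dimensionality of each nonzero summand and the resulting basis claim, which the paper's terser proof leaves implicit.
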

\begin{proof}
We have
\begin{equation*}
\mathbb{C}T \simeq \bigoplus_{i=1}^{g} \mathbb{C}[G] e_{i}.
\end{equation*}
If $\psi \in \widehat{G}$, then 
\begin{equation*}
e_{\psi}\mathbb{C}[G]e_{i} \simeq 
\begin{cases}
0, & \text{ if } G_{i} \not \subseteq {\rm ker}(\psi);\\
\mathbb{C}, & \text{ otherwise}.
\end{cases}
\end{equation*}
It follows that ${\rm dim}_{\mathbb{C}} (e_{\psi} \mathbb{C}T) = |\{s \in S : G_{s} \subseteq {\rm ker}(\psi) \}|$.
\end{proof}

\subsection{The \texorpdfstring{$p$}{p}-adic valuation} \label{section:pad}
Although it is not strictly necessary, it is convenient in \cref{br_section} to work $p$-adically where $p$ is a rational prime, so here we introduce some relevant notation.  (One could instead choose to work globally with the cyclotomic number field of prime conductor $p$ and the valuation associated to its unique prime ideal lying above $p$.)  The symbol $\mathbb{Z}_{p}$ denotes the ring of $p$-adic integers and $\mathbb{Q}_{p}$ the field of $p$-adic numbers.  All the calculations will happen in an algebraic closure $\overline{\mathbb{Q}}_{p}$ of $\mathbb{Q}_{p}$, but it is convenient to work in a completion of $\overline{\mathbb{Q}}_{p}$ which we denote by $\mathbb{C}_{p}$.  The usual $p$-adic valuation on $\mathbb{Q}_{p}$ will be denoted by ${\rm ord}_{p}$ as well as its extension to $\mathbb{C}_{p}$.

\begin{lemma} \label{useful_r_t}
Let $p$ be a rational prime.  If $P(T) \in \overline{\mathbb{Q}}_{p}[T]$ is a nonzero polynomial, then 
$${\rm ord}_{p}(P(p^{n})) = \lambda n + \nu $$
provided $n$ is large enough, where 
$$\lambda = {\rm ord}_{T=0}(P(T)) \text{ and } \nu = {\rm ord}_{p}(a_{\lambda})$$
if $a_{\lambda}$ is the coefficient of $T^{\lambda}$.
\end{lemma}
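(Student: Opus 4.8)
The plan is to expand $P$ in powers of $T$, substitute $T = p^{n}$, and compare the $p$-adic valuations of the individual terms, exploiting the non-archimedean nature of ${\rm ord}_{p}$. Write $P(T) = \sum_{i=\lambda}^{d} a_{i}T^{i}$ with $a_{\lambda} \neq 0$ and $d = \deg P$, so that by definition $\lambda = {\rm ord}_{T=0}(P(T))$ and $\nu = {\rm ord}_{p}(a_{\lambda})$. Substituting $T = p^{n}$ gives $P(p^{n}) = \sum_{i=\lambda}^{d} a_{i}p^{ni}$, and the $i$-th summand (when $a_{i}\neq 0$) has $p$-adic valuation ${\rm ord}_{p}(a_{i}) + ni$.

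Next I would single out the term $i = \lambda$, whose valuation is $\nu + \lambda n$, and argue that it strictly dominates all the others for $n$ large. For any index $i$ with $\lambda < i \le d$ and $a_{i} \neq 0$, the difference of valuations is
$$\bigl({\rm ord}_{p}(a_{i}) + ni\bigr) - \bigl(\nu + \lambda n\bigr) = (i-\lambda)n + \bigl({\rm ord}_{p}(a_{i}) - \nu\bigr) \ge n + \bigl({\rm ord}_{p}(a_{i}) - \nu\bigr),$$
which tends to $+\infty$ as $n \to \infty$. Since there are only finitely many such indices $i$, there is an $n_{0}$ (one may take $n_{0} > \max_{i}\,(\nu - {\rm ord}_{p}(a_{i}))$ over the finitely many $i > \lambda$ with $a_{i}\neq 0$) such that for all $n \ge n_{0}$ the term indexed by $\lambda$ has strictly smallest valuation among the nonzero summands.

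Finally I would invoke the strict form of the ultrametric inequality: in a finite sum of elements of $\overline{\mathbb{Q}}_{p}$, if exactly one summand has strictly smaller valuation than all the others, then the valuation of the sum equals that minimum. Applying this for $n \ge n_{0}$ yields ${\rm ord}_{p}(P(p^{n})) = \nu + \lambda n$, as claimed. The only slightly degenerate case is when $P(T) = a_{\lambda}T^{\lambda}$ is a monomial, in which case $P(p^{n}) = a_{\lambda}p^{n\lambda}$ and the formula holds for every $n \ge 0$; this requires no separate argument. I do not anticipate any real obstacle here — the statement is essentially a direct consequence of the fact that, $p$-adically, lower-order terms in a polynomial evaluated at $p^{n}$ dominate the higher-order ones once $n$ is large.
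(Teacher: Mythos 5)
Your argument is correct and is essentially the same as the paper's: expand $P$ starting from the $T^{\lambda}$ term, note that the lowest-order term has valuation $\lambda n + \nu$ which is strictly smaller than the valuations $in + {\rm ord}_{p}(a_{i})$ of all higher terms once $n$ is large, and conclude by the ultrametric equality for a sum with a unique minimal-valuation summand. No issues.
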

\begin{proof}
Write 
$$P(T) = a_{\lambda}T^{\lambda} + \ldots + a_{d}T^{d} $$
for some $a_{j} \in \overline{\mathbb{Q}}_{p}$ with $a_{\lambda} \neq 0$.  Note that ${\rm ord}_{p}(a_{j}p^{jn}) = jn + {\rm ord}_{p}(a_{j})$ and thus
$$\lambda n + {\rm ord}_{p}(a_{\lambda}) < {\rm ord}_{p}(a_{j}p^{jn}) $$
for all $j > \lambda$ provided $n$ is large enough.  It follows that
\begin{equation*}
\begin{aligned} 
{\rm ord}_{p}(P(p^{n})) &= {\rm min}\{{\rm ord}_{p}(a_{j}p^{jn}) : j=\lambda,\ldots,d \}\\
&= \lambda n + \nu,
\end{aligned}
\end{equation*}
provided $n$ is large enough as we wanted to show.
\end{proof}

Recall that given a non-zero power series
$$P(T) = \sum_{i=0}^{\infty} a_{i}T^{i} \in \mathbb{Z}_{p}\llbracket T \rrbracket,$$ 
one defines
$$\mu(P(T)) = {\rm min}\{{\rm ord}_{p}(a_{i}): i \in \mathbb{Z}_{\ge 0}\}\text{ and } \lambda(P(T)) = {\rm min}\{i \in \mathbb{Z}_{\ge 0}: {\rm ord}_{p}(a_{i}) = \mu(P(T)) \}. $$

\begin{lemma} \label{useful_p_s}
Given $P(T) \in \mathbb{Z}_{p}\llbracket T \rrbracket$ such that $P(T) \neq 0$, and letting $\mu = \mu(P(T))$ and $\lambda = \lambda(P(T))$, there exists $\nu \in \mathbb{Z}$ such that if $n$ is large enough, one has
$$\sum_{j=0}^{n}\sum_{\substack{\zeta \in \mu_{p^{n}} \\ {\rm ord}(\zeta) = p^{j}}}{\rm ord}_{p}(P(\zeta - 1)) = \mu p^{n} + \lambda n + \nu, $$
where the second sum is over the $p^{n}$th roots of unity that have order precisely $p^{j}$.
\end{lemma}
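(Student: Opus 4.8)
The plan is to apply the $p$-adic Weierstrass preparation theorem to $P(T)$, factoring $P(T) = p^{\mu}\, g(T)\, U(T)$ in $\mathbb{Z}_{p}\llbracket T \rrbracket$ with $\mu = \mu(P(T))$, with $g(T)$ a distinguished polynomial of degree $\lambda = \lambda(P(T))$, and with $U(T) \in \mathbb{Z}_{p}\llbracket T \rrbracket^{\times}$. For any root of unity $\zeta$ of $p$-power order, the element $\zeta - 1$ lies in the maximal ideal of the valuation ring of $\overline{\mathbb{Q}}_{p}$, so $U(\zeta - 1)$ converges to a unit and contributes nothing to the valuation; hence ${\rm ord}_{p}(P(\zeta - 1)) = \mu + {\rm ord}_{p}(g(\zeta - 1))$ (here I assume, as is automatic in the situation where the lemma is applied and as is needed for the left-hand side to be finite, that $P$ does not vanish at any such $\zeta - 1$; in particular $P(0) \neq 0$). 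Since $\mu_{p^{n}}$ is the disjoint union, over $0 \le j \le n$, of the $\varphi(p^{j})$ roots of unity of exact order $p^{j}$, and since $\sum_{j=0}^{n}\varphi(p^{j}) = p^{n}$, the $\mu$-part of the sum is exactly $\mu p^{n}$, and everything reduces to showing $\sum_{\zeta \in \mu_{p^{n}}}{\rm ord}_{p}(g(\zeta - 1)) = \lambda n + \nu$ for a suitable constant $\nu$ once $n$ is large.

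For the distinguished-polynomial part I would factor $g(T) = \prod_{i=1}^{\lambda}(T - \alpha_{i})$ over $\overline{\mathbb{Q}}_{p}$. Because $g$ is distinguished, a Newton polygon argument gives ${\rm ord}_{p}(\alpha_{i}) > 0$ for every $i$, so $\delta := \min_{i}{\rm ord}_{p}(\alpha_{i}) > 0$. A root of unity $\zeta$ of exact order $p^{j}$ with $j \ge 1$ satisfies ${\rm ord}_{p}(\zeta - 1) = \tfrac{1}{\varphi(p^{j})} = \tfrac{1}{p^{j-1}(p-1)}$, which decreases to $0$, so there is a fixed $j_{0}$ (depending only on $P$) with $\tfrac{1}{p^{j_{0}-1}(p-1)} < \delta$. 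For $j \ge j_{0}$ and $\zeta$ of exact order $p^{j}$ we then have ${\rm ord}_{p}(\zeta - 1) < {\rm ord}_{p}(\alpha_{i})$ for all $i$, hence ${\rm ord}_{p}(\zeta - 1 - \alpha_{i}) = {\rm ord}_{p}(\zeta - 1)$ and therefore ${\rm ord}_{p}(g(\zeta - 1)) = \lambda \cdot {\rm ord}_{p}(\zeta - 1)$. Multiplying by the number $\varphi(p^{j}) = p^{j-1}(p-1)$ of such $\zeta$ shows that the block at level $j$ contributes exactly $\lambda$ for every $j \ge j_{0}$.

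It then remains to sum over $0 \le j \le n$: the finitely many blocks with $j < j_{0}$ — including the block $j = 0$, where $\zeta = 1$ and one records the fixed quantity ${\rm ord}_{p}(g(0))$ — contribute a total $C$ that is independent of $n$ once $n \ge j_{0}$, while the blocks with $j_{0} \le j \le n$ each contribute $\lambda$, so
\[
\sum_{\zeta \in \mu_{p^{n}}}{\rm ord}_{p}(g(\zeta - 1)) = C + (n - j_{0} + 1)\lambda = \lambda n + \bigl(C - \lambda(j_{0}-1)\bigr).
\]
Setting $\nu = C - \lambda(j_{0}-1)$ and adding back the $\mu p^{n}$ term yields the claimed equality; that $\nu \in \mathbb{Z}$ follows because the left-hand side is always an integer (it is ${\rm ord}_{p}$ of the nonzero element $\prod_{\zeta \in \mu_{p^{n}}}P(\zeta-1)$, which by Weierstrass division lies in $\mathbb{Z}_{p}$). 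The degenerate cases $\mu = 0$ or $\lambda = 0$, where $g = 1$, are covered by the same bookkeeping.

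The genuinely routine ingredients here are Weierstrass preparation, the unit estimate for $U(\zeta-1)$, and the telescoping identity $\sum_{j \le n}\varphi(p^{j}) = p^{n}$; the one real point is the observation that ${\rm ord}_{p}(\zeta - 1)$ eventually drops below the valuations of all the roots of $g$, which forces each level-$j$ block to stabilize at $\lambda$. I do not expect a serious obstacle — this is essentially Iwasawa's original computation relating the analytic class number formula to the invariants $\mu$ and $\lambda$, transported to the present setting — but the matter requiring the most care is the bookkeeping of the constant $C$ together with the (implicit) verification that $P$ does not vanish at $\zeta - 1$ for $p$-power roots of unity $\zeta$ in the context where the lemma is used, so that the left-hand side is genuinely finite.
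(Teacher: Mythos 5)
Your proof is correct, and the overall strategy coincides with the paper's: both arguments reduce to showing that for a root of unity $\zeta$ of exact order $p^{j}$ with $j$ large, one has ${\rm ord}_{p}(P(\zeta-1)) = \mu + \lambda/\phi(p^{j})$, after which the bookkeeping (each large block contributes exactly $\lambda$, the $p^{n}$ roots of unity contribute $\mu p^{n}$, and the finitely many small blocks contribute a constant) is identical. Where you differ is in how that key valuation is established. You invoke the $p$-adic Weierstrass preparation theorem, write $P = p^{\mu} g U$ with $g$ distinguished of degree $\lambda$, discard the unit $U(\zeta-1)$, factor $g$ into linear factors over $\overline{\mathbb{Q}}_{p}$, and use the Newton polygon to see that all roots have strictly positive valuation, so that eventually ${\rm ord}_{p}(\zeta-1)$ drops below all of them. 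The paper avoids preparation and roots entirely: it writes $P = p^{\mu}R$ with $R = \sum e_{k}T^{k}$, and compares the valuations ${\rm ord}_{p}(e_{k}) + k/\phi(p^{j})$ of the individual terms directly, noting that once $\lambda < \phi(p^{j})$ the term $k=\lambda$ has strictly smaller valuation than every other term (the terms $k<\lambda$ because ${\rm ord}_{p}(e_{k}) \ge 1 > \lambda/\phi(p^{j})$, the terms $k>\lambda$ because $k/\phi(p^{j}) \ge (\lambda+1)/\phi(p^{j})$). The paper's route is more elementary and self-contained; yours makes the role of the roots of the characteristic power series visible and is the more classical Iwasawa-theoretic presentation. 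One small point in your favour: you explicitly flag the implicit hypothesis that $P(\zeta-1)\neq 0$ for the relevant roots of unity (without which the left-hand side is infinite and the statement fails, e.g.\ for $P(T)=(1+T)^{p}-1$); the paper leaves this tacit, relying on the fact that in the application only the large-$j$ blocks occur.
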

\begin{proof}
Let $R(T)\in \mathbb{Z}_{p}\llbracket T \rrbracket$ be such that $P(T)=p^{\mu} R(T)$. Then, if $R(T)=\sum_{k=0}^{\infty} e_k T^k$, we have that $\lambda={\rm min} \{k \in \mathbb{Z}_{\geq 0} : {\rm ord}_{p}(e_k)=0\}$. Now, notice that it is enough to prove that for large $i$, 
\begin{equation} \label{R-p-evaluation}
{\rm ord}_{p} (R(\zeta-1))=\frac{\lambda}{\phi(p^i)}
\end{equation}
where ${\rm ord}(\zeta)=p^i$, and $\phi$ denotes the Euler $\phi$-function. We assume $i$ is large enough so that $\lambda < \phi(p^i)$ as this ensures that the values 
$${\rm ord}_{p}(e_k(\zeta-1)^k)= {\rm ord}_{p}(e_k)+\frac{k}{\phi(p^i)}$$
are distinct for $k=0,  ... , \lambda$. (Indeed, if two such values were equal with $0\leq j< k\leq \lambda$, then $(k-j)/\phi(p^i) \in \mathbb{Z}$ and $0<(k-j)/\phi(p^i)\leq \lambda/\phi(p^{i}) < 1$ which is a contradiction.) From this, together with the fact that for $k<\lambda$ one has 
$${\rm ord}_{p}(e_k(\zeta-1)^k) > {\rm ord}_{p}(e_k) \geq 1 > \frac{\lambda}{\phi(p^i)},$$
we obtain  
$${\rm ord}_{p} \left(\sum_{k=0}^{\lambda} e_k(\zeta-1)^k\right)=\frac{\lambda}{\phi(p^i)}.$$
In addition we have
$${\rm ord}_{p}\left(\sum_{k=\lambda+1}^{\infty} e_k(\zeta-1)^k\right) \geq    \min \Big\{\frac{k}{\phi(p^i)}: k> \lambda\Big\}=\frac{\lambda+1}{\phi(p^i)}$$ and (\ref{R-p-evaluation}) follows. This completes the proof.
\end{proof}

\section{Graph theory} \label{section:gra}
\subsection{Basic definitions} \label{section:bas}
Throughout, we use Serre's formalism for graphs as detailed in \cite{Serre:1977} and \cite{Sunada:2013} for instance.  Here, we are deliberately brief and we refer the reader to \cite{Gambheera/Vallieres:2024} for more details, since we are using the exact same notation.  If $Y$ is a graph, $V_{Y}$ denotes its collection of vertices, $\mathbf{E}_{Y}$ its collection of directed edges, and $E_{Y}$ its collection of undirected edges.  If $\varepsilon \in \mathbf{E}_{Y}$, then $o(\varepsilon)$ denotes its origin vertex and $t(\varepsilon)$ its terminus vertex.  Moreover, the inversion map is denoted by $\varepsilon \mapsto \bar{\varepsilon}$.  Given $w \in V_{Y}$, we let
$$\mathbf{E}_{Y,w} = \{\varepsilon \in \mathbf{E}_{Y}: o(\varepsilon) = w \}, $$
and we set ${\rm val}_{Y}(w) = |\mathbf{E}_{Y,w}|$ provided the set $\mathbf{E}_{Y,w}$ is finite.  The graph $Y$ is called locally finite if $\mathbf{E}_{Y,w}$ is a finite set for all $w \in V_{Y}$ and is called finite if both $V_{Y}$ and $\mathbf{E}_{Y}$ are finite sets.  Paths in graphs are defined as usual.  A graph $Y$ is connected if there is a path between any two vertices in $Y$.  

Morphisms of graphs are defined as usual (see \cite[Definition 2.1]{Gambheera/Vallieres:2024}).  Branched covers are a particular kind of morphisms of graphs whose definition can be found in \cite[Definition 3.1]{Gambheera/Vallieres:2024}.  If $f:Y \rightarrow X$ is a branched cover of finite graphs and $w \in V_{Y}$, we denote its ramification index by $m_{w}$.  Under the assumption that $Y$ and $X$ are finite and connected, the quantity
$$\sum_{w \in f^{-1}(v)} m_{w}, $$
does not depend on $v \in V_{X}$.  This constant is denoted by $[Y:X]$, and is called the degree of the finite branched cover.  

\subsection{The Euler characteristic} \label{section:the}
Let $Y$ be a finite graph.  We set $C_{0}(Y) = \mathbb{C}V_{Y}$; using the notation of \cite{Gambheera/Vallieres:2024}, one has $C_{0}(Y) \simeq \mathbb{C} \otimes_{\mathbb{Z}} {\rm Div}(Y)$.  Inside $\mathbb{C}\mathbf{E}_{Y}$, we let $W$ be the $\mathbb{C}$-vector subspace generated by $\{\varepsilon + \bar{\varepsilon} : \varepsilon \in \mathbf{E}_{Y} \}$, and we define $C_{1}(Y) = \mathbb{C}\mathbf{E}_{Y}/W$.  We have a natural morphism of $\mathbb{C}$-vector spaces $\partial: \mathbb{C}\mathbf{E}_{Y} \rightarrow C_{0}(Y)$ defined on the $\mathbb{C}$-basis $\mathbf{E}_{Y}$ via
$$\varepsilon \mapsto \partial(\varepsilon) = t(\varepsilon) - o(\varepsilon). $$
Since $\partial(\varepsilon + \bar{\varepsilon}) = 0$, the map $\partial$ induces a morphism of $\mathbb{C}$-vector spaces $\partial:C_{1}(Y) \rightarrow C_{0}(Y)$.  This gives us a chain complex of $\mathbb{C}$-vector spaces
\begin{equation} \label{chain_complex}
0 \rightarrow C_{1}(Y) \stackrel{\partial}{\longrightarrow} C_{0}(Y) \rightarrow 0. 
\end{equation}
\begin{definition} \label{def_euler_char}
The Euler characteristic of $Y$ is the Euler characteristic of the chain complex (\ref{chain_complex}), that is $\chi(Y) = {\rm dim}_{\mathbb{C}}(C_{0}(Y)) - {\rm dim}_{\mathbb{C}}(C_{1}(Y))$.
\end{definition}

The Euler characteristic of a finite graph can therefore be calculated using the following simple formula.
\begin{proposition} \label{eul_form}
Let $Y$ be a finite graph, then $\chi(Y) = |V_{Y}| - |E_{Y}|$.
\end{proposition}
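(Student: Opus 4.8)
The plan is to compute the two dimensions appearing in \cref{def_euler_char} separately. The term $\dim_{\mathbb{C}}(C_{0}(Y))$ is immediate: by definition $C_{0}(Y) = \mathbb{C}V_{Y}$ is the free $\mathbb{C}$-vector space on the finite set $V_{Y}$, so $\dim_{\mathbb{C}}(C_{0}(Y)) = |V_{Y}|$. The only real content is to show $\dim_{\mathbb{C}}(C_{1}(Y)) = |E_{Y}|$, and the whole proposition then follows by subtraction.

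For $C_{1}(Y) = \mathbb{C}\mathbf{E}_{Y}/W$, I would first record that in Serre's formalism the inversion map $\varepsilon \mapsto \bar{\varepsilon}$ is a fixed-point-free involution on $\mathbf{E}_{Y}$, so that $\mathbf{E}_{Y}$ decomposes into $|E_{Y}|$ orbits of size two and hence $|\mathbf{E}_{Y}| = 2|E_{Y}|$. Then I would choose a section of the quotient map $\mathbf{E}_{Y} \to E_{Y}$, i.e.\ an orientation: a subset $\mathbf{E}_{Y}^{+} \subseteq \mathbf{E}_{Y}$ containing exactly one directed edge over each undirected edge. The claim is that $\{\varepsilon + \bar{\varepsilon} : \varepsilon \in \mathbf{E}_{Y}^{+}\}$ is a $\mathbb{C}$-basis of $W$. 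It spans $W$ because $\varepsilon + \bar{\varepsilon} = \bar{\varepsilon} + \overline{\bar{\varepsilon}}$, so the generators of $W$ indexed by $\mathbf{E}_{Y}$ are already exhausted by those indexed by $\mathbf{E}_{Y}^{+}$; and it is linearly independent because, for distinct $\varepsilon, \varepsilon' \in \mathbf{E}_{Y}^{+}$, the vectors $\varepsilon + \bar{\varepsilon}$ and $\varepsilon' + \bar{\varepsilon'}$ are supported on disjoint pairs of elements of the basis $\mathbf{E}_{Y}$ of $\mathbb{C}\mathbf{E}_{Y}$. Therefore $\dim_{\mathbb{C}}(W) = |E_{Y}|$.

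Combining these, $\dim_{\mathbb{C}}(C_{1}(Y)) = \dim_{\mathbb{C}}(\mathbb{C}\mathbf{E}_{Y}) - \dim_{\mathbb{C}}(W) = 2|E_{Y}| - |E_{Y}| = |E_{Y}|$, and hence
\[
\chi(Y) = \dim_{\mathbb{C}}(C_{0}(Y)) - \dim_{\mathbb{C}}(C_{1}(Y)) = |V_{Y}| - |E_{Y}|,
\]
as desired. There is no genuine obstacle here; the one point requiring a little care is the linear independence of the $\varepsilon + \bar{\varepsilon}$, which is precisely where one uses that inversion has no fixed points (so that the two directed edges over a given undirected edge are genuinely distinct basis vectors) together with the disjointness of supports across distinct undirected edges.
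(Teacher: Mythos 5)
Your proof is correct and takes the same route as the paper, which simply asserts the two dimension counts $\dim_{\mathbb{C}}(C_{0}(Y)) = |V_{Y}|$ and $\dim_{\mathbb{C}}(C_{1}(Y)) = |E_{Y}|$ without further justification. You have merely supplied the routine details (the fixed-point-free involution giving $|\mathbf{E}_{Y}| = 2|E_{Y}|$ and the basis of $W$ indexed by an orientation), and these are all accurate.
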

\begin{proof}
This follows directly from \cref{def_euler_char} and the observations that
$${\rm dim}_{\mathbb{C}}(C_{0}(Y)) = |V_{Y}| \text{ and } {\rm dim}_{\mathbb{C}}(C_{1}(Y)) = |E_{Y}|. $$
\end{proof}
In \cite{Baker/Norine:2009}, Baker and Norine proved an analogue of the Riemann-Hurwitz formula for branched covers of graphs (and more generally for harmonic morphisms).  We will make use of this formula in several instances throughout this paper (see also \cite[Theorem 5.12]{Sunada:2013}).  We record it here for future reference.
\begin{theorem}[Riemann-Hurwitz formula] \label{riemann_hurwitz}
Let $f:Y \rightarrow X$ be a branched cover of finite connected graphs.  Then, one has
$$\chi(Y) = [Y:X] \cdot \chi(X) - \sum_{w \in V_{Y}}(m_{w}-1).$$
\end{theorem}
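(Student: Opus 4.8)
The plan is to reduce everything to vertex and edge counts by means of \cref{eul_form}, and then to express those counts in terms of the degree $[Y:X]$ and the ramification indices $m_w$. The only nontrivial input is the local structure of a branched cover: by definition (see \cite[Definition 3.1]{Gambheera/Vallieres:2024}), $f$ has no vertical edges, and for each $w \in V_Y$ the induced map $\mathbf{E}_{Y,w} \to \mathbf{E}_{X,f(w)}$ is surjective and exactly $m_w$-to-one, so that ${\rm val}_Y(w) = m_w \cdot {\rm val}_X(f(w))$. I also use the fact recorded in \cref{section:bas} that $\sum_{w \in f^{-1}(v)} m_w = [Y:X]$ is a positive constant independent of $v \in V_X$; in particular every fibre $f^{-1}(v)$ is nonempty, so $f$ is surjective on vertices (and hence on directed edges).

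First I would count edges. In Serre's formalism every directed edge differs from its inverse, so $\sum_{w \in V_Y}{\rm val}_Y(w) = |\mathbf{E}_Y| = 2|E_Y|$, and likewise for $X$. Using the local formula and then grouping the vertices of $Y$ by their image,
\begin{equation*}
2|E_Y| = \sum_{w \in V_Y} m_w\, {\rm val}_X(f(w)) = \sum_{v \in V_X} {\rm val}_X(v) \sum_{w \in f^{-1}(v)} m_w = [Y:X] \sum_{v \in V_X} {\rm val}_X(v) = 2\,[Y:X]\,|E_X|,
\end{equation*}
whence $|E_Y| = [Y:X]\,|E_X|$.

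Next I would count vertices. Writing $1 = m_w - (m_w - 1)$ and summing over a fibre gives
\begin{equation*}
|f^{-1}(v)| = \sum_{w \in f^{-1}(v)} m_w - \sum_{w \in f^{-1}(v)} (m_w - 1) = [Y:X] - \sum_{w \in f^{-1}(v)} (m_w - 1),
\end{equation*}
and summing over $v \in V_X$, using that the $f^{-1}(v)$ partition $V_Y$, yields $|V_Y| = [Y:X]\,|V_X| - \sum_{w \in V_Y}(m_w - 1)$. Subtracting the edge identity from the vertex identity and applying \cref{eul_form} to both $Y$ and $X$ gives
\begin{equation*}
\chi(Y) = |V_Y| - |E_Y| = [Y:X]\big(|V_X| - |E_X|\big) - \sum_{w \in V_Y}(m_w - 1) = [Y:X]\,\chi(X) - \sum_{w \in V_Y}(m_w - 1).
\end{equation*}

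There is no real conceptual obstacle here: the content of the theorem is essentially packaged into the definition of a branched cover, which encodes both the local identity ${\rm val}_Y(w) = m_w\,{\rm val}_X(f(w))$ and the independence of $\sum_{w \in f^{-1}(v)} m_w$ from $v$. The only points needing mild care are the consistent bookkeeping between directed and undirected edges (harmless since $\varepsilon \neq \bar{\varepsilon}$ always) and the treatment of degenerate vertices — isolated vertices, or vertices supporting loops — which the valence formula handles uniformly.
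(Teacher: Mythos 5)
Your proof is correct and follows essentially the same route as the paper's: both arguments rest on the local identity $|\mathbf{E}_{Y,w}| = m_w\,|\mathbf{E}_{X,f(w)}|$ to get $|\mathbf{E}_Y| = [Y:X]\cdot|\mathbf{E}_X|$, and on the constancy of $\sum_{w \in f^{-1}(v)} m_w = [Y:X]$ to handle the vertex count, differing only in bookkeeping (you isolate the fibre identity $|f^{-1}(v)| = [Y:X] - \sum_{w\in f^{-1}(v)}(m_w-1)$ up front, while the paper rearranges at the end).
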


\subsection{The laplacian operator} \label{op}
In this section, we define a few operators on $C_{0}(Y)$ under the assumption that $Y$ is locally finite.  First, one defines the valency (or degree) operator $\mathcal{D}:C_{0}(Y) \rightarrow C_{0}(Y)$ via 
$$w \mapsto {\rm val}_{Y}(w) \cdot w.$$  
Second, one defines the adjacency operator $\mathcal{A}:C_{0}(Y) \rightarrow C_{0}(Y)$ via 
$$w \mapsto \mathcal{A}(w) = \sum_{\varepsilon \in \mathbf{E}_{Y,w}} t(\varepsilon). $$
If moreover $Y$ is finite and one chooses a labeling $V_{Y} = \{w_{1},\ldots,w_{q} \}$ of the vertices, then the matrices of $\mathcal{D}$ and $\mathcal{A}$ with respect to the ordered $\mathbb{C}$-basis $(w_{1},\ldots,w_{q})$ are called the valency (or degree) matrix and the adjacency matrix of $Y$, respectively.  Those two matrices are simply denoted by $D$ and $A$.  The laplacian operator is defined to be $\mathcal{L} = \mathcal{D} - \mathcal{A}$ and its matrix with respect to the same ordered $\mathbb{C}$-basis as above is called the laplacian matrix.  One sets ${\rm Pr}(Y) = \mathcal{L}(C_{0}(Y))$ and when $Y$ is finite and connected, one has the short exact sequence
\begin{equation} \label{ses_laplacian}
0 \rightarrow \mathbb{C} \sum_{w \in V_{Y}} w \rightarrow C_{0}(Y) \stackrel{\mathcal{L}}{\longrightarrow} {\rm Pr}(Y) \rightarrow 0
\end{equation}
of $\mathbb{C}$-vector spaces.  We refer the reader to \cite[Theorem 2.3]{Gambheera/Vallieres:2024} for more details.  A consequence of (\ref{ses_laplacian}) is that
\begin{equation} \label{lap_singular}
{\rm det}_{\mathbb{C}}(\mathcal{L}) = 0,
\end{equation}
since the kernel of $\mathcal{L}:C_{0}(Y) \rightarrow C_{0}(Y)$ is non-trivial.

\subsection{The Ihara zeta function} \label{section:iha}
We refer the reader to \cite{Terras:2011} for the basic definitions and results pertaining to the Ihara zeta function of a finite graph, and here we will be deliberately brief.  If $Y$ is a finite graph, we let
$$Z_{Y}(u) = \exp_{\mathbb{Q}}\left(\sum_{k=1}^{\infty} N_{k} \frac{u^{k}}{k} \right) \in 1+ u\mathbb{Q}\llbracket u \rrbracket, $$
where $N_{k}$ is the number of reduced closed paths in $Y$ of length $k$.  At first, $Z_{Y}(u)$ is a formal power series with rational coefficients, but since
$$Z_{Y}(u) = \prod_{\mathfrak{c}}(1-u^{{\rm len}(\mathfrak{c})})^{-1},$$
where the product is over the primes $\mathfrak{c}$ of $Y$ and ${\rm len}(\mathfrak{c})$ denotes the length of $\mathfrak{c}$, it follows that $Z_{Y}(u) \in 1+ u\mathbb{Z}\llbracket u \rrbracket$.  In fact, $Z_{Y}(u)$ is the reciprocal of a polynomial with integer coefficients as the following theorem shows for which an elegant proof can be found in \cite{Kotani/Sunada:2000} for instance.
\begin{theorem}[Ihara's determinant formula] \label{ihara}
Let $Y$ be a finite graph.  Its Ihara zeta function satisfies
$$Z_{Y}(u)^{-1} = (1-u^{2})^{-\chi(Y)}\cdot {\rm det}(I - Au + (D - I)u^{2}) \in \mathbb{Z}[u], $$
where $A$ is the adjacency matrix and $D$ the degree matrix of $Y$.
\end{theorem}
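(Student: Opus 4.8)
The plan is to follow Bass's argument: pass to an ``edge'' version of the zeta function, and then deduce the three-term formula from a block matrix factorization. Throughout write $n = |V_{Y}|$ and $2m = |\mathbf{E}_{Y}|$, and let $W \in M_{2m}(\mathbb{Z})$ be the Hashimoto edge adjacency matrix, indexed by the directed edges of $Y$, with $W_{\varepsilon,\eta} = 1$ when $t(\varepsilon) = o(\eta)$ and $\eta \neq \bar{\varepsilon}$, and $W_{\varepsilon,\eta} = 0$ otherwise. The first step is the observation that a closed path of length $k$ in $Y$ with no backtracking when read cyclically is the same datum as a sequence $(\varepsilon_{1},\ldots,\varepsilon_{k})$ of directed edges satisfying $W_{\varepsilon_{i},\varepsilon_{i+1}} = 1$ for all $i$ (indices modulo $k$), so that $N_{k} = {\rm tr}_{\mathbb{C}}(W^{k})$ for every $k \ge 1$. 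Since the element $\sum_{k \ge 1} W^{k} u^{k}/k = -\log(I - uW)$ of $u M_{2m}(\mathbb{C}\llbracket u \rrbracket)$ has ${\rm exp}$ equal to $(I - uW)^{-1}$ by \cref{basic_prop}, applying \cref{det_tr_exp} with $R = \mathbb{C}$ to it yields
\[
Z_{Y}(u) = {\rm exp}_{\mathbb{C}}\left( \sum_{k \ge 1} {\rm tr}_{\mathbb{C}}(W^{k}) \frac{u^{k}}{k}\right) = {\rm det}_{\mathbb{C}\llbracket u \rrbracket}\left( (I - uW)^{-1}\right) = {\rm det}(I - uW)^{-1},
\]
hence $Z_{Y}(u)^{-1} = {\rm det}(I - uW)$; this already gives the integrality assertion, since $W$ has integer entries.

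It remains to identify ${\rm det}(I - uW)$ with $(1-u^{2})^{-\chi(Y)} \cdot {\rm det}(I - Au + (D-I)u^{2})$. For this I would introduce the two incidence matrices $S, T \in M_{n \times 2m}(\mathbb{Z})$, where $S_{v,\varepsilon} = 1$ if $o(\varepsilon) = v$ and $T_{v,\varepsilon} = 1$ if $t(\varepsilon) = v$ (both being $0$ otherwise), together with the involution matrix $J \in M_{2m}(\mathbb{Z})$ defined by $J_{\varepsilon,\eta} = 1$ if $\eta = \bar{\varepsilon}$. A short direct computation records the relations
\[
S S^{t} = T T^{t} = D, \qquad S T^{t} = A, \qquad S J = T, \qquad T J = S, \qquad J^{2} = I, \qquad W = T^{t} S - J.
\]
The heart of the proof is then the block matrix identity in $M_{n+2m}(\mathbb{Z}[u])$
\[
\begin{pmatrix} I_{n} & 0 \\ T^{t} & I_{2m} \end{pmatrix} \begin{pmatrix} (1-u^{2}) I_{n} & u S \\ 0 & I_{2m} - u W \end{pmatrix} = \begin{pmatrix} I_{n} & u S \\ 0 & I_{2m} + u J \end{pmatrix} \begin{pmatrix} I_{n} - u A + (D-I) u^{2} & 0 \\ T^{t} - u S^{t} & I_{2m} \end{pmatrix},
\]
which one verifies by multiplying out both sides and substituting the relations above; for example the top-left block of the right-hand side equals $\big(I_{n} - uA + (D-I)u^{2}\big) + uS(T^{t} - uS^{t}) = (1-u^{2})I_{n}$ because $S T^{t} = A$ and $S S^{t} = D$, while its bottom-left block equals $(I_{2m} + uJ)(T^{t} - uS^{t}) = (1-u^{2})T^{t}$ because $J T^{t} = S^{t}$ and $J S^{t} = T^{t}$.

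Taking ${\rm det}_{\mathbb{Z}[u]}$ of this identity, and using that the determinant of a block-triangular matrix is the product of the determinants of the diagonal blocks, the left-hand side becomes $(1-u^{2})^{n} \cdot {\rm det}(I - uW)$ and the right-hand side becomes ${\rm det}(I_{2m} + uJ) \cdot {\rm det}\big(I - uA + (D-I)u^{2}\big)$. Since $J$ is the product of the $m$ disjoint transpositions $\varepsilon \leftrightarrow \bar{\varepsilon}$, it is conjugate to a block-diagonal matrix with $m$ diagonal blocks $\left( \begin{smallmatrix} 0 & 1 \\ 1 & 0 \end{smallmatrix} \right)$, so ${\rm det}(I_{2m} + uJ) = (1-u^{2})^{m}$. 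Cancelling powers of $1-u^{2}$ in the domain $\mathbb{Q}(u)$ and invoking $\chi(Y) = n - m$ from \cref{eul_form}, this gives ${\rm det}(I - uW) = (1-u^{2})^{m-n} \cdot {\rm det}\big(I - Au + (D-I)u^{2}\big) = (1-u^{2})^{-\chi(Y)} \cdot {\rm det}\big(I - Au + (D-I)u^{2}\big)$, which together with the first paragraph proves the theorem.

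I expect the genuine content to be bookkeeping rather than ideas: one must check carefully that $N_{k} = {\rm tr}(W^{k})$, that the six relations among $S, T, J, A, D$ hold (these are sensitive to the chosen orientation conventions for directed edges), and that the block identity holds block by block; beyond that the argument is formal. A reader wanting a fully detailed treatment can be referred to \cite{Terras:2011} or \cite{Kotani/Sunada:2000}. I also note that the argument above uses no connectedness or minimum-valency hypothesis on $Y$, so the formula holds for an arbitrary finite graph in Serre's sense.
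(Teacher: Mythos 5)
Your proof is correct. The paper does not prove this theorem itself but defers to \cite{Kotani/Sunada:2000}, and your argument is essentially that standard one (going back to \cite{Bass:1992} and presented in \cite{Terras:2011}): first the identity $Z_{Y}(u)^{-1}={\rm det}(I-uW)$ for the Hashimoto edge matrix via $N_{k}={\rm tr}(W^{k})$ and the $\det$--$\operatorname{tr}$--$\exp$ relation of \cref{det_tr_exp}, then the block-matrix factorization relating $I-uW$ to $I-Au+(D-I)u^{2}$; I checked the six incidence relations and both sides of the block identity, and they hold with the paper's conventions for $A$ and $D$.
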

From now on, we let
$$c_{Y}(u) = (1-u^{2})^{-\chi(Y)} \text{ and } h_{Y}(u) = {\rm det}(I - Au + (D - I)u^{2}). $$
Note that $h_{Y}(u) \in 1+u\mathbb{Z}[u]$.  We will be particularly interested in the special value at $u=1$ of Ihara zeta and $L$-functions.  For the following theorem, we refer the reader to \cite{Hashimoto:1990}.
\begin{theorem} \label{hashimoto}
Let $Y$ be a finite connected graph.  Then, one has
$$h_{Y}'(1) = -2 \chi(Y) \kappa(Y), $$
where $\kappa(Y)$ denotes the number of spanning trees of $Y$.
\end{theorem}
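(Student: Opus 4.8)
The plan is to read $h_{Y}'(1)$ off the determinantal expression of \cref{ihara}, reduce the whole computation to the laplacian matrix, and then feed in the Matrix--Tree theorem. Write $H(u) = I - Au + (D-I)u^{2}$, so $h_{Y}(u) = {\rm det}(H(u))$, and let $L = D - A$ denote the laplacian matrix of $Y$ with respect to the fixed labeling $V_{Y} = \{w_{1},\ldots,w_{q}\}$. The first step is the bookkeeping observation that $H(1) = I - A + D - I = L$; in particular $h_{Y}(1) = {\rm det}(L) = {\rm det}_{\mathbb{C}}(\mathcal{L}) = 0$ by \eqref{lap_singular}, consistent with the statement concerning the derivative of $h_{Y}$. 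Differentiating, $H'(u) = -A + 2(D-I)u$, so $H'(1) = -A + 2D - 2I = L + (D - 2I)$; that is, $H'(1)$ is $L$ plus the diagonal matrix whose $i$-th entry is ${\rm val}_{Y}(w_{i}) - 2$.

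\textbf{Differentiating the determinant.} Next I would apply the product rule to the Leibniz expansion of the determinant: for any matrix $M(u)$ over a polynomial ring one has $\tfrac{d}{du}{\rm det}(M(u)) = \sum_{i=1}^{q}{\rm det}(M^{[i]}(u))$, where $M^{[i]}(u)$ is $M(u)$ with its $i$-th row replaced by the derivative of that row. Taking $M = H$ and evaluating at $u = 1$: in $H^{[i]}(1)$ the $i$-th row equals the $i$-th row of $H'(1) = L + (D-2I)$, namely the $i$-th row of $L$ plus $({\rm val}_{Y}(w_{i})-2)$ times the $i$-th standard basis row vector, while every other row of $H^{[i]}(1)$ coincides with the corresponding row of $L$. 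Expanding this determinant in its $i$-th row by multilinearity gives
$$ {\rm det}(H^{[i]}(1)) = {\rm det}(L) + ({\rm val}_{Y}(w_{i}) - 2)\cdot C_{i}, $$
where $C_{i}$ is the determinant of the $(q-1)\times(q-1)$ matrix obtained from $L$ by deleting its $i$-th row and $i$-th column.

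\textbf{Conclusion.} Now I plug in the two inputs. The term ${\rm det}(L)$ vanishes by \eqref{lap_singular}, and the Matrix--Tree theorem — applicable because $Y$ is finite and connected — gives $C_{i} = \kappa(Y)$ for every $i$. Summing over $i$,
$$ h_{Y}'(1) = \sum_{i=1}^{q}({\rm val}_{Y}(w_{i}) - 2)\,\kappa(Y) = \kappa(Y)\left(\sum_{i=1}^{q}{\rm val}_{Y}(w_{i}) - 2q\right). $$
Since $\{\mathbf{E}_{Y,w}\}_{w\in V_{Y}}$ partitions $\mathbf{E}_{Y}$, we have $\sum_{i}{\rm val}_{Y}(w_{i}) = |\mathbf{E}_{Y}| = 2|E_{Y}|$, and $q = |V_{Y}|$; hence by \cref{eul_form} the parenthesised quantity is $2(|E_{Y}| - |V_{Y}|) = -2\chi(Y)$, so $h_{Y}'(1) = -2\chi(Y)\kappa(Y)$, as claimed.

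\textbf{Main obstacle.} Everything is purely formal once the Matrix--Tree theorem is granted, so the only real decision is whether to cite it or to include the short Cauchy--Binet argument via the signed incidence matrix of $Y$; I do not anticipate any genuine difficulty beyond that. An essentially equivalent route would instead invoke Jacobi's formula $h_{Y}'(1) = {\rm tr}\big(\operatorname{adj}(L)\,H'(1)\big)$ together with $\operatorname{adj}(L)\,L = {\rm det}(L)\,I = 0$ and the matrix form $\operatorname{adj}(L) = \kappa(Y)\,J$ of the Matrix--Tree theorem ($J$ the all-ones $q\times q$ matrix), reducing the claim to the identity ${\rm tr}\big(J(D-2I)\big) = 2|E_{Y}| - 2|V_{Y}|$.
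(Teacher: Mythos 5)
Your proof is correct. The paper does not prove this statement itself — it simply cites Hashimoto's 1990 paper — and your argument is essentially the standard one found there (and in Terras's book): evaluate $H(1)=L$, differentiate the determinant row by row, kill the $\det(L)$ terms by \eqref{lap_singular}, and identify each principal minor with $\kappa(Y)$ via the Matrix--Tree theorem, so that the remaining diagonal perturbation $D-2I$ traces out $-2\chi(Y)$. Two minor points worth making explicit if you write this up: (i) since $Y$ may be a multigraph with loops in Serre's formalism, you should invoke the Matrix--Tree theorem in its multigraph form (loops add equally to $D$ and to the diagonal of $A$, so they cancel in $L$ and contribute no spanning trees, hence cause no trouble); and (ii) connectedness of $Y$ is exactly what guarantees that every principal $(q-1)\times(q-1)$ minor of $L$ equals $\kappa(Y)$ rather than $0$, so it is genuinely used and not just decorative. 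Your alternative via Jacobi's formula and $\operatorname{adj}(L)=\kappa(Y)J$ is equally valid and slightly slicker.
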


\section{Equivariant graph theory} \label{equ}
\subsection{Groups acting on graphs} \label{gr_act}
A group $G$ acts on a graph $Y$ if we are given a group morphism $G \rightarrow {\rm Aut}(Y)$.  The action is said to be without inversion if one has $\sigma \cdot \varepsilon \neq \bar{\varepsilon}$ for all $\varepsilon \in \mathbf{E}_{Y}$ and all $\sigma \in G$.  In this case, the quotient $G \backslash Y = (G \backslash V_{Y}, G \backslash \mathbf{E}_{Y})$ is a graph in itself with the incidence and inversion maps induced from the ones on $Y$.  We shall write $Y_{G}$ instead of $G \backslash Y$ throughout. 

As explained in \cite[Proposition 3.3]{Gambheera/Vallieres:2024}, if $G$ is a finite group acting on a graph $Y$ without inversion and freely on $\mathbf{E}_{Y}$, then the natural morphism of graphs $f: Y \rightarrow X$, where $X = Y_{G}$, is a branched cover of graphs for which $m_{w} = |{\rm Stab}_{G}(w)|$ for all $w \in V_{Y}$.  From now on, we let $G_{w} = {\rm Stab}_{G}(w)$.  Note that since stabilizers of vertices in the same orbits are conjugate, the ramification indices $m_{w}$ depend only on $f(w)$.  We will often write $m_{v}$ for the ramification index of a vertex $w \in V_{Y}$, where $v = f(w)$.  

In this paper, $G$ will always be a finite abelian group.  Therefore, the stabilizer group $G_{w}$ also depends only on $v = f(w)$, and we shall often denote this stabilizer group by $G_{v}$, where $v = f(w)$.  Thus, when $G$ is abelian $m_{v} = |G_{v}|$, and there are $(G:G_{v})$ vertices in $Y$ lying above $v$.

\subsection{Group actions obtained from voltage assignments} \label{volta}
As explained in \cite[\S 4.1]{Gambheera/Vallieres:2024}, one can use voltage assignments in order to construct graphs on which a group acts freely on directed edges.  We refer the reader to \cite[\S 4.1]{Gambheera/Vallieres:2024} for the detailed construction.  Briefly, the construction starts with a base graph $X$, a group $G$, a function $\alpha:\mathbf{E}_{X} \rightarrow G$ satisfying $\alpha(\bar{s}) = \alpha(s)^{-1}$ for all $s \in \mathbf{E}_{X}$, and a collection of subgroups 
$$\mathcal{G} = \{(v,G_{v}) : v \in V_{X} \text{ and } G_{v} \le G \}. $$
To this data, one associates a graph $Y = X(G,\mathcal{G},\alpha)$ which is a branched cover of $X$.  Moreover, the group $G$ acts without inversion on $Y$ and freely on directed edges.  See \cite[Proposition 4.1]{Gambheera/Vallieres:2024} for the complete list of properties that this branched cover satisfies.  

For us in this paper, $G$ will always be a finite abelian group.  One thing we did not mention in \cite{Gambheera/Vallieres:2024} is that one always has $Y_{G} \simeq X$.  In fact, we leave it to the reader to check that the following more general statement holds true:  If $H$ is a subgroup of $G$ and $\Gamma = G/H$, then the map 
\begin{equation} \label{identification}
f: Y_{H} \rightarrow X(\Gamma,\pi_{H}(\mathcal{G}),\pi_{H} \circ \alpha)
\end{equation}
given by
$$H (v, \sigma G_{v}) \mapsto f(H (v,\sigma G_{v})) = (v,\pi_{H}(\sigma) \pi_{H}(G_{v})) \text{ and } H(e,\sigma) \mapsto f(H(e,\sigma)) = (e,\pi_{H}(\sigma)) $$
is a well-defined $\Gamma$-equivariant isomorphism of graphs, where $\pi_{H}:G \rightarrow \Gamma$ denotes the natural surjective group morphism, and where we use the notation of \cite[\S 4.2]{Gambheera/Vallieres:2024}.  This will allow us to identify $Y_{H}$ with $X(\Gamma,\pi_{H}(\mathcal{G}),\pi_{H} \circ \alpha)$ in the future.  For more information on voltage assignments, and more generally on the classification of harmonic $G$-covers, we refer the reader to \cite{Len/Ulirsch/Zakharov:2024}.

\subsection{The equivariant Euler characteristic} \label{equiv_sec}
Let $G$ be a finite abelian group acting on a finite graph $Y$ without inversion and let $X = Y_{G}$.  The $\mathbb{C}$-vector space $C_{0}(Y)$ becomes a $\mathbb{C}[G]$-module and is a permutation module so we can apply the results of \cref{permutation}.  Thus, we now introduce a similar notation to the one used in \cref{permutation} for $C_{0}(Y)$.  We fix a labeling $V_{X} = \{v_{1},\ldots,v_{g} \}$ of the vertices of $X$ and for each $i = 1,\ldots,g$, we let $w_{i}$ be a fixed vertex of $Y$ lying above $v_{i}$.  For $i=1,\ldots,g$, we write $G_{i}$ instead of $G_{v_{i}} = {\rm Stab}_{G}(w_{i})$.

The $\mathbb{C}$-vector space $\mathbb{C}\mathbf{E}_{Y}$ is also a permutation module, and therefore a similar notation could be used.  In particular, we will use the notation $G_{\varepsilon}$ to denote ${\rm Stab}_{G}(\varepsilon)$ whenever $\varepsilon \in \mathbf{E}_{Y}$.  Moreover, $W$ is a $\mathbb{C}[G]$-submodule of $\mathbb{C}\mathbf{E}_{Y}$, and is in fact a permutation module itself.  Indeed, if $O$ is an orientation for $Y$, that is $O \subseteq \mathbf{E}_{Y}$ is such that 
$O \cap \bar{O} = \varnothing$ and $\mathbf{E}_{Y} = O \cup \bar{O}$, then $\Omega = \{\varepsilon + \bar{\varepsilon} : \varepsilon \in O\}$ is $G$-stable so that $\Omega$ is a $G$-set, and $W \simeq \mathbb{C}\Omega$.  Since both $W$ and $\mathbb{C}\mathbf{E}_{Y}$ are $\mathbb{C}[G]$-modules, then so is the quotient $C_{1}(Y)$.

The $\mathbb{C}$-linear morphism $\partial$ can be checked to be $G$-equivariant, from which it follows that the chain complex (\ref{chain_complex}) becomes a chain complex of finitely generated $\mathbb{C}[G]$-modules.  Since $\mathbb{C}[G]$ is semisimple, both $C_{0}(Y)$ and $C_{1}(Y)$ are projective $\mathbb{C}[G]$-modules and their (Hattori-Stallings) ranks over $\mathbb{C}[G]$ therefore make sense, as explained in  \cref{com_alg}.

\begin{definition}
Let $Y$ be a finite graph on which a finite abelian group $G$ acts without inversion.  One defines the equivariant Euler characteristic of $Y$ to be
$$\chi_{\mathbb{C}[G]}(Y) = {\rm rank}_{\mathbb{C}[G]} (C_{0}(Y)) - {\rm rank}_{\mathbb{C}[G]} (C_{1}(Y)).$$
\end{definition}

Our next goal is to find an explicit formula for the equivariant Euler characteristic similar to the one in \cref{eul_form} for the usual Euler characteristic.  
\begin{theorem} \label{exp_equi_euler_char}
Let $G$ be a finite abelian group acting on a finite graph $Y$ without inversion and set $X = Y_{G}$.  Let $O_{X}$ be an orientation of $X$.  Then
$$\chi_{\mathbb{C}[G]}(Y) = \sum_{v \in V_{X}} e_{G_{v}} - \sum_{s \in O_{X}}e_{G_{s}}.$$
In particular, if $G$ acts freely on directed edges, then
$$\chi_{\mathbb{C}[G]}(Y) = \sum_{v \in V_{X}} e_{G_{v}} - |E_{X}|.$$
\end{theorem}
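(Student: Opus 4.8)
The plan is to compute the two Hattori--Stallings ranks appearing in the definition of $\chi_{\mathbb{C}[G]}(Y)$ separately, by realizing $C_{0}(Y)$, $\mathbb{C}\mathbf{E}_{Y}$, and $W$ as permutation $\mathbb{C}[G]$-modules and applying the machinery of \cref{permutation,com_alg}. The one computational fact I need is that, for an idempotent $e \in \mathbb{C}[G]$, the projective $\mathbb{C}[G]$-module $\mathbb{C}[G]e$ has rank equal to $e$: since $\mathbb{C}[G]$ is commutative, the decomposition $\mathbb{C}[G] = \mathbb{C}[G]e \oplus \mathbb{C}[G](1-e)$ shows that $\mathrm{id}_{\mathbb{C}[G]e} \oplus 0_{\mathbb{C}[G](1-e)}$ is the endomorphism $x \mapsto ex$ of the free rank-one module $\mathbb{C}[G]$, whose trace is $e$ by (\ref{simple_but_useful}). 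Combined with the additivity of the rank and the fact that a permutation module $\mathbb{C}T$ for a finite $G$-set $T$ with orbit representatives $t_{1},\dots,t_{g}$ satisfies $\mathbb{C}T = \bigoplus_{i=1}^{g} \mathbb{C}[G]t_{i}$ with each $\mathbb{C}[G]t_{i} \simeq \mathbb{C}[G]e_{G_{s_{i}}}$ (by \cref{permutation}, where $s_{i}$ is the orbit of $t_{i}$), this gives $\mathrm{rank}_{\mathbb{C}[G]}(\mathbb{C}T) = \sum_{s \in T_{G}} e_{G_{s}}$.

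Applying the last identity to $T = V_{Y}$, whose orbit set is $V_{X}$ and whose point stabilizers are the $G_{v}$, yields $\mathrm{rank}_{\mathbb{C}[G]}(C_{0}(Y)) = \sum_{v \in V_{X}} e_{G_{v}}$. For the first-homology part, let $f \colon Y \to X$ be the quotient morphism and set $O_{Y} = f^{-1}(O_{X})$; because the action is without inversion and $O_{X} \cap \overline{O_{X}} = \varnothing$, the set $O_{Y}$ is a $G$-stable orientation of $Y$. From the short exact sequence $0 \to W \to \mathbb{C}\mathbf{E}_{Y} \to C_{1}(Y) \to 0$ of projective $\mathbb{C}[G]$-modules and additivity, $\mathrm{rank}_{\mathbb{C}[G]}(C_{1}(Y)) = \mathrm{rank}_{\mathbb{C}[G]}(\mathbb{C}\mathbf{E}_{Y}) - \mathrm{rank}_{\mathbb{C}[G]}(W)$. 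Here $\mathbf{E}_{Y}$ is a $G$-set with orbit set $\mathbf{E}_{X} = O_{X} \sqcup \overline{O_{X}}$ and with stabilizer $G_{\varepsilon}$ over each directed edge, and since $G_{\bar{\eta}} = G_{\eta}$ we get $\mathrm{rank}_{\mathbb{C}[G]}(\mathbb{C}\mathbf{E}_{Y}) = \sum_{\eta \in \mathbf{E}_{X}} e_{G_{\eta}} = 2 \sum_{s \in O_{X}} e_{G_{s}}$. On the other hand, $W \simeq \mathbb{C}\Omega$ with $\Omega = \{\varepsilon + \bar{\varepsilon} : \varepsilon \in O_{Y}\}$ as noted in \cref{equiv_sec}; I would check that $\varepsilon + \bar{\varepsilon} \mapsto f(\varepsilon)$ identifies the orbit set of $\Omega$ with $O_{X}$ and that, using the without-inversion hypothesis (which rules out $\sigma\varepsilon = \bar{\varepsilon}$), the stabilizer of $\varepsilon + \bar{\varepsilon}$ is exactly $G_{\varepsilon}$. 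Thus $\mathrm{rank}_{\mathbb{C}[G]}(W) = \sum_{s \in O_{X}} e_{G_{s}}$, and subtracting gives $\mathrm{rank}_{\mathbb{C}[G]}(C_{1}(Y)) = \sum_{s \in O_{X}} e_{G_{s}}$. Combining the two computations proves the first formula.

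For the ``in particular'' assertion, if $G$ acts freely on directed edges then $G_{s}$ is trivial for every $s \in O_{X}$, so $e_{G_{s}} = e_{\{1\}} = 1 \in \mathbb{C}[G]$ and $\sum_{s \in O_{X}} e_{G_{s}} = |O_{X}| = |E_{X}|$, since an orientation contains exactly one directed edge lying over each undirected edge.

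The main obstacle I anticipate is the bookkeeping around $W$: correctly exhibiting it as the permutation module $\mathbb{C}\Omega$, determining the $G$-orbits of $\Omega$ together with the stabilizers $G_{\varepsilon + \bar{\varepsilon}}$, and isolating exactly where the without-inversion hypothesis is used — this is essentially the only place in the argument that invokes that hypothesis. Everything else is a direct application of the rank formalism of \cref{com_alg} together with the permutation-module decomposition established in \cref{permutation}.
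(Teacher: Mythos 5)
Your proposal is correct and follows essentially the same route as the paper's proof: both compute $\mathrm{rank}_{\mathbb{C}[G]}(C_{0}(Y))$ via the permutation-module decomposition and the identity $\mathrm{rank}_{\mathbb{C}[G]}(\mathbb{C}[G]e_{i})=e_{i}$, and both handle $C_{1}(Y)$ through the short exact sequence $0\to W\to\mathbb{C}\mathbf{E}_{Y}\to C_{1}(Y)\to 0$ using $G_{\varepsilon}=G_{\bar{\varepsilon}}=G_{\varepsilon+\bar{\varepsilon}}$. Your extra care in pinpointing that the without-inversion hypothesis is what forces $G_{\varepsilon+\bar{\varepsilon}}=G_{\varepsilon}$ is a welcome clarification of a step the paper states without comment.
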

\begin{proof}
Since
$$C_{0}(Y) = \bigoplus_{i = 1}^{g}\mathbb{C}[G]w_{i} \simeq \bigoplus_{i=1}^{g} \mathbb{C}[G]e_{i}, $$
by the isomorphisms (\ref{iso_u}) and since the rank over $\mathbb{C}[G]$ is additive (see \cref{com_alg}), it suffices to understand the rank over $\mathbb{C}[G]$ of the $\mathbb{C}[G]$-modules $\mathbb{C}[G]e_{i}$.  But a simple calculation using the supplement $\mathbb{C}[G](1-e_{i})$ of $\mathbb{C}[G]e_{i}$ gives ${\rm rank}_{\mathbb{C}[G]}(\mathbb{C}[G]e_{i}) = e_{i}$.  Therefore, we have
\begin{equation} \label{c_0}
{\rm rank}_{\mathbb{C}[G]} (C_{0}(Y)) = \sum_{v \in V_{X}} e_{G_{v}}. 
\end{equation}
For $C_{1}(Y)$, we consider the short exact sequence
$$0 \rightarrow W \rightarrow \mathbb{C}\mathbf{E}_{Y} \rightarrow C_{1}(Y) \rightarrow 0 $$
of finitely generated projective $\mathbb{C}[G]$-modules.  Both $W$ and $\mathbb{C}\mathbf{E}_{Y}$ are permutation modules and thus their rank can be calculated similarly as above.  Noting that $G_{\varepsilon} = G_{\bar{\varepsilon}} = G_{\varepsilon + \bar{\varepsilon}}$, one calculates as above
$${\rm rank}_{\mathbb{C}[G]}(\mathbb{C}\mathbf{E}_{Y}) = \sum_{s \in \mathbf{E}_{X}}e_{G_{s}} \text{ and } {\rm rank}_{\mathbb{C}[G]}(W) = \sum_{s \in O_{X}}e_{G_{s}}, $$
and thus
\begin{equation} \label{c_1}
{\rm rank}_{\mathbb{C}[G]}(C_{1}(Y)) = \sum_{s \in O_{X}}e_{G_{s}}.
\end{equation}
Putting (\ref{c_0}) and (\ref{c_1}) together gives the desired result.

\end{proof}

\subsection{The equivariant Ihara zeta function} \label{section:theequiv}
We place ourselves again in the situation where a finite abelian group $G$ acts on a finite graph $Y$ without inversion.  Any $\sigma \in G$ induces a bijection $\mathbf{E}_{Y,w} \rightarrow \mathbf{E}_{Y,\sigma \cdot w}$ from which it follows that both the degree operator $\mathcal{D}$ and the adjacency operator $\mathcal{A}$ are morphisms of $\mathbb{C}[G]$-modules.  We let $\mathcal{I}$ be the identity operator on $C_{0}(Y)$, and we define also $\mathcal{Q} = \mathcal{D} - \mathcal{I}$.  The following polynomial
\begin{equation} \label{delta_}
\Delta(u) := \mathcal{I} - \mathcal{A}u + \mathcal{Q}u^{2} \in {\rm End}_{\mathbb{C}[G]}(C_{0}(Y))[u] 
\end{equation}
will play an important role.  Via the isomorphism 
$$\omega_{C_{0}(Y)}:{\rm End}_{\mathbb{C}[G]}(C_{0}(Y))[u] \stackrel{\simeq}{\longrightarrow} {\rm End}_{\mathbb{C}[G][u]}(C_{0}(Y)[u]) $$
given by \cref{conv_iso_pol_vs}, we will identify polynomials in ${\rm End}_{\mathbb{C}[G]}(C_{0}(Y))[u]$ with endomorphisms in ${\rm End}_{\mathbb{C}[G][u]}(C_{0}(Y)[u])$.  In particular, since $C_{0}(Y)[u]$ is a projective $\mathbb{C}[G][u]$-module, it makes sense to talk about the determinant of the $\mathbb{C}[G][u]$-operator $\Delta(u)$ defined above in (\ref{delta_}), and we shall denote it by
$$\eta_{Y}(u) := {\rm det}_{\mathbb{C}[G][u]}(\Delta(u)) \in 1 + u\mathbb{C}[G][u]. $$
Moreover, the expression
$$\gamma_{Y}(u) := (1-u^{2})^{-\chi_{\mathbb{C}[G]}(Y)} \in 1 +u\mathbb{C}[G] \llbracket u \rrbracket  $$
makes sense as well and was defined in (\ref{exponent}) above.  Note that
$$\gamma_{Y}(u), \eta_{Y}(u) \in 1 + u\mathbb{C}[G]\llbracket u \rrbracket \subseteq \mathbb{C}[G]\llbracket u \rrbracket^{\times}.$$
We can now define the equivariant Ihara zeta function $\theta_{Y}(u)$ which is the central object of study in this paper.  
\begin{definition} \label{equiv_ihara}
With the notation as above, we define the equivariant Ihara zeta function $\theta_{Y}(u)$ via
$$\theta_{Y}(u)^{-1} = \gamma_{Y}(u) \cdot \eta_{Y}(u) \in 1 + u\mathbb{C}[G]\llbracket u \rrbracket \subseteq \mathbb{C}[G]\llbracket u \rrbracket^{\times}. $$
\end{definition}
One way to calculate $\eta_{Y}(u)$ explicitly, is to find a $\mathbb{C}[G]$-module $M$ such that
$$C_{0}(Y)[u] \oplus M[u] \simeq (C_{0}(Y) \oplus M)[u] $$
is free of finite rank over $\mathbb{C}[G][u]$, and then calculate ${\rm det}_{\mathbb{C}[G][u]}(\Delta(u) \oplus {\rm id}_{M[u]})$.  Such a module $M$ is provided by (\ref{useful_sup}), and in \cref{explicit} below we carry this out in detail.

\subsection{An explicit formula for the equivariant Ihara zeta function} \label{explicit}
We use the same notation as in the previous section.  Given $w, w' \in V_{Y}$, we define
$$a_{w}(w') = |\{\varepsilon \in \mathbf{E}_{Y}:o(\varepsilon) = w' \text{ and } t(\varepsilon) = w\}|, $$
and for $i=1,\ldots,g$, we also define $\ell_{i}:C_{0}(Y) \rightarrow \mathbb{C}[G]$ via
$$w \mapsto \ell_{i}(w) = \frac{1}{|G_{i}|}\sum_{\sigma \in G}a_{w_{i}}(\sigma w) \sigma^{-1}. $$
We leave it to the reader to check that $\ell_{i}$ is a morphism of $\mathbb{C}[G]$-modules.  We will often use the fact that 
\begin{equation} \label{useful_identity}
a_{w}(\sigma w') = a_{\sigma^{-1}w}(w')
\end{equation}
for all $w,w' \in V_{Y}$ and for all $\sigma \in G$, which follows from the observation that the function
$$\{\varepsilon \in \mathbf{E}_{Y}: {\rm inc}(\varepsilon) = (\sigma w',w) \} \rightarrow \{\varepsilon \in \mathbf{E}_{Y}: {\rm inc}(\varepsilon) = (w',\sigma^{-1}w) \} $$
given by $\varepsilon \mapsto \sigma^{-1} \varepsilon$ is a bijection.
\begin{proposition} \label{dec_adj}
For all $D \in C_{0}(Y)$, one has
$$\mathcal{A}(D) = \sum_{i=1}^{g} \ell_{i}(D) w_{i} $$
\end{proposition}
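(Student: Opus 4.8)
The plan is to reduce to basis vectors and unwind the definitions. Since $\mathcal{A}$ and each $\ell_i$ are $\mathbb{C}$-linear (the $\mathbb{C}[G]$-linearity of $\ell_i$ is not even needed here), and since $V_Y$ is a $\mathbb{C}$-basis of $C_0(Y)$, it suffices to verify the identity for $D = w$ with $w \in V_Y$. First I would rewrite the adjacency operator in terms of the integers $a_{w'}(w)$: by definition $\mathcal{A}(w) = \sum_{\varepsilon \in \mathbf{E}_{Y,w}} t(\varepsilon) = \sum_{w' \in V_Y} a_{w'}(w)\, w'$, which is exactly what we want the right-hand side to equal at $D = w$.

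Next I would expand the right-hand side: $\sum_{i=1}^{g} \ell_i(w) w_i = \sum_{i=1}^{g} \frac{1}{|G_i|} \sum_{\sigma \in G} a_{w_i}(\sigma w)\, \sigma^{-1} w_i$. Using the identity \eqref{useful_identity}, namely $a_{w_i}(\sigma w) = a_{\sigma^{-1} w_i}(w)$, and substituting $\tau = \sigma^{-1}$, the inner sum becomes $\sum_{\tau \in G} a_{\tau w_i}(w)\, \tau w_i$. The key point is the orbit--stabilizer bookkeeping: as $\tau$ ranges over $G$, the vertex $\tau w_i$ ranges over the orbit $G w_i$, and each $w' \in G w_i$ is reached for exactly $|G_i| = |{\rm Stab}_G(w_i)|$ values of $\tau$ (here one uses that $G$ is abelian, so every vertex in the orbit has the same stabilizer $G_i$). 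Hence the inner sum equals $|G_i| \sum_{w' \in G w_i} a_{w'}(w)\, w'$, and the prefactor $1/|G_i|$ cancels it.

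Finally I would assemble the pieces: summing over $i$ and invoking the orbit decomposition $V_Y = \bigsqcup_{i=1}^{g} G w_i$ gives $\sum_{i=1}^{g} \ell_i(w) w_i = \sum_{i=1}^{g} \sum_{w' \in G w_i} a_{w'}(w)\, w' = \sum_{w' \in V_Y} a_{w'}(w)\, w' = \mathcal{A}(w)$, as desired. The only mildly delicate step is this multiplicity count in the reparametrization combined with the orbit decomposition; the rest is a direct unwinding of the definitions of $\mathcal{A}$, $a_w(w')$, and $\ell_i$ together with \eqref{useful_identity}. I expect the orbit--stabilizer bookkeeping to be the main place one has to be careful.
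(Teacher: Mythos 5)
Your proof is correct and is essentially the paper's argument run in reverse: the paper starts from $\mathcal{A}(w)=\sum_{w_0}a_{w_0}(w)w_0$, decomposes $V_Y$ into orbits, converts the sum over coset representatives into $\tfrac{1}{|G_i|}\sum_{\sigma\in G}$, and applies (\ref{useful_identity}), which are exactly your steps in the opposite order. The orbit--stabilizer bookkeeping you flag is handled in the paper by the same coset-versus-full-group substitution, so there is no substantive difference.
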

\begin{proof}
Note that it suffices to show the claim for $D = w$, since the vertices of $Y$ generate $C_{0}(Y)$ as a $\mathbb{C}$-vector space.  Using (\ref{useful_identity}), we calculate
\begin{equation*}
\begin{aligned} 
\mathcal{A}(w) &= \sum_{w_{0} \in V_{Y}}a_{w_{0}}(w) w_{0} \\
&= \sum_{i=1}^{g} \sum_{\sigma \in G/G_{i}} a_{\sigma w_{i}}(w)\sigma w_{i} \\
&= \sum_{i=1}^{g} \frac{1}{|G_{i}|} \sum_{\sigma \in G}a_{\sigma w_{i}}(w) \sigma w_{i} \\
&= \sum_{i=1}^{g} \frac{1}{|G_{i}|} \sum_{\sigma \in G}a_{w_{i}}(\sigma^{-1} w) \sigma w_{i} \\
&= \sum_{i=1}^{g} \ell_{i}(w)w_{i},
\end{aligned}
\end{equation*}
as we wanted to show.
\end{proof}
From now on, we let
$$\widetilde{C}_{0}(Y) = \bigoplus_{i=1}^{g} \mathbb{C}[G]e_{i}.$$
Recall that we have a $\mathbb{C}[G]$-module isomorphism
\begin{equation} \label{tri_iso}
C_{0}(Y)  \stackrel{\simeq}{\longrightarrow} \widetilde{C}_{0}(Y), 
\end{equation}
which is induced by the $\mathbb{C}[G]$-module isomorphisms $\mathbb{C}[G]w_{i} \stackrel{\simeq}{\longrightarrow} \mathbb{C}[G]e_{i}$ from (\ref{iso_u}) above.  We let $\widetilde{\mathcal{A}}$ and $\widetilde{\mathcal{D}}$ be the $\mathbb{C}[G]$-module morphisms $\widetilde{C}_{0}(Y) \rightarrow \widetilde{C}_{0}(Y)$ that make the two diagrams
\begin{equation*} 
	\begin{CD}
		 C_{0}(Y)    @>{\simeq}>>   \widetilde{C}_{0}(Y) \\
		 @VV{\mathcal{A}}V       @VV{ \widetilde{\mathcal{A}}}V    &  \\
		 C_{0}(Y)    @>{\simeq}>>   \widetilde{C}_{0}(Y)
	\end{CD}
\, \, \text{ and } \, \, 
        \begin{CD}
		 C_{0}(Y)    @>{\simeq}>>   \widetilde{C}_{0}(Y) \\
		 @VV{ \mathcal{D}}V       @VV{\widetilde{\mathcal{D}}}V    &  \\
		 C_{0}(Y)    @>{\simeq}>>   \widetilde{C}_{0}(Y)
	\end{CD}
\end{equation*}
commutative, where the horizontal $\mathbb{C}[G]$-module isomorphisms are the map (\ref{tri_iso}).  The following lemma will simplify a few formulas later on.
\begin{lemma} \label{sim_for}
With the same notation as above, for all $i=1,\ldots,g$ and all $D \in C_{0}(Y)$, one has
$$\ell_{i}(D)e_{i} = \ell_{i}(D). $$
Moreover,
$$\ell_{i}(w_{j})e_{j} = \ell_{i}(w_{j})$$
for all $i,j=1,\ldots,g$.
\end{lemma}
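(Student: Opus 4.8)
The plan is to prove both assertions by direct computation from the defining formula $\ell_{i}(w) = \frac{1}{|G_{i}|}\sum_{\sigma \in G}a_{w_{i}}(\sigma w)\sigma^{-1}$, exploiting that the integers $a_{w_{i}}(\sigma w)$ are invariant under suitable translations of $\sigma$ coming from the stabilizer subgroups. Since $\mathbb{C}[G]$ is commutative ($G$ is abelian), left and right multiplication by the idempotents $e_{i} = e_{G_{i}} = \frac{1}{|G_{i}|}\sum_{h \in G_{i}}h$ agree, which I will use freely.

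For the first identity, both sides are $\mathbb{C}$-linear in $D$ and the vertices form a $\mathbb{C}$-basis of $C_{0}(Y) = \mathbb{C}V_{Y}$, so it suffices to treat $D = w$ for a single vertex $w$. I would expand
$$\ell_{i}(w)e_{i} = \frac{1}{|G_{i}|^{2}}\sum_{h \in G_{i}}\sum_{\sigma \in G}a_{w_{i}}(\sigma w)\,\sigma^{-1}h$$
and substitute $\sigma = h\tau$. Then $\sigma^{-1}h = \tau^{-1}$, while $a_{w_{i}}(\sigma w) = a_{w_{i}}(h\tau w) = a_{h^{-1}w_{i}}(\tau w) = a_{w_{i}}(\tau w)$, where the second equality is identity (\ref{useful_identity}) and the third uses $h^{-1} \in G_{i} = {\rm Stab}_{G}(w_{i})$. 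Hence the inner sum over $h \in G_{i}$ just contributes a factor $|G_{i}|$, and one is left with $\frac{1}{|G_{i}|}\sum_{\tau \in G}a_{w_{i}}(\tau w)\tau^{-1} = \ell_{i}(w)$.

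For the second identity I would argue analogously, now multiplying $\ell_{i}(w_{j})$ on the right by $e_{j}$: writing $\ell_{i}(w_{j})e_{j} = \frac{1}{|G_{i}||G_{j}|}\sum_{h \in G_{j}}\sum_{\sigma \in G}a_{w_{i}}(\sigma w_{j})\sigma^{-1}h$ and substituting $\sigma = h\tau$ gives $\sigma^{-1}h = \tau^{-1}$ and $a_{w_{i}}(\sigma w_{j}) = a_{w_{i}}(h\tau w_{j}) = a_{w_{i}}(\tau w_{j})$, the last step because $h\tau w_{j} = \tau h w_{j} = \tau w_{j}$ (using commutativity and $h \in G_{j} = {\rm Stab}_{G}(w_{j})$), so again the sum over $h$ contributes $|G_{j}|$ and collapses to $\ell_{i}(w_{j})$. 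Alternatively, one can deduce it from the first identity together with \cref{dec_adj}: applying the $\mathbb{C}[G]$-module morphism $\mathcal{A}$ to $e_{j}w_{j} = w_{j}$ and comparing coordinates in $C_{0}(Y) = \bigoplus_{i}\mathbb{C}[G]w_{i}$ yields $(1-e_{j})\ell_{i}(w_{j}) \in {\rm Ann}_{\mathbb{C}[G]}(w_{i}) = \mathbb{C}[G](1-e_{i})$, whence $(1-e_{j})\ell_{i}(w_{j}) = (1-e_{j})\ell_{i}(w_{j})(1-e_{i}) = 0$, the last equality by the first identity.

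There is no serious obstacle here; the only point requiring care is the change of variables in the double sums and, in the first part, correctly using (\ref{useful_identity}) to move the stabilizer element past the terminus vertex $w_{i}$ — in contrast to the second part, where the stabilizer element simply fixes the origin vertex $w_{j}$. I expect each case to take only a few lines.
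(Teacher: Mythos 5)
Your proof is correct and follows essentially the same route as the paper: the first identity is the same change-of-variables computation in the double sum, using (\ref{useful_identity}) together with $h w_{i} = w_{i}$ for $h \in G_{i}$. The paper leaves the second identity to the reader as "similar," and your direct argument (where the stabilizer element fixes the origin vertex $w_{j}$, so (\ref{useful_identity}) is not even needed) is exactly the intended one; your alternative deduction via \cref{dec_adj} and ${\rm Ann}_{\mathbb{C}[G]}(w_{i}) = \mathbb{C}[G](1-e_{i})$ is also valid.
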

\begin{proof}
It suffices to show this when $D=w$.  Using (\ref{useful_identity}), we calculate
\begin{equation*}
\begin{aligned} 
\ell_{i}(w) e_{i} &= \frac{1}{|G_{i}|} \sum_{\sigma \in G}a_{w_{i}}(\sigma w) \sigma^{-1}e_{i} \\
&= \frac{1}{|G_{i}|^{2}} \sum_{\sigma \in G} \sum_{h \in G_{i}} a_{w_{i}}(\sigma w) (h\sigma)^{-1} \\
&= \frac{1}{|G_{i}|^{2}} \sum_{\sigma \in G} \sum_{h \in G_{i}} a_{w_{i}}(h^{-1}\sigma w) \sigma^{-1} \\
&= \frac{1}{|G_{i}|^{2}} \sum_{\sigma \in G} \sum_{h \in G_{i}} a_{hw_{i}}(\sigma w) \sigma^{-1} \\
&= \frac{1}{|G_{i}|} \sum_{\sigma \in G} a_{w_{i}}(\sigma w) \sigma^{-1} \\
&= \ell_{i}(w).
\end{aligned}
\end{equation*}
The second claim is proved similarly, and we leave it to the reader.
\end{proof}
Consider now
$$\widetilde{\Delta}(u) = \widetilde{\mathcal{I}} - \widetilde{\mathcal{A}}u + \widetilde{\mathcal{Q}}u^{2} \in {\rm End}_{\mathbb{C}[G]}(\widetilde{C}_{0}(Y))[u], $$
where $\widetilde{Q} = \widetilde{\mathcal{D}}- \widetilde{\mathcal{I}}$, and $\widetilde{\mathcal{I}}$ is the identity operator on $\widetilde{C}_{0}(Y)$.  As before, we view 
$$\widetilde{\Delta}(u) \in {\rm End}_{\mathbb{C}[G][u]}(\widetilde{C}_{0}(Y)[u])$$ 
via the isomorphism
$$\omega_{\widetilde{C}_{0}(Y)}:{\rm End}_{\mathbb{C}[G]}(\widetilde{C}_{0}(Y))[u] \stackrel{\simeq}{\longrightarrow} {\rm End}_{\mathbb{C}[G][u]}(\widetilde{C}_{0}(Y)[u]) $$
given by \cref{conv_iso_pol_vs}.  Recall now the $\mathbb{C}[G]$-module $M$ from (\ref{useful_sup}).  We have 
\begin{equation} \label{yo}
\widetilde{C}_{0}(Y)[u] \oplus M[u] \simeq (\mathbb{C}[G][u])^{g}, 
\end{equation}
and therefore, we have
$$\eta_{Y}(u) = {\rm det}_{\mathbb{C}[G][u]}(\widetilde{\Delta}(u) \oplus {\rm id}_{M[u]}). $$
A simple calculation and \cref{sim_for} shows that the matrix $(\widetilde{\delta}_{ij}(u))$ for the $\mathbb{C}[G][u]$-operator $\widetilde{\Delta}(u) \oplus {\rm id}_{M[u]}$ with respect to the standard $\mathbb{C}[G][u]$-basis of the free $\mathbb{C}[G][u]$-module (\ref{yo}) is given by
\begin{equation*}
\widetilde{\delta}_{ij}(u) =
\begin{cases}
- \ell_{i}(w_{j})u, &\text{ if } i \neq j;\\
1 - \ell_{i}(w_{i})u + ({\rm val}_{Y}(w_{i}) - 1)e_{i}u^{2}, &\text{ if } i=j.
\end{cases}
\end{equation*}
This last formula can be rephrased as follows.
\begin{theorem} \label{equi_exp}
With the notation as above, let $\mathbf{A} = (\mathbf{a}_{ij}) \in M_{g}(\mathbb{C}[G])$ be the matrix given by $\mathbf{a}_{ij} = \ell_{i}(w_{j})$, and let $\mathbf{Q} = (\mathbf{q}_{ij}) \in M_{g}(\mathbb{C}[G])$ be the diagonal matrix given by $\mathbf{q}_{ii} = ({\rm val}_{Y}(w_{i}) - 1)e_{i}$.  Then, one has
$$\eta_{Y}(u) = {\rm det}(\mathbf{I} - \mathbf{A}u + \mathbf{Q}u^{2}), $$
where $\mathbf{I}$ is the $g \times g$ identity matrix in $M_{g}(\mathbb{C}[G])$.
\end{theorem}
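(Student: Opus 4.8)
The plan is to observe that \cref{equi_exp} is simply a reformulation of the matrix computation carried out immediately before its statement, so that the proof reduces to matching entries and keeping track of what the $\mathbb{C}[G][u]$-determinant computes. First I would recall from \cref{equiv_ihara} that $\eta_{Y}(u) = {\rm det}_{\mathbb{C}[G][u]}(\Delta(u))$, and transport $\Delta(u)$ across the $\mathbb{C}[G]$-module isomorphism $C_{0}(Y) \simeq \widetilde{C}_{0}(Y)$ of (\ref{tri_iso}) after base change to $\mathbb{C}[G][u]$; by the very diagrams defining $\widetilde{\mathcal{A}}$ and $\widetilde{\mathcal{D}}$, the operator $\Delta(u)$ corresponds to $\widetilde{\Delta}(u)$, and since the determinant of an endomorphism of a finitely generated projective module is invariant under isomorphism, $\eta_{Y}(u) = {\rm det}_{\mathbb{C}[G][u]}(\widetilde{\Delta}(u))$.

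Next I would invoke (\ref{yo}): because $\widetilde{C}_{0}(Y)[u] \oplus M[u]$ is free of finite rank over $\mathbb{C}[G][u]$, the definition of the determinant on projective modules recalled in \cref{com_alg} gives $\eta_{Y}(u) = {\rm det}_{\mathbb{C}[G][u]}(\widetilde{\Delta}(u) \oplus {\rm id}_{M[u]})$, which is now the determinant of an endomorphism of the \emph{free} module $(\mathbb{C}[G][u])^{g}$ and hence the ordinary determinant of its matrix with respect to the standard basis. That matrix is exactly the matrix $(\widetilde{\delta}_{ij}(u))$ exhibited above, computed using \cref{dec_adj} and \cref{sim_for}.

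It then only remains to read off that $(\widetilde{\delta}_{ij}(u))$ coincides with $\mathbf{I} - \mathbf{A}u + \mathbf{Q}u^{2}$: off the diagonal the entry $-\ell_{i}(w_{j})u$ equals $-\mathbf{a}_{ij}u$, and on the diagonal $1 - \ell_{i}(w_{i})u + ({\rm val}_{Y}(w_{i})-1)e_{i}u^{2}$ equals $1 - \mathbf{a}_{ii}u + \mathbf{q}_{ii}u^{2}$, so ${\rm det}_{\mathbb{C}[G][u]}(\widetilde{\delta}_{ij}(u)) = {\rm det}(\mathbf{I} - \mathbf{A}u + \mathbf{Q}u^{2})$ and we are done. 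I do not expect a real obstacle here; the only things deserving a word of care are that the operators genuinely correspond under (\ref{tri_iso}) (which rests on \cref{dec_adj} and on the identities $\ell_{i}(w_{j})e_{i} = \ell_{i}(w_{j})$ of \cref{sim_for}) and the bookkeeping that ${\rm det}_{\mathbb{C}[G][u]}$ of an endomorphism of a free module is the usual matrix determinant, so that the notation ${\rm det}(\mathbf{I} - \mathbf{A}u + \mathbf{Q}u^{2})$ on the right-hand side is unambiguous.
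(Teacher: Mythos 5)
Your proposal is correct and follows exactly the route the paper takes: the paper presents no separate proof environment for \cref{equi_exp} precisely because the theorem is a rephrasing of the computation carried out just before it, namely transporting $\Delta(u)$ to $\widetilde{\Delta}(u)$ via (\ref{tri_iso}), stabilizing by $M[u]$ using (\ref{useful_sup}) and (\ref{yo}), and reading off the matrix $(\widetilde{\delta}_{ij}(u))$ with the help of \cref{dec_adj} and \cref{sim_for}. Your bookkeeping points (invariance of the determinant under the isomorphism, and that ${\rm det}_{\mathbb{C}[G][u]}$ on the free module is the ordinary matrix determinant) are exactly the implicit steps the paper relies on.
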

This last theorem is convenient to calculate the polynomial $\eta_{Y}(u)$.  We show how this can be done in the following example. 

\begin{example} \label{ex11}
We use the construction outlined in \cref{volta} and explained in detail in \cite[\S 4.1]{Gambheera/Vallieres:2024} to give an example.  We take $X$ to be the graph consisting of two vertices joined by two undirected edges.  We label the vertices $V_{X} = \{v_{1},v_{2}\}$ and we let $s_{1},s_{2}$ be the two directed edges going from $v_{1}$ to $v_{2}$.  We take $G = \mathbb{Z}/4\mathbb{Z}$ to be a cyclic group of order four.  We take the voltage assigment $\alpha:\mathbf{E}_{X} \rightarrow G$ given by
$$\alpha(s_{1}) = \bar{1}, \text{ and } \alpha(s_{2}) = \bar{2}. $$
Moreover, we let $G_{1} = \{\bar{0} \}$, and $G_{2} = \{\bar{0},\bar{2}\} = \langle \bar{2} \rangle$.  The graph $Y=X(G,\mathcal{G},\alpha)$ is a branched cover of $X$ with ramification above the vertex $v_{2}$ only, and can be visualized as follows:
\begin{equation*}  
\begin{tikzpicture}[baseline={([yshift=-0.7ex] current bounding box.center)}]
\draw[fill=black] (-1,3/4) circle (1pt);
\draw[fill=black] (-1,-3/4) circle (1pt);
\draw[fill=black] (1,3/4) circle (1pt);
\draw[fill=black] (1,-3/4) circle (1pt);
\draw[fill=black] (-1/2,0) circle (1pt);
\draw[fill=black] (1/2,0) circle (1pt);
\path (-1,3/4) edge (-1/2,0);
\path (-1,3/4) edge (1/2,0);
\path (1,3/4) edge (-1/2,0);
\path (1,3/4) edge (1/2,0);
\path (1,-3/4) edge (-1/2,0);
\path (1,-3/4) edge (1/2,0);
\path (-1,-3/4) edge (-1/2,0);
\path (-1,-3/4) edge (1/2,0);
\end{tikzpicture}
\, \, \, \, \, \rightarrow \, \, \, \, \, 
\begin{tikzpicture}[baseline={([yshift=-0.7ex] current bounding box.center)}]
\draw[fill=black] (-3/4,0) circle (1pt);
\draw[fill=black] (3/4,0) circle (1pt);
\path (-3/4,0) edge [bend right=15] (3/4,0);
\path (-3/4,0) edge [bend left=15] (3/4,0);
\end{tikzpicture}
\end{equation*}
We let
$$w_{1} = (v_{1},\bar{0}) \text{ and } w_{2} = (v_{2},G_{2}). $$
With this choice of the vertices $w_{i} \in V_{Y}$, one calculates the $\ell_{i}(w_{j})$ and $({\rm val}_{Y}(w_{i})-1)e_{i}$ to obtain
\begin{equation*}
\mathbf{A} = \begin{pmatrix} 0 & {N_{G}} \\ \frac{1}{2}N_{G} & 0 \end{pmatrix}
\text{ and }
\mathbf{Q} = \begin{pmatrix} 1 & 0 \\ 0&3e_{2} \end{pmatrix}
\end{equation*}
so that
\begin{equation*}
\begin{aligned}
\eta_{Y}(u) &= {\rm det}(\mathbf{I} - \mathbf{A}u + \mathbf{Q}u^{2}) \\
&= 1 + \frac{1}{2}(\bar{0} - 4\cdot \bar{1} - \bar{2} - 4\cdot\bar{3})u^{2} + \frac{3}{2}(\bar{0} + \bar{2})u^{4} \in \mathbb{C}[G][u]. 
\end{aligned}
\end{equation*}
\demo
\end{example}

\subsection{The induction and inflation properties} \label{ind_inf}
Throughout this section, $G$ will be a finite abelian group, $H \le G$, and $\Gamma = G/H$.  Let $Y$ be a finite graph on which $G$ acts without inversion.  Then $H$ also acts on $Y$ without inversion.  In addition to the natural projection map $\pi_{H}:\mathbb{C}[G] \rightarrow \mathbb{C}[\Gamma]$, which is a $\mathbb{C}$-algebra morphism, we will introduce two functions
\begin{equation} \label{t_and_n}
T_{G/H}:\mathbb{C}[G] \rightarrow \mathbb{C}[H] \text{ and } N_{G/H}:\mathbb{C}[G] \rightarrow \mathbb{C}[H] 
\end{equation}
that are not $\mathbb{C}$-algebra morphisms in general, but are nonetheless useful.  Their definitions are as follows.  The module $\mathbb{C}[G]$ is a free $\mathbb{C}[H]$-module of finite rank, for if $\{\sigma_{1},\ldots,\sigma_{n}\}$ is a complete set of representatives for $\Gamma$, then the morphisms $\mathbb{C}[H] \rightarrow \mathbb{C}[H] \sigma_{i}$ of $\mathbb{C}[H]$-modules defined via $\lambda \mapsto \lambda \sigma_{i}$ are isomorphisms for all $i=1,\ldots,n$; therefore,
$$\mathbb{C}[G] = \bigoplus_{i=1}^{n}\mathbb{C}[H] \sigma_{i} \simeq (\mathbb{C}[H])^{n}. $$
Given $\lambda \in \mathbb{C}[G]$, we let 
$$m_{\lambda}:\mathbb{C}[G] \rightarrow \mathbb{C}[G]$$ 
denote the multiplication by $\lambda$ map; it is a morphism of $\mathbb{C}[H]$-modules, and the maps $T_{G/H}$ and $N_{G/H}$ from (\ref{t_and_n}) above are defined via
$$T_{G/H}(\lambda) = {\rm tr}_{\mathbb{C}[H]}(m_{\lambda}) \text{ and } N_{G/H}(\lambda) = {\rm det}_{\mathbb{C}[H]}(m_{\lambda}), $$
respectively.  It follows that $T_{G/H}$ is $\mathbb{C}[H]$-linear, and $N_{G/H}$ is multiplicative.  The two maps $T_{G/H}$ and $N_{G/H}$ can be extended to power series as follows.  The $\mathbb{C}$-algebra of power series $\mathbb{C}[G]\llbracket u \rrbracket$ is a free $\mathbb{C}[H]\llbracket u \rrbracket$-module of rank $n$ as well.  Therefore, if $P \in \mathbb{C}[G] \llbracket u \rrbracket$, then the multiplication by $P$ map 
\begin{equation} \label{mult_ps}
m_{P}:\mathbb{C}[G]\llbracket u \rrbracket \rightarrow \mathbb{C}[G]\llbracket u \rrbracket
\end{equation}
is a morphism of $\mathbb{C}[H]\llbracket u \rrbracket$-modules, and one defines
$$T_{G/H}(P) = {\rm tr}_{\mathbb{C}[H]\llbracket u \rrbracket}(m_{P}) \text{ and } N_{G/H}(P) = {\rm det}_{\mathbb{C}[H]\llbracket u \rrbracket}(m_{P}). $$
As such, we obtain two functions 
\begin{equation} \label{t_and_n_ps}
T_{G/H}, N_{G/H}:\mathbb{C}[G]\llbracket u \rrbracket \rightarrow \mathbb{C}[H]\llbracket u \rrbracket, 
\end{equation}
which are $\mathbb{C}[H]\llbracket u \rrbracket$-linear and multiplicative, respectively.  
\begin{remark} \label{base_ch_rem}
For clarity, let us denote in this remark the map in (\ref{mult_ps}) above by $\widehat{m}_{P}$.  Given $P \in \mathbb{C}[G][u]$, we also have a $\mathbb{C}[H][u]$-module morphism 
$$m_{P}:\mathbb{C}[G][u] \rightarrow \mathbb{C}[G][u].$$  
By \cref{comp_base_change}, one has
\begin{equation} \label{iso_bc}
\mathbb{C}[H]\llbracket u \rrbracket \otimes_{\mathbb{C}[H][u]}\mathbb{C}[G][u] \simeq \mathbb{C}[G]\llbracket u \rrbracket,
\end{equation}
and we leave it to the reader to check that for all $P \in \mathbb{C}[G][u]$, the diagram
\begin{equation*}
\begin{tikzcd}
\mathbb{C}[G]\llbracket u \rrbracket \arrow[,d] \arrow["\widehat{m}_{P}",r] & \mathbb{C}[G]\llbracket u \rrbracket \arrow[,d]\\
\mathbb{C}[H]\llbracket u \rrbracket \otimes_{\mathbb{C}[H][u]}\mathbb{C}[G][u] \arrow["{\rm id} \otimes m_{P}",r]  & \mathbb{C}[H]\llbracket u \rrbracket \otimes_{\mathbb{C}[H][u]}\mathbb{C}[G][u]
\end{tikzcd}
\end{equation*}
where the two vertical arrows are the isomorphism (\ref{iso_bc}) above is commutative.  It follows that
$$N_{G/H}(\mathbb{C}[G][u]) \subseteq \mathbb{C}[H][u] \text{ and } T_{G/H}(\mathbb{C}[G][u]) \subseteq \mathbb{C}[H][u].$$
\end{remark}

We will now study various properties of the two functions $T_{G/H}$ and $N_{G/H}$ that will culminate in \cref{ind_really} below.  We start with the following one.
\begin{lemma} \label{exp_and_T_N}
The following diagram
\begin{equation*} 
\begin{CD}
	u\mathbb{C}[G]\llbracket u \rrbracket  @>{{\rm exp}_{\mathbb{C}[G]}}>>   1 + u\mathbb{C}[G]\llbracket u \rrbracket  \\
    @V{T_{G/H}}VV     @VV{{N_{G/H}} }V    &  \\
	u\mathbb{C}[H]\llbracket u \rrbracket    @>{{\rm exp}_{\mathbb{C}[H]}}>>   1 + u\mathbb{C}[H]\llbracket u \rrbracket
\end{CD}
\end{equation*}
commutes.
\end{lemma}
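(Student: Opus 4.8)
The plan is to reduce the statement to the matrix identity linking trace, determinant and exponential recorded in \cref{det_tr_exp}. Choose once and for all a complete set of representatives $\{\sigma_{1},\ldots,\sigma_{n}\}$ for $\Gamma=G/H$, so that $\{\sigma_{1},\ldots,\sigma_{n}\}$ is a free $\mathbb{C}[H]\llbracket u \rrbracket$-basis of $\mathbb{C}[G]\llbracket u \rrbracket$. This basis identifies ${\rm End}_{\mathbb{C}[H]\llbracket u \rrbracket}(\mathbb{C}[G]\llbracket u \rrbracket)$ with $M_{n}(\mathbb{C}[H]\llbracket u \rrbracket)=M_{n}(\mathbb{C}[H])\llbracket u \rrbracket$ as $\mathbb{C}[H]\llbracket u \rrbracket$-algebras, and under this identification $T_{G/H}(P)$ and $N_{G/H}(P)$ are, by their very definition, the trace and the determinant over $\mathbb{C}[H]\llbracket u \rrbracket$ of the matrix of the multiplication operator $m_{P}$ from (\ref{mult_ps}).

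The key point is that $P\mapsto m_{P}$ is a morphism of $\mathbb{C}[H]\llbracket u \rrbracket$-algebras — it is additive and unital, multiplicative because $m_{PQ}=m_{P}\circ m_{Q}$, and $\mathbb{C}[H]\llbracket u \rrbracket$-linear because $\mathbb{C}[G]\llbracket u \rrbracket$ is commutative — hence, after the identification above, a morphism of $\mathbb{Q}\llbracket u \rrbracket$-algebras $\mathbb{C}[G]\llbracket u \rrbracket\to M_{n}(\mathbb{C}[H])\llbracket u \rrbracket$. Part~(2) of \cref{basic_prop}, with $A=\mathbb{C}[G]$ and $B=M_{n}(\mathbb{C}[H])$, then yields $m_{\exp_{\mathbb{C}[G]}(P)}=\exp(m_{P})$ for every $P\in u\mathbb{C}[G]\llbracket u \rrbracket$, where the right-hand side is the ordinary exponential of matrices. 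One also notes that $\mathbb{Q}\llbracket u \rrbracket$-linearity of $m$ forces $m_{P}\in uM_{n}(\mathbb{C}[H]\llbracket u \rrbracket)$ whenever $P\in u\mathbb{C}[G]\llbracket u \rrbracket$; this is exactly the hypothesis under which \cref{det_tr_exp} applies to $m_{P}$.

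It then remains only to chain these facts together. For $P\in u\mathbb{C}[G]\llbracket u \rrbracket$,
$$N_{G/H}\big(\exp_{\mathbb{C}[G]}(P)\big)={\rm det}_{\mathbb{C}[H]\llbracket u \rrbracket}\big(m_{\exp_{\mathbb{C}[G]}(P)}\big)={\rm det}_{\mathbb{C}[H]\llbracket u \rrbracket}\big(\exp(m_{P})\big),$$
and \cref{det_tr_exp}, applied over the commutative $\mathbb{Q}$-algebra $R=\mathbb{C}[H]$ to the matrix $m_{P}\in uM_{n}(\mathbb{C}[H]\llbracket u \rrbracket)$, identifies the last expression with $\exp_{\mathbb{C}[H]}\big({\rm tr}_{\mathbb{C}[H]\llbracket u \rrbracket}(m_{P})\big)=\exp_{\mathbb{C}[H]}\big(T_{G/H}(P)\big)$. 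Since $P$ was arbitrary in $u\mathbb{C}[G]\llbracket u \rrbracket$, this is precisely the asserted commutativity. I do not expect any real obstacle: the only points needing a sentence of justification are the basis-independent identification of $T_{G/H}$ and $N_{G/H}$ with the matrix trace and determinant of $m_{P}$ (which rests on the base-change behaviour of trace and determinant recalled in \cref{com_alg}) and the observation that $m_{P}$ has vanishing constant term, both of which are routine.
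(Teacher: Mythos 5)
Your proposal is correct and follows essentially the same route as the paper's proof: both identify $T_{G/H}$ and $N_{G/H}$ with the matrix trace and determinant of the multiplication operator $m_{P}$, use the fact that $P\mapsto m_{P}$ is a morphism of $\mathbb{Q}\llbracket u\rrbracket$-algebras together with \cref{basic_prop}(2) to get $m_{\exp_{\mathbb{C}[G]}(P)}=\exp(m_{P})$, and then conclude by \cref{det_tr_exp}. The only difference is presentational (a chain of equalities versus the paper's two commutative diagrams), so nothing further is needed.
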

\begin{proof}
We note first that
$$T_{G/H}(u\mathbb{C}[G] \llbracket u \rrbracket) \subseteq u \mathbb{C}[H]\llbracket u \rrbracket \text{ and } N_{G/H}(1+u\mathbb{C}[G]\llbracket u \rrbracket) \subseteq 1 + u \mathbb{C}[H]\llbracket u \rrbracket$$
so that both vertical arrows land in the correct codomains.  Indeed, if $P \in u \mathbb{C}[G]\llbracket u \rrbracket$, then the matrix of $m_{P}$ consists of power series in $u \mathbb{C}[H]\llbracket u \rrbracket$, and thus the first inclusion follows.  For the second inclusion, if $P \in 1 + u \mathbb{C}[G]\llbracket u \rrbracket$, then the elements on the diagonal of the matrix of $m_{P}$ are all in $1 + u \mathbb{C}[H]\llbracket u \rrbracket$ and the other power series off the diagonal are in $u \mathbb{C}[H]\llbracket u \rrbracket$.  Since $u \mathbb{C}[H]\llbracket u \rrbracket$ is an ideal of $\mathbb{C}[H]\llbracket u \rrbracket$, the determinant of such a matrix lands back in $1 + u\mathbb{C}[H]\llbracket u \rrbracket$, and the second inclusion follows as well.

To simplify the notation, we now let $E = {\rm End}_{\mathbb{C}[H]}(\mathbb{C}[G])$ and we let also $\beta:u\mathbb{C}[G]\llbracket u \rrbracket \rightarrow u{\rm End}_{\mathbb{C}[H]\llbracket u \rrbracket}(\mathbb{C}[G]\llbracket u \rrbracket)$ be the map $P \mapsto m_{P}$.  Note first that the diagram
\begin{equation*}
\begin{tikzcd}
u\mathbb{C}[G]\llbracket u \rrbracket \arrow["{\rm exp}_{\mathbb{C}[G]}",d] \arrow["\beta",r] & u{\rm End}_{\mathbb{C}[H]\llbracket u \rrbracket}(\mathbb{C}[G]\llbracket u \rrbracket)  \arrow["\omega_{\mathbb{C}[G]}^{-1}",r] & uE\llbracket u \rrbracket \arrow["{\rm exp}_{E}",d]\\
1+u\mathbb{C}[G]\llbracket u \rrbracket\arrow["\beta",r] & 1+u{\rm End}_{\mathbb{C}[H]\llbracket u \rrbracket}(\mathbb{C}[G]\llbracket u \rrbracket) \arrow["\omega_{\mathbb{C}[G]}^{-1}",r] & 1+uE\llbracket u \rrbracket
\end{tikzcd},
\end{equation*}
commutes by \cref{basic_prop} after noting that $\omega_{\mathbb{C}[G]}^{-1} \circ \beta$ is a morphism of $\mathbb{Q}\llbracket u \rrbracket$-algebras.  The result would then follow if we could show that the following diagram
\begin{equation*}
\begin{tikzcd}
uE\llbracket u \rrbracket \arrow["{\rm exp}_{E}",d] \arrow["\omega_{\mathbb{C}[G]}",r] & u{\rm End}_{\mathbb{C}[H]\llbracket u \rrbracket}(\mathbb{C}[G]\llbracket u \rrbracket) \arrow[,d] \arrow["{\rm tr}_{\mathbb{C}[H]\llbracket u \rrbracket}",r] & u\mathbb{C}[H] \llbracket u \rrbracket \arrow["{\rm exp}_{\mathbb{C}[H]}",d]\\
1+uE\llbracket u \rrbracket\arrow["\omega_{\mathbb{C}[G]}",r] & 1+u{\rm End}_{\mathbb{C}[H]\llbracket u \rrbracket}(\mathbb{C}[G]\llbracket u \rrbracket) \arrow["{\rm det}_{\mathbb{C}[H]\llbracket u \rrbracket}",r] & 1+ u\mathbb{C}[H] \llbracket u \rrbracket
\end{tikzcd}
\end{equation*}
commutes, where the middle vertical arrow is induced by the left vertical arrow.  But after picking an ordered $\mathbb{C}[H]\llbracket u \rrbracket$-basis for $\mathbb{C}[G]\llbracket u \rrbracket$, the right square becomes
\begin{equation*}
\begin{tikzcd}
uM_{n}(\mathbb{C}[H]\llbracket u \rrbracket) \arrow[,d] \arrow["{\rm tr}_{\mathbb{C}[H]\llbracket u \rrbracket}",r] & u\mathbb{C}[H] \llbracket u \rrbracket \arrow["{\rm exp}_{\mathbb{C}[H]}",d]\\
I+uM_{n}(\mathbb{C}[H]\llbracket u \rrbracket) \arrow["{\rm det}_{\mathbb{C}[H]\llbracket u \rrbracket}",r] & 1+ u\mathbb{C}[H] \llbracket u \rrbracket
\end{tikzcd}
\end{equation*}
where the left vertical arrow is the usual exponential of matrices.  One concludes the proof by appealing to \cref{det_tr_exp}.
\end{proof}
The next property we study is the relationship between the function $T_{G/H}$ and the equivariant Euler characteristic.
\begin{lemma} \label{eul_and_T}
With the notation as above, one has
$$T_{G/H}(\chi_{\mathbb{C}[G]}(Y)) = \chi_{\mathbb{C}[H]}(Y). $$
\end{lemma}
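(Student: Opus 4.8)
The plan is to reduce this to a transitivity property of the Hattori--Stallings rank under restriction of scalars along the inclusion $\mathbb{C}[H] \hookrightarrow \mathbb{C}[G]$. Concretely, I would first establish the following transitivity of trace: if $M$ is a finitely generated projective $\mathbb{C}[G]$-module and $f \in {\rm End}_{\mathbb{C}[G]}(M)$, viewed also as an element of ${\rm End}_{\mathbb{C}[H]}(M)$, then
\[
{\rm tr}_{\mathbb{C}[H]}(f) \;=\; T_{G/H}\bigl({\rm tr}_{\mathbb{C}[G]}(f)\bigr).
\]
Granting this, apply it with $M = C_0(Y)$ and $M = C_1(Y)$ (both projective over $\mathbb{C}[G]$, as noted in \cref{equiv_sec}) and $f$ the identity map: since ${\rm rank}$ is by definition the trace of the identity, we get ${\rm rank}_{\mathbb{C}[H]}(C_i(Y)) = T_{G/H}\bigl({\rm rank}_{\mathbb{C}[G]}(C_i(Y))\bigr)$ for $i = 0,1$. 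As $T_{G/H}$ is $\mathbb{C}[H]$-linear, in particular additive, subtracting these two equalities gives $\chi_{\mathbb{C}[H]}(Y) = {\rm rank}_{\mathbb{C}[H]}(C_0(Y)) - {\rm rank}_{\mathbb{C}[H]}(C_1(Y)) = T_{G/H}(\chi_{\mathbb{C}[G]}(Y))$, which is the assertion.

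To prove the transitivity of trace, I would first do the free case: say $F$ is free over $\mathbb{C}[G]$ with basis $b_1,\dots,b_m$, and fix a complete set of representatives $\sigma_1,\dots,\sigma_n$ of $\Gamma = G/H$, so that $\{\sigma_j\}$ is a $\mathbb{C}[H]$-basis of $\mathbb{C}[G]$ and $\{\sigma_j b_i\}$ is a $\mathbb{C}[H]$-basis of $F$. If $(a_{ik}) \in M_m(\mathbb{C}[G])$ is the matrix of $f$ with respect to $(b_i)$, then, using that $G$ is abelian so that $\sigma_j a_{ik} = a_{ik}\sigma_j$, one checks that, after grouping the $\mathbb{C}[H]$-basis as $\{\sigma_1 b_i,\dots,\sigma_n b_i\}_{i=1}^{m}$, the $(i,i)$ diagonal block of the matrix of $f$ is exactly the matrix of the multiplication map $m_{a_{ii}}\colon \mathbb{C}[G]\to\mathbb{C}[G]$ in the basis $(\sigma_j)$. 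Hence ${\rm tr}_{\mathbb{C}[H]}(f) = \sum_{i} {\rm tr}_{\mathbb{C}[H]}(m_{a_{ii}}) = \sum_i T_{G/H}(a_{ii}) = T_{G/H}\bigl(\sum_i a_{ii}\bigr) = T_{G/H}({\rm tr}_{\mathbb{C}[G]}(f))$. For the general projective case, write $F = M \oplus N$ with $F$ free over $\mathbb{C}[G]$; then $F$ is free over $\mathbb{C}[H]$ and $N$ is projective over $\mathbb{C}[H]$, so by the well-definedness of the trace of endomorphisms of finitely generated projective modules (recalled in \cref{com_alg}) both ${\rm tr}_{\mathbb{C}[G]}(f)$ and ${\rm tr}_{\mathbb{C}[H]}(f)$ agree with the respective traces of $f \oplus 0_N \in {\rm End}(F)$, and the free case applied to $f \oplus 0_N$ finishes the argument.

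The only mildly delicate point is the identification of the diagonal blocks in the free case with the matrix of $m_{a_{ii}}$; everything else is bookkeeping together with the linearity and additivity of $T_{G/H}$. An alternative but longer route would avoid this lemma by instead applying $T_{G/H}$ term-by-term to the explicit formula of \cref{exp_equi_euler_char}: one would verify that $T_{G/H}(e_{K}) = [G:HK]\,e_{H\cap K}$ for every subgroup $K \le G$ (with $e_{H\cap K}$ now an idempotent of $\mathbb{C}[H]$), and then match $\sum_{v\in V_X}[G:HG_v]\,e_{H\cap G_v} - \sum_{s\in O_X}[G:HG_s]\,e_{H\cap G_s}$ against the explicit formula for $\chi_{\mathbb{C}[H]}(Y)$ coming from the decomposition of the fibers of $Y\to X$ into $H$-orbits, all stabilizers within a fiber being equal since $G$ is abelian.
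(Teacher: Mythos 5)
Your main argument is correct, but it takes a genuinely different route from the paper. You prove an abstract transitivity statement for the Hattori--Stallings trace under restriction of scalars along $\mathbb{C}[H]\hookrightarrow\mathbb{C}[G]$ — namely ${\rm tr}_{\mathbb{C}[H]}(f)=T_{G/H}({\rm tr}_{\mathbb{C}[G]}(f))$, verified on free modules by a block-matrix computation and extended to projectives via the well-definedness of the trace recalled in \cref{com_alg} — and then apply it to ${\rm id}$ on $C_0(Y)$ and $C_1(Y)$. This works (the block identification is exactly right, and the passage from the free to the projective case is legitimate because a free $\mathbb{C}[G]$-module of finite rank is free of finite rank over $\mathbb{C}[H]$), and it has the virtue of never touching the combinatorics of the graph: it is really a statement about restriction of scalars for arbitrary finitely generated projective modules. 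The paper instead computes $T_{G/H}(e_K)$ explicitly for subgroups $K\le G$, obtaining $T_{G/H}(e_K)=\tfrac{(G:K)}{(H:H\cap K)}\,e_{H\cap K}$, and then matches the resulting sum against the explicit formula of \cref{exp_equi_euler_char} for the $H$-action by counting how many $H$-orbits of vertices (and edges) of $Y$ lie over each $G$-orbit; this is precisely your sketched ``alternative route,'' and your closed form $T_{G/H}(e_K)=(G:HK)\,e_{H\cap K}$ agrees with the paper's via $(H:H\cap K)=(HK:K)$. Your primary argument buys generality and avoids the orbit count; the paper's buys an explicit idempotent identity that it reuses nowhere else but that makes the geometric content (fibers of $Y_H\to Y_G$) visible.
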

\begin{proof}
Let $H,K \le G$.  We start by showing that
\begin{equation} \label{to_pro}
\frac{1}{(G:K)}T_{G/H}(e_{K}) = \frac{1}{(H:H \cap K)} e_{H \cap K}. 
\end{equation}
For that purpose, let $\{\sigma_{1},\ldots, \sigma_{n} \}$ be a complete set of representatives of $G/H$.  For $j=1,\ldots,n$, we have
\begin{equation*} 
\begin{aligned}
e_{K} \cdot \sigma_{j} &= \frac{1}{|K|} \sum_{k \in K} k \cdot \sigma_{j} \\
&= \frac{1}{|K|} \sum_{i=1}^{n} \lambda_{ij}(K) \cdot \sigma_{i},
\end{aligned}
\end{equation*}
where
$$\lambda_{ij}(K) = \sum_{x \in K\sigma_{j} \cap H \sigma_{i}} h(x), $$
with the convention that $h(x)$ is the unique element in $H$ satisfying $x = h(x)\sigma_{i}$.  Then, we have
\begin{equation*}
\begin{aligned}
T_{G/H}(e_{K}) &= \frac{1}{|K|} \sum_{i=1}^{n} \lambda_{ii}(K) \\
&= \frac{1}{|K|} \sum_{i=1}^{n} \sum_{x \in K\sigma_{i} \cap H\sigma_{i}} h(x) \\
&= \frac{1}{|K|} \sum_{i=1}^{n} \sum_{x \in (K\cap H )\sigma_{i}}h(x) \\
&= \frac{1}{|K|} \sum_{i=1}^{n} \sum_{x \in K \cap H} x \\
&= \frac{(G:H)}{|K|} |K \cap H| e_{K \cap H}.
\end{aligned}
\end{equation*}
Using Lagrange's theorem, a simple calculation shows that
$$\frac{(G:H)}{(G:K)}\frac{|K \cap H|}{|K|} = \frac{1}{(H:K\cap H)} $$
from which (\ref{to_pro}) above follows.  Thus for each $w \in V_{Y}$, one has
$$T_{G/H}(e_{G_{w}}) = \frac{(G:G_{w})}{(H:H_{w})} e_{H_{w}}, $$
since $H_{w} = G_{w} \cap H$.  The quotient group $\Gamma = G/H$ acts on $Y_{H}$, and one has a short exact sequence
\begin{equation}
0 \rightarrow H \cdot G_{w}/G_{w} \rightarrow G/G_{w} \rightarrow \Gamma/\Gamma_{Hw} \rightarrow 0
\end{equation}
of groups, where $\Gamma_{Hw} = {\rm Stab}_{\Gamma}(Hw)$.  Since $H/H_{w} \simeq H \cdot G_{w}/G_{w}$, one has
$$\frac{(G:G_{w})}{(H:H_{w})} = (\Gamma:\Gamma_{Hw}). $$
Now, since $(\Gamma:\Gamma_{Hw})$ is precisely the number of vertices of $Y_{H}$ that maps to the vertex $Gw \in Y_{G}$ via the natural graph morphism $Y_{H} \rightarrow Y_{G}$, one obtains the equality
$$\sum_{v \in V_{Y_G}}T_{G/H}(e_{G_{v}}) = \sum_{w \in V_{Y_{H}}}e_{H_{w}}. $$
A similar argument works for edges as well, and thus the result follows from \cref{exp_equi_euler_char}. 
\end{proof}
Next, we look at the relationship between $N_{G/H}$ and the determinant in the following lemma.
\begin{lemma} \label{det_and_N}
Let $M$ be a finitely generated $\mathbb{C}[G]$-module.  Then the diagram
\begin{equation*} 
\begin{CD}
	{\rm End}_{\mathbb{C}[G][u]}(M[u])  @>>> {\rm End}_{\mathbb{C}[H][u]}(M[u])     \\
    @V{{\rm det}_{\mathbb{C}[G][u]}}VV     @VV{ {\rm det}_{\mathbb{C}[H][u]}}V    &  \\
	\mathbb{C}[G][u]    @>{N_{G/H}}>>   \mathbb{C}[H][u]
\end{CD}
\end{equation*}
commutes, where the first horizontal arrow is obtained by viewing a $\mathbb{C}[G][u]$-module operator on $M[u]$ as a $\mathbb{C}[H][u]$-module operator which is possible, since $H \le G$.  
\end{lemma}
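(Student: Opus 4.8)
The plan is to reduce to the case where $M$ is free over $\mathbb{C}[G]$, pick coordinates so that everything becomes matrices, recognize $N_{G/H}$ as the determinant of the regular representation of $\mathbb{C}[G][u]$ over $\mathbb{C}[H][u]$, and then invoke the block-determinant formula \cref{silvester}.

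First I would dispose of the projectivity issue. Since $\mathbb{C}[G]$ is semisimple, $M$ is projective, so there is a $\mathbb{C}[G]$-module $N$ with $M \oplus N = F$ free of finite rank over $\mathbb{C}[G]$; then $F[u] = M[u] \oplus N[u]$ is free of finite rank over $\mathbb{C}[G][u]$, and also free of finite rank over $\mathbb{C}[H][u]$ because $\mathbb{C}[G][u]$ is free of finite rank over $\mathbb{C}[H][u]$. Given $f \in {\rm End}_{\mathbb{C}[G][u]}(M[u])$, the map $f \oplus {\rm id}_{N[u]}$ is still $\mathbb{C}[G][u]$-linear, and the definition of the determinant of an endomorphism of a finitely generated projective module together with (\ref{det_prop}) gives ${\rm det}_{\mathbb{C}[G][u]}(f) = {\rm det}_{\mathbb{C}[G][u]}(f \oplus {\rm id}_{N[u]})$ and likewise ${\rm det}_{\mathbb{C}[H][u]}(f) = {\rm det}_{\mathbb{C}[H][u]}(f \oplus {\rm id}_{N[u]})$. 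Hence it suffices to prove the commutativity of the square for $f \oplus {\rm id}_{N[u]}$, i.e. we may assume $M = F$ is free over $\mathbb{C}[G]$, so that $M[u]$ is free of some rank $m$ over $\mathbb{C}[G][u]$.

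Next I would set up coordinates. Fix a $\mathbb{C}[G][u]$-basis of $M[u]$, so that $f$ is represented by a matrix $B = (b_{ij}) \in M_{m}(\mathbb{C}[G][u])$ with ${\rm det}_{\mathbb{C}[G][u]}(f) = {\rm det}_{\mathbb{C}[G][u]}(B)$. Then fix a complete set of coset representatives $\sigma_{1},\ldots,\sigma_{n}$ for $\Gamma = G/H$; this gives a $\mathbb{C}[H][u]$-basis of $\mathbb{C}[G][u]$ and hence a ring morphism $\rho \colon \mathbb{C}[G][u] \to M_{n}(\mathbb{C}[H][u])$, namely $\lambda \mapsto$ the matrix of the multiplication-by-$\lambda$ map $m_{\lambda}$ on $\mathbb{C}[G][u]$. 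By the very definition of $N_{G/H}$ on polynomials (see \cref{base_ch_rem}) one has $N_{G/H} = {\rm det}_{\mathbb{C}[H][u]} \circ \rho$ on $\mathbb{C}[G][u]$. Tensoring the two bases gives a $\mathbb{C}[H][u]$-basis of $M[u]$, and a direct computation of $f$ on these basis vectors shows that the matrix of $f$, viewed as a $\mathbb{C}[H][u]$-endomorphism of $M[u]$ in this basis, is obtained from $B$ by replacing each entry $b_{ij}$ with the $n \times n$ block $\rho(b_{ij})$; that is, it is the image of $B$ under the map $M_{m}(\mathbb{C}[G][u]) \to M_{m}(M_{n}(\mathbb{C}[H][u])) \cong M_{mn}(\mathbb{C}[H][u])$ induced entrywise by $\rho$, exactly the first vertical arrow in \cref{silvester}. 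Consequently ${\rm det}_{\mathbb{C}[H][u]}(f)$ equals ${\rm det}_{\mathbb{C}[H][u]}$ of this block matrix.

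Finally I would apply \cref{silvester} with $R = \mathbb{C}[H][u]$, $S = \mathbb{C}[G][u]$, and $\rho$ the regular representation just described: it asserts precisely that ${\rm det}_{\mathbb{C}[H][u]}$ of the entrywise-$\rho$ image of $B$ equals ${\rm det}_{\mathbb{C}[H][u]}\bigl(\rho({\rm det}_{\mathbb{C}[G][u]}(B))\bigr)$, and the right-hand side is $N_{G/H}({\rm det}_{\mathbb{C}[G][u]}(B)) = N_{G/H}({\rm det}_{\mathbb{C}[G][u]}(f))$. Together with the identification in the previous paragraph, this is exactly the commutativity of the square. The only genuinely delicate point is the bookkeeping that identifies the $\mathbb{C}[H][u]$-matrix of $f$ with the block matrix $(\rho(b_{ij}))$: one must keep the left-module and column-vector conventions consistent throughout (and note that reindexing the blocks or transposing within blocks is harmless, since $\mathbb{C}[G][u]$ is commutative and determinants are insensitive to transposition). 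Once that is pinned down, \cref{silvester} does all the work, and the reduction steps are routine.
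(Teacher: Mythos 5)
Your proposal is correct and follows essentially the same route as the paper's proof: reduce to the case of a free module, represent the operator by a matrix over $\mathbb{C}[G][u]$, realize $N_{G/H}$ as $\det_{\mathbb{C}[H][u]}\circ\rho$ for the regular representation $\rho:\mathbb{C}[G][u]\to M_n(\mathbb{C}[H][u])$, and conclude with the block-determinant theorem (\cref{silvester}). Your treatment of the reduction to the free case and of the basis bookkeeping is in fact slightly more explicit than the paper's, but the substance is identical.
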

\begin{proof}
It suffices to show the theorem for when $M$ is free.  So let us assume that $F$ is a free $\mathbb{C}[G]$-module of finite rank.  We fix an ordered $\mathbb{C}[G]$-basis $(f_{1},\ldots,f_{m})$ of $F$, and also an ordered $\mathbb{C}[H]$-basis $(\sigma_{1},\ldots,\sigma_{n})$ of $\mathbb{C}[G]$.  Note that $(f_{1},\ldots,f_{m})$ is also an ordered $\mathbb{C}[G][u]$-basis of $F[u]$ and $(\sigma_{1},\ldots,\sigma_{n})$ an ordered $\mathbb{C}[H][u]$-basis of $\mathbb{C}[G][u]$.  Consider now the ring morphism
$$\rho: \mathbb{C}[G][u] \rightarrow M_{n}(\mathbb{C}[H][u]) $$
given by $P \mapsto \rho(P) = [m_{P}]$, where $[m_{P}]$ is the matrix representing the $\mathbb{C}[H][u]$-operator $m_{P}$ with respect to the basis $(\sigma_{1},\ldots,\sigma_{n})$.  \cref{silvester} gives a commutative diagram
\begin{equation*}
\begin{tikzcd}
M_{m}(\mathbb{C}[G][u]) \arrow[,d] \arrow["{\rm det}_{\mathbb{C}[G][u]}",r] & \mathbb{C}[G][u]  \arrow["\rho",r] & M_{n}(\mathbb{C}[H][u]) \arrow["{\rm det}_{\mathbb{C}[H][u]}",d]\\
M_{m}(M_{n}(\mathbb{C}[H][u]))  \arrow["\simeq",r] & M_{mn}(\mathbb{C}[H][u]) \arrow["{\rm det}_{\mathbb{C}[H][u]}",r] & \mathbb{C}[H][u]
\end{tikzcd}
\end{equation*}
Moreover, we also have another commutative diagram
\begin{equation*}
\begin{tikzcd}
{\rm End}_{\mathbb{C}[G][u]}(F[u]) \arrow["\simeq",r] \arrow[,d] & M_{m}(\mathbb{C}[G][u]) \arrow[,d]  \\
{\rm End}_{\mathbb{C}[H][u]}(F[u]) \arrow["\simeq",r] & M_{m}(M_{n}(\mathbb{C}[H][u])),
\end{tikzcd}
\end{equation*}
where the first horizontal arrow is the one obtained by writing the matrix of a $\mathbb{C}[G][u]$-operator on $F[u]$ using the ordered $\mathbb{C}[G][u]$-basis $(f_{1},\ldots,f_{m})$ for $F[u]$, and the bottom horizontal arrow is the one obtained by writing the matrix of a $\mathbb{C}[H][u]$-operator on $F[u]$ using the ordered $\mathbb{C}[H][u]$-basis $(\sigma_{1}f_{1},\ldots,\sigma_{n}f_{1},\sigma_{1}f_{2},\ldots)$ of $F[u]$.  Putting these two commutative diagrams together gives the desired result.
\end{proof}

We are now in a position to formulate our main result of this section which is the equivariant way of thinking about the induction property of the Artin formalism.  (Compare with the second statement of \cite[Proposition 1.8 on page 87]{Tate:1984}.)  Since we need to keep track of the group that is acting on our graph $Y$, we shall write $\gamma_{(Y,G)}(u)$ instead of $\gamma_{Y}(u)$, and similarly for $\eta_{Y}(u)$, and $\theta_{Y}(u)$.
\begin{theorem} \label{ind_really}
One has
$$N_{G/H}(\gamma_{(Y,G)}(u)) = \gamma_{(Y,H)}(u) \text{ and } N_{G/H}(\eta_{(Y,G)}(u)) = \eta_{(Y,H)}(u), $$
and thus, the following equality $N_{G/H}(\theta_{(Y,G)}(u)) = \theta_{(Y,H)}(u)$ also holds true.
\end{theorem}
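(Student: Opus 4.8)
The plan is to handle the two factors in the defining product $\theta_{(Y,G)}(u)^{-1} = \gamma_{(Y,G)}(u)\cdot\eta_{(Y,G)}(u)$ separately, each via one of the three lemmas just established, and then to combine the two identities using that $N_{G/H}$ is multiplicative.

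For the $\gamma$-part I would first put $\gamma_{(Y,G)}(u) = (1-u^{2})^{-\chi_{\mathbb{C}[G]}(Y)}$ into exponential form. Setting $L(u) = \sum_{k \ge 1} u^{2k}/k \in u\mathbb{Q}\llbracket u \rrbracket$ and applying \cref{basic_prop} to the structure morphism $\mathbb{Q}\llbracket u \rrbracket \to \mathbb{C}[G]\llbracket u \rrbracket$, one gets $\log_{\mathbb{C}[G]}(1-u^{2}) = -L(u)$, so (\ref{exponent}) yields $\gamma_{(Y,G)}(u) = \exp_{\mathbb{C}[G]}\bigl(\chi_{\mathbb{C}[G]}(Y)\cdot L(u)\bigr)$ with $\chi_{\mathbb{C}[G]}(Y)\cdot L(u) \in u\mathbb{C}[G]\llbracket u \rrbracket$. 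Then I apply $N_{G/H}$ and invoke \cref{exp_and_T_N} to obtain $N_{G/H}(\gamma_{(Y,G)}(u)) = \exp_{\mathbb{C}[H]}\bigl(T_{G/H}(\chi_{\mathbb{C}[G]}(Y)\cdot L(u))\bigr)$. Since $L(u)$ is a scalar series it lies in $\mathbb{C}[H]\llbracket u \rrbracket$, so the $\mathbb{C}[H]\llbracket u \rrbracket$-linearity of $T_{G/H}$ lets me pull it out, leaving $L(u)\cdot T_{G/H}(\chi_{\mathbb{C}[G]}(Y)) = L(u)\cdot\chi_{\mathbb{C}[H]}(Y)$ by \cref{eul_and_T}; re-reading the definition of $\gamma_{(Y,H)}(u)$ then closes this case.

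For the $\eta$-part the point I would stress is that the operators $\mathcal{I}, \mathcal{A}, \mathcal{D}$ on $C_{0}(Y)$, hence $\mathcal{Q} = \mathcal{D} - \mathcal{I}$ and the polynomial $\Delta(u) = \mathcal{I} - \mathcal{A}u + \mathcal{Q}u^{2}$, are defined purely from the graph $Y$ and make no reference to the acting group, and that the isomorphism $\omega_{C_{0}(Y)}$ of \cref{conv_iso_pol_vs} is ring-agnostic. Consequently, restricting the $\mathbb{C}[G]$-structure on $C_{0}(Y)$ to $\mathbb{C}[H]$ and re-running the construction of \cref{section:theequiv} produces the very same endomorphism $\Delta(u)$ of $C_{0}(Y)[u]$, now viewed as a $\mathbb{C}[H][u]$-operator, which is exactly the one computing $\eta_{(Y,H)}(u)$. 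With this identification, applying \cref{det_and_N} with $M = C_{0}(Y)$ and $f = \Delta(u)$ gives $N_{G/H}(\eta_{(Y,G)}(u)) = N_{G/H}({\rm det}_{\mathbb{C}[G][u]}(\Delta(u))) = {\rm det}_{\mathbb{C}[H][u]}(\Delta(u)) = \eta_{(Y,H)}(u)$.

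Finally, since $N_{G/H}\colon \mathbb{C}[G]\llbracket u \rrbracket \to \mathbb{C}[H]\llbracket u \rrbracket$ is multiplicative and sends $1$ to $1$, it carries units to units with $N_{G/H}(P^{-1}) = N_{G/H}(P)^{-1}$; applying it to $\theta_{(Y,G)}(u)^{-1} = \gamma_{(Y,G)}(u)\cdot\eta_{(Y,G)}(u)$ and using the two identities just obtained gives $N_{G/H}(\theta_{(Y,G)}(u))^{-1} = \gamma_{(Y,H)}(u)\cdot\eta_{(Y,H)}(u) = \theta_{(Y,H)}(u)^{-1}$, whence $N_{G/H}(\theta_{(Y,G)}(u)) = \theta_{(Y,H)}(u)$. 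The one step that needs genuine care — and hence the main obstacle — is the identification in the $\eta$-part: one must verify that restriction of scalars along $\mathbb{C}[H] \hookrightarrow \mathbb{C}[G]$ commutes with the functor $(-)[u]$ and with $\omega$, so that \cref{det_and_N} really delivers $\eta_{(Y,H)}(u)$ and not merely some determinant; the rest is bookkeeping around \cref{exp_and_T_N,eul_and_T,det_and_N}.
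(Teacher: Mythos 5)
Your proposal is correct and follows essentially the same route as the paper's proof: the $\gamma$-identity via \cref{exp_and_T_N}, the $\mathbb{C}[H]\llbracket u \rrbracket$-linearity of $T_{G/H}$ applied to the scalar series $\log(1-u^2)$, and \cref{eul_and_T}; the $\eta$-identity via \cref{det_and_N} applied to $\Delta(u)$ on $C_0(Y)[u]$; and the $\theta$-identity via multiplicativity of $N_{G/H}$. The only difference is that you spell out the restriction-of-scalars identification of $\Delta(u)$ that the paper leaves implicit.
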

\begin{proof}
The first identity follows from \cref{exp_and_T_N,eul_and_T}, and the observation that
\begin{equation*}
\begin{aligned} 
T_{G/H}\left(-\chi_{\mathbb{C}[G]}(Y) \cdot {\rm log}_{\mathbb{C}[G]}(1-u^{2}) \right) &= T_{G/H}\left(-\chi_{\mathbb{C}[G]}(Y) \cdot {\rm log}_{\mathbb{Q}}(1-u^{2})\right)\\ 
&= T_{G/H}\left(-\chi_{\mathbb{C}[G]}(Y)\right) \cdot {\rm log}_{\mathbb{Q}}(1-u^{2}) \\
&= -\chi_{\mathbb{C}[H]}(Y) \cdot{\rm log}_{\mathbb{C}[H]}(1-u^{2}).
\end{aligned}
\end{equation*}
The second identity follows from \cref{det_and_N}, and the third one from the multiplicativity property of $N_{G/H}$.
\end{proof}
\begin{example} \label{ex222}
Let us go back to \cref{ex11}, where we take $H = \langle \bar{2} \rangle = G_{2}$.  The graph $Y_{H}$ has four vertices, and the branched cover $Y \rightarrow Y_{H}$ can be visualized as follows:
\begin{equation*}  
\begin{tikzpicture}[baseline={([yshift=-0.7ex] current bounding box.center)}]
\draw[fill=black] (-1,3/4) circle (1pt);
\draw[fill=black] (-1,-3/4) circle (1pt);
\draw[fill=black] (1,3/4) circle (1pt);
\draw[fill=black] (1,-3/4) circle (1pt);
\draw[fill=black] (-1/2,0) circle (1pt);
\draw[fill=black] (1/2,0) circle (1pt);
\path (-1,3/4) edge (-1/2,0);
\path (-1,3/4) edge (1/2,0);
\path (1,3/4) edge (-1/2,0);
\path (1,3/4) edge (1/2,0);
\path (1,-3/4) edge (-1/2,0);
\path (1,-3/4) edge (1/2,0);
\path (-1,-3/4) edge (-1/2,0);
\path (-1,-3/4) edge (1/2,0);
\end{tikzpicture}
\, \, \, \, \, \rightarrow \, \, \, \, \, 
\begin{tikzpicture}[baseline={([yshift=-0.7ex] current bounding box.center)}]
\draw[fill=black] (-3/4,1/2) circle (1pt);
\draw[fill=black] (-3/4,-1/2) circle (1pt);
\draw[fill=black] (3/4,1/2) circle (1pt);
\draw[fill=black] (3/4,-1/2) circle (1pt);
\path (-3/4,1/2) edge (3/4,1/2);
\path (-3/4,-1/2) edge (3/4,-1/2);
\path (-3/4,1/2) edge (3/4,-1/2);
\path (-3/4,-1/2) edge (3/4,1/2);
\end{tikzpicture} 
\end{equation*}
We pick
$$w_{1} = (v_{1},\bar{0}), w_{2} = (v_{1},\bar{1}), w_{3} = (v_{2},H), \text{ and } w_{4} = (v_{2}, \bar{1} + H) \in V_{Y}, $$
and with this choice, we calculate
\begin{equation*}
\mathbf{A} = \begin{pmatrix} 0 & 0 & N_{H} & N_{H} \\ 0 & 0 & N_{H} & N_{H} \\ e_{H} & e_{H} & 0 & 0 \\ e_{H} & e_{H} & 0 & 0 \end{pmatrix}
\text{ and }
\mathbf{Q} = \begin{pmatrix} 1 & 0 & 0 &0 \\ 0 & 1 & 0 &0\\ 0 & 0 & 3e_{H} &0\\0 & 0 & 0 & 3e_{H} \end{pmatrix}
\end{equation*}
so that
\begin{equation} \label{equiv_H}
\begin{aligned}
\eta_{(Y,H)}(u) &= {\rm det}(\mathbf{I} - \mathbf{A}u + \mathbf{Q}u^{2}) \\
&= 1 + (\bar{0}-\bar{2})u^{2} - \frac{1}{2}(9\cdot \bar{0} + 11\cdot \bar{2})u^{4} + 9e_{H}u^{8} \in \mathbb{C}[H][u]. 
\end{aligned}
\end{equation}
On the other hand, let us pick the coset representatives $\{\bar{0},\bar{1}\}$ for $\Gamma = G/H$.  With respect to the ordered $\mathbb{C}[H][u]$-basis $(\bar{0},\bar{1})$ of $\mathbb{C}[G][u]$, one calculates the matrix representing $m_{\eta_{(Y,G)}(u)}$ to be
\begin{equation*}
\begin{pmatrix}
1 + \frac{1}{2}(\bar{0} - \bar{2})u^{2} + 3 e_{H}u^{4} & -2 N_{H}u^{2} \\
-2 N_{H} u^{2} & 1 + \frac{1}{2}(\bar{0} - \bar{2})u^{2} + 3 e_{H}u^{4}
\end{pmatrix} \in M_{2}(\mathbb{C}[H][u]),
\end{equation*}
and the calculation of its determinant leads to
$$N_{G/H}(\eta_{(Y,G)}(u)) = 1 + (\bar{0}-\bar{2})u^{2} - \frac{1}{2}(9\cdot \bar{0} + 11\cdot \bar{2})u^{4} + 9e_{H}u^{8} \in \mathbb{C}[H][u], $$
which coincides with (\ref{equiv_H}) as expected by \cref{ind_really}.  We will show in \cref{ex22} below that
\begin{equation*} 
\begin{aligned}
\gamma_{(Y,G)}(u) &= e_{\psi_{0}} + e_{\psi_{1}} + (1-u^{2})e_{\psi_{2}} + (1-u^{2})e_{\psi_{3}} \\
&= 1 + \frac{1}{2}(-\bar{0} + \bar{2})u^{2} \in \mathbb{C}[G][u],
\end{aligned}
\end{equation*}
where $\psi_{0}$ is the trivial character of $G$, $\psi_{1}$ the unique character of order $2$ of $G$, and $\psi_{2}, \psi_{3}$ the two characters of order $4$ of $G$.  A similar calculation gives 
\begin{equation} \label{ga_H}
\gamma_{(Y,H)}(u) = 1 + (-\bar{0} + \bar{2})u^{2} + \frac{1}{2}(\bar{0}-\bar{2})u^{4} \in \mathbb{C}[H][u]. 
\end{equation}
With respect to the ordered $\mathbb{C}[H][u]$-basis $(\bar{0},\bar{1})$ of $\mathbb{C}[G][u]$, one calculates the matrix representing $m_{\gamma_{(Y,G)}(u)}$ to be
\begin{equation*}
\begin{pmatrix}
1 + \frac{1}{2}(-\bar{0} + \bar{2})u^{2} & 0 \\
0 & 1 + \frac{1}{2}(-\bar{0} + \bar{2})u^{2}
\end{pmatrix} \in M_{2}(\mathbb{C}[H][u]),
\end{equation*}
whose determinant agrees with (\ref{ga_H}) as expected by \cref{ind_really}.
\demo
\end{example}

The $\mathbb{C}$-algebra morphism $\pi_{H}:\mathbb{C}[G] \rightarrow \mathbb{C}[\Gamma]$ can be extended to another $\mathbb{C}$-algebra morphism
$$\pi_{H}:\mathbb{C}[G][u] \rightarrow \mathbb{C}[\Gamma][u]$$
by applying $\pi_{H}$ to the coefficients of polynomials in $\mathbb{C}[G][u]$.  Moreover, if $G$ acts without inversion on $Y$, then so does $\Gamma$ on $Y_{H}$, and therefore one can consider
$$\eta_{(Y_{H},\Gamma)}(u) \in \mathbb{C}[\Gamma] [u]. $$
In general, 
\begin{equation} \label{failure_inf}
\pi_{H}(\eta_{(Y,G)}(u)) \neq \eta_{(Y_{H},\Gamma)}(u).
\end{equation}
Indeed, going back to \cref{ex11,ex22}, using the identification (\ref{identification}) and noting that $\pi_{H}(G_{i})$ is the trivial subgroup of $\Gamma$ for all $i = 1,2$, one has
$$X(\Gamma,\pi_{H}(\mathcal{G}),\pi_{H} \circ \alpha) = X(\Gamma,\pi_{H} \circ \alpha), $$
and we can identify $Y_{H}$ with $X(\Gamma, \pi_{H} \circ \alpha)$.  Using this identification and choosing $w_{i} = (v_{i},1_{\Gamma}) \in V_{X(\Gamma, \pi_{H} \circ \alpha)}$ for $i=1,2$, we calculate
\begin{equation*}
\mathbf{A} = \begin{pmatrix} 0 & {N_{\Gamma}} \\ N_{\Gamma} & 0 \end{pmatrix}
\text{ and }
\mathbf{Q} = \begin{pmatrix} 1 & 0 \\ 0&1 \end{pmatrix}
\end{equation*}
so that
$$\eta_{(Y_{H},\Gamma)}(u) = 1 - 2N_{\Gamma}u^{2} \in \mathbb{C}[\Gamma][u]. $$
On the other hand, we have
\begin{equation*}
\begin{aligned} 
\pi_{H} \left(\eta_{(Y,G)}(u) \right) &= \pi_{H}\left( 1 + \frac{1}{2}(\bar{0} - 4\cdot \bar{1} - \bar{2} - 4\cdot\bar{3})u^{2} + \frac{3}{2}(\bar{0} + \bar{2})u^{4}\right) \\
&= 1 - 4 au^{2}+3 u^{4} \\
&\neq \eta_{(Y_{H},\Gamma)}(u),
\end{aligned}
\end{equation*}
where we let $a$ be the unique non-trivial element of $\Gamma$.  The non-equality (\ref{failure_inf}) shows that the inflation property, expressed in an equivariant way, of the Artin formalism is not always satisfied in this situation.  (Compare with the first statement of \cite[Proposition 1.8 on page 87]{Tate:1984}.)

\section{Artin representations} \label{section:art}
\subsection{The Euler characteristic} \label{section:Eul}
As usual, we let $G$ be a finite abelian group acting on a finite graph $Y$ without inversion.  Here, we view an Artin representation $V$ as a $(\mathbb{C},\mathbb{C}[G])$-bimodule with the right multiplication given by $v\lambda := \lambda v$.  Given an Artin representation $V$ of $G$, then tensoring the chain complex (\ref{chain_complex}) with $V$ over $\mathbb{C}[G]$ gives another chain complex
$$0 \longrightarrow V \otimes_{\mathbb{C}[G]}C_{1}(Y) \stackrel{\partial_{V}}{\longrightarrow} V \otimes_{\mathbb{C}[G]}C_{0}(Y) \longrightarrow 0$$
of $\mathbb{C}$-vector spaces, where $\partial_{V} = {\rm id}_{V} \otimes \partial$.  From now on, if $M$ is a finitely generated $\mathbb{C}[G]$-module and $V$ is an Artin representation, then we let
$$T_{V}(M) = V \otimes_{\mathbb{C}[G]}M. $$
The map $M \mapsto T_{V}(M)$ is a covariant functor from the category of finitely generated $\mathbb{C}[G]$-modules to the category of finite dimensional $\mathbb{C}$-vector spaces.  If $f:M \rightarrow N$ is a morphism of finitely generated $\mathbb{C}[G]$-modules, then we write $f_{V}$ instead of $T_{V}(f) = {\rm id}_{V} \otimes f$.
\begin{definition} \label{artin_euler}
Let $Y$ be a finite graph on which a finite abelian group $G$ acts without inversion, and let $V$ be an Artin representation of $G$.  One defines
$$\chi_{V}(Y) = {\rm dim}_{\mathbb{C}}(T_{V}(C_{0}(Y))) - {\rm dim}_{\mathbb{C}} (T_{V}(C_{1}(Y))). $$
\end{definition}
Clearly, $\chi_{V}(Y)$ depends only on the isomorphism class of $V$, and therefore we will often write $\chi_{\psi}(Y)$ instead of $\chi_{V}(Y)$ if $\psi$ is the character of $V$, since two Artin representations have the same character if and only if they are isomorphic.  From now on, if $\psi \in \widehat{G}$, we write 
$$r_{0}(\psi) = |\{v \in V_{X}: G_{v} \not \subseteq {\rm ker}(\psi) \}| \text{ and } r_{1}(\psi) = |\{s \in O_{X}: G_{s} \not\subseteq {\rm ker}(\psi) \}|,$$
where $O_{X}$ is an orientation of $X = Y_{G}$.
\begin{theorem} \label{art_eul_char}
Let $Y$ be a finite graph on which a finite abelian group $G$ acts without inversion, and let $V$ be an Artin representation of $G$.  Let also $X = Y_{G}$.  One has
$$\chi_{V}(Y) = \psi_{V}(\chi_{\mathbb{C}[G]}(Y)). $$
Moreover, if $V$ is irreducible with character $\psi$, then
$$\chi_{\psi}(Y) = \chi(X) - r_{0}(\psi) + r_{1}(\psi), $$
so that in particular, if $G$ acts freely on $\mathbf{E}_{Y}$, then $\chi_{\psi}(Y) = \chi(X) - r_{0}(\psi)$.
\end{theorem}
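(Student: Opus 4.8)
The plan is to route everything through the Hattori--Stallings rank, via a single lemma: for every finitely generated projective $\mathbb{C}[G]$-module $M$ and every Artin representation $V$ of $G$,
$${\rm dim}_{\mathbb{C}}\big(T_{V}(M)\big) = \psi_{V}\big({\rm rank}_{\mathbb{C}[G]}(M)\big).$$
Both sides are additive in $V$: writing $V \simeq \bigoplus_{\psi} V_{\psi}^{\,m_{\psi}}$ as a sum of irreducibles one has $T_{V}(M) \simeq \bigoplus_{\psi} T_{V_{\psi}}(M)^{\,m_{\psi}}$ (as $\mathbb{C}[G]$ is semisimple, $T_{V}$ is exact) and $\psi_{V} = \sum_{\psi} m_{\psi}\,\psi$, so it suffices to treat an irreducible $V$ with character $\psi \in \widehat{G}$. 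In that case $V$ is one-dimensional and $\mathbb{C}[G]$ acts on it through the $\mathbb{C}$-algebra morphism $\psi : \mathbb{C}[G] \to \mathbb{C}$, so $T_{V}(M) = V \otimes_{\mathbb{C}[G]} M$ is precisely the base change of $M$ along $\psi$; the base-change property of the rank recalled in \cref{com_alg} gives ${\rm rank}_{\mathbb{C}}(T_{V}(M)) = \psi({\rm rank}_{\mathbb{C}[G]}(M))$, and the left-hand side is just ${\rm dim}_{\mathbb{C}} T_{V}(M)$. (Equivalently, choose a free $F = M \oplus N$ with idempotent $e \in {\rm End}_{\mathbb{C}[G]}(F)$ of image $M$ and compute ${\rm dim}_{\mathbb{C}} T_{V}(M) = {\rm tr}_{\mathbb{C}}(T_{V}(e)) = \psi_{V}({\rm tr}_{\mathbb{C}[G]}(e))$ using the block-trace identity for the $G$-action on $V$.)

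Granting the lemma, the first assertion is immediate. Since $\mathbb{C}[G]$ is semisimple, $C_{0}(Y)$ and $C_{1}(Y)$ are finitely generated projective $\mathbb{C}[G]$-modules, so applying the lemma to each, subtracting, and using the $\mathbb{C}$-linearity of $\psi_{V}$,
$$\chi_{V}(Y) = \psi_{V}\big({\rm rank}_{\mathbb{C}[G]}C_{0}(Y)\big) - \psi_{V}\big({\rm rank}_{\mathbb{C}[G]}C_{1}(Y)\big) = \psi_{V}\big(\chi_{\mathbb{C}[G]}(Y)\big).$$

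For the refined formula, take $V$ irreducible with character $\psi \in \widehat{G}$, so that $\psi$ is a $\mathbb{C}$-algebra morphism and may be applied to the explicit expression for $\chi_{\mathbb{C}[G]}(Y)$ furnished by \cref{exp_equi_euler_char}:
$$\chi_{\psi}(Y) = \psi\big(\chi_{\mathbb{C}[G]}(Y)\big) = \sum_{v \in V_{X}} \psi(e_{G_{v}}) - \sum_{s \in O_{X}} \psi(e_{G_{s}}).$$
For any $H \le G$ one has $\psi(e_{H}) = \frac{1}{|H|}\sum_{h \in H}\psi(h)$, which equals $1$ if $H \subseteq {\rm ker}(\psi)$ and $0$ otherwise, by the orthogonality relation (\ref{ortho}) for $\psi|_{H}$. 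Thus the first sum equals $|\{v \in V_{X} : G_{v} \subseteq {\rm ker}(\psi)\}| = |V_{X}| - r_{0}(\psi)$ and the second equals $|\{s \in O_{X} : G_{s} \subseteq {\rm ker}(\psi)\}| = |E_{X}| - r_{1}(\psi)$ since $|O_{X}| = |E_{X}|$, so
$$\chi_{\psi}(Y) = \big(|V_{X}| - r_{0}(\psi)\big) - \big(|E_{X}| - r_{1}(\psi)\big) = \chi(X) - r_{0}(\psi) + r_{1}(\psi)$$
by \cref{eul_form}. Finally, when $G$ acts freely on $\mathbf{E}_{Y}$ every stabilizer $G_{s}$ is trivial, hence contained in ${\rm ker}(\psi)$, so $r_{1}(\psi) = 0$ and the last claim follows.

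There is no serious obstacle; the only mild subtlety is that $C_{1}(Y)$ is a quotient $\mathbb{C}\mathbf{E}_{Y}/W$, so computing ${\rm dim}_{\mathbb{C}} T_{V}(C_{1}(Y))$ directly --- e.g.\ via \cref{dim_iso_component} applied separately to $\mathbb{C}\mathbf{E}_{Y}$ and to $W \simeq \mathbb{C}\Omega$ with their edge stabilizers --- involves more bookkeeping than passing through the rank and \cref{exp_equi_euler_char}, which is why I would organize the argument as above.
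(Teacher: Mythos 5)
Your proposal is correct, and it reorganizes the argument in a way that differs mildly but genuinely from the paper's. The paper reduces to an irreducible character $\psi$ and then computes \emph{both} sides of the first identity explicitly: it evaluates ${\rm dim}_{\mathbb{C}}(e_{\psi}C_{0}(Y))$ and ${\rm dim}_{\mathbb{C}}(e_{\psi}C_{1}(Y))$ via \cref{dim_iso_component} (handling $C_{1}(Y)$ through the permutation modules $\mathbb{C}\mathbf{E}_{Y}$ and $W$), and separately evaluates $\psi(\chi_{\mathbb{C}[G]}(Y))$ via \cref{exp_equi_euler_char}, observing that both equal $\chi(X)-r_{0}(\psi)+r_{1}(\psi)$, so the two assertions of the theorem drop out simultaneously. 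You instead make the first identity a purely formal consequence of the base-change property of the Hattori--Stallings rank recalled in \cref{com_alg} (your lemma ${\rm dim}_{\mathbb{C}}\,T_{V}(M)=\psi_{V}({\rm rank}_{\mathbb{C}[G]}(M))$ for projective $M$), which needs no graph-theoretic input and in particular sidesteps the bookkeeping with $C_{1}(Y)=\mathbb{C}\mathbf{E}_{Y}/W$ that you flag at the end; the second identity is then obtained by evaluating $\psi$ on the formula of \cref{exp_equi_euler_char} and using $\psi(e_{H})\in\{0,1\}$ according to whether $H\subseteq{\rm ker}(\psi)$, which is exactly how the paper treats the right-hand side. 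Your route isolates the general principle more cleanly and proves the first identity for all $V$ at once; the paper's has the small advantage of exhibiting the common value $\chi(X)-r_{0}(\psi)+r_{1}(\psi)$ directly for both sides. The underlying ingredients (additivity in $V$, the orthogonality relation (\ref{ortho}), and \cref{exp_equi_euler_char}) are the same, and your argument is complete.
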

\begin{proof}
Since both $\chi_{V}$ and $\psi_{V}$ are additive in $V$, and since any Artin representation is a direct sum of irreducible ones, it suffices to show the claim for an irreducible Artin representation $V$.  Assume therefore that $V$ is irreducible, and let $\psi = \psi_{V}$ be its character.  On one hand, by \cref{artin_euler} we have 
$$\chi_{V}(Y) = {\rm dim}_{\mathbb{C}} (e_{\psi} C_{0}(Y)) - {\rm dim}_{\mathbb{C}} (e_{\psi} C_{1}(Y)). $$
Using \cref{dim_iso_component}, one has
\begin{equation*}
\begin{aligned}
{\rm dim}_{\mathbb{C}}(e_{\psi}C_{1}(Y)) &= {\rm dim}_{\mathbb{C}}(e_{\psi}\mathbb{C}\mathbf{E}_{Y}) - {\rm dim}_{\mathbb{C}}(e_{\psi}W) \\
&= |\{s \in \mathbf{E}_{X}: G_{s} \subseteq {\rm ker}(\psi)\}| -|\{s \in O_{X}: G_{s} \subseteq {\rm ker}(\psi)\}| \\
&= |E_{X}| - r_{1}(\psi),
\end{aligned}
\end{equation*}
where $O_{X}$ is an orientation of $X$ as usual, and similarly
$${\rm dim}_{\mathbb{C}}(e_{\psi}C_{0}(Y)) = |V_{X}| - r_{0}(\psi), $$
so that
$$\chi_{V}(Y) = |V_{X}| - r_{0}(\psi) - (|E_{X}| - r_{1}(\psi)) = \chi(X) - r_{0}(\psi) + r_{1}(\psi). $$ 
On the other hand, \cref{exp_equi_euler_char} gives
$$\psi(\chi_{\mathbb{C}[G]}(Y)) = |V_{X}| - r_{0}(\psi) - (|E_{X}| - r_{1}(\psi)) = \chi(X) - r_{0}(\psi) + r_{1}(\psi) $$
which concludes the proof.
\end{proof}

\subsection{The function \texorpdfstring{$\sigma_{V}$}{}} \label{the_function_sigma}
Throughout this section, $G$ is a finite abelian group acting on a finite graph $Y$ without inversion.  We let
$${\rm \sigma}_{V}:\mathbb{C}[G]\llbracket u \rrbracket \rightarrow \mathbb{C}\llbracket u \rrbracket$$
be the function which is obtained from the composition
\begin{equation}
\mathbb{C}[G] \llbracket u \rrbracket \stackrel{\rho_{V}}{\longrightarrow} {\rm End}_{\mathbb{C}}(V) \llbracket u \rrbracket \stackrel{\omega_{V}}{\longrightarrow} {\rm End}_{\mathbb{C}\llbracket u \rrbracket}(V \llbracket u \rrbracket) \stackrel{{\rm det}_{\mathbb{C}\llbracket u \rrbracket}}{\longrightarrow} \mathbb{C}\llbracket u \rrbracket, 
\end{equation}
where the first map $\rho_{V}$ is the $\mathbb{C}\llbracket u \rrbracket$-algebra morphism induced by the $\mathbb{C}$-algebra morphism
$$\rho_{V}:\mathbb{C}[G] \rightarrow {\rm End}_{\mathbb{C}}(V) $$
associated to the Artin representation $V$, and the second map $\omega_{V}$ is the isomorphism of $\mathbb{C}\llbracket u \rrbracket$-algebras of \cref{conv_iso}.  The function $\sigma_{V}$ is not a $\mathbb{C}\llbracket u\rrbracket$-algebra morphism in general, but is only multiplicative meaning that 
$${\rm \sigma}_{V}(P \cdot Q) = {\rm \sigma}_{V}(P) \cdot {\rm \sigma}_{V}(Q) $$ 
whenever $P,Q \in \mathbb{C}[G]\llbracket u \rrbracket$.  We will use the function $\sigma_{V}$ in \cref{iharalfun} below to define Ihara $L$-functions associated to Artin representations.  

\begin{remark}
An argument similar to the one used in \cref{base_ch_rem} shows that 
$$\sigma_{V}(\mathbb{C}[G][u]) \subseteq \mathbb{C}[u]. $$
\end{remark}

In this section, we study some properties satisfied by $\sigma_{V}$ that we will need later.
\begin{proposition} \label{det_up_to_iso}
If $V_{1}$ and $V_{2}$ are two isomorphic Artin representations, then one has
$${\rm \sigma}_{V_{1}} = {\rm \sigma}_{V_{2}}. $$
\end{proposition}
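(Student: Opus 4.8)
The plan is to trace a fixed $\mathbb{C}[G]$-module isomorphism $\phi\colon V_{1}\xrightarrow{\sim}V_{2}$ through each of the three arrows in the composition defining $\sigma_{V}$, and to use that the determinant is unchanged under the resulting conjugation. First I would record that, because $\phi$ is $\mathbb{C}[G]$-linear, the conjugation $c_{\phi}(f)=\phi\circ f\circ\phi^{-1}$ gives a $\mathbb{C}$-algebra isomorphism $c_{\phi}\colon\mathrm{End}_{\mathbb{C}}(V_{1})\to\mathrm{End}_{\mathbb{C}}(V_{2})$ satisfying $\rho_{V_{2}}=c_{\phi}\circ\rho_{V_{1}}$; applying $c_{\phi}$ coefficientwise, the same identity holds for the induced $\mathbb{C}\llbracket u\rrbracket$-algebra maps $\mathbb{C}[G]\llbracket u\rrbracket\to\mathrm{End}_{\mathbb{C}}(V_{i})\llbracket u\rrbracket$.

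Next I would note that $\phi$ induces a $\mathbb{C}\llbracket u\rrbracket$-module isomorphism $\Phi\colon V_{1}\llbracket u\rrbracket\to V_{2}\llbracket u\rrbracket$ by applying $\phi$ to each coefficient, and that the square with horizontal arrows $\omega_{V_{1}},\omega_{V_{2}}$ (from \cref{conv_iso}), left arrow $c_{\phi}$, and right arrow ``conjugation by $\Phi$'' commutes; this is immediate from the explicit formula for $\omega_{V}$, since applying $\phi$ coefficientwise commutes with evaluating a coefficientwise-defined endomorphism. Finally, $\det_{\mathbb{C}\llbracket u\rrbracket}$ is invariant under conjugation by $\Phi$ — this is the base-change/multiplicativity property of the determinant recalled in \cref{com_alg} — so chasing an arbitrary $P\in\mathbb{C}[G]\llbracket u\rrbracket$ around the diagram yields $\sigma_{V_{1}}(P)=\sigma_{V_{2}}(P)$.

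Alternatively, and perhaps more cleanly, I would sidestep diagram chases by transporting bases: fix an ordered $\mathbb{C}$-basis $(v_{1},\dots,v_{d})$ of $V_{1}$ and carry it over via $\phi$ to the ordered $\mathbb{C}$-basis $(\phi(v_{1}),\dots,\phi(v_{d}))$ of $V_{2}$, which also serve as $\mathbb{C}\llbracket u\rrbracket$-bases of $V_{1}\llbracket u\rrbracket$ and $V_{2}\llbracket u\rrbracket$. Since $\phi$ intertwines the $G$-actions, for every $\lambda\in\mathbb{C}[G]$ the matrix of $\rho_{V_{1}}(\lambda)$ in the first basis equals the matrix of $\rho_{V_{2}}(\lambda)$ in the second; hence, for $P=\sum_{i}a_{i}u^{i}\in\mathbb{C}[G]\llbracket u\rrbracket$, the matrix in $M_{d}(\mathbb{C}\llbracket u\rrbracket)$ representing $\omega_{V_{1}}(\rho_{V_{1}}(P))$ coincides term by term with the one representing $\omega_{V_{2}}(\rho_{V_{2}}(P))$, so the two determinants agree, i.e. $\sigma_{V_{1}}(P)=\sigma_{V_{2}}(P)$.

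There is no genuine obstacle here: the statement is a formal consequence of the conjugation-invariance of the determinant together with the functoriality in $V$ of all three maps in the definition of $\sigma_{V}$. The only mild subtlety is that $\Phi$ is an isomorphism between the two distinct modules $V_{1}\llbracket u\rrbracket$ and $V_{2}\llbracket u\rrbracket$ rather than an automorphism, so one either invokes the base-change form of determinant-invariance or, as in the basis argument above, avoids the point entirely.
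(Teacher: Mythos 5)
Your proposal is correct and follows essentially the same route as the paper: fix a $\mathbb{C}[G]$-isomorphism $V_{1}\to V_{2}$, conjugate through each stage of the composition defining $\sigma_{V}$, and conclude by the conjugation-invariance of the determinant. Your second, basis-transport argument is just a coordinate version of the same diagram chase, so there is nothing further to add.
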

\begin{proof}
Since $V_{1} \simeq V_{2}$, there exists an isomorphism $f:V_{1} \rightarrow V_{2}$ of $\mathbb{C}[G]$-modules.  The map $f$ induces an isomorphism of $\mathbb{C}$-algebras 
$$f_{*}:{\rm End}_{\mathbb{C}}(V_{1}) \rightarrow {\rm End}_{\mathbb{C}}(V_{2}) $$
given by $\alpha \mapsto f_{*}(\alpha) = f \circ \alpha \circ f^{-1}$.  Therefore, we get a natural morphism of $\mathbb{C}\llbracket u \rrbracket$-algebras 
$${\rm End}_{\mathbb{C}}(V_{1})\llbracket u \rrbracket \rightarrow {\rm End}_{\mathbb{C}}(V_{2}) \llbracket u \rrbracket $$
which we denote by the same symbol $f_{*}$.  The map $f$ also induces an isomorphism 
$$\tilde{f}:V_{1}\llbracket u \rrbracket \rightarrow V_{2} \llbracket u \rrbracket$$ 
of $\mathbb{C}\llbracket u \rrbracket$-modules which, in turn, induces an isomorphism
$$\tilde{f}_{*}:{\rm End}_{\mathbb{C}\llbracket u \rrbracket}(V_{1}\llbracket u \rrbracket) \rightarrow {\rm End}_{\mathbb{C}\llbracket u \rrbracket}(V_{2}\llbracket u \rrbracket) $$
of $\mathbb{C}\llbracket u \rrbracket$-algebras given by $\beta \mapsto \tilde{f}_{*}(\beta) = \tilde{f} \circ \beta \circ \tilde{f}^{-1}$.  The result then follows from the commutativity of the diagram
\begin{equation*}
\begin{tikzcd}
\mathbb{C}[G]\llbracket u \rrbracket \arrow["{\rm id}_{\mathbb{C}[G]\llbracket u \rrbracket}",d] \arrow["\rho_{V_{1}}",r] & {\rm End}_{\mathbb{C}}(V_{1})\llbracket u \rrbracket \arrow["f_{*}",d] \arrow["\omega_{V_{1}}",r] & {\rm End}_{\mathbb{C}\llbracket u \rrbracket}(V_{1}\llbracket u \rrbracket) \arrow["\tilde{f}_{*}",d]\\
\mathbb{C}[G]\llbracket u \rrbracket \arrow["\rho_{V_{2}}",r] & {\rm End}_{\mathbb{C}}(V_{2})\llbracket u \rrbracket \arrow["\omega_{V_{2}}",r] & {\rm End}_{\mathbb{C}\llbracket u \rrbracket}(V_{2}\llbracket u \rrbracket)
\end{tikzcd}
\end{equation*}
which we leave to the reader to check, and the observation that 
$${\rm det}_{\mathbb{C}\llbracket u \rrbracket} \circ \tilde{f}_{*} (\beta) = {\rm det}_{\mathbb{C}\llbracket u \rrbracket}(\beta) $$
for all $\beta \in {\rm End}_{\mathbb{C}\llbracket u \rrbracket}(V_{1}\llbracket u \rrbracket)$.
\end{proof}

Although ${\rm \sigma}_{V}$ is only multiplicative in general, it is in fact a morphism of $\mathbb{C}\llbracket u \rrbracket$-algebras when $V$ is irreducible as the following proposition shows.
\begin{proposition} \label{sigma_irr}
If $V$ is an irreducible Artin representation, then ${\sigma}_{V}$ is a $\mathbb{C}\llbracket u \rrbracket$-algebra morphism and is simply the $\mathbb{C}\llbracket u \rrbracket$-algebra morphism $\psi:\mathbb{C}[G]\llbracket u \rrbracket \rightarrow \mathbb{C}\llbracket u \rrbracket$, obtained from the $\mathbb{C}$-algebra morphism $\psi:\mathbb{C}[G] \rightarrow \mathbb{C}$ applied to the coefficients of a power series in $\mathbb{C}[G]\llbracket u \rrbracket$, where $\psi$ is the character of $V$.
\end{proposition}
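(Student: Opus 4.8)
The plan is to exploit the fact that, since $G$ is abelian and $V$ is irreducible, the $\mathbb{C}$-vector space $V$ is one-dimensional. Write $\psi = \psi_{V} \in \widehat{G}$ for the character of $V$ and recall that, $V$ being irreducible, $\psi$ is a $\mathbb{C}$-algebra morphism $\mathbb{C}[G] \to \mathbb{C}$. The first step is to observe that in this rank-one situation both the trace map ${\rm tr}_{\mathbb{C}}$ used to define $\psi = \psi_{V}$ and the determinant map ${\rm det}_{\mathbb{C}\llbracket u \rrbracket}$ appearing in the definition of $\sigma_{V}$ coincide with the trivial algebra isomorphism of (\ref{simple_but_useful}). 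Indeed, ${\rm End}_{\mathbb{C}}(V)$ is a free $\mathbb{C}$-module of rank one, so (\ref{simple_but_useful}) identifies it with $\mathbb{C}$ as $\mathbb{C}$-algebras, compatibly with ${\rm tr}_{\mathbb{C}}$; under this identification, the map landing in $\mathbb{C}$ obtained from $\rho_{V}$ is precisely $\psi$, because for a line the scalar by which $\rho_{V}(\lambda)$ acts is its trace $\psi_{V}(\lambda)$. Likewise $V\llbracket u \rrbracket$ is a free $\mathbb{C}\llbracket u \rrbracket$-module of rank one, so (\ref{simple_but_useful}) applied over $R = \mathbb{C}\llbracket u \rrbracket$ identifies ${\rm End}_{\mathbb{C}\llbracket u \rrbracket}(V\llbracket u \rrbracket)$ with $\mathbb{C}\llbracket u \rrbracket$ as $\mathbb{C}\llbracket u \rrbracket$-algebras, compatibly with ${\rm det}_{\mathbb{C}\llbracket u \rrbracket}$.

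Second, I would check that, through these two rank-one trivializations, the isomorphism $\omega_{V}$ of \cref{conv_iso} becomes the identity map of $\mathbb{C}\llbracket u \rrbracket$. Concretely, if $f_{i} \in {\rm End}_{\mathbb{C}}(V)$ acts on $V$ by the scalar $r_{f_{i}}$, then the defining formula for $\omega_{V}$ gives
\[
\omega_{V}\Bigl(\sum_{i} f_{i} u^{i}\Bigr)\Bigl(\sum_{j} m_{j} u^{j}\Bigr) = \sum_{k}\Bigl(\sum_{i+j=k} f_{i}(m_{j})\Bigr)u^{k} = \Bigl(\sum_{i} r_{f_{i}}u^{i}\Bigr)\Bigl(\sum_{j} m_{j} u^{j}\Bigr),
\]
so that $\omega_{V}\bigl(\sum_{i} f_{i} u^{i}\bigr)$ is multiplication by the scalar $\sum_{i} r_{f_{i}}u^{i} \in \mathbb{C}\llbracket u \rrbracket$, i.e. it corresponds to $\sum_{i} r_{f_{i}}u^{i}$ under (\ref{simple_but_useful}). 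Hence the square whose top arrow is $\omega_{V}$, whose vertical arrows are the two trivializations above, and whose bottom arrow is the identity of $\mathbb{C}\llbracket u \rrbracket$ commutes.

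Putting the two steps together, $\sigma_{V} = {\rm det}_{\mathbb{C}\llbracket u \rrbracket} \circ \omega_{V} \circ \rho_{V}$ is identified with the composite of $\rho_{V}$ with the rank-one trivialization of ${\rm End}_{\mathbb{C}}(V)\llbracket u \rrbracket$, which, coefficient by coefficient, is nothing but $\psi$ applied to the coefficients of a power series in $\mathbb{C}[G]\llbracket u \rrbracket$. In particular $\sigma_{V}$ is a $\mathbb{C}\llbracket u \rrbracket$-algebra morphism, being a composite of the coefficientwise extension of the $\mathbb{C}$-algebra morphism $\rho_{V}$ (itself a $\mathbb{C}\llbracket u \rrbracket$-algebra morphism) with the algebra isomorphism of (\ref{simple_but_useful}); and it coincides with the map $\psi : \mathbb{C}[G]\llbracket u \rrbracket \to \mathbb{C}\llbracket u \rrbracket$ of the statement. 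None of this presents a genuine obstacle; the only point requiring a moment's care is the second step, namely checking that $\omega_{V}$ is transported to the identity, which amounts to unwinding the defining formula for $\omega_{V}$ in the rank-one case together with the observation that ${\rm det}_{\mathbb{C}\llbracket u \rrbracket}$ on a rank-one endomorphism is literally ``the scalar it acts by.''
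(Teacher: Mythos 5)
Your proposal is correct and follows essentially the same route as the paper's proof: both reduce to the observation that $V$ is one-dimensional, trivialize ${\rm End}_{\mathbb{C}}(V)$ and ${\rm End}_{\mathbb{C}\llbracket u \rrbracket}(V\llbracket u \rrbracket)$ via (\ref{simple_but_useful}), and chase the resulting commutative diagram identifying $\sigma_{V}$ with the coefficientwise application of $\psi$. You in fact spell out the commutativity checks (in particular that $\omega_{V}$ is transported to the identity) that the paper leaves to the reader.
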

\begin{proof}
Since $G$ is finite abelian, $V$ is necessarily one dimensional over $\mathbb{C}$.  The claim then follows from the commutativity of the diagram
\begin{equation*}
\begin{tikzcd}
\mathbb{C}[G]\llbracket u \rrbracket \arrow[d] \arrow["\rho_{V}",r] & {\rm End}_{\mathbb{C}}(V) \llbracket u \rrbracket \arrow[d] \arrow["\omega_{V}",r] & {\rm End}_{\mathbb{C}\llbracket u \rrbracket}(V\llbracket u \rrbracket) \arrow[d] \arrow["{\rm det}_{\mathbb{C}\llbracket u \rrbracket}",r] & \mathbb{C}\llbracket u \rrbracket \arrow[d]\\
\mathbb{C}[G]\llbracket u \rrbracket \arrow["\psi",r] & \mathbb{C}\llbracket u \rrbracket \arrow["{\rm id}_{\mathbb{C}\llbracket u \rrbracket}",r] & \mathbb{C}\llbracket u \rrbracket \arrow["{\rm id}_{\mathbb{C}\llbracket u \rrbracket}",r] & \mathbb{C}\llbracket u \rrbracket.
\end{tikzcd}
\end{equation*}
for which we now explain the vertical morphisms involved.  The first one on the left is the identity, the second one is the isomorphism of $\mathbb{C}\llbracket u \rrbracket$-algebras induced from the $\mathbb{C}$-algebra isomorphism ${\rm End}_{\mathbb{C}}(V) \stackrel{\simeq}{\longrightarrow} \mathbb{C}$ coming from (\ref{simple_but_useful}), the third one is the isomorphism coming from (\ref{simple_but_useful}) applied to the free $\mathbb{C}\llbracket u \rrbracket$-module $V\llbracket u \rrbracket$ of rank one, and the fourth one is the identity.  We leave it to the reader to check that the three squares in the diagram above commute, from which the result follows.
\end{proof}

\begin{proposition} \label{mult_det}
Let $V_{1}$ and $V_{2}$ be two Artin representations of $G$ and let $V = V_{1} \oplus V_{2}$.  Then
$${\rm \sigma}_{V} = {\rm \sigma}_{V_{1}} \cdot {\rm \sigma}_{V_{2}}. $$
\end{proposition}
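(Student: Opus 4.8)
The plan is to verify that each of the three maps whose composition defines $\sigma_{V}$ is compatible with the decomposition $V = V_{1} \oplus V_{2}$: the first two send direct sums of operators to direct sums of operators, and the last sends a direct sum of operators to the product of the two determinants.

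First, since $V = V_{1} \oplus V_{2}$ as $\mathbb{C}[G]$-modules, the structure morphism $\rho_{V}$ factors through the block-diagonal embedding ${\rm End}_{\mathbb{C}}(V_{1}) \oplus {\rm End}_{\mathbb{C}}(V_{2}) \hookrightarrow {\rm End}_{\mathbb{C}}(V)$, so that $\rho_{V}(\lambda) = \rho_{V_{1}}(\lambda) \oplus \rho_{V_{2}}(\lambda)$ for every $\lambda \in \mathbb{C}[G]$; applying this coefficient by coefficient, $\rho_{V}(P) = \rho_{V_{1}}(P) \oplus \rho_{V_{2}}(P)$ for every $P \in \mathbb{C}[G]\llbracket u \rrbracket$. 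Next, under the identification $V\llbracket u \rrbracket = V_{1}\llbracket u \rrbracket \oplus V_{2}\llbracket u \rrbracket$ of $\mathbb{C}\llbracket u \rrbracket$-modules, the explicit formula for $\omega$ recorded in \cref{conv_iso} gives at once $\omega_{V}(f_{1} \oplus f_{2}) = \omega_{V_{1}}(f_{1}) \oplus \omega_{V_{2}}(f_{2})$ for $f_{i} \in {\rm End}_{\mathbb{C}}(V_{i})\llbracket u \rrbracket$, and hence
$$\omega_{V}(\rho_{V}(P)) = \omega_{V_{1}}(\rho_{V_{1}}(P)) \oplus \omega_{V_{2}}(\rho_{V_{2}}(P)) \in {\rm End}_{\mathbb{C}\llbracket u \rrbracket}(V\llbracket u \rrbracket).$$
Finally, since $V_{1}\llbracket u \rrbracket$ and $V_{2}\llbracket u \rrbracket$ are free, in particular finitely generated projective, over $\mathbb{C}\llbracket u \rrbracket$, the multiplicativity of the determinant on direct sums --- the right-hand identity in (\ref{det_prop}) applied with the ring $\mathbb{C}\llbracket u \rrbracket$ --- shows that taking ${\rm det}_{\mathbb{C}\llbracket u \rrbracket}$ of the last displayed operator yields the product ${\rm det}_{\mathbb{C}\llbracket u \rrbracket}(\omega_{V_{1}}(\rho_{V_{1}}(P))) \cdot {\rm det}_{\mathbb{C}\llbracket u \rrbracket}(\omega_{V_{2}}(\rho_{V_{2}}(P)))$. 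Chaining the three identities gives $\sigma_{V}(P) = \sigma_{V_{1}}(P) \cdot \sigma_{V_{2}}(P)$ for all $P \in \mathbb{C}[G]\llbracket u \rrbracket$, which is the assertion.

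Every step here is a routine compatibility check, the only substantive input being the already-established multiplicativity of the determinant under direct sums of projective modules, so I do not expect any genuine obstacle. The one point deserving a moment's attention is making sure that the block-diagonal operator $\rho_{V_{1}}(\lambda) \oplus \rho_{V_{2}}(\lambda)$ really does represent $\rho_{V}(\lambda)$, but this is immediate from the fact that $V$ is, by definition, the direct sum representation, with $G$ acting diagonally.
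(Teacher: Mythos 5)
Your argument is correct and is essentially the paper's own proof written out in more detail: the paper likewise reduces the claim to the identity $\omega_{V}(\rho_{V}(P)) = \omega_{V_{1}}(\rho_{V_{1}}(P)) \oplus \omega_{V_{2}}(\rho_{V_{2}}(P))$ together with the multiplicativity of the determinant on direct sums from (\ref{det_prop}). No issues.
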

\begin{proof}
This follows directly from (\ref{det_prop}) and the observation that
$$\omega_{V}(\rho_{V}(P)) = \omega_{V_{1}}(\rho_{V_{1}}(P)) \oplus \omega_{V_{2}}(\rho_{V_{2}}(P)) $$
for all $P \in \mathbb{C}[G]\llbracket u \rrbracket$.
\end{proof}

In \cref{af}, we will study the Artin formalism for Ihara $L$-functions.  The following theorem will be useful to study the induction property (see \cref{formalism_two} below). 
\begin{theorem} \label{useful_for_induction}
Let $H \le G$, $V$ an Artin representation of $H$, and let $W = {\rm Ind}(V)$.  Then, the following diagram
\begin{equation*} 
\begin{tikzcd}
\mathbb{C}[G]\llbracket u \rrbracket \arrow["\sigma_{W}"',rd] \arrow["N_{G/H}",r] & \mathbb{C}[H]\llbracket u \rrbracket \arrow["\sigma_{V}",d] \\
 & \mathbb{C}\llbracket u \rrbracket  
\end{tikzcd}
\end{equation*}
commutes.
\end{theorem}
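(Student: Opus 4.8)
The plan is to unwind both composites into matrix language and then invoke Silvester's block-determinant formula (\cref{silvester}). Fix a complete set $\{\sigma_{1},\ldots,\sigma_{n}\}$ of coset representatives for $\Gamma = G/H$, so that $(\sigma_{1},\ldots,\sigma_{n})$ is an ordered $\mathbb{C}[H]\llbracket u \rrbracket$-basis of the free module $\mathbb{C}[G]\llbracket u \rrbracket$; by definition of $N_{G/H}$ on power series one has $N_{G/H}(P) = {\rm det}_{\mathbb{C}[H]\llbracket u \rrbracket}([m_{P}])$, where $[m_{P}] \in M_{n}(\mathbb{C}[H]\llbracket u \rrbracket)$ is the matrix of the multiplication-by-$P$ operator in this basis. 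Fix also a $\mathbb{C}$-basis $(v_{1},\ldots,v_{d})$ of $V$; it yields a $\mathbb{C}\llbracket u \rrbracket$-basis of $V\llbracket u \rrbracket$ and identifies $\rho_{V}$ with a $\mathbb{C}$-algebra morphism $\mathbb{C}[H] \to M_{d}(\mathbb{C})$, hence, applying it coefficientwise, with a ring morphism $\rho_{V}:\mathbb{C}[H]\llbracket u \rrbracket \to M_{d}(\mathbb{C}\llbracket u \rrbracket)$. Since the isomorphism $\omega_{V}$ of \cref{conv_iso} is compatible with passing to matrices, unwinding the definition of $\sigma_{V}$ shows that $\sigma_{V}(Q) = {\rm det}_{\mathbb{C}\llbracket u \rrbracket}(\rho_{V}(Q))$ for all $Q \in \mathbb{C}[H]\llbracket u \rrbracket$; in particular $\sigma_{V}(N_{G/H}(P)) = {\rm det}_{\mathbb{C}\llbracket u \rrbracket}\bigl(\rho_{V}({\rm det}_{\mathbb{C}[H]\llbracket u \rrbracket}([m_{P}]))\bigr)$.

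Next I would identify $W\llbracket u \rrbracket$ and the action of $P$ on it explicitly. Since $G$ is abelian, $\mathbb{C}[G]$ is commutative and the decomposition $\mathbb{C}[G] = \bigoplus_{i=1}^{n}\mathbb{C}[H]\sigma_{i}$ gives $W = \mathbb{C}[G]\otimes_{\mathbb{C}[H]}V = \bigoplus_{i=1}^{n}\sigma_{i}\otimes V$, so $\{\sigma_{i}\otimes v_{j}\}$ is a $\mathbb{C}$-basis of $W$ and the corresponding family is a $\mathbb{C}\llbracket u \rrbracket$-basis of $W\llbracket u \rrbracket$. For $P \in \mathbb{C}[G]\llbracket u \rrbracket$, writing $P\sigma_{j} = \sum_{i}(m_{P})_{ij}\sigma_{i}$, one gets $\rho_{W}(P)(\sigma_{j}\otimes v_{k}) = \sum_{i}\sigma_{i}\otimes \rho_{V}((m_{P})_{ij})v_{k}$, so the matrix of $\rho_{W}(P)$ in this basis, grouped into $n\times n$ blocks of size $d$, is exactly the block matrix obtained by applying $\rho_{V}$ entrywise to $[m_{P}]$. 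Taking ${\rm det}_{\mathbb{C}\llbracket u \rrbracket}$ of this $nd\times nd$ matrix computes $\sigma_{W}(P)$, i.e. $\sigma_{W}(P) = {\rm det}_{\mathbb{C}\llbracket u \rrbracket}\bigl(\rho_{V}([m_{P}])\bigr)$, where now $\rho_{V}$ is read as the map $M_{n}(\mathbb{C}[H]\llbracket u \rrbracket) \to M_{n}(M_{d}(\mathbb{C}\llbracket u \rrbracket)) \simeq M_{nd}(\mathbb{C}\llbracket u \rrbracket)$ induced on matrices.

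With both composites expressed this way, the equality $\sigma_{V}(N_{G/H}(P)) = \sigma_{W}(P)$ is precisely the commutativity asserted in \cref{silvester}, applied with the commutative rings $R = \mathbb{C}\llbracket u \rrbracket$ and $S = \mathbb{C}[H]\llbracket u \rrbracket$, the ring morphism $\rho_{V}:S \to M_{d}(R)$ described above, and $m = n = (G:H)$. I do not expect any real obstacle here: the content of the argument is entirely in choosing the two compatible bases so that the block structure lines up with the hypotheses of Silvester's theorem. The only points needing (routine) verification are that $\sigma_{V}$ genuinely equals ${\rm det}_{\mathbb{C}\llbracket u \rrbracket}\circ\rho_{V}$ after the basis choice — which follows from the definition of $\omega_{V}$ in \cref{conv_iso} — and that the multiplication operator $\rho_{W}(P)$ on $W\llbracket u \rrbracket$ is the coefficientwise extension of $\rho_{W}$ on $W$, which holds because forming power series is a coefficientwise, exact operation. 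One may, if desired, first check the identity for $P \in \mathbb{C}[G][u]$ and then extend by $u$-adic continuity, but it is cleanest to argue directly over $\mathbb{C}[G]\llbracket u \rrbracket$ as above.
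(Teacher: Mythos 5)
Your proof is correct, but it takes a genuinely different route from the paper's. The paper first reduces to the case of an irreducible $V$ (via additivity of induction and \cref{mult_det}); since $G$ and $H$ are abelian such a $V$ is one-dimensional with character $\psi$, and the argument then runs through the idempotent decomposition $e_{\psi} = \sum_{\varphi|_{H}=\psi} e_{\varphi}$ in $\mathbb{C}[G]$, computing both $\sigma_{V}(N_{G/H}(P))$ and $\sigma_{W}(P)$ explicitly as the product $\prod_{\varphi|_{H}=\psi}\varphi(P)$. You instead keep $V$ arbitrary, realize $\sigma_{W}(P)$ as the determinant of the $nd\times nd$ block matrix obtained by applying $\rho_{V}$ entrywise to the matrix $[m_{P}]$ of multiplication by $P$ on the free $\mathbb{C}[H]\llbracket u \rrbracket$-module $\mathbb{C}[G]\llbracket u \rrbracket$, and conclude by Silvester's block-determinant formula (\cref{silvester}) applied to the ring morphism $\rho_{V}:\mathbb{C}[H]\llbracket u \rrbracket \to M_{d}(\mathbb{C}\llbracket u \rrbracket)$. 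This is exactly the mechanism the paper uses to prove the companion statement \cref{det_and_N}, so your argument makes the two halves of the induction formalism run on the same engine; it also avoids any appeal to the one-dimensionality of irreducibles and hence is closer to a form that would survive outside the abelian setting (the one place you do use commutativity of $\mathbb{C}[G]$ is in moving $(m_{P})_{ij}$ across $\sigma_{i}$ in the tensor product $\mathbb{C}[G]\otimes_{\mathbb{C}[H]}V$, and that step should be flagged as such). What the paper's route buys in exchange is the explicit closed form $\sigma_{W}(P)=\prod_{\varphi|_{H}=\psi}\varphi(P)$, which identifies the character of ${\rm Ind}(V)$ along the way. Both proofs are complete; the verifications you label as routine (that $\sigma_{V}=\det_{\mathbb{C}\llbracket u \rrbracket}\circ\rho_{V}$ after choosing a basis, and that multiplication by a power series is the coefficientwise extension) are indeed routine.
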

\begin{proof}
By \cref{mult_det}, and since induction is additive, meaning ${\rm Ind}(V_{1}\oplus V_{2}) \simeq {\rm Ind}(V_{1}) \oplus {\rm Ind}(V_{2})$, it suffices to show the claim when $V$ is an irreducible Artin representation of $H$.  Let $\psi$ be the character of such an irreducible Artin representation $V$ of $H$.  Then, a simple calculation using the orthogonality relation (\ref{ortho}) gives
\begin{equation} \label{dir_sum}
e_{\psi} = \sum_{\substack{\varphi \in \widehat{G} \\ \varphi \vert_{H} = \psi}} e_{\varphi}.
\end{equation}
It follows that one has
$$\mathbb{C}[G]\llbracket u \rrbracket e_{\psi} =\bigoplus_{\substack{\varphi \in \widehat{G}\\ \varphi|_{H} = \psi}}\mathbb{C}[G]\llbracket u \rrbracket e_{\varphi}. $$
Let $P \in \mathbb{C}[G]\llbracket u \rrbracket$, then $m_{P}$ induces by restriction a $\mathbb{C}$-linear morphism 
$$m_{P}^{\psi}:\mathbb{C}[G]\llbracket u \rrbracket e_{\psi} \rightarrow \mathbb{C}[G]\llbracket u \rrbracket e_{\psi},$$
which by (\ref{dir_sum}) above satisfies 
$$m_{P}^{\psi} = \bigoplus_{\substack{\varphi \in \widehat{G}\\ \varphi|_{H} = \psi}} m_{P}^{\varphi}, $$
where $m_{P}^{\varphi}$ denotes the $\mathbb{C}$-linear morphism obtained by restricting $m_{P}$ to $\mathbb{C}[G]\llbracket u \rrbracket e_{\varphi}$.  Noting that $m_{P}^{\varphi} = m_{\varphi(P)}$ and that ${\rm det}_{\mathbb{C}}(m_{\varphi(P)}) = \varphi(P)$, one obtains 
\begin{equation*}
\begin{aligned} 
\sigma_{V} \circ N_{G/H}(P) &= {\rm det}_{\mathbb{C}}(m_{P}^{\psi}) \\
&= \prod_{\substack{\varphi \in \widehat{G} \\ \varphi|_{H} = \psi}} {\rm det}_{\mathbb{C}}(m_{P}^{\varphi}) \\
&= \prod_{\substack{\varphi \in \widehat{G} \\ \varphi|_{H} = \psi}} \varphi(P).
\end{aligned}
\end{equation*}
On the other hand, one has
$${\rm Ind}(V) \simeq \mathbb{C}[G] \otimes_{\mathbb{C}[H]}(\mathbb{C}[H]e_{\psi}) \simeq \mathbb{C}[G]e_{\psi} \simeq \bigoplus_{\substack{\varphi \in \widehat{G} \\ \varphi|_{H} = \psi}} \mathbb{C}[G]e_{\varphi}, $$
by (\ref{dir_sum}) above from which it follows that the character of $W$ is given 
$$\sum_{\substack{\varphi \in \widehat{G} \\ \varphi|_{H} = \psi}} \varphi. $$
The result then follows from \cref{mult_det,sigma_irr}.
\end{proof}

\subsection{The Ihara \texorpdfstring{$L$}{L}-functions} \label{iharalfun}
The multiplicative function ${\rm \sigma}_{V}$ from \cref{the_function_sigma} allows us to define the Ihara $L$-function associated to an Artin representation $V$.  As before, $Y$ is a finite graph on which a finite abelian group acts without inversion.
\begin{definition}
Let $V$ be an Artin representation of $G$.  Then, we define the Ihara $L$-function associated to $V$ to be
$$L_{V}(u) = {\rm \sigma}_{V}\left(\theta_{Y}(u) \right) \in 1 + u \mathbb{C}\llbracket u \rrbracket \subseteq \mathbb{C} \llbracket u \rrbracket^{\times}.$$
We let also
$$c_{V}(u) = \sigma_{V}(\gamma_{Y}(u)) \text{ and } h_{V}(u) = \sigma_{V}(\eta_{Y}(u)) $$
so that we have $L_{V}(u)^{-1} = c_{V}(u) \cdot h_{V}(u)$.
\end{definition}
The first thing to notice is that the functions $c_{V}(u)$, $h_{V}(u)$, and $L_{V}(u)$ depend only on the isomorphism class of $V$ as the following theorem shows.
\begin{theorem} \label{dep}
Let $V_{1}$ and $V_{2}$ be two Artin representations of $G$, and assume that they are isomorphic as $\mathbb{C}[G]$-modules, then
$$c_{V_{1}}(u) = c_{V_{2}}(u) \text{ and } h_{V_{1}}(u) = h_{V_{2}}(u) $$
so that we also have $L_{V_{1}}(u) = L_{V_{2}}(u)$.
\end{theorem}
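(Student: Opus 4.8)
The plan is to deduce everything directly from \cref{det_up_to_iso}. By definition, $c_{V}(u)$, $h_{V}(u)$, and $L_{V}(u)$ are the images of the three fixed power series $\gamma_{Y}(u)$, $\eta_{Y}(u)$, and $\theta_{Y}(u) = \theta_{Y}(u)^{-1}{}^{-1}$ — more precisely $L_{V}(u) = \sigma_{V}(\theta_{Y}(u))$, $c_{V}(u) = \sigma_{V}(\gamma_{Y}(u))$, $h_{V}(u) = \sigma_{V}(\eta_{Y}(u))$ — under the multiplicative function $\sigma_{V}:\mathbb{C}[G]\llbracket u \rrbracket \rightarrow \mathbb{C}\llbracket u \rrbracket$. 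These three power series depend only on the graph $Y$ together with the $G$-action, and not on any choice of representation; in particular they are literally the \emph{same} elements of $\mathbb{C}[G]\llbracket u \rrbracket$ whether one is computing quantities attached to $V_{1}$ or to $V_{2}$.

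First I would invoke \cref{det_up_to_iso}: since $V_{1}$ and $V_{2}$ are isomorphic as $\mathbb{C}[G]$-modules, one has the equality of functions $\sigma_{V_{1}} = \sigma_{V_{2}}$ on $\mathbb{C}[G]\llbracket u \rrbracket$. Applying this equality to the argument $\gamma_{Y}(u)$ yields $c_{V_{1}}(u) = \sigma_{V_{1}}(\gamma_{Y}(u)) = \sigma_{V_{2}}(\gamma_{Y}(u)) = c_{V_{2}}(u)$, and applying it to $\eta_{Y}(u)$ yields $h_{V_{1}}(u) = h_{V_{2}}(u)$ in exactly the same way. Since $L_{V_{i}}(u)^{-1} = c_{V_{i}}(u) \cdot h_{V_{i}}(u)$ for $i = 1, 2$ by definition, these two equalities give $L_{V_{1}}(u)^{-1} = L_{V_{2}}(u)^{-1}$, and hence $L_{V_{1}}(u) = L_{V_{2}}(u)$ after inverting in the group of units $1 + u\mathbb{C}\llbracket u \rrbracket$. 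Alternatively one may apply $\sigma_{V_{1}} = \sigma_{V_{2}}$ directly to $\theta_{Y}(u)$.

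There is no genuine obstacle in this statement: all of its content is already packaged in \cref{det_up_to_iso}, whose proof in turn rests on the elementary fact that conjugation of an endomorphism by an isomorphism leaves its determinant unchanged. The present theorem is simply the translation of that invariance through the definitions of $c_{V}$, $h_{V}$, and $L_{V}$, so the proof amounts to unwinding those definitions and citing the proposition.
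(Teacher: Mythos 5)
Your proof is correct and takes exactly the route the paper does: the paper's entire proof is the single line ``This follows from \cref{det_up_to_iso},'' and your argument is just that citation with the definitions of $c_{V}$, $h_{V}$, and $L_{V}$ unwound. Nothing to add.
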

\begin{proof}
This follows from \cref{det_up_to_iso}.
\end{proof}
Because of \cref{dep} combined with the fact that two Artin representations are isomorphic as $\mathbb{C}[G]$-modules if and only if they have the same character, we will often write $L(u,\psi)$ instead of $L_{V}(u)$, where $\psi$ is the character of $V$, and similarly for $c_{V}(u)$ and $h_{V}(u)$.

\begin{theorem} \label{art_c}
With the notation as above, one has
$$c_{V}(u) = (1-u^{2})^{-\chi_{V}(Y)} \in \mathbb{C}\llbracket u \rrbracket. $$
\end{theorem}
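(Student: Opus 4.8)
The plan is to reduce to the case where $V$ is irreducible and then invoke the behavior of exponentiation under algebra morphisms recorded in \cref{exp_beh}. The function $\sigma_{V}$ is only multiplicative, not an algebra morphism, so we cannot apply it directly to the ``power'' $\gamma_{Y}(u) = (1-u^{2})^{-\chi_{\mathbb{C}[G]}(Y)}$; passing to irreducibles is exactly what turns $\sigma_{V}$ into a genuine $\mathbb{C}\llbracket u \rrbracket$-algebra morphism.

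First I would record the additivity that legitimizes the reduction. If $V = V_{1} \oplus V_{2}$, then $\sigma_{V} = \sigma_{V_{1}} \cdot \sigma_{V_{2}}$ by \cref{mult_det}, while $\chi_{V}(Y) = \chi_{V_{1}}(Y) + \chi_{V_{2}}(Y)$ — either directly from \cref{artin_euler}, since $T_{V_{1}\oplus V_{2}}(M) \simeq T_{V_{1}}(M) \oplus T_{V_{2}}(M)$ so the dimensions add, or from \cref{art_eul_char} together with the additivity of $\psi_{V}$ in $V$. Since every Artin representation is a direct sum of irreducible ones, and since $(1-u^{2})^{a+b} = (1-u^{2})^{a}\cdot(1-u^{2})^{b}$ in $1 + u\mathbb{C}\llbracket u \rrbracket$ by \cref{exp_beh}(1), it suffices to prove the identity when $V$ is irreducible.

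So assume $V$ is irreducible with character $\psi$. By \cref{sigma_irr}, $\sigma_{V}$ is the $\mathbb{C}\llbracket u \rrbracket$-algebra morphism obtained by applying $\psi : \mathbb{C}[G] \to \mathbb{C}$ to the coefficients of a power series. Now $\gamma_{Y}(u) = (1-u^{2})^{-\chi_{\mathbb{C}[G]}(Y)}$, where the exponentiation is the one defined in (\ref{exponent}) with $A = \mathbb{C}[G]$, base $P = 1-u^{2} \in 1 + u\mathbb{C}[G]\llbracket u \rrbracket$, and exponent $-\chi_{\mathbb{C}[G]}(Y) \in \mathbb{C}[G]$. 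Applying \cref{exp_beh}(2) to the $\mathbb{Q}$-algebra morphism $\psi : \mathbb{C}[G] \to \mathbb{C}$ (legitimate because $G$ is abelian, so $\mathbb{C}[G]$ is a commutative $\mathbb{Q}$-algebra) gives
$$c_{V}(u) = \sigma_{V}(\gamma_{Y}(u)) = \psi\!\left((1-u^{2})^{-\chi_{\mathbb{C}[G]}(Y)}\right) = \psi(1-u^{2})^{\psi(-\chi_{\mathbb{C}[G]}(Y))} = (1-u^{2})^{-\psi(\chi_{\mathbb{C}[G]}(Y))},$$
where we used $\psi(1_{G}) = 1$ so that $\psi(1-u^{2}) = 1 - u^{2} \in \mathbb{C}\llbracket u \rrbracket$, and the final exponentiation is now the usual one in $1 + u\mathbb{C}\llbracket u \rrbracket$ with integer exponent. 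Finally, \cref{art_eul_char} gives $\psi(\chi_{\mathbb{C}[G]}(Y)) = \chi_{\psi}(Y) = \chi_{V}(Y)$, which completes the proof.

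I do not expect a serious obstacle here; the only point requiring care is keeping straight in which ring each exponentiation takes place — the exponent $-\chi_{\mathbb{C}[G]}(Y)$ living in $\mathbb{C}[G]$ before applying $\psi$ and in $\mathbb{Z} \subseteq \mathbb{C}$ afterward — and checking that the hypotheses of \cref{exp_beh} (commutative $\mathbb{Q}$-algebras) are met, which is immediate from the standing assumption that $G$ is abelian.
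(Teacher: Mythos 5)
Your proof is correct and follows essentially the same route as the paper's: reduce to irreducible $V$ via additivity (\cref{mult_det}, \cref{exp_beh}), then use \cref{sigma_irr} to turn $\sigma_V$ into the algebra morphism $\psi$, commute $\psi$ past the exponentiation, and finish with \cref{art_eul_char}. Your write-up just makes explicit the bookkeeping about where each exponent lives, which the paper leaves implicit.
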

\begin{proof}
Because of the additivity property of $\chi_{V}$ in $V$, \cref{exp_beh,mult_det}, it suffices to show the claim for irreducible Artin representations.  If $V$ is an irreducible Artin representation with character $\psi = \psi_{V}$, then $\psi:\mathbb{C}[G] \rightarrow \mathbb{C}$ is a $\mathbb{C}$-algebra morphism.  Thus, using \cref{sigma_irr,basic_prop,art_eul_char}, one calculates
\begin{equation*}
\begin{aligned}
c(u,\psi) &= \psi\left((1-u^{2})^{-\chi_{\mathbb{C}[G]}(Y)} \right) \\
&= (1-u^{2})^{-\psi(\chi_{\mathbb{C}[G]}(Y))} \\
&= (1-u^{2})^{-\chi_{\psi}(Y)},
\end{aligned}
\end{equation*}
and this ends the proof.
\end{proof}

Similarly, we can give an explicit description of the polynomial $h_{V}(u)$.  In order to do so, we need the following proposition.
\begin{proposition} \label{useful_c_d}
Let $G$ be a finite abelian group acting on a finite graph $Y$ without inversion, and let $V$ be an Artin representation.  To simplify the notation, let $M=C_{0}(Y)$.  Then, the diagram
\begin{equation} \label{com_h}
\begin{tikzcd}
 {\rm End}_{\mathbb{C}[G]}(M)[u] \arrow["T_{V}",d] \arrow["\omega_{M}",r] & {\rm End}_{\mathbb{C}[G][u]}(M[u])  \arrow["{\rm det}_{\mathbb{C}[G][u]}",r] & \mathbb{C}[G][u]  \arrow["\sigma_{V}",d] \\
 {\rm End}_{\mathbb{C}}(T_{V}(M))[u] \arrow["\omega_{T_{V}(M)}",r] & {\rm End}_{\mathbb{C}[u]}(T_{V}(M)[u]) \arrow["{\rm det}_{\mathbb{C}[u]}",r]&   \mathbb{C}[u]
\end{tikzcd}
\end{equation}
is commutative.  Here the first vertical arrow to the left of the diagram above is obtained by applying $T_{V}$ to each coefficient of a polynomial in ${\rm End}_{\mathbb{C}[G]}(M)[u]$.
\end{proposition}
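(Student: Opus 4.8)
The plan is to reduce the commutativity of (\ref{com_h}) to the case where $M$ is a free $\mathbb{C}[G]$-module of finite rank, and then to deduce it from Silvester's block-determinant identity (\cref{silvester}).

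\emph{Reduction to the free case.} Since $\mathbb{C}[G]$ is semisimple, $M = C_{0}(Y)$ is a finitely generated projective $\mathbb{C}[G]$-module, so we may fix a free $\mathbb{C}[G]$-module $F$ of finite rank and a $\mathbb{C}[G]$-module $N$ with $F = M \oplus N$. Given $P = \sum_{i} f_{i} u^{i} \in {\rm End}_{\mathbb{C}[G]}(M)[u]$, put $Q = (f_{0} \oplus {\rm id}_{N}) + \sum_{i \ge 1}(f_{i} \oplus 0_{N})u^{i} \in {\rm End}_{\mathbb{C}[G]}(F)[u]$. From the explicit formula defining $\omega$ one checks that, under the canonical identification $F[u] = M[u] \oplus N[u]$, one has $\omega_{F}(Q) = \omega_{M}(P) \oplus {\rm id}_{N[u]}$; hence, by the definition of the determinant of an endomorphism of a finitely generated projective module together with (\ref{det_prop}), ${\rm det}_{\mathbb{C}[G][u]}(\omega_{M}(P)) = {\rm det}_{\mathbb{C}[G][u]}(\omega_{F}(Q))$. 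Since $T_{V} = V \otimes_{\mathbb{C}[G]} -$ is an additive functor, $T_{V}(F) = T_{V}(M) \oplus T_{V}(N)$ and $T_{V}(Q) = (T_{V}(f_{0}) \oplus {\rm id}_{T_{V}(N)}) + \sum_{i \ge 1}(T_{V}(f_{i}) \oplus 0_{T_{V}(N)})u^{i}$, and the same computation along the bottom row gives ${\rm det}_{\mathbb{C}[u]}(\omega_{T_{V}(M)}(T_{V}(P))) = {\rm det}_{\mathbb{C}[u]}(\omega_{T_{V}(F)}(T_{V}(Q)))$. Therefore it suffices to prove the commutativity of (\ref{com_h}) with $(M,P)$ replaced by $(F,Q)$, i.e., we may assume $M$ is free with ordered $\mathbb{C}[G]$-basis $(b_{1},\ldots,b_{m})$.

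\emph{The free case via \cref{silvester}.} Fix an ordered $\mathbb{C}$-basis $(v_{1},\ldots,v_{d})$ of $V$ with $d = {\rm dim}_{\mathbb{C}}(V)$, so that $\{v_{a} \otimes b_{j}\}$ is a $\mathbb{C}$-basis of $T_{V}(M)$, which we order lexicographically in $(j,a)$. Using $(b_{1},\ldots,b_{m})$ to identify both ${\rm End}_{\mathbb{C}[G]}(M)[u]$ and ${\rm End}_{\mathbb{C}[G][u]}(M[u])$ with $M_{m}(\mathbb{C}[G][u])$ — an identification compatible with $\omega_{M}$ — the endomorphism $P$ corresponds to a matrix $\mathbf{P} \in M_{m}(\mathbb{C}[G][u])$ with ${\rm det}_{\mathbb{C}[G][u]}(\omega_{M}(P)) = {\rm det}_{\mathbb{C}[G][u]}(\mathbf{P})$. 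Unwinding the definition of $\sigma_{V}$ on polynomials and using \cref{conv_iso} to identify $\omega_{V}(\rho_{V}(\lambda))$ with the matrix $\rho_{V}(\lambda) \in M_{d}(\mathbb{C}[u])$, one sees that the restriction of $\sigma_{V}$ to $\mathbb{C}[G][u]$ is $\lambda \mapsto {\rm det}_{\mathbb{C}[u]}(\rho_{V}(\lambda))$, where $\rho_{V} : \mathbb{C}[G][u] \to M_{d}(\mathbb{C}[u])$ is the ring morphism obtained from $\rho_{V} : \mathbb{C}[G] \to {\rm End}_{\mathbb{C}}(V) \simeq M_{d}(\mathbb{C})$ applied coefficientwise. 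On the other hand, writing $f_{i}(b_{k}) = \sum_{j} \lambda^{(i)}_{jk} b_{j}$ and using the bimodule convention $v\lambda := \lambda v$ on $V$, we get $T_{V}(f_{i})(v_{a} \otimes b_{k}) = \sum_{j} (\lambda^{(i)}_{jk} v_{a}) \otimes b_{j}$, so the matrix of $T_{V}(f_{i})$ with respect to $\{v_{a} \otimes b_{j}\}$ is the block matrix obtained from the matrix of $f_{i}$ by replacing each entry $\lambda^{(i)}_{jk}$ with the block $\rho_{V}(\lambda^{(i)}_{jk}) \in M_{d}(\mathbb{C})$; consequently ${\rm det}_{\mathbb{C}[u]}(\omega_{T_{V}(M)}(T_{V}(P)))$ equals ${\rm det}_{\mathbb{C}[u]}$ of the matrix in $M_{m}(M_{d}(\mathbb{C}[u])) \simeq M_{md}(\mathbb{C}[u])$ obtained from $\mathbf{P}$ by applying $\rho_{V}$ entrywise. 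Now \cref{silvester}, applied with $S = \mathbb{C}[G][u]$, $R = \mathbb{C}[u]$, $n = d$ and $\rho = \rho_{V}$, says precisely that ${\rm det}_{\mathbb{C}[u]}(\rho_{V}({\rm det}_{\mathbb{C}[G][u]}(\mathbf{P})))$ — which is $\sigma_{V}({\rm det}_{\mathbb{C}[G][u]}(\omega_{M}(P)))$ — equals ${\rm det}_{\mathbb{C}[u]}$ of that entrywise image, and this is exactly the asserted commutativity of (\ref{com_h}).

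I expect the genuine difficulty to lie entirely in the bookkeeping: verifying $\omega_{F}(Q) = \omega_{M}(P) \oplus {\rm id}_{N[u]}$ in the reduction step, and matching the matrix of $T_{V}(f_{i})$ with the entrywise image of the matrix of $f_{i}$ under $\rho_{V}$. In the latter one must be careful with the right $\mathbb{C}[G]$-action $v\lambda = \lambda v$ on $V$ — this may introduce a transpose, which is harmless since the determinant is transpose-invariant — and with the chosen ordering of the basis $\{v_{a} \otimes b_{j}\}$, which is also harmless since a different ordering merely permutes rows and columns of all matrices involved simultaneously.
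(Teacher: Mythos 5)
Your proof is correct, but it takes a genuinely different route from the paper's. The paper reduces along the representation: it first checks that both paths of (\ref{com_h}) are multiplicative in $V$ (via (\ref{det_prop}) and \cref{mult_det}), so one may assume $V$ irreducible, hence one-dimensional since $G$ is abelian; the bottom row then becomes the $e_{\psi}$-isotypic component, and the right-hand square commutes simply by base change of the determinant along the $\mathbb{C}$-algebra morphism $\psi:\mathbb{C}[G]\to\mathbb{C}$. You instead reduce along the module: you pass from the projective $M$ to a free complement $F = M \oplus N$ (which is exactly how the determinant on projectives is defined, so this step is essentially tautological once you verify $\omega_{F}(Q) = \omega_{M}(P)\oplus{\rm id}_{N[u]}$, which you do correctly), and then invoke \cref{silvester} with $\rho = \rho_{V}:\mathbb{C}[G][u]\to M_{d}(\mathbb{C}[u])$. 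This is precisely the strategy the paper uses to prove the companion statement \cref{det_and_N} for $N_{G/H}$, so your argument unifies the two proofs; it also does not need $V$ to split into one-dimensional pieces, so it would survive verbatim in a non-abelian setting where $\mathbb{C}[G]$ is replaced by any commutative coefficient ring and $V$ by a higher-dimensional representation. What the paper's route buys is brevity given the lemmas already in place (\cref{sigma_irr}, \cref{mult_det}) and the avoidance of any basis or block-matrix bookkeeping. Your worries about a transpose are in fact moot: with the bimodule convention $v\lambda := \lambda v$ and the standard convention that the $(j,k)$ entry of the matrix of $f$ records the coefficient of $b_{j}$ in $f(b_{k})$, the matrix of $T_{V}(f_{i})$ is exactly the entrywise image of the matrix of $f_{i}$ under $\rho_{V}$, with no transpose, because $\mathbb{C}[G]$ is commutative.
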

\begin{proof}
If $V  = V_{1} \oplus V_{2}$, then
$$T_{V}(M) \simeq T_{V_{1}}(M) \oplus T_{V_{2}}(M), $$
so that
$${\rm End}_{\mathbb{C}[u]}(T_{V}(M)[u]) \simeq {\rm End}_{\mathbb{C}[u]}(T_{V_{1}}(M)[u] \oplus T_{V_{2}}(M)[u]). $$
A simple calculation shows that
\begin{equation*}
\omega_{T_{V}(M)} \circ T_{V}(P) = \left(\omega_{T_{V_{1}}(M)} \circ T_{V_{1}}(P)\right) \oplus \left(\omega_{T_{V_{2}}(M)} \circ T_{V_{2}}(P) \right),
\end{equation*} 
and it follows that
$${\rm det}_{\mathbb{C}[u]}\left(\omega_{T_{V}(M)} \circ T_{V}(P) \right) = {\rm det}_{\mathbb{C}[u]}\left(\omega_{T_{V_{1}}(M)} \circ T_{V_{1}}(P) \right) \cdot {\rm det}_{\mathbb{C}[u]}\left(\omega_{T_{V_{2}}(M)} \circ T_{V_{2}}(P) \right). $$
Since $\sigma_{V}$ satisfies a similar property by \cref{mult_det}, it suffices to prove the claim when $V$ is irreducible, so let us assume so, and let $\psi$ be the character of $V$.  Using \cref{sigma_irr}, the diagram (\ref{com_h}) becomes
\begin{equation*}
\begin{tikzcd}
 {\rm End}_{\mathbb{C}[G]}(M)[u] \arrow[d] \arrow["\omega_{M}",r] & {\rm End}_{\mathbb{C}[G][u]}(M[u]) \arrow[d] \arrow["{\rm det}_{\mathbb{C}[G][u]}",r] & \mathbb{C}[G][u]  \arrow["\psi",d] \\
 {\rm End}_{\mathbb{C}}(e_{\psi}M)[u] \arrow["\omega_{e_{\psi}M}",r] & {\rm End}_{\mathbb{C}[u]}(e_{\psi}M[u]) \arrow["{\rm det}_{\mathbb{C}[u]}",r]&   \mathbb{C}[u].
\end{tikzcd}
\end{equation*}
The square on the right commutes because of the usual base change property of the determinant with respect to the ring morphism $\psi:\mathbb{C}[G] \rightarrow \mathbb{C}$, and the commutativity of the square on the left is a simple calculation left to the reader.
\end{proof}
Given an Artin representation $V$ of $G$, we let
$$\Delta_{V}(u) := \mathcal{I}_{V} - \mathcal{A}_{V}u + \mathcal{Q}_{V}u^{2} \in {\rm End}_{\mathbb{C}}(T_{V}(C_{0}(Y)))[u]. $$
Again, via the isomorphism provided by \cref{conv_iso_pol_vs}, we view $\Delta_{V}(u)$ as an operator on the free $\mathbb{C}[u]$-module $T_{V}(C_{0}(Y))[u]$.
\begin{corollary} \label{3_term}
With the notation as above, one has
$$h_{V}(u) = {\rm det}_{\mathbb{C}[u]}(\Delta_{V}(u)) \in \mathbb{C}[u]. $$
\end{corollary}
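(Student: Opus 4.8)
The plan is to read off the corollary directly from \cref{useful_c_d} by feeding it the correct input. Set $M = C_0(Y)$, which is a finitely generated projective $\mathbb{C}[G]$-module since $\mathbb{C}[G]$ is semisimple, so the determinants in that proposition make sense, and take $P = \Delta(u) = \mathcal{I} - \mathcal{A}u + \mathcal{Q}u^2 \in {\rm End}_{\mathbb{C}[G]}(M)[u]$. Chasing $P$ along the top row of the commutative diagram (\ref{com_h}): the element $\omega_M(\Delta(u))$ is exactly the $\mathbb{C}[G][u]$-operator whose determinant was defined to be $\eta_Y(u)$, and then $\sigma_V(\eta_Y(u))$ is by definition $h_V(u)$. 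So the top-right corner of the diagram receives $h_V(u)$.

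Next I would compute the image of $\Delta(u)$ along the bottom row. The left-hand vertical arrow of (\ref{com_h}) applies $T_V$ to each coefficient, so it sends $\mathcal{I} - \mathcal{A}u + \mathcal{Q}u^2$ to $T_V(\mathcal{I}) - T_V(\mathcal{A})u + T_V(\mathcal{Q})u^2$. By the notational convention fixed before \cref{artin_euler} — writing $f_V$ for $T_V(f)$ — we have $T_V(\mathcal{I}) = \mathcal{I}_V$, $T_V(\mathcal{A}) = \mathcal{A}_V$, and, since $T_V$ is additive on morphisms, $T_V(\mathcal{Q}) = T_V(\mathcal{D}) - T_V(\mathcal{I}) = \mathcal{D}_V - \mathcal{I}_V = \mathcal{Q}_V$; moreover $T_V({\rm id}_M) = {\rm id}_{T_V(M)}$, so $\mathcal{I}_V$ really is the identity operator on $T_V(C_0(Y))$. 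Hence the image of $\Delta(u)$ is precisely $\Delta_V(u) \in {\rm End}_{\mathbb{C}}(T_V(M))[u]$. Applying $\omega_{T_V(M)}$ and then $\det_{\mathbb{C}[u]}$ yields $\det_{\mathbb{C}[u]}(\Delta_V(u))$, where we are using the identification from \cref{conv_iso_pol_vs} of a polynomial endomorphism with the induced endomorphism of the polynomial module — exactly the identification under which $\det_{\mathbb{C}[u]}(\Delta_V(u))$ is to be understood in the statement.

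Since the diagram (\ref{com_h}) commutes, the two paths from ${\rm End}_{\mathbb{C}[G]}(M)[u]$ to $\mathbb{C}[u]$ agree on $\Delta(u)$, which gives $h_V(u) = \det_{\mathbb{C}[u]}(\Delta_V(u))$. I do not expect any genuine obstacle here: all the substantive content — the compatibility of $\sigma_V$ with the determinant through $T_V$, including the reduction to the irreducible case and the base-change property of the determinant — has already been packaged into \cref{useful_c_d}, and what remains is only the bookkeeping of recognizing $\eta_Y(u)$, $h_V(u)$, and $\Delta_V(u)$ as the appropriate corners and arrow-images in that diagram.
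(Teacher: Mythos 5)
Your proposal is correct and follows exactly the route the paper takes: the paper's proof of this corollary is literally ``a direct consequence of \cref{useful_c_d},'' and your argument is just that diagram chase written out, feeding $\Delta(u)$ into (\ref{com_h}) and identifying the two corners as $h_{V}(u)$ and ${\rm det}_{\mathbb{C}[u]}(\Delta_{V}(u))$. Nothing is missing.
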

\begin{proof}
This is a direct consequence of \cref{useful_c_d}.
\end{proof}

Given any $\psi \in \widehat{G}$, the $\mathbb{C}$-algebra morphism $\psi:\mathbb{C}[G] \rightarrow \mathbb{C}$ induces yet another $\mathbb{C}$-algebra morphism
$$\psi:\mathbb{C}[G]\llbracket u \rrbracket \rightarrow  \mathbb{C}\llbracket u \rrbracket $$
by applying $\psi$ to the coefficients of power series in $\mathbb{C}[G]\llbracket u \rrbracket$.  Given $P(u) \in \mathbb{C}[G]\llbracket u \rrbracket$, we let $P(u,\psi) = \psi(P(u)) \in \mathbb{C}\llbracket u \rrbracket$.  (Note that this is consistent with \cref{sigma_irr}.)  Since
$$\mathbb{C}[G]\llbracket u \rrbracket = \bigoplus_{\psi \in \widehat{G}}\mathbb{C}\llbracket u \rrbracket e_{\psi}, $$
one has
$$P(u) = \sum_{\psi \in \widehat{G}} P(u,\psi) e_{\psi}, $$
for all $P(u) \in \mathbb{C}[G]\llbracket u \rrbracket$.  In particular, we have
$$ \gamma_{Y}(u) = \sum_{\psi \in \widehat{G}}c(u,\psi) e_{\psi} \text{ and } \eta_{Y}(u) = \sum_{\psi \in \widehat{G}}h(u,\psi) e_{\psi}.$$
It follows from \cref{art_eul_char,art_c} that if $G$ acts freely on $\mathbf{E}_{Y}$ and $\chi(X) \le 0$, then $\gamma_{Y}(u) \in \mathbb{C}[G][u]$, a fact which was perhaps not obvious from the original definition of $\gamma_{Y}(u)$.

\begin{example} \label{ex22}
We revisit \cref{ex11}.  Let $\psi_{0}$ be the trivial character of $G$, $\psi_{1}$ the unique character of order $2$ of $G$, and $\psi_{2}, \psi_{3}$ the two characters of order $4$.  Then, we have
$$h(u,\psi_{0}) = \psi_{0}(\eta_{Y}(u)) = 1-4u^{2} + 3u^{4}, h(u,\psi_{1}) = \psi_{1}(\eta_{Y}(u)) = 1 + 4u^{2} + 3u^{4},$$
and
$$h(u,\psi_{j}) = \psi_{j}(\eta_{Y}(u)) = 1+u^{2}, $$
for $j=2,3$.  We verify that
$$\eta_{Y}(u) = \sum_{j=0}^{3}h(u,\psi_{j})e_{\psi_{j}}. $$
Moreover,
$$c(u,\psi_{0}) = 1 = c(u,\psi_{1}), $$
and
$$c(u,\psi_{2}) = 1 - u^{2} = c(u,\psi_{3}) $$
Since 
$$\chi_{\mathbb{C}[G]}(Y) = e_{2} - 1, $$
this gives us the equality
$$(1-u^{2})^{1 - e_{2}} = e_{\psi_{0}} + e_{\psi_{1}} + (1-u^{2})e_{\psi_{2}} + (1-u^{2})e_{\psi_{3}} \in \mathbb{C}[G][u]. $$
\demo
\end{example}

\subsection{The Artin formalism} \label{af}
The setup is the usual one: $Y$ is a finite graph on which a finite abelian group $G$ acts without inversion.  In this section, we study the Artin formalism for the Ihara $L$-functions.  By the Artin formalism, we mean the usual three properties of additivity, induction and inflation satisfied by the classical Artin $L$-functions in algebraic number theory as listed for instance in \cite[page 15]{Tate:1984}.  We start with the additivity property.

\begin{theorem} \label{formalism_one}
Let $V_{1}$ and $V_{2}$ be two Artin representations of $G$, then one has
$$c_{V_{1} \oplus V_{2}}(u) = c_{V_{1}}(u) \cdot c_{V_{2}}(u) \text{ and } h_{V_{1} \oplus V_{2}}(u) = h_{V_{1}}(u) \cdot h_{V_{2}}(u), $$
so that we also have
$$L_{V_{1} \oplus V_{2}}(u) = L_{V_{1}}(u) \cdot L_{V_{2}}(u). $$
\end{theorem}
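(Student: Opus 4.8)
The plan is to reduce the entire statement to the multiplicativity of the function $\sigma_V$ proved in \cref{mult_det}. Recall that by definition $c_V(u) = \sigma_V(\gamma_Y(u))$, $h_V(u) = \sigma_V(\eta_Y(u))$, and $L_V(u) = \sigma_V(\theta_Y(u))$, and that the three power series $\gamma_Y(u), \eta_Y(u), \theta_Y(u) \in \mathbb{C}[G]\llbracket u \rrbracket$ are attached to the $G$-graph $Y$ and do not depend on any choice of Artin representation. Thus all the representation-theoretic content lives in $\sigma_V$, and the theorem becomes a formal consequence of how $\sigma_V$ behaves on a direct sum.

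First I would write $V = V_1 \oplus V_2$ and invoke \cref{mult_det} in the form $\sigma_V(P) = \sigma_{V_1}(P) \cdot \sigma_{V_2}(P)$ for every $P \in \mathbb{C}[G]\llbracket u \rrbracket$; the underlying fact, already recorded in the proof of \cref{mult_det}, is that $\omega_V(\rho_V(P)) = \omega_{V_1}(\rho_{V_1}(P)) \oplus \omega_{V_2}(\rho_{V_2}(P))$ as an endomorphism of $(V_1 \oplus V_2)\llbracket u \rrbracket$, so that the multiplicativity of the determinant on direct sums, i.e. the second identity in (\ref{det_prop}), gives the claim.

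Then I would evaluate this identity at $P = \gamma_Y(u)$ and at $P = \eta_Y(u)$ to obtain $c_{V_1 \oplus V_2}(u) = c_{V_1}(u) c_{V_2}(u)$ and $h_{V_1 \oplus V_2}(u) = h_{V_1}(u) h_{V_2}(u)$, respectively. For the $L$-function one can either evaluate the same identity at $P = \theta_Y(u)$ directly, or combine the previous two using $L_V(u)^{-1} = c_V(u) h_V(u)$ and the commutativity of $\mathbb{C}\llbracket u \rrbracket$ to get $L_{V_1 \oplus V_2}(u)^{-1} = c_{V_1}(u) h_{V_1}(u) c_{V_2}(u) h_{V_2}(u) = L_{V_1}(u)^{-1} L_{V_2}(u)^{-1}$, hence the stated equality after inverting in $\mathbb{C}\llbracket u \rrbracket^{\times}$.

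There is no real obstacle here: the entire difficulty has already been dispatched in \cref{mult_det} and in the verification that $c_V$, $h_V$, $L_V$ are well defined. The only point requiring a little care is that $\sigma_V$ is merely multiplicative, not a $\mathbb{C}\llbracket u \rrbracket$-algebra morphism, unless $V$ is irreducible (\cref{sigma_irr}); consequently the argument must be phrased purely multiplicatively and cannot, for instance, split $\sigma_V(\gamma_Y(u))$ as a sum over isotypic components. Since additivity of $\sigma_V$ is never needed, this causes no trouble.
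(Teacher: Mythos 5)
Your proposal is correct and matches the paper's argument, which likewise deduces all three identities directly from \cref{mult_det} by evaluating $\sigma_{V_{1}\oplus V_{2}} = \sigma_{V_{1}}\cdot\sigma_{V_{2}}$ at $\gamma_{Y}(u)$, $\eta_{Y}(u)$, and $\theta_{Y}(u)$. Your added remark about $\sigma_{V}$ being only multiplicative is a fair point of care but does not change the route.
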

\begin{proof}
This follows directly from \cref{mult_det}.
\end{proof}

The induction property is contained in the following theorem.
\begin{theorem} \label{formalism_two}
Let $H$ be a subgroup of $G$, $V$ an Artin representation of $H$, and let ${\rm Ind}(V)$ denote the induced representation from $H$ to $G$.  Then
$$c_{{\rm Ind}(V)}(u) = c_{V}(u) \text{ and } h_{{\rm Ind}(V)}(u) = h_{V}(u), $$
so that we also have
$$L_{{\rm Ind}(V)}(u) = L_{V}(u). $$
\end{theorem}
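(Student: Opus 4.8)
The plan is to deduce this directly from two results already in hand: the multiplicativity of the equivariant Ihara zeta function under the norm map (Theorem~\ref{ind_really}) and the compatibility of $\sigma_V$ with $N_{G/H}$ under induction (Theorem~\ref{useful_for_induction}). The one point to settle at the outset is bookkeeping. Since $G$ acts on $Y$ without inversion, so does $H$, so both $\theta_{(Y,G)}(u) \in \mathbb{C}[G]\llbracket u \rrbracket$ and $\theta_{(Y,H)}(u) \in \mathbb{C}[H]\llbracket u \rrbracket$ are defined, together with their factors $\gamma_{(Y,G)}(u), \eta_{(Y,G)}(u)$ and $\gamma_{(Y,H)}(u), \eta_{(Y,H)}(u)$. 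The functions $c_V, h_V, L_V$ attached to the $H$-representation $V$ are built from $\theta_{(Y,H)}(u)$ via $\sigma_V : \mathbb{C}[H]\llbracket u \rrbracket \to \mathbb{C}\llbracket u \rrbracket$, whereas those attached to the $G$-representation $\mathrm{Ind}(V)$ are built from $\theta_{(Y,G)}(u)$ via $\sigma_{\mathrm{Ind}(V)} : \mathbb{C}[G]\llbracket u \rrbracket \to \mathbb{C}\llbracket u \rrbracket$; these are distinct maps living over distinct group rings and must not be conflated.

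First I would treat the factor $c$. By definition $c_{\mathrm{Ind}(V)}(u) = \sigma_{\mathrm{Ind}(V)}(\gamma_{(Y,G)}(u))$. Applying Theorem~\ref{useful_for_induction} with $W = \mathrm{Ind}(V)$ gives $\sigma_{\mathrm{Ind}(V)} = \sigma_V \circ N_{G/H}$ on $\mathbb{C}[G]\llbracket u \rrbracket$, so this equals $\sigma_V(N_{G/H}(\gamma_{(Y,G)}(u)))$. Now the first identity of Theorem~\ref{ind_really} reads $N_{G/H}(\gamma_{(Y,G)}(u)) = \gamma_{(Y,H)}(u)$, whence $c_{\mathrm{Ind}(V)}(u) = \sigma_V(\gamma_{(Y,H)}(u)) = c_V(u)$. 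The argument for $h$ is word for word the same: $h_{\mathrm{Ind}(V)}(u) = \sigma_{\mathrm{Ind}(V)}(\eta_{(Y,G)}(u)) = \sigma_V(N_{G/H}(\eta_{(Y,G)}(u))) = \sigma_V(\eta_{(Y,H)}(u)) = h_V(u)$, using this time the second identity of Theorem~\ref{ind_really}, namely $N_{G/H}(\eta_{(Y,G)}(u)) = \eta_{(Y,H)}(u)$.

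Finally, since $L_{\mathrm{Ind}(V)}(u)^{-1} = c_{\mathrm{Ind}(V)}(u) \cdot h_{\mathrm{Ind}(V)}(u)$ and likewise for $V$, the two displayed equalities give $L_{\mathrm{Ind}(V)}(u)^{-1} = c_V(u) \cdot h_V(u) = L_V(u)^{-1}$, hence $L_{\mathrm{Ind}(V)}(u) = L_V(u)$. Alternatively one can run the chain in one line directly on $\theta$, using that $N_{G/H}$ and $\sigma_V$ are multiplicative: $L_{\mathrm{Ind}(V)}(u) = \sigma_{\mathrm{Ind}(V)}(\theta_{(Y,G)}(u)) = \sigma_V(N_{G/H}(\theta_{(Y,G)}(u))) = \sigma_V(\theta_{(Y,H)}(u)) = L_V(u)$, invoking the last assertion of Theorem~\ref{ind_really}. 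There is no genuine obstacle: the substance of the induction property has been absorbed into Theorems~\ref{ind_really} and~\ref{useful_for_induction}, and the proof is purely a matter of composing the two commutative diagrams correctly, the only hazard being to keep the group rings $\mathbb{C}[G]\llbracket u \rrbracket$ and $\mathbb{C}[H]\llbracket u \rrbracket$—and the corresponding $\sigma$'s, $\theta$'s, and equivariant Euler characteristics—carefully distinguished.
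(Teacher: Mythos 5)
Your proposal is correct and is exactly the paper's argument: the paper proves this theorem by citing precisely \cref{useful_for_induction} and \cref{ind_really}, and your write-up simply makes explicit the composition $\sigma_{\mathrm{Ind}(V)} = \sigma_V \circ N_{G/H}$ applied to $\gamma_{(Y,G)}(u)$, $\eta_{(Y,G)}(u)$, and $\theta_{(Y,G)}(u)$ that the paper leaves implicit. No discrepancies.
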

\begin{proof}
This follows from \cref{useful_for_induction,ind_really}.
\end{proof}

We point out that the inflation property is not satisfied in general in the situation of branched covers of graphs.  This is already apparent from \cref{ex11}.  Indeed, one calculates
$$h_{X}(u) =  (1-u^{2})^{2} \text{ and } c_{X}(u) = 1,$$
but
$$h(u,\psi_{0}) = 1-4u^{2} + 3u^{4} \text{ and } c(u,\psi_{0}) = 1, $$
as we calculated in \cref{ex22} above.  If the inflation property were satisfied, then we would have $h_{X}(u) = h(u,\psi_{0})$ which is not the case.

As a direct consequence of \cref{formalism_two}, we obtain the following result.
\begin{corollary} \label{product_form_for_c}
Let $G$ be a finite abelian group acting without inversion on a finite graph $Y$.  Then, one has
$$c_{Y}(u) = \prod_{\psi \in \widehat{G}}c(u,\psi). $$
\end{corollary}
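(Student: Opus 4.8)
The plan is to feed the regular representation of $G$ into the Artin formalism of \cref{formalism_one,formalism_two}. Since $G$ is finite abelian, the regular representation decomposes as a direct sum $\mathbb{C}[G] \simeq \bigoplus_{\psi \in \widehat{G}} V_{\psi}$ of the one-dimensional representations $V_{\psi}$ with character $\psi$ (each $\psi \in \widehat{G}$ occurring with multiplicity one). Applying the additivity property (\cref{formalism_one}) term by term then gives
$$c_{\mathbb{C}[G]}(u) = \prod_{\psi \in \widehat{G}} c_{V_{\psi}}(u) = \prod_{\psi \in \widehat{G}} c(u,\psi),$$
so the corollary reduces to identifying $c_{\mathbb{C}[G]}(u)$ with $c_Y(u)$.

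For that identification I would invoke the induction property. Take $H = \{1\}$, the trivial subgroup of $G$, and let $\mathbf{1}$ be the one-dimensional trivial representation of $H$; then ${\rm Ind}(\mathbf{1}) \simeq \mathbb{C}[G]$, so \cref{formalism_two} yields $c_{\mathbb{C}[G]}(u) = c_{\mathbf{1}}(u)$. It remains to observe that for the trivial group acting on $Y$ the equivariant machinery of \cref{equ,section:art} collapses to the classical Ihara zeta function: the map $\sigma_{\mathbf{1}}$ is the identity on $\mathbb{C}\llbracket u \rrbracket$ since $\mathbf{1}$ is one-dimensional, the equivariant Euler characteristic $\chi_{\mathbb{C}[\{1\}]}(Y)$ is the ordinary $\chi(Y)$ because the Hattori--Stallings rank over $\mathbb{C}$ is the dimension, and hence $\gamma_Y(u) = (1-u^2)^{-\chi(Y)} = c_Y(u)$. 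Therefore $c_{\mathbf{1}}(u) = \sigma_{\mathbf{1}}(\gamma_Y(u)) = c_Y(u)$, and combining with the first paragraph gives $c_Y(u) = \prod_{\psi \in \widehat{G}} c(u,\psi)$.

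One could equally well bypass the trivial subgroup: by \cref{art_c} one has $c_V(u) = (1-u^2)^{-\chi_V(Y)}$ for every Artin representation $V$, and for $V = \mathbb{C}[G]$ the regular representation \cref{artin_euler} gives $\chi_V(Y) = \dim_{\mathbb{C}}(\mathbb{C}[G] \otimes_{\mathbb{C}[G]} C_0(Y)) - \dim_{\mathbb{C}}(\mathbb{C}[G] \otimes_{\mathbb{C}[G]} C_1(Y)) = \dim_{\mathbb{C}} C_0(Y) - \dim_{\mathbb{C}} C_1(Y) = \chi(Y)$, so again $c_{\mathbb{C}[G]}(u) = c_Y(u)$. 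I do not expect a genuine obstacle here; the only care needed is bookkeeping of the several Euler characteristics in play --- the plain $\chi$ of \cref{def_euler_char}, the $\mathbb{C}[G]$-valued equivariant one, and the $\mathbb{C}$-valued $\chi_V$ of \cref{artin_euler} --- together with checking that the $G = \{1\}$ specialization of $\theta_Y(u)$ genuinely recovers $Z_Y(u)$, which is immediate from Ihara's determinant formula (\cref{ihara}) once one knows $\sigma_{\mathbf{1}} = {\rm id}$.
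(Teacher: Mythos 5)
Your proof is correct and follows essentially the same route as the paper: induce the trivial representation of the trivial subgroup to obtain the regular representation $\mathbb{C}[G] \simeq \bigoplus_{\psi \in \widehat{G}} V_{\psi}$, then apply the induction property (\cref{formalism_two}) and additivity (\cref{formalism_one}). Your explicit verification that the $G=\{1\}$ specialization recovers $c_Y(u)$ (via $\sigma_{\mathbf{1}} = \mathrm{id}$ and $\chi_{\mathbb{C}[\{1\}]}(Y) = \chi(Y)$), as well as the alternative identification $\chi_{\mathbb{C}[G]\text{-reg}}(Y) = \chi(Y)$ via \cref{art_c}, are details the paper leaves implicit, and both are sound.
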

\begin{proof}
Let $V_{0}$ be the trivial representation of the trivial group, and consider $V = {\rm Ind}(V_{0})$, the induced representation up to $G$.  The representation $V$ is isomorphic to 
$$\mathbb{C}[G] = \bigoplus_{\psi \in \widehat{G}} \mathbb{C}[G] e_{\psi}. $$
It follows from \cref{formalism_one,formalism_two} that
$$c_{Y}(u) = \prod_{\psi \in \widehat{G}} c(u,\psi)$$
as desired.
\end{proof}

\begin{remark}
By definition, we have $c_{Y}(u) = (1-u^{2})^{-\chi(Y)}$ and if $\psi \in \widehat{G}$ and $G$ acts freely on $\mathbf{E}_{Y}$, then 
$$c(u,\psi) = (1-u^{2})^{-\chi(X) + r_{0}(\psi)},$$ 
because of \cref{art_c,art_eul_char}, where $X = Y_{G}$.  Therefore, the conclusion of \cref{product_form_for_c} is true if and only if
\begin{equation} \label{rh_disguised}
\chi(Y) = |G| \cdot \chi(X) - \sum_{\psi \in \widehat{G}}r_{0}(\psi). 
\end{equation}
In view of \cref{sum_order} below, noting that $|G| = [Y:X]$ when $Y$ is connected, (\ref{rh_disguised}) is equivalent to the Riemann-Hurwitz formula of Baker and Norine (\cref{riemann_hurwitz}) in our situation.
\end{remark}

\begin{lemma} \label{sum_order}
One has
$$\sum_{\psi \in \widehat{G}} r_{0}(\psi) = \sum_{w \in V_{Y}}(m_{w} - 1). $$
\end{lemma}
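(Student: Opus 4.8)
The plan is to prove the identity by a double-counting argument that reorganizes each side according to the orbits of $G$ on $V_Y$, i.e.\ according to the vertices $v \in V_X$. First I would rewrite the right-hand side: since $m_w = |G_w|$ depends only on $v = f(w)$ where $f : Y \to X$ is the quotient map, and since there are exactly $(G : G_v)$ vertices of $Y$ lying above each $v \in V_X$, one has
\begin{equation*}
\sum_{w \in V_Y}(m_w - 1) = \sum_{v \in V_X} (G : G_v)\,\bigl(|G_v| - 1\bigr) = \sum_{v \in V_X} \bigl(|G| - (G:G_v)\bigr).
\end{equation*}
Next I would rewrite the left-hand side. By definition $r_0(\psi) = |\{v \in V_X : G_v \not\subseteq \ker(\psi)\}|$, so swapping the order of summation gives
\begin{equation*}
\sum_{\psi \in \widehat{G}} r_0(\psi) = \sum_{v \in V_X} \bigl|\{\psi \in \widehat{G} : G_v \not\subseteq \ker(\psi)\}\bigr| = \sum_{v \in V_X}\Bigl(|\widehat{G}| - |\{\psi \in \widehat{G} : G_v \subseteq \ker(\psi)\}|\Bigr).
\end{equation*}

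The crux is then the elementary character-theoretic fact that for a subgroup $H \le G$ of a finite abelian group, the characters of $G$ that are trivial on $H$ are precisely the characters inflated from $G/H$, so their number is $|\widehat{G/H}| = |G/H| = (G:H)$; together with $|\widehat{G}| = |G|$ this makes the two rewritten sums term-by-term equal, with $H = G_v$. So the key steps, in order, are: (1) reduce the right side to $\sum_{v}(|G| - (G:G_v))$ using $m_w = |G_w|$ and the orbit-counting $|f^{-1}(v)| = (G:G_v)$; (2) interchange summation on the left to reduce it to $\sum_v (|G| - \#\{\psi \text{ trivial on } G_v\})$; (3) invoke $\#\{\psi \in \widehat{G} : \psi|_H = 1\} = (G:H)$ for each $H = G_v$; (4) match the two expressions.

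I do not expect any serious obstacle here — every ingredient is either a bare finite-group fact or already recorded in \cref{gr_act} (namely that $m_v = |G_v|$ and that there are $(G:G_v)$ vertices above $v$ when $G$ is abelian). The only point requiring a sentence of care is the duality statement in step (3); one can either cite the standard isomorphism $\widehat{G} \cong G$ for finite abelian groups (which restricts the subgroup $\{\psi : \psi|_H = 1\}$ to something of index $|H|$), or note directly that restriction $\widehat{G} \to \widehat{H}$ is surjective with kernel $\{\psi : \psi|_H = 1\}$, whence that kernel has order $|\widehat{G}|/|\widehat{H}| = |G|/|H| = (G:H)$. Everything else is bookkeeping.
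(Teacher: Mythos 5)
Your proposal is correct and follows essentially the same route as the paper: both arguments come down to the identity $\sum_{\psi \in \widehat{G}} r_{0}(\psi) = \sum_{v \in V_{X}}\bigl(|G| - (G:G_{v})\bigr)$, combined with the orbit count $|f^{-1}(v)| = (G:G_{v})$ and $m_{w} = |G_{v}|$. The only cosmetic difference is where the key count comes from: you obtain $\#\{\psi \in \widehat{G} : G_{v} \subseteq \ker(\psi)\} = (G:G_{v})$ directly from duality for finite abelian groups (surjectivity of restriction $\widehat{G} \to \widehat{G_{v}}$), whereas the paper extracts the same number by summing the dimensions of the isotypic components $e_{\psi}C_{0}(Y)$ from \cref{dim_iso_component} and comparing with $\dim_{\mathbb{C}} C_{0}(Y) = \sum_{v}(G:G_{v})$; both are valid and amount to the same computation.
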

\begin{proof}
Observe first that if $\psi \in \widehat{G}$, then \cref{dim_iso_component} implies
$${\rm dim}_{\mathbb{C}} \, (e_{\psi} C_{0}(Y)) = |V_{X}| - r_{0}(\psi). $$
Since
$$C_{0}(Y) = \bigoplus_{\psi \in \widehat{G}} e_{\psi} C_{0}(Y),$$
we have
\begin{equation*}
\begin{aligned} 
{\rm dim}_{\mathbb{C}} (C_{0}(Y)) &= \sum_{\psi \in \widehat{G}} {\rm dim}_{\mathbb{C}} (e_{\psi} C_{0}(Y)) \\
&= \sum_{\psi \in \widehat{G}} (|V_{X}| - r_{0}(\psi)) \\
&= |G||V_{X}| - \sum_{\psi \in \widehat{G}} r_{0}(\psi). \\
\end{aligned}
\end{equation*}
It follows that
\begin{equation*}
\begin{aligned}
\sum_{\psi \in \widehat{G}} r_{0}(\psi) &= |G||V_{X}| - {\rm dim}_{\mathbb{C}} (C_{0}(Y)) \\
&= |G||V_{X}| - \sum_{v \in V_{X}}(G:G_{v})\\
&= \sum_{v \in V_{X}}(|G| - (G:G_{v})) \\
&= \sum_{v \in V_{X}} \sum_{w \in f^{-1}(v)}(m_{w} - 1)  \\
&= \sum_{w \in V_{Y}} (m_{w} - 1),
\end{aligned}
\end{equation*}
and this ends the proof.
\end{proof}

\begin{example}
We revisit \cref{ex11,ex22}.  One has $\chi(Y) = -2$, and thus
$$c_{Y}(u) = (1-u^{2})^{2}. $$
Since $c(u,\psi_{0}) = c(u,\psi_{1}) = 1$, and $c(u,\psi_{2}) = c(u,\psi_{3}) = (1-u^{2})$, we do have the equality
$$c_{Y}(u) = \prod_{j=0}^{3}c(u,\psi_{j}) $$
as predicted by \cref{product_form_for_c}.  The Riemann-Hurwitz formula of Baker and Norine reads in this example as
\begin{equation*}
\begin{aligned}
\chi(Y) &= 4 \cdot \chi(X) - \sum_{w \in V_{Y}}(m_{w} - 1) \\
&= 4 \cdot 0 - (1 + 0 + 0  + 0+ 0+ 1) \\
&= -2.
\end{aligned}
\end{equation*}
\demo
\end{example}

The function $h_{Y}(u)$ satisfies a product formula as well similar to the corresponding one for $c_{Y}(u)$ of \cref{product_form_for_c}. 
\begin{corollary} \label{prod_formula_th}
Let $Y$ be a finite graph on which a finite abelian group $G$ acts without inversion.  One has
\begin{equation} \label{prod_formula}
h_{Y}(u) = \prod_{\psi \in \widehat{G}} h(u,\psi).
\end{equation}
\end{corollary}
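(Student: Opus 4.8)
The plan is to follow the argument of \cref{product_form_for_c} verbatim, with $h$ in place of $c$. Let $H=\{1_{G}\}$ be the trivial subgroup of $G$ and let $V_{0}$ be the trivial (necessarily one-dimensional) Artin representation of $H$. Since $H$ acts trivially on $Y$ and $Y_{H}=Y$, all the equivariant data over $\mathbb{C}[H]=\mathbb{C}$ collapse to the classical data attached to $Y$: one has $\chi_{\mathbb{C}[H]}(Y)=\chi(Y)$ and, more to the point, $\eta_{(Y,H)}(u)={\rm det}_{\mathbb{C}[u]}(\mathcal{I}-\mathcal{A}u+\mathcal{Q}u^{2})={\rm det}(I-Au+(D-I)u^{2})=h_{Y}(u)$ by Ihara's determinant formula (\cref{ihara}). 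Because $\sigma_{V_{0}}$ is the identity map of $\mathbb{C}\llbracket u \rrbracket$, this gives $h_{V_{0}}(u)=\eta_{(Y,H)}(u)=h_{Y}(u)$.

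Next I would identify the induced representation $W={\rm Ind}_{H}^{G}(V_{0})$. As usual $W\simeq\mathbb{C}[G]$, the regular representation of $G$, and since $G$ is finite abelian, $\mathbb{C}[G]=\bigoplus_{\psi\in\widehat{G}}\mathbb{C}[G]e_{\psi}$ is a direct sum of the one-dimensional representations with characters $\psi\in\widehat{G}$. Iterating the additivity property of \cref{formalism_one} over this decomposition, and writing $h_{\mathbb{C}[G]e_{\psi}}(u)=h(u,\psi)$ (which is legitimate by \cref{dep}, since $\mathbb{C}[G]e_{\psi}$ has character $\psi$), we obtain
\[
h_{W}(u)=\prod_{\psi\in\widehat{G}}h_{\mathbb{C}[G]e_{\psi}}(u)=\prod_{\psi\in\widehat{G}}h(u,\psi).
\]

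Finally, the induction property of \cref{formalism_two}, applied to $H\le G$ and to $V_{0}$, gives $h_{W}(u)=h_{{\rm Ind}(V_{0})}(u)=h_{V_{0}}(u)$. Chaining the three identities yields $h_{Y}(u)=h_{V_{0}}(u)=h_{W}(u)=\prod_{\psi\in\widehat{G}}h(u,\psi)$, which is (\ref{prod_formula}). I do not expect any real obstacle: the substantive content has already been secured in the Artin formalism of \cref{formalism_one,formalism_two} (which itself rests on \cref{ind_really,useful_for_induction,mult_det}), and the only step demanding a moment's attention is the degenerate-case identification $h_{V_{0}}(u)=h_{Y}(u)$, which is immediate once one unwinds the definitions of $\eta$ and $\sigma$ for the trivial group and invokes \cref{ihara}.
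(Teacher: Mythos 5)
Your argument is correct and is exactly the paper's proof: the paper proves \cref{prod_formula_th} by declaring it "identical to the one for \cref{product_form_for_c}," i.e.\ inducing the trivial representation of the trivial subgroup to get $\mathbb{C}[G]=\bigoplus_{\psi}\mathbb{C}[G]e_{\psi}$ and applying \cref{formalism_one,formalism_two}. Your explicit verification of the degenerate identification $h_{V_{0}}(u)=\eta_{(Y,\{1\})}(u)=h_{Y}(u)$ is a detail the paper leaves implicit, and it checks out.
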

\begin{proof}
The proof is identical to the one for \cref{product_form_for_c}.
\end{proof}

\begin{example}
Going back to \cref{ex11,ex22}, one calculates
$$h_{Y}(u) = 1  + 2u^{2} - 9u^{4} - 20u^{6} - u^{8} + 18u^{10} + 9u^{12}, $$
and one sees that
\begin{equation*}
\begin{aligned}
h_{Y}(u) &= \prod_{j=0}^{3}h(u,\psi_{j}) \\
&= (1-4u^{2} + 3u^{4}) \cdot (1+4u^{2} + 3u^{4})  \cdot (1 + u^{2}) \cdot (1+u^{2}), 
\end{aligned}
\end{equation*}
as predicted by \cref{prod_formula_th}.
\demo
\end{example}

\section{Branched \texorpdfstring{$\mathbb{Z}_{p}$}{L}-towers of graphs} \label{br_section}
Let $p$ be a prime number.  Given a finite connected graph $X$, a function $\alpha:\mathbf{E}_{X} \rightarrow \mathbb{Z}_{p}$ satisfying $\alpha(\bar{s}) = -\alpha(s)$ for all $s \in \mathbf{E}_{X}$, and a family $\mathcal{G}$ of closed subgroups of $\mathbb{Z}_{p}$ indexed by $V_{X}$, we consider the graphs $X_{n} = X(\mathbb{Z}_{p}/p^{n}\mathbb{Z}_{p},\mathcal{G}_{n},\alpha_{n})$ as explained in \cite[\S 4.3]{Gambheera/Vallieres:2024}.  We assume that the graphs $X_{n}$ are connected for all $n \ge 0$.  These graphs come with natural branched covers $X_{n+1} \rightarrow X_{n}$ of degree $p$ which together form a branched $\mathbb{Z}_{p}$-tower of finite connected graphs
\begin{equation} \label{z_p_tower}
X = X_{0} \leftarrow X_{1} \leftarrow X_{2} \leftarrow \ldots \leftarrow X_{n} \leftarrow \ldots 
\end{equation}
as explained in more details in \cite[\S 4.3]{Gambheera/Vallieres:2024}.  In this section, we use Ihara $L$-functions to study how ${\rm ord}_{p}(\kappa(X_{n}))$ varies as $n$ becomes large.
\subsection{The number of spanning trees in abelian branched covers} \label{num_of_span}
We use the same notation as in \cref{equiv_sec}.

\begin{proposition} \label{order_of_van}
Let $Y$ be a finite connected graph on which a finite abelian group $G$ acts without inversion and freely on directed edges, and let $X = Y_{G}$ be the quotient graph.  Let $C = (c_{ij})$ be the $g \times g$ diagonal matrix satisfying $c_{ii} = |G_{i}|$, for all $i =1,\ldots,g$.  If $D$ and $A$ denote the degree and adjacency matrices of $X$, respectively, then
$$h(u,\psi_{0}) = {\rm det}(I - ACu + (DC - I)u^{2}), $$
where $I$ is the identity matrix.  Moreover, one has ${\rm ord}_{u=1}(h(u,\psi_{0})) \ge 1$.
\end{proposition}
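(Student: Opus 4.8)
The plan is to push the explicit formula for $\eta_{Y}(u)$ from \cref{equi_exp} through the trivial character and then to evaluate at $u=1$. Recall that $h(u,\psi_{0}) = \psi_{0}(\eta_{Y}(u))$, where $\psi_{0}\colon\mathbb{C}[G]\to\mathbb{C}$ is the trivial character, extended to polynomials by applying it to the coefficients, and that $\eta_{Y}(u) = {\rm det}(\mathbf{I} - \mathbf{A}u + \mathbf{Q}u^{2})$ with $\mathbf{a}_{ij} = \ell_{i}(w_{j})$ and $\mathbf{Q}$ diagonal with $\mathbf{q}_{ii} = ({\rm val}_{Y}(w_{i})-1)e_{i}$. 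Since the determinant commutes with the ring morphism induced by $\psi_{0}$ applied entrywise, one has $h(u,\psi_{0}) = {\rm det}(I - \psi_{0}(\mathbf{A})u + \psi_{0}(\mathbf{Q})u^{2})$, so the first assertion reduces to identifying $\psi_{0}(\mathbf{A}) = AC$ and $\psi_{0}(\mathbf{Q}) = DC - I$.

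For the quadratic term this is quick: $\psi_{0}(e_{i}) = \tfrac{1}{|G_{i}|}\sum_{h\in G_{i}}\psi_{0}(h) = 1$, so $\psi_{0}(\mathbf{q}_{ii}) = {\rm val}_{Y}(w_{i}) - 1$. Because $G$ acts freely on directed edges, $f\colon Y\to X$ is a branched cover with $m_{w_{i}} = |{\rm Stab}_{G}(w_{i})| = |G_{i}|$, and the local defining property of a branched cover forces ${\rm val}_{Y}(w_{i}) = m_{w_{i}}\,{\rm val}_{X}(v_{i}) = |G_{i}|\,d_{i}$ where $d_{i} = D_{ii}$; hence $\psi_{0}(\mathbf{q}_{ii}) = |G_{i}|d_{i} - 1 = (DC)_{ii} - 1$, i.e.\ $\psi_{0}(\mathbf{Q}) = DC - I$.

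For the linear term, applying $\psi_{0}$ to $\ell_{i}(w_{j}) = \tfrac{1}{|G_{i}|}\sum_{\sigma\in G} a_{w_{i}}(\sigma w_{j})\sigma^{-1}$ gives $\psi_{0}(\mathbf{a}_{ij}) = \tfrac{1}{|G_{i}|}\sum_{\sigma\in G} a_{w_{i}}(\sigma w_{j})$. The orbit $\{\sigma w_{j}:\sigma\in G\}$ is exactly the fiber $f^{-1}(v_{j})$, and each fiber vertex is hit $|G_{j}|$ times, so this sum equals $\tfrac{|G_{j}|}{|G_{i}|}\sum_{w'\in f^{-1}(v_{j})} a_{w_{i}}(w')$. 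Now $\sum_{w'\in f^{-1}(v_{j})} a_{w_{i}}(w')$ counts directed edges of $Y$ with terminus $w_{i}$ and origin in $f^{-1}(v_{j})$; applying the branched-cover property to $\bar{\varepsilon}$ (so as to count edges by their terminus rather than their origin, using $f(\bar\varepsilon) = \overline{f(\varepsilon)}$) shows that over each directed edge of $X$ from $v_{j}$ to $v_{i}$ there are precisely $m_{w_{i}} = |G_{i}|$ such edges, so $\sum_{w'\in f^{-1}(v_{j})} a_{w_{i}}(w') = |G_{i}|A_{ij}$. Therefore $\psi_{0}(\mathbf{a}_{ij}) = |G_{j}|A_{ij} = (AC)_{ij}$, which establishes $\psi_{0}(\mathbf{A}) = AC$ and hence the formula $h(u,\psi_{0}) = {\rm det}(I - ACu + (DC-I)u^{2})$. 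For the order of vanishing, I would simply evaluate at $u=1$: the matrix becomes $I - AC + (DC - I) = (D-A)C$, the product of the Laplacian matrix of $X$ with $C$. Since $X$ is finite and connected, the Laplacian is singular (the all-ones vector lies in its kernel; cf.\ (\ref{lap_singular}) applied to $X$), so $h(1,\psi_{0}) = {\rm det}(D-A)\,{\rm det}(C) = 0$. As $h(u,\psi_{0})$ is a polynomial with constant term $\psi_{0}(1) = 1$, it is nonzero, so ${\rm ord}_{u=1}(h(u,\psi_{0}))\ge 1$.

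The step I expect to be the main obstacle is the bookkeeping in the linear term: translating the equivariant sums $\sum_{\sigma} a_{w_{i}}(\sigma w_{j})$ into adjacency data of the quotient graph $X$, while keeping careful track of the stabilizer sizes both when passing from sums over $G$ to sums over fibers and when invoking the branched-cover property with the inversion to count directed edges by their terminus. The remaining ingredients — base-change compatibility of the determinant, the identity $\psi_{0}(e_{i}) = 1$, the valency relation for branched covers, and the singularity of the Laplacian — are routine or already available in the text.
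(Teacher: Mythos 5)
Your proposal is correct and follows essentially the same route as the paper: push $\psi_{0}$ through the determinant of $\mathbf{I}-\mathbf{A}u+\mathbf{Q}u^{2}$, identify $\psi_{0}(\mathbf{A})=AC$ and $\psi_{0}(\mathbf{Q})=DC-I$ via the branched-cover local degree $m_{w}=|G_{w}|$, and then evaluate at $u=1$ to get $\det(\mathcal{L}_{X})\det(C)=0$. The only (immaterial) difference is in the linear-term count: the paper uses the identity $a_{w}(\sigma w')=a_{\sigma^{-1}w}(w')$ to reduce to a sum over $G/G_{i}$ and counts via the $|G_{j}|$-to-$1$ map $\mathbf{E}_{Y,w_{j}}\to\mathbf{E}_{X,v_{j}}$, whereas you sum over the fiber $f^{-1}(v_{j})$ and count edges by their terminus at $w_{i}$ using edge reversal — the same bookkeeping arranged differently.
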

\begin{proof}
By \cref{equi_exp}, we have
\begin{equation*}
\begin{aligned}
h(u,\psi_{0}) &= \psi_{0}({\rm det}(\mathbf{I} - \mathbf{A}u + \mathbf{Q}u^{2})) \\
&= {\rm det}(I - \psi_{0}(\mathbf{A})u + \psi_{0}(\mathbf{Q})u^{2}).
\end{aligned}
\end{equation*}
For a given $i \in \{1,\dots,g \}$, let $\{\sigma_{1},\sigma_{2},\ldots,\sigma_{r} \}$ be a complete set of representatives of $G/G_{i}$.  Using (\ref{useful_identity}), one has
$$\psi_{0}(\ell_{i}(w_{j})) = \sum_{k=1}^{r}a_{\sigma_{k}w_{i}}(w_{j}). $$ 
Since the function $\mathbf{E}_{Y,w_{j}} \rightarrow \mathbf{E}_{X,v_{j}}$ is $m_{j}$-to-$1$, where $m_{j} = m_{w_{j}}$, one has
$$\psi_{0}(\ell_{i}(w_{j})) = a_{ij} |G_{j}|, $$
where $a_{ij}$ is the number of directed edges of $X$ going from $v_{j}$ to $v_{i}$.  It follows that
\begin{equation} \label{a}
\psi_{0}(\mathbf{A}) = AC. 
\end{equation}
Moreover,
$$\psi_{0}\left(({\rm val}_{Y}(w_{i}) - 1)e_{i} \right) = |G_{i}|{\rm val}_{X}(v_{i}) -1, $$
from which it follows that
\begin{equation} \label{q}
\psi_{0}(\mathbf{Q}) = DC - I. 
\end{equation}
Putting (\ref{a}) and (\ref{q}) together gives
\begin{equation} \label{conc_pro}
h(u,\psi_{0}) =  {\rm det}(I - ACu + (DC - I)u^{2}),
\end{equation}
and this ends the proof of the first part of the proposition.  For the second part, using (\ref{conc_pro}), one has
$$h(1,\psi_{0}) = {\rm det}(DC - AC) = {\rm det}(\mathcal{L}_{X}) \cdot {\rm det}(C), $$
where $\mathcal{L}_{X}$ is the laplacian operator for $X$.  By (\ref{lap_singular}), we have $h(1,\psi_{0}) = 0$ which ends the proof.
\end{proof}
This last proposition allows us to determine the order of vanishing at $u=1$ of the polynomials $h(u,\psi)$.
\begin{theorem} \label{order_of_vanishing}
Let $Y$ be a finite connected graph on which a finite abelian group $G$ acts without inversion and freely on directed edges.  If $\chi(Y) \neq 0$, then 
$${\rm ord}_{u=1}h(u,\psi) = 0 $$
for all non-trivial character $\psi \in \widehat{G}$, and ${\rm ord}_{u=1}h(u,\psi_{0}) = 1$.
\end{theorem}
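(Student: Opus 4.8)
The plan is to use the product formula of Corollary~\ref{prod_formula_th} together with Hashimoto's special value formula (Theorem~\ref{hashimoto}) and the Riemann–Hurwitz formula (Theorem~\ref{riemann_hurwitz}) to control the total order of vanishing at $u=1$, and then to pin down the individual contributions using Proposition~\ref{order_of_van}. First I would recall from Theorem~\ref{ihara} and Theorem~\ref{hashimoto} that, since $Y$ is finite connected with $\chi(Y) \neq 0$, one has $h_{Y}(u) \in 1 + u\mathbb{Z}[u]$ with $h_{Y}(1) = 0$ but $h_{Y}'(1) = -2\chi(Y)\kappa(Y) \neq 0$, so that $\mathrm{ord}_{u=1} h_{Y}(u) = 1$ exactly. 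On the other hand, Corollary~\ref{prod_formula_th} gives the factorization $h_{Y}(u) = \prod_{\psi \in \widehat{G}} h(u,\psi)$, so summing orders of vanishing yields
\begin{equation*}
\sum_{\psi \in \widehat{G}} \mathrm{ord}_{u=1} h(u,\psi) = \mathrm{ord}_{u=1} h_{Y}(u) = 1.
\end{equation*}
Since each $h(u,\psi) \in 1 + u\mathbb{C}[u]$ by Corollary~\ref{3_term} (in particular each factor is a genuine polynomial, not the reciprocal of one, so each order of vanishing is a nonnegative integer), and since Proposition~\ref{order_of_van} already shows $\mathrm{ord}_{u=1} h(u,\psi_{0}) \ge 1$, the only way the orders can sum to $1$ is if $\mathrm{ord}_{u=1} h(u,\psi_{0}) = 1$ and $\mathrm{ord}_{u=1} h(u,\psi) = 0$ for every nontrivial $\psi$. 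This gives the theorem immediately.

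There is one gap to address: I need to know that $\chi(Y) \neq 0$ genuinely forces $h_{Y}'(1) \neq 0$, i.e.\ that $\kappa(Y) \neq 0$, which holds because $Y$ is connected (a connected finite graph has at least one spanning tree). I should also double-check the hypothesis flow: the statement assumes $G$ acts without inversion and freely on directed edges, which is exactly the hypothesis under which Proposition~\ref{order_of_van} and the product formula apply, and connectedness of $Y$ is what makes Theorem~\ref{hashimoto} applicable. The ramification assumption is not needed beyond what is already baked into Corollary~\ref{prod_formula_th}.

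The main obstacle — really the only subtle point — is making sure the ``summing orders of vanishing'' step is legitimate, namely that none of the $h(u,\psi)$ is identically zero and that $h_{Y}(u)$ is nonzero near $u=1$ with a simple zero. The first is clear since $h(u,\psi) \in 1 + u\mathbb{C}[u]$ has constant term $1$; the second follows from $h_{Y}(1) = 0$, $h_{Y}'(1) = -2\chi(Y)\kappa(Y) \neq 0$. So the argument is essentially a counting argument: a nonnegative integer $\mathrm{ord}_{u=1} h(u,\psi_0) \ge 1$, the rest nonnegative, and the sum equal to $1$, forces everything. I would present it in exactly this order: (1) $\mathrm{ord}_{u=1} h_Y(u) = 1$ via Hashimoto and connectedness; (2) the product formula turns this into a sum over $\widehat{G}$; (3) combine with the lower bound from Proposition~\ref{order_of_van} to conclude.
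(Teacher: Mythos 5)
Your argument is correct and is essentially the paper's own proof: establish $\mathrm{ord}_{u=1}h_{Y}(u)=1$ from Hashimoto's formula (together with the vanishing of the graph laplacian determinant, which gives $h_Y(1)=0$), combine the product formula $h_{Y}(u)=\prod_{\psi}h(u,\psi)$ with the lower bound $\mathrm{ord}_{u=1}h(u,\psi_{0})\ge 1$ from Proposition~\ref{order_of_van}, and conclude by counting. The extra care you take (nonvanishing of each factor, $\kappa(Y)\ge 1$ by connectedness) is exactly the right justification for the steps the paper leaves implicit.
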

\begin{proof}
By (\ref{lap_singular}) and \cref{hashimoto}, we have ${\rm ord}_{u=1}(h_{Y}(u)) = 1$.  Since by \cref{order_of_van}, ${\rm ord}_{u=1}(h(u,\psi_{0})) \ge 1$, the result follows at once from (\ref{prod_formula}).
\end{proof}

\begin{corollary}
Let $Y$ be a finite connected graph on which a finite abelian group $G$ acts without inversion and freely on directed edges.  If $\chi(Y) \neq 0$, then 
\begin{equation} \label{ess_pr}
h_{Y}'(1) = h'(1,\psi_{0}) \cdot \prod_{\psi \neq \psi_{0}} h(1,\psi).
\end{equation}
\end{corollary}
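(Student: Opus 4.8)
The plan is to differentiate the product formula of \cref{prod_formula_th} and use the vanishing of $h(u,\psi_{0})$ at $u=1$. First I would recall that by \cref{prod_formula_th} one has
$$h_{Y}(u) = \prod_{\psi \in \widehat{G}}h(u,\psi) = h(u,\psi_{0}) \cdot \prod_{\psi \neq \psi_{0}}h(u,\psi)$$
in $\mathbb{C}[u]$, where each factor $h(u,\psi)$ is a polynomial by \cref{3_term}. Writing $g(u) = \prod_{\psi \neq \psi_{0}}h(u,\psi) \in \mathbb{C}[u]$, the Leibniz rule gives
$$h_{Y}'(u) = h'(u,\psi_{0}) \cdot g(u) + h(u,\psi_{0}) \cdot g'(u).$$

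Next I would evaluate at $u=1$. By \cref{order_of_van} (applicable since $G$ acts without inversion and freely on directed edges), one has $h(1,\psi_{0}) = 0$; alternatively this is contained in \cref{order_of_vanishing} under the running hypothesis $\chi(Y) \neq 0$. Since $g'(u)$ is again a polynomial, $h(1,\psi_{0}) \cdot g'(1) = 0$, and therefore
$$h_{Y}'(1) = h'(1,\psi_{0}) \cdot g(1) = h'(1,\psi_{0}) \cdot \prod_{\psi \neq \psi_{0}}h(1,\psi),$$
which is exactly the claimed identity.

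There is essentially no obstacle here: the statement is a formal consequence of the product decomposition \cref{prod_formula_th} together with the fact, established in \cref{order_of_van} and \cref{order_of_vanishing}, that the $\psi_{0}$-factor is the only one vanishing at $u=1$ and it vanishes to order exactly one. The only point worth a brief remark is that all the functions involved are genuine polynomials (by \cref{3_term}), so that the Leibniz rule applies and the evaluation at $u=1$ is unproblematic; the hypothesis $\chi(Y) \neq 0$ is used only to guarantee, via \cref{order_of_vanishing}, that the remaining factors $h(1,\psi)$ with $\psi \neq \psi_{0}$ are nonzero, so that the right-hand side of \eqref{ess_pr} is a nonzero multiple of $h'(1,\psi_{0})$.
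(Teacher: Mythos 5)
Your proof is correct and follows exactly the route the paper takes: differentiate the product formula of \cref{prod_formula_th}, evaluate at $u=1$, and use the vanishing $h(1,\psi_{0})=0$ from \cref{order_of_van}/\cref{order_of_vanishing} to kill the second Leibniz term. The paper states this in one line; you have simply written out the same argument in full.
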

\begin{proof}
This follows directly from \cref{order_of_vanishing} after differentiating (\ref{prod_formula}) and evaluating at $u=1$.
\end{proof}
Combining (\ref{ess_pr}) with \cref{hashimoto} gives
\begin{equation} \label{starting_pt}
-2 \chi(Y) \kappa(Y) = h'(1,\psi_{0}) \cdot \prod_{\psi \neq \psi_{0}} h(1,\psi), 
\end{equation}
provided $Y$ is connected and $\chi(Y) \neq 0$.  This last formula will be our starting point in \cref{ana_sec} below to derive the analogue of Iwasawa's asymptotic class number formula using Ihara $L$-functions.  

\subsection{Ihara \texorpdfstring{$L$}{L}-functions and voltage assignments} \label{Ihar}
Throughout this section, we let $X$ be a finite graph, $G$ a finite abelian group, $\alpha:\mathbf{E}_{X} \rightarrow G$ a function satisfying $\alpha(\bar{s}) = \alpha(s)^{-1}$ for all $s \in \mathbf{E}_{X}$, and $\mathcal{G} =\{(v,G_{v}):G_{v} \le G \}$ a collection of subgroups of $G$ indexed by the vertices of $X$.  We pick a labeling of the vertices of $X$, say $V_{X} = \{v_{1},\ldots,v_{g} \}$, and we consider the graph $Y = X(G,\mathcal{G},\alpha)$.
\begin{proposition} \label{exp_adj_op}
With the notation as above, for each $i=1,\ldots,g$, let $w_{i} = (v_{i},G_{i}) \in V_{Y}$.  Then, for each $j=1,\ldots,g$, one has
$$\mathcal{A}(w_{j}) = \sum_{i=1}^{g}\lambda_{ij}w_{i},$$
where
$$\lambda_{ij} = \sum_{\substack{s \in \mathbf{E}_{X} \\ {\rm inc}(s) = (v_{j},v_{i})}}\alpha(s) N_{G_{j}} \in \mathbb{Z}[G] \subseteq \mathbb{C}[G]. $$
\end{proposition}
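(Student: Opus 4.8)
The plan is to unwind the explicit description of $Y = X(G,\mathcal{G},\alpha)$ recorded in \cite[\S 4.1]{Gambheera/Vallieres:2024} and then compute $\mathcal{A}(w_j)$ directly from the definition of the adjacency operator. Recall that the vertices of $Y$ are the pairs $(v,\sigma G_v)$ with $v \in V_X$ and $\sigma G_v \in G/G_v$, that the directed edges are the pairs $(e,\sigma)$ with $e \in \mathbf{E}_X$ and $\sigma \in G$ (so that $G$ acts freely on $\mathbf{E}_Y$), that the incidence maps are $o((e,\sigma)) = (o(e),\sigma G_{o(e)})$ and $t((e,\sigma)) = (t(e),\sigma\alpha(e)G_{t(e)})$, and that the $G$-action is $\tau\cdot(e,\sigma) = (e,\tau\sigma)$ and $\tau\cdot(v,\sigma G_v) = (v,\tau\sigma G_v)$; in particular ${\rm Stab}_G(w_i) = G_i$ and $\tau\cdot w_i = (v_i,\tau G_i)$ inside $C_0(Y)$.

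First I would identify $\mathbf{E}_{Y,w_j}$: a directed edge $(e,\sigma)$ has origin $w_j = (v_j,G_j)$ precisely when $o(e) = v_j$ and $\sigma \in G_j$, and its terminus is then $(t(e),\sigma\alpha(e)G_{t(e)})$. Substituting into $\mathcal{A}(w_j) = \sum_{\varepsilon \in \mathbf{E}_{Y,w_j}} t(\varepsilon)$ gives
$$\mathcal{A}(w_j) = \sum_{\substack{e \in \mathbf{E}_X \\ o(e) = v_j}} \sum_{\sigma \in G_j} \big(t(e),\sigma\alpha(e)G_{t(e)}\big).$$
Next, for such an $e$ with $t(e) = v_i$, the identity $(v_i,\tau G_i) = \tau\cdot w_i$ in $C_0(Y)$ turns the inner sum into $\sum_{\sigma \in G_j}\sigma\alpha(e)\cdot w_i = \big(\sum_{\sigma \in G_j}\sigma\big)\alpha(e)\cdot w_i = N_{G_j}\alpha(e)\,w_i = \alpha(e)N_{G_j}\,w_i$, the last step using that $G$ is abelian. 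Finally I would regroup the outer sum according to the terminus, i.e. over the $s \in \mathbf{E}_X$ with ${\rm inc}(s) = (v_j,v_i)$, obtaining $\mathcal{A}(w_j) = \sum_{i=1}^g \lambda_{ij}w_i$ with $\lambda_{ij} = \sum_{s:\ {\rm inc}(s) = (v_j,v_i)}\alpha(s)N_{G_j}$; this lies in $\mathbb{Z}[G]$ because $\alpha(s) \in G$ and $N_{G_j} = \sum_{h \in G_j} h \in \mathbb{Z}[G]$.

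This argument is pure bookkeeping, so I do not expect a genuine obstacle; the one point requiring care is pinning down the conventions of the voltage construction --- which coset label and which power of $\alpha(e)$ attach to the terminus of $(e,\sigma)$ --- since that is exactly what makes $\lambda_{ij}$ come out with $\alpha(s)$ rather than $\alpha(s)^{-1}$. It is also worth observing that distinct $\sigma \in G_j$ may yield the same terminus vertex over a fixed $e$, but this is harmless: one computes in the $\mathbb{C}[G]$-module $C_0(Y)$, where $\tau w_i$ depends only on the coset $\tau G_i$, so passing from $\sum_{\sigma \in G_j}\sigma\alpha(e)\cdot w_i$ to $N_{G_j}\alpha(e)\,w_i$ is legitimate. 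Finally, the count in the first step matches the fact (used in the proof of \cref{order_of_van}) that $\mathbf{E}_{Y,w_j}\to\mathbf{E}_{X,v_j}$ is $m_j$-to-one with $m_j = |G_j|$.
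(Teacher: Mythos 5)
Your proof is correct and follows essentially the same route as the paper's: identify $\mathbf{E}_{Y,w_j}$ with the pairs $(s,h)$, $s\in\mathbf{E}_{X,v_j}$, $h\in G_j$, rewrite the terminus $(t(s),h\alpha(s)G_{t(s)})$ as $h\alpha(s)\cdot w_i$, sum over $h\in G_j$ to produce $N_{G_j}\alpha(s)$, and regroup by terminus. The extra remarks on conventions and on collapsing repeated termini inside the $\mathbb{C}[G]$-module $C_0(Y)$ are accurate but not needed beyond what the paper's computation already carries out.
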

\begin{proof}
We calculate
\begin{equation*}
\begin{aligned}
\mathcal{A}(w_{j}) &= \sum_{\varepsilon \in \mathbf{E}_{Y,w_{j}}} t(\varepsilon) \\
&= \sum_{s \in \mathbf{E}_{X,v_{j}}} \sum_{h \in G_{j}} t((s,h)) \\
&= \sum_{s \in \mathbf{E}_{X,v_{j}}} \sum_{h \in G_{j}} (t(s),h\alpha(s)G_{t(s)}) \\
&= \sum_{s \in \mathbf{E}_{X,v_{j}}} \sum_{h \in G_{j}} h \alpha(s) (t(s),G_{t(s)}) \\
&= \sum_{i=1}^{g} \lambda_{ij} w_{i},
\end{aligned}
\end{equation*}
as we wanted to show.
\end{proof}

\begin{definition} \label{zak_mat}
With the same notation as above, we define two matrices $\mathbf{A}_{\alpha}, \mathbf{C} \in M_{g}(\mathbb{Z}[G])$ as follows.  The matrix $\mathbf{A}_{\alpha} = (\mathbf{a}_{ij}(\alpha))$ is defined via
$$\mathbf{a}_{ij}(\alpha) = \sum_{\substack{s \in \mathbf{E}_{X}\\{\rm inc}(s)=(v_{j},v_{i})}} \alpha(s), $$
and the matrix $\mathbf{C} = (\mathbf{c}_{ij})$ is the diagonal matrix defined via $\mathbf{c}_{ii} = N_{G_{i}}$.  We also let $\mathbf{D} = (d_{ij}) \in M_{g}(\mathbb{Z})$ be the usual degree matrix of $X$, and $\mathbf{I}$ the $g \times g$ identity matrix in $M_{g}(\mathbb{Z}[G])$.  At last, we define
$$\xi_{Y}(u) = {\rm det}(\mathbf{I} - \mathbf{A}_{\alpha}\mathbf{C}u + (\mathbf{D}\mathbf{C} - \mathbf{I})u^{2}) \in 1 + u\mathbb{Z}[G][u]. $$
\end{definition}
Given $\psi \in \widehat{G}$, similarly to the polynomial $h(u,\psi)$, we let 
$$z(u,\psi) = \psi(\xi_{Y}(u)) \in 1 + u\mathbb{C}[u].$$
\begin{theorem} \label{link_zak}
With the notation as above, for each $\psi \in \widehat{G}$, one has
$$(1-u^{2})^{r_{0}(\psi)} \cdot h(u,\psi) = z(u,\psi). $$
Moreover, given $\psi \in \widehat{G}$, if we let $\widetilde{\mathbf{A}}_{\alpha}, \widetilde{\mathbf{C}}$, and $\widetilde{\mathbf{D}}$ be obtained from the corresponding matrices $\mathbf{A}_{\alpha}, \mathbf{C}$, and $\mathbf{D}$, respectively, by removing the rows and columns corresponding to the indices $i$ such that $G_{i} \not \subseteq {\rm ker}(\psi)$, then one has
$$h(u,\psi) = {\rm det}(\widetilde{I} - \psi(\widetilde{\mathbf{A}}_{\alpha})\psi(\widetilde{\mathbf{C}})u + (\psi(\widetilde{\mathbf{D}})\psi(\widetilde{\mathbf{C}}) - \widetilde{I})u^{2}), $$
where $\widetilde{I}$ is the identity matrix with $|V_{X}| - r_{0}(\psi)$ number of rows and columns.
\end{theorem}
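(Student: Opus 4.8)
The plan is to prove the two assertions separately, each by relating the matrix $\mathbf{A}$ of \cref{equi_exp} to the matrix $\mathbf{A}_\alpha \mathbf{C}$ of \cref{zak_mat}, and then applying the character $\psi$.

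\emph{First assertion.} I would begin by computing $\mathbf{A}$ explicitly in terms of the voltage data. From \cref{exp_adj_op}, with $w_i = (v_i, G_i)$, we have $\mathcal{A}(w_j) = \sum_i \lambda_{ij} w_i$ with $\lambda_{ij} = \sum_{s:\,{\rm inc}(s)=(v_j,v_i)} \alpha(s) N_{G_j}$; since $\mathbf{A} = (\mathbf{a}_{ij})$ with $\mathbf{a}_{ij} = \ell_i(w_j)$ is defined to satisfy $\mathcal{A}(w_j) = \sum_i \ell_i(w_j) w_i$ (\cref{dec_adj}, \cref{equi_exp}), uniqueness of coefficients (the $w_i$ form a free basis of $\widetilde{C}_0(Y)$ after the identification) gives $\mathbf{a}_{ij} = \lambda_{ij}$. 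Recalling $\mathbf{C}$ is the diagonal matrix with $\mathbf{c}_{jj} = N_{G_j}$ and $\mathbf{A}_\alpha = (\mathbf{a}_{ij}(\alpha))$ with $\mathbf{a}_{ij}(\alpha) = \sum_{s:\,{\rm inc}(s)=(v_j,v_i)} \alpha(s)$, one reads off $\mathbf{A} = \mathbf{A}_\alpha \mathbf{C}$. Similarly, since $G$ acts freely on directed edges, ${\rm val}_Y(w_j) = {\rm val}_X(v_j) = d_{jj}$, so the diagonal matrix $\mathbf{Q}$ of \cref{equi_exp} has $\mathbf{q}_{jj} = (d_{jj}-1)e_j$; multiplying through by $|G_j|$ inside each idempotent, $|G_j| e_j$ corresponds (under the free-action normalization) to $N_{G_j}$ after applying $\psi$ — but more cleanly, one checks directly that $\psi_0(\mathbf{Q}) = DC - I$ was already done in \cref{order_of_van}, and the same computation shows $\mathbf{Q}$ is the image in the relevant sense of $\mathbf{D}\mathbf{C} - \mathbf{I}$. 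The cleanest route is actually to compare after applying $\psi$: by \cref{equi_exp}, $h(u,\psi) = \psi(\eta_Y(u)) = {\rm det}(I - \psi(\mathbf{A})u + \psi(\mathbf{Q})u^2)$. Now $\psi(\mathbf{A}_\alpha \mathbf{C}) = \psi(\mathbf{A}_\alpha)\psi(\mathbf{C})$ and one computes $\psi(N_{G_j}) = |G_j|$ if $G_j \subseteq \ker\psi$ and $0$ otherwise; comparing with $\psi(\mathbf{Q})$ (where $\psi(e_j) = 1$ or $0$ according to the same dichotomy) shows that $z(u,\psi) = \psi(\xi_Y(u))$ and $h(u,\psi)$ agree up to the discrepancy on the rows/columns indexed by $i$ with $G_i \not\subseteq \ker\psi$; on each such index, the matrix $\mathbf{I} - \mathbf{A}_\alpha\mathbf{C}u + (\mathbf{D}\mathbf{C}-\mathbf{I})u^2$ has its row collapse (after applying $\psi$) to the row of $I - 0 \cdot u + (0 - I)u^2 = (1-u^2)\cdot(\text{that diagonal entry})$, contributing exactly a factor $(1-u^2)$ each, which accounts for the $(1-u^2)^{r_0(\psi)}$. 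Making this block-triangular decomposition precise is the main bookkeeping step.

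\emph{Second assertion.} Having established the first, I would use the block structure: order the indices so that those with $G_i \subseteq \ker\psi$ come first. Applying $\psi$ entrywise to $\mathbf{I} - \mathbf{A}_\alpha\mathbf{C}u + (\mathbf{D}\mathbf{C}-\mathbf{I})u^2$, the columns indexed by $i$ with $G_i \not\subseteq \ker\psi$ have $\psi(\mathbf{c}_{ii}) = \psi(N_{G_i}) = 0$, so those columns of $\psi(\mathbf{A}_\alpha\mathbf{C})$ and of $\psi(\mathbf{D}\mathbf{C})$ vanish entirely; hence the matrix becomes block lower-triangular with the lower-right block equal to $(1-u^2)\widetilde{I}'$ on the "bad" indices and the upper-left block equal to $\widetilde{I} - \psi(\widetilde{\mathbf{A}}_\alpha)\psi(\widetilde{\mathbf{C}})u + (\psi(\widetilde{\mathbf{D}})\psi(\widetilde{\mathbf{C}}) - \widetilde{I})u^2$ on the "good" indices. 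Taking determinants, $z(u,\psi) = (1-u^2)^{r_0(\psi)} \cdot {\rm det}(\widetilde{I} - \psi(\widetilde{\mathbf{A}}_\alpha)\psi(\widetilde{\mathbf{C}})u + (\psi(\widetilde{\mathbf{D}})\psi(\widetilde{\mathbf{C}}) - \widetilde{I})u^2)$, and combining with the first assertion $z(u,\psi) = (1-u^2)^{r_0(\psi)} h(u,\psi)$ and cancelling the nonzero factor $(1-u^2)^{r_0(\psi)}$ in $\mathbb{C}[u]$ yields the formula.

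\emph{Expected main obstacle.} The genuine content is the first assertion, and within it the identification that the "bad" rows/columns contribute precisely $(1-u^2)$ each after applying $\psi$ — equivalently, that $\psi(N_{G_i}) = 0$ forces both the linear and the cross terms in those rows to die while the constant and quadratic terms survive as $1$ and $-1$. This requires care because one must verify that the off-diagonal entries in the bad rows of $\psi(\mathbf{D}\mathbf{C})$ and $\psi(\mathbf{A}_\alpha\mathbf{C})$ also vanish (they do, since $\mathbf{C}$ is diagonal with the killing entry in the relevant slot), so the triangular-block structure is exact and no correction terms appear. Everything else — the identification $\mathbf{A} = \mathbf{A}_\alpha\mathbf{C}$ via \cref{exp_adj_op,equi_exp}, and the final determinant of a block-triangular matrix — is routine linear algebra. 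I expect the proof to occupy about a page, with the bulk being the explicit entrywise comparison.
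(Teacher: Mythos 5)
Your final route---apply $\psi$ entrywise to the two three-term matrices, observe that each ``bad'' column (those $j$ with $G_{j} \not\subseteq \ker(\psi)$) collapses to the standard basis vector for $\psi(\mathbf{I} - \mathbf{A}u + \mathbf{Q}u^{2})$ but to $(1-u^{2})$ times it for $\psi(\mathbf{I} - \mathbf{A}_{\alpha}\mathbf{C}u + (\mathbf{D}\mathbf{C}-\mathbf{I})u^{2})$, check that the remaining good rows agree, and read off the determinants from the resulting block-triangular shape---is exactly the paper's proof, and your block-triangular derivation of the second assertion from the first is a correct reading of the same expansion. So the strategy is sound and matches the paper.

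That said, two preliminary claims you assert as facts are false and should not stand even as scaffolding. First, $\mathbf{A} \neq \mathbf{A}_{\alpha}\mathbf{C}$ in general: the $w_{i}$ do \emph{not} form a free $\mathbb{C}[G]$-basis of $C_{0}(Y)$ (each $\mathbb{C}[G]w_{i} \simeq \mathbb{C}[G]e_{i}$ has annihilator $\mathbb{C}[G](1-e_{i})$), so comparing \cref{exp_adj_op} with \cref{dec_adj} only gives $\ell_{i}(w_{j}) = e_{i}\lambda_{ij} = e_{i}(\mathbf{A}_{\alpha}\mathbf{C})_{ij}$, not equality. Indeed, in \cref{ex11} one has $\mathbf{a}_{21} = \tfrac{1}{2}N_{G}$ while $(\mathbf{A}_{\alpha}\mathbf{C})_{21} = \bar{1}+\bar{2}$. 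This discrepancy is invisible after applying $\psi$ precisely on the rows $i$ with $G_{i} \subseteq \ker(\psi)$ (where $\psi(e_{i})=1$), which is why the paper restricts the row-by-row comparison to those indices; your argument implicitly does the same, but the justification ``uniqueness of coefficients'' is wrong. Second, ${\rm val}_{Y}(w_{j}) = |G_{j}|\cdot{\rm val}_{X}(v_{j})$, not ${\rm val}_{X}(v_{j})$ (the fiber of $\mathbf{E}_{Y,w_{j}} \to \mathbf{E}_{X,v_{j}}$ has $|G_{j}|$ elements); this is again absorbed once you defer to the computation in \cref{order_of_van}. Finally, note it is the bad \emph{columns} of $\psi(\mathbf{A}_{\alpha}\mathbf{C})$ and $\psi(\mathbf{D}\mathbf{C})$ that vanish, since the killing factor $N_{G_{j}}$ sits in column $j$; the bad rows of $\psi(\mathbf{A}_{\alpha}\mathbf{C})$ do not vanish in general (unlike those of $\psi(\mathbf{A})$), though this does not affect the determinant factorization you need.
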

\begin{proof}
Let $\psi \in \widehat{G}$, and let $j$ be such that $G_{j} \not \subseteq {\rm ker}(\psi)$.  \cref{sim_for} shows that applying $\psi$ to the $j$th column of $\mathbf{A}$ gives a column consisting of zeros only.  The same is true if one applies $\psi$ to the $j$th column of $\mathbf{A}_{\alpha}\mathbf{C}$.  Applying $\psi$ to the $j$th column of $\mathbf{Q}$ also gives a column consisting of zeros only, but applying $\psi$ to the $j$th column of $\mathbf{D}\mathbf{C} - \mathbf{I}$ gives a column consisting of zeros except in the $j$th row where one has a $-1$.  It follows that the $j$th column of $\psi(\mathbf{I} - \mathbf{A}u + \mathbf{Q}u^{2})$ consists of zeros except for a $1$ in the $j$th row, whereas the $j$th column of $\psi(\mathbf{I} - \mathbf{A}_{\alpha}\mathbf{C}u + (\mathbf{D}\mathbf{C}-\mathbf{I})u^{2})$ consists of zeros except for a $1-u^{2}$ in the $j$th row.  The claim will then follows if we show that the $i$th row of both 
$$\psi(\mathbf{I} - \mathbf{A}u + \mathbf{Q}u^{2}) \text{ and } \psi(\mathbf{I} - \mathbf{A}_{\alpha}\mathbf{C}u + (\mathbf{D}\mathbf{C}-\mathbf{I})u^{2})$$
are the same whenever $G_{i} \subseteq {\rm ker}(\psi)$.  But this follows from \cref{dim_iso_component,dec_adj,exp_adj_op}.
\end{proof}
Note that
$$(\mathbf{I} - \mathbf{A}_{\alpha}\mathbf{C}u + (\mathbf{D}\mathbf{C}-\mathbf{I})u^{2})^{t} = \mathbf{I} - \mathbf{C}\mathbf{A}_{\alpha}^{t}u + (\mathbf{C}\mathbf{D}-\mathbf{I})u^{2} $$
and thus 
\begin{equation*}
\begin{aligned}
L(u,\psi)^{-1} &= c(u,\psi) \cdot h(u,\psi) \\
&= (1-u^{2})^{-\chi(X) + r_{0}(\psi)} \cdot h(u,\psi) \\
&= (1-u^{2})^{-\chi(X)}z(u,\psi),
\end{aligned}
\end{equation*}
where 
$$z(u,\psi) = {\rm det}\left( I - \psi(\mathbf{C})\psi(\mathbf{A}_{\alpha}^{t})u + (\psi(\mathbf{C})\psi(\mathbf{D})-I)u^{2} \right), $$
and we invite the reader to compare the expression for $L(u,\psi)^{-1}$ above with the one contained in \cite[Theorem 4.17]{Zakharov:2021} in the situation where $G$ is abelian and the action is without inversion (so that there are no legs).
\begin{example}
We revisit \cref{ex11,ex22}.  We calculate
\begin{equation*}
\mathbf{A}_{\alpha} = 
\begin{pmatrix}
0 & \bar{3} + \bar{2} \\
\bar{1} + \bar{2} & 0
\end{pmatrix},
\mathbf{C} =
\begin{pmatrix}
1 & 0 \\
0 & \bar{0} + \bar{2}
\end{pmatrix}, \text{ and }
\mathbf{D} =
\begin{pmatrix}
2 & 0 \\
0 & 2
\end{pmatrix},
\end{equation*}
so that
\begin{equation*}
\begin{aligned} 
\xi_{Y}(u) &= {\rm det}(\mathbf{I} - \mathbf{A}_{\alpha}\mathbf{C}u + (\mathbf{D}\mathbf{C} - \mathbf{I})u^{2}) \\
&= 1 -2(\bar{1} + \bar{3})u^{2} + (1 + 2 \cdot \bar{2})u^{4} \in \mathbb{Z}[G][u].
\end{aligned}
\end{equation*}
We have
$$r_{0}(\psi_{0}) = r_{0}(\psi_{1}) = 0 \text{ and } r_{0}(\psi_{2}) = r_{0}(\psi_{3}) = 1. $$
We then calculate
\begin{equation*}
\begin{aligned}
z(u,\psi_{0}) &= 1 - 4u^{2} + 3u^{4} = h(u,\psi_{0}) \\
z(u,\psi_{1}) &= 1 + 4u^{2} + 3u^{4} = h(u,\psi_{1}) \\
z(u,\psi_{2}) &= 1-u^{4} = (1-u^{2}) \cdot h(u,\psi_{2}) \\
z(u,\psi_{3}) &= 1-u^{4} = (1-u^{2}) \cdot h(u,\psi_{3}),
\end{aligned}
\end{equation*}
as expected by \cref{link_zak}.
\end{example}

\subsection{The analogue of Iwasawa's asymptotic class number formula} \label{ana_sec}
We start by reminding the reader about our notation.  Let $p$ be a rational prime, $X$ a finite connected graph, and $\alpha:\mathbf{E}_{X} \rightarrow \mathbb{Z}_{p}$ a function satisfying $\alpha(\bar{s}) = -\alpha(s)$ for all $s \in \mathbf{E}_{X}$.  Let also $\mathcal{G}$ be a collection of closed subgroups $G_{v}$ of $\mathbb{Z}_{p}$ indexed by the vertices of $X$.  For every $n \in \mathbb{Z}_{\ge 0}$, we let $\Gamma_{n} = \mathbb{Z}_{p}/p^{n}\mathbb{Z}_{p} \simeq \mathbb{Z}/p^{n}\mathbb{Z}$, and we have a natural surjective group morphism $\pi_{n}:\mathbb{Z}_{p} \rightarrow \Gamma_{n}$.  Given $v \in V_{X}$, we let $\Gamma_{n,v} = \pi_{n}(G_{v}) \le \Gamma_{n}$, and we set
$$\mathcal{G}_{n} = \{(v,\Gamma_{n,v}): v \in V_{X} \}. $$
The composition $\alpha_{n} = \pi_{n} \circ \alpha$ also satisfies $\alpha_{n}(s) = -\alpha_{n}(s)$ for all $s \in \mathbf{E}_{X}$, and for each $n \ge 1$, we consider the finite graph $X_{n} = X(\Gamma_{n},\mathcal{G}_{n},\alpha_{n})$.  As explained in \cite[\S 4.2]{Gambheera/Vallieres:2024}, the graphs $X_{n}$ comes with natural branched covers which together form a branched $\mathbb{Z}_{p}$-towers 
$$X = X_{0} \leftarrow X_{1} \leftarrow X_{2} \leftarrow \ldots \leftarrow X_{n} \leftarrow \ldots $$
of finite graphs.  We will always assume that all graphs $X_{n}$ are connected (see \cite[\S 4.5]{Gambheera/Vallieres:2024} for a condition that guarantees the connectedness of the graphs $X_{n}$).  As in \cite{Gambheera/Vallieres:2024}, we let
$$V^{unr} = \{v \in V_{X}: G_{v} = 0 \} \text{ and } V^{ram} = \{v \in V_{X}: G_{v} \neq 0 \}. $$
Moreover, given $v \in V^{ram}$, we let $k_{v} \in \mathbb{Z}_{\ge 0}$ be such that $G_{v} = p^{k_{v}}\mathbb{Z}_{p}$, and we let
$$n_{1} = {\rm max}\{k_{v}: v \in V^{ram} \}. $$
Note that
$$\Gamma_{n,v} \simeq p^{{\rm min}(k_{v},n)}\mathbb{Z}_{p}/p^{n}\mathbb{Z}_{p} $$
so that if $n \ge k_{v}$, then one has
$$\Gamma_{n,v} \simeq \mathbb{Z}/p^{n-k_{v}}\mathbb{Z}. $$
It follows that if $n \ge n_{1}$, then there are $p^{k_{v}}$ vertices of $X_{n}$ above $v$ with ramification indices $m_{v,n} = p^{n-k_{v}}$ if $v \in V^{ram}$, whereas if $v \in V^{unr}$, then there are $p^{n}$ vertices of $X_{n}$ lying above $v$ and $m_{v,n} = 1$.  Just as before, we introduce a labeling $V_{X} = \{v_{1},\ldots,v_{g} \}$, but we also assume that 
$$V^{unr} = \{v_{1},\ldots,v_{t} \} \text{ and } V^{ram} = \{v_{t+1},\ldots,v_{g}\}.$$
We will often replace a vertex $v_{i}$ appearing in any notation by the corresponding index only.  For instance, we will write $k_{i}$ instead of $k_{v_{i}}$ if $i \in \{t+1,\ldots,g \}$.
  
Throughout this section, we also assume that there exists $n_{2} \ge 1$ such that $\chi(X_{n_{2}}) < 0$.  Note that \cref{riemann_hurwitz} implies that $\chi(X_{n}) \neq 0$ for all $n \ge n_{2}$.  If $n \ge n_{2}$, then (\ref{starting_pt}) gives
\begin{equation} \label{starting_pt1}
-2 \chi(X_{n}) \kappa(X_{n}) = h'(1,\psi_{0})\prod_{\substack{\psi \in \widehat{\Gamma}_{n} \\ \psi \neq \psi_{0}}} h(1,\psi).
\end{equation}
In order to understand ${\rm ord}_{p}(\kappa(X_{n}))$, we have to understand the $p$-adic valuation of $\chi(X_{n})$, $h'(1,\psi_{0})$, and $h(1,\psi)$ when $\psi \in \widehat{\Gamma}_{n}$ is non-trivial.  Therefore, it is convenient to work with $\mathbb{C}_{p}$ instead of $\mathbb{C}$.  Everything in this paper could have been done with $\mathbb{C}_{p}$ instead of $\mathbb{C}$, since $\mathbb{C}_{p}$ is also an algebraically closed field of characteristic $0$.  So from now on, we will see all of the numbers involved inside $\mathbb{C}_{p}$.  
\begin{proposition} \label{ec_i}
With the notation as above, there exists a constant $c \in \mathbb{Z}$ such that
\begin{equation*}
{\rm ord}_{p}(\chi(X_{n})) =
\begin{cases}
n + {\rm ord}_{p}(\chi(X)), &\text{ if } V^{ram} = \varnothing \text{ and } \chi(X) \neq 0; \\
c, &\text{ if } V^{ram} \neq \varnothing,
\end{cases}
\end{equation*}
when $n$ is large enough.
\end{proposition}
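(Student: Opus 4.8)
The plan is to reduce everything to the Riemann--Hurwitz formula of Baker and Norine (\cref{riemann_hurwitz}). Applied to the branched cover $X_n \to X$ it gives
$$\chi(X_n) = [X_n:X] \cdot \chi(X) - \sum_{w \in V_{X_n}}(m_w - 1).$$
As recalled just before the statement, once $n \ge n_1$ there are exactly $p^{k_v}$ vertices of $X_n$ above each $v \in V^{ram}$, each with ramification index $p^{n-k_v}$, while above each $v \in V^{unr}$ there are $p^n$ vertices, all unramified. In particular the degree $[X_n:X]$, which equals $\sum_{w} m_w$ over the fibre of any vertex of $X$, is $p^n$.

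Substituting this and simplifying the ramification sum,
$$\sum_{w \in V_{X_n}}(m_w - 1) = \sum_{v \in V^{ram}} p^{k_v}\bigl(p^{n - k_v} - 1\bigr) = |V^{ram}|\, p^n - \sum_{v \in V^{ram}} p^{k_v},$$
one obtains, for all $n \ge n_1$, the key identity
$$\chi(X_n) = \bigl(\chi(X) - |V^{ram}|\bigr)\, p^n + \sum_{v \in V^{ram}} p^{k_v}.$$
The right-hand side is $P(p^n)$ for the polynomial $P(T) = \bigl(\chi(X) - |V^{ram}|\bigr)\, T + \sum_{v \in V^{ram}} p^{k_v} \in \mathbb{Z}[T]$, and the two cases of the proposition are then two applications of \cref{useful_r_t}.

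If $V^{ram} = \varnothing$ then $\chi(X_n) = \chi(X)\, p^n$, so ${\rm ord}_p(\chi(X_n)) = n + {\rm ord}_p(\chi(X))$ for all $n$, using the hypothesis $\chi(X) \neq 0$. If $V^{ram} \neq \varnothing$, the constant term $\sum_{v \in V^{ram}} p^{k_v}$ of $P(T)$ is a sum of positive integers, hence nonzero, so $\lambda = {\rm ord}_{T=0}(P(T)) = 0$ and \cref{useful_r_t} gives, for $n$ large, ${\rm ord}_p(\chi(X_n)) = {\rm ord}_p\!\left(\sum_{v \in V^{ram}} p^{k_v}\right) =: c$, a constant independent of $n$; in particular $\chi(X_n) \neq 0$ for such $n$, so the quantity is well defined (this nonvanishing also follows from the standing hypothesis $\chi(X_{n_2}) < 0$ together with \cref{riemann_hurwitz}). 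The only points needing care are the ramification bookkeeping and the identification $[X_n:X] = p^n$, both already in place in the excerpt; the arithmetic content is merely the elementary fact that a polynomial in $p^n$ with nonzero constant term has eventually constant $p$-adic valuation. So I do not expect a genuine obstacle.
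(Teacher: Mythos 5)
Your proof is correct and follows exactly the paper's argument: apply the Riemann--Hurwitz formula to $X_n \to X$ for $n \ge n_1$, use the ramification data to rewrite the correction term, and obtain $\chi(X_n) = (\chi(X) - |V^{ram}|)p^n + \sum_{v \in V^{ram}}p^{k_v}$, from which both cases follow. The extra details you supply (the explicit appeal to \cref{useful_r_t} and the identification of the constant $c$) are consistent with what the paper leaves implicit.
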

\begin{proof}
\cref{riemann_hurwitz} implies that if $n \ge n_{1}$, one has
\begin{equation*}
\begin{aligned}
\chi(X_{n}) &= p^{n}\chi(X) - \sum_{w \in V_{X_{n}}}(m_{w}-1) \\
&= p^{n}\chi(X) - \sum_{v \in V^{ram}}p^{k_{v}}(p^{n-k_{v}}-1) \\
&= p^{n}(\chi(X) - |V^{ram}|) + \sum_{v \in V^{ram}} p^{k_{v}}
\end{aligned}
\end{equation*}
from which the result follows.
\end{proof}
For the sake of clarity in this section, we will write $h_{X_{n}}(1,\psi)$ instead of simply $h(1,\psi)$ as we have done so far, where $\psi$ is a character of $\Gamma_{n}$.

\begin{proposition} \label{tc_i}
With the notation as above, there exists $\lambda_{0} \in \mathbb{Z}_{\ge 0}$ and $\nu_{0} \in \mathbb{Z}$ such that
$${\rm ord}_{p}(h_{X_{n}}'(1,\psi_{0})) = \lambda_{0} n + \nu_{0} $$
when $n$ is large enough.
\end{proposition}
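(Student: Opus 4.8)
The plan is to extract a closed formula for $h_{X_{n}}'(1,\psi_{0})$, recognize it as a fixed nonzero polynomial evaluated at $p^{n}$, and then conclude with \cref{useful_r_t}. Throughout I would take $n \geq \max(n_{1},n_{2})$, so that the ramification indices have stabilized and, by \cref{riemann_hurwitz}, $\chi(X_{n}) \neq 0$. By \cref{order_of_van}, $h_{X_{n}}(u,\psi_{0}) = {\rm det}(M_{n}(u))$ where $M_{n}(u) = I - AC_{n}u + (DC_{n}-I)u^{2}$, with $A$ and $D$ the adjacency and degree matrices of $X$ and $C_{n}$ the diagonal matrix whose $i$th entry is the ramification index of $v_{i}$ in $X_{n}$; for $n \geq n_{1}$ that entry is $1$ if $v_{i} \in V^{unr}$ and $p^{n-k_{i}}$ if $v_{i} \in V^{ram}$, so $C_{n}$ is invertible with ${\rm det}(C_{n}) = p^{\,n|V^{ram}| - \sum_{v \in V^{ram}}k_{v}}$.

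Next I would differentiate. By \cref{order_of_vanishing}, $h_{X_{n}}(u,\psi_{0})$ vanishes to order exactly $1$ at $u=1$, so $h_{X_{n}}'(1,\psi_{0}) \neq 0$, and Jacobi's formula gives $h_{X_{n}}'(1,\psi_{0}) = {\rm tr}\big({\rm adj}(M_{n}(1))\,M_{n}'(1)\big)$. Here $M_{n}(1) = (D-A)C_{n} = \mathcal{L}_{X}C_{n}$, with $\mathcal{L}_{X}$ the laplacian matrix of $X$, which is singular by \eqref{lap_singular}. Since $X$ is connected, $\mathcal{L}_{X}$ is symmetric with kernel and cokernel both spanned by the all-ones vector, and the matrix-tree theorem gives ${\rm adj}(\mathcal{L}_{X}) = \kappa(X)\,J$ with $J$ the all-ones matrix; hence ${\rm adj}(M_{n}(1)) = {\rm adj}(C_{n})\,{\rm adj}(\mathcal{L}_{X}) = \kappa(X){\rm det}(C_{n})\,C_{n}^{-1}J$. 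Combining this with $M_{n}'(1) = (2D-A)C_{n} - 2I$ and the cyclic invariance of the trace yields
\[
h_{X_{n}}'(1,\psi_{0}) = \kappa(X)\,{\rm det}(C_{n})\Big({\rm tr}\big(J(2D-A)\big) - 2\,{\rm tr}\big(C_{n}^{-1}J\big)\Big),
\]
where ${\rm tr}(J(2D-A)) = 2|E_{X}|$ (the sum of all entries of $2D-A$, a constant) and ${\rm tr}(C_{n}^{-1}J) = \sum_{i}(C_{n})_{ii}^{-1} = |V^{unr}| + \sum_{v \in V^{ram}}p^{\,k_{v}-n}$.

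Collecting terms and setting $T = p^{n}$ (with the usual conventions that an empty product is $1$ and an empty sum is $0$), the previous identity becomes $h_{X_{n}}'(1,\psi_{0}) = P(p^{n})$ with
\[
P(T) = \frac{2\kappa(X)}{p^{\sum_{v \in V^{ram}}k_{v}}}\left((|E_{X}|-|V^{unr}|)\,T^{|V^{ram}|} - \Big(\sum_{v \in V^{ram}}p^{k_{v}}\Big)T^{|V^{ram}|-1}\right) \in \overline{\mathbb{Q}}_{p}[T].
\]
This $P$ is nonzero: if $V^{ram} = \varnothing$ it is the constant $-2\chi(X)\kappa(X)$, which is nonzero because in that case $\chi(X_{n}) < 0$ forces $\chi(X) < 0$; and if $V^{ram} \neq \varnothing$ its coefficient of $T^{|V^{ram}|-1}$ is a nonzero rational multiple of $\kappa(X)$. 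Applying \cref{useful_r_t} to $P$ then gives ${\rm ord}_{p}(h_{X_{n}}'(1,\psi_{0})) = {\rm ord}_{p}(P(p^{n})) = \lambda_{0}n + \nu_{0}$ for $n$ large, with $\lambda_{0} = {\rm ord}_{T=0}(P(T)) = \max(0,|V^{ram}|-1) \in \mathbb{Z}_{\geq 0}$ and $\nu_{0} = {\rm ord}_{p}$ of the lowest-degree coefficient of $P$. As a consistency check, when $V^{ram} = \varnothing$ this recovers $h_{X}'(1) = -2\chi(X)\kappa(X)$ from \cref{hashimoto}.

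The step I expect to demand the most care is the computation of ${\rm adj}(M_{n}(1))$: correctly establishing ${\rm adj}(\mathcal{L}_{X}) = \kappa(X)J$ via the matrix-tree theorem (all cofactors of the laplacian of a connected graph agree) and propagating the diagonal scaling by $C_{n}$, and then verifying that $P$ is genuinely nonzero across the degenerate configurations $V^{ram} = \varnothing$, $|V^{ram}| = 1$, and $|E_{X}| = |V^{unr}|$, so that \cref{useful_r_t} is applicable.
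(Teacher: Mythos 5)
Your proof is correct, and it shares the paper's overall skeleton --- write $h_{X_{n}}'(1,\psi_{0})$ as a fixed element of $\overline{\mathbb{Q}}_{p}[T]$ evaluated at $T=p^{n}$ and invoke \cref{useful_r_t} --- but the middle of the argument is genuinely different and more informative. The paper stops at an abstract existence statement: since $h_{X_{n}}(u,\psi_{0})={\rm det}(I-AC_{n}u+(DC_{n}-I)u^{2})$ and the entries of $C_{n}$ are $1$ or $p^{n-k_{i}}$ for $n\ge n_{1}$, the determinant is a universal polynomial in the quantities $p^{n-k_{i}}$ and $u$, whence $h_{X_{n}}'(1,\psi_{0})=Q(p^{n})$ for some $Q\in\overline{\mathbb{Q}}_{p}[T]$. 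You instead compute $Q$ in closed form via Jacobi's formula, the factorization $M_{n}(1)=\mathcal{L}_{X}C_{n}$, and the all-minors matrix-tree theorem ${\rm adj}(\mathcal{L}_{X})=\kappa(X)J$; your formula checks out (including the trace computations ${\rm tr}(J(2D-A))=2|E_{X}|$ and ${\rm tr}(C_{n}^{-1}J)=|V^{unr}|+\sum_{v}p^{k_{v}-n}$, and the degenerate case $|V_{X}|=1$). Two things this buys you: first, an explicit verification that the polynomial is nonzero, which \cref{useful_r_t} requires and which the paper leaves implicit (it follows there from \cref{order_of_vanishing}, which forces $h_{X_{n}}'(1,\psi_{0})\neq 0$ for large $n$); second, the value $\lambda_{0}=\max(0,|V^{ram}|-1)$ falls out immediately, whereas the paper only recovers this at the end of \cref{section:revi} by comparing the analytic count against the algebraic main conjecture. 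Your value agrees with that corollary, which is a good independent consistency check. The one point worth stating carefully if you write this up is the convention handling the term $T^{|V^{ram}|-1}$ when $V^{ram}=\varnothing$: its coefficient is an empty sum, so the term is absent and $P$ is the nonzero constant $-2\chi(X)\kappa(X)$, consistent with \cref{hashimoto}.
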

\begin{proof}
We let $C_{n} = (c_{ij}^{(n)})$ be the diagonal matrix from \cref{order_of_van}.  If $n \ge n_{1}$, then one has
\begin{equation*}
c_{ii}^{(n)} = 
\begin{cases}
p^{n-k_{i}}, &\text{ if } i=t+1,\ldots,g; \\
1, &\text{ otherwise}.
\end{cases}
\end{equation*}
\cref{order_of_van} implies the equality
$$h_{X_{n}}(u,\psi_{0}) = {\rm det}(I - AC_{n}u + (DC_{n} - I)u^{2}), $$
where $A$ and $D$ are the adjacency and the degree matrices of $X$, respectively.  It follows that there exists
$$P(x_{t+1},\ldots,x_{g},u) \in \mathbb{Z}[x_{t+1},\ldots,x_{g},u] $$
such that
$$h_{X_{n}}(u,\psi_{0}) = P(p^{n - k_{t+1}},\ldots,p^{n - k_{g}},u),$$
provided $n \ge n_{1}$.  Therefore, there exists $Q(T) \in \overline{\mathbb{Q}}_{p}[T]$ such that
$$h_{X_{n}}'(1,\psi_{0}) = Q(p^{n}), $$
provided $n \ge n_{1}$.  The result then follows from \cref{useful_r_t}.
\end{proof}
From now on, whenever $0 \le j \le n$ we let
$$\widehat{\Gamma}_{n}^{(j)} = \{\psi \in \widehat{\Gamma}_{n}: {\rm ord}(\psi) = p^{j} \}.$$ 
Note that all $\psi \in \widehat{\Gamma}_{n}^{(j)}$ for a fixed $j$ have the same kernel, and moreover one has $|\widehat{\Gamma}_{n}^{(j)}| =\phi(p^{j})$, where $\phi$ here is the Euler $\phi$-function.

\begin{proposition} \label{ram_i}
With the notation as above, if $0 < j \le n_{1}$, then there exists $\lambda_{j} \in \mathbb{Z}_{\ge 0}$ and $\nu_{j} \in \mathbb{Z}$ such that
$${\rm ord}_{p} \left( \prod_{\psi \in \widehat{\Gamma}_{n}^{(j)}} h_{X_{n}}(1,\psi)\right) = \lambda_{j}n + \nu_{j} $$
when $n$ is large enough.
\end{proposition}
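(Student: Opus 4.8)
The plan is to feed the explicit matrix formula of \cref{link_zak} into the product, evaluate it at $u=1$, and separate each factor $h_{X_n}(1,\psi)$ into an ``arithmetic'' part that does not depend on $n$ and an ``analytic'' part that is a pure power of $p$.

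First I would fix $n \ge \max(n_1,n_2)$ and $\psi \in \widehat{\Gamma}_n^{(j)}$, so that $\ker(\psi) = p^j\mathbb{Z}_p/p^n\mathbb{Z}_p$. For $n \ge n_1$ the stabilizer $\Gamma_{n,v}$ is trivial for $v \in V^{unr}$ and equals $p^{k_v}\mathbb{Z}_p/p^n\mathbb{Z}_p$ for $v \in V^{ram}$, so the condition $\Gamma_{n,v_i} \subseteq \ker(\psi)$ holds exactly when $v_i \in V^{unr}$ or ($v_i \in V^{ram}$ and $k_i \ge j$). Hence the set $S_j$ of indices \emph{not} deleted in forming $\widetilde{\mathbf{A}}_\alpha,\widetilde{\mathbf{C}},\widetilde{\mathbf{D}}$ depends only on $j$, and $r_0(\psi) = |V_X| - |S_j|$ is constant on $\widehat{\Gamma}_n^{(j)}$. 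Putting $M_j = S_j \cap V^{ram} = \{v_i \in V^{ram} : k_i \ge j\}$ and evaluating \cref{link_zak} at $u=1$, the matrix becomes $\bigl(\psi(\widetilde{\mathbf{D}}) - \psi(\widetilde{\mathbf{A}}_\alpha)\bigr)\psi(\widetilde{\mathbf{C}})$, so
$$h_{X_n}(1,\psi) = \det\!\bigl(\psi(\widetilde{\mathbf{D}}) - \psi(\widetilde{\mathbf{A}}_\alpha)\bigr)\cdot\det\!\bigl(\psi(\widetilde{\mathbf{C}})\bigr).$$
The diagonal matrix $\widetilde{\mathbf{C}}$ contributes $1$ at each surviving unramified index (there $N_{\Gamma_{n,v_i}} = 1$) and $\psi(N_{\Gamma_{n,v_i}}) = |\Gamma_{n,v_i}| = p^{n-k_i}$ at each $v_i \in M_j$ (since $\Gamma_{n,v_i} \subseteq \ker\psi$), so $\det(\psi(\widetilde{\mathbf{C}})) = p^{\,|M_j|\,n - \sum_{v_i\in M_j}k_i}$, the \emph{same} for all $\psi \in \widehat{\Gamma}_n^{(j)}$. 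For the other factor, $\widetilde{\mathbf{D}}$ has integer entries and $\psi(\widetilde{\mathbf{A}}_\alpha)$ has entries $\sum_s \psi(\alpha_n(s))$ depending only on $\alpha(s)\bmod p^j$; writing $\psi = \bar{\psi}\circ\pi_{n,j}$ with $\bar\psi$ an order-$p^j$ character of $\Gamma_j$, the multiset $\{\det(\psi(\widetilde{\mathbf{D}}) - \psi(\widetilde{\mathbf{A}}_\alpha)) : \psi \in \widehat{\Gamma}_n^{(j)}\}$ equals $\{\det(\widetilde{\mathbf{D}} - \bar\psi(\widetilde{\mathbf{A}}_{\alpha_j})) : \bar\psi \in \widehat{\Gamma}_j^{(j)}\}$ and is therefore independent of $n$ for $n\ge n_1$. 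Set $a_j$ to be the product of these $\phi(p^j)$ values.

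The one genuinely non-formal step — the part I expect to be the main obstacle — is showing that $a_j$ is a \emph{nonzero integer}. Nonvanishing is cheap: each $\psi \in \widehat{\Gamma}_n^{(j)}$ is nontrivial since $j \ge 1$, and $\chi(X_n)\neq 0$ for $n\ge n_2$, so $h_{X_n}(1,\psi)\neq 0$ by \cref{order_of_vanishing}, forcing each factor of $a_j$, hence $a_j$, to be nonzero. Integrality follows from Galois theory: each factor is an algebraic integer in $\mathbb{Z}[\zeta_{p^j}]$, and any two order-$p^j$ characters of $\Gamma_n$ differ by composing with some $\zeta_{p^j}\mapsto\zeta_{p^j}^{a}$, $a\in(\mathbb{Z}/p^j\mathbb{Z})^\times$, so $\mathrm{Gal}(\mathbb{Q}(\zeta_{p^j})/\mathbb{Q})$ permutes the factors of $a_j$; hence $a_j$ is Galois-invariant, i.e. $a_j \in \mathbb{Z}[\zeta_{p^j}]\cap\mathbb{Q} = \mathbb{Z}$. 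Combining everything,
$$\mathrm{ord}_p\!\Bigl(\prod_{\psi\in\widehat{\Gamma}_n^{(j)}}h_{X_n}(1,\psi)\Bigr) = \mathrm{ord}_p(a_j) + \phi(p^j)\Bigl(|M_j|\,n - \sum_{v_i\in M_j}k_i\Bigr),$$
so the proposition holds for all $n \ge \max(n_1,n_2)$ with $\lambda_j = \phi(p^j)\,|M_j| \in \mathbb{Z}_{\ge 0}$ and $\nu_j = \mathrm{ord}_p(a_j) - \phi(p^j)\sum_{v_i\in M_j}k_i \in \mathbb{Z}$.
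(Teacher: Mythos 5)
Your proof is correct and follows essentially the same route as the paper's: both evaluate the formula of \cref{link_zak} at $u=1$, split $h_{X_n}(1,\psi)$ into the $n$-independent factor $\det\bigl(\widetilde{\mathbf{D}}-\psi(\widetilde{\mathbf{A}}_{\alpha_n})\bigr)$ times $\det\bigl(\psi(\widetilde{\mathbf{C}}_n)\bigr)$, and read off the linear growth in $n$ from the latter (the paper packages this as a polynomial evaluated at $p^n$ and cites \cref{useful_r_t}). Your explicit treatment of nonvanishing via \cref{order_of_vanishing} and of the integrality of $\nu_j$ via Galois invariance fills in two small points the paper leaves implicit.
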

\begin{proof}
Let $\psi \in \widehat{\Gamma}_{n}^{(j)}$ with $ 0 < j \le n_{1} \le n$, and let
$$V_{\psi,n} = \{v \in V^{ram}: \Gamma_{n,v} \not \subseteq {\rm ker}(\psi) \}. $$
Note that the set $V_{\psi,n}$ does not depend on $n$ when $n \ge n_{1}$.  Therefore we will simply write $V_{\psi}$ for this set instead of $V_{\psi,n}$ provided $n \ge n_{1}$.  Consider the matrices $\widetilde{\mathbf{A}}_{\alpha_{n}}$, $\widetilde{\mathbf{C}}_{n}$, and $\widetilde{\mathbf{D}}$ from \cref{link_zak} so that we have
$$h_{X_{n}}(u,\psi) = {\rm det}(\widetilde{I} - \psi(\widetilde{\mathbf{A}}_{\alpha_{n}})\psi(\widetilde{\mathbf{C}}_{n})u + (\psi(\widetilde{\mathbf{D}})\psi(\widetilde{\mathbf{C}}_{n})- \widetilde{I})u^{2}), $$
from which it follows that
$$h_{X_{n}}(1,\psi) = {\rm det}(\widetilde{\mathbf{D}} - \psi(\widetilde{\mathbf{A}}_{\alpha_{n}})){\rm det}(\psi(\widetilde{\mathbf{C}}_{n})). $$
Observe that for $n \ge n_{1}$, the quantity ${\rm det}(\widetilde{\mathbf{D}} - \psi(\widetilde{\mathbf{A}}_{\alpha_{n}}))$ is independent of $n$.  On the other hand, the matrix $\psi(\widetilde{\mathbf{C}}_{n})$ is a diagonal matrix whose elements on the diagonal are either $1$'s or of the form $p^{n - k_{v}}$, when $v \in V^{ram} \smallsetminus V_{\psi}$.  Therefore, there exists $Q(T) \in \overline{\mathbb{Q}}_{p}[T]$ such that
$$h_{X_{n}}(1,\psi) = Q(p^{n}), $$
provided $n \ge n_{1}$.  The result then follows from \cref{useful_r_t}.
\end{proof}

\begin{proposition} \label{unr_i}
With the notation as above, there exists a power series $g(T) \in \mathbb{Z}_{p}\llbracket T\rrbracket$ such that
$$h_{X_{n}}(1,\psi) = g(\psi(\bar{1}) - 1) $$
for all $\psi \in \widehat{\Gamma}_{n}^{(j)}$ whenever $n_{1}<j\le n$.  Therefore, if we let 
$$\mu_{unr} = \mu(g(T)) \text{ and } \lambda_{unr} = \lambda(g(T)),$$ 
then there exists $\nu_{unr} \in \mathbb{Z}$ such that one has
$${\rm ord}_{p} \left(\prod_{j=n_{1}+1}^{n} \prod_{\psi \in \widehat{\Gamma}_{n}^{(j)}} h_{X_{n}}(1,\psi) \right) = \mu_{unr} p^{n} + \lambda_{unr}n + \nu_{unr}$$
when $n$ is large enough.  
\end{proposition}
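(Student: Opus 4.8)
The plan is to first produce the power series $g(T)$ out of the reduced determinant formula of \cref{link_zak}, and then to feed it into the $p$-adic bookkeeping used in the proof of \cref{useful_p_s}.

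\emph{Constructing $g$.} I would start from the observation that if $n_1 < j \le n$ and $\psi \in \widehat{\Gamma}_n^{(j)}$, then every ramified vertex $v$ satisfies $\Gamma_{n,v} \not\subseteq \ker\psi$: indeed $\ker\psi$ is the subgroup of index $p^j$ of $\Gamma_n$, while $\Gamma_{n,v} = p^{k_v}\mathbb{Z}_p/p^n\mathbb{Z}_p$ with $k_v \le n_1 < j$. Hence in \cref{link_zak} the surviving rows and columns are precisely those indexed by the unramified vertices $v_1, \dots, v_t$; since $\Gamma_{n,v_i} = 0$ for $i \le t$, the matrix $\widetilde{\mathbf{C}}_n$ is the identity and $\widetilde{\mathbf{D}}$ is the (integer) degree submatrix of $X$. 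Specializing \cref{link_zak} at $u = 1$ then gives $h_{X_n}(1,\psi) = {\rm det}\big(\widetilde{\mathbf{D}} - \psi(\widetilde{\mathbf{A}}_{\alpha_n})\big)$, and the entries of $\psi(\widetilde{\mathbf{A}}_{\alpha_n})$ are $\sum_{s:\,{\rm inc}(s) = (v_j, v_i)} \psi(\pi_n(\alpha(s)))$ with $i, j \le t$. By the standard $p$-adic interpolation property of the binomial series $(1+T)^a = \sum_{k \ge 0} \binom{a}{k}T^k \in \mathbb{Z}_p\llbracket T\rrbracket$ for $a \in \mathbb{Z}_p$ --- which satisfies $(1+T)^a\big|_{T = \zeta - 1} = \zeta^a$ for every $p$-power root of unity $\zeta$ --- one has $\psi(\pi_n(\alpha(s))) = (1+T)^{\alpha(s)}\big|_{T = \psi(\bar{1}) - 1}$. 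I would therefore set $\mathcal{A}(T)_{ij} = \sum_{s:\,{\rm inc}(s) = (v_j, v_i)} (1+T)^{\alpha(s)} \in \mathbb{Z}_p\llbracket T\rrbracket$ for $1 \le i, j \le t$ and define $g(T) = {\rm det}\big(\widetilde{\mathbf{D}} - \mathcal{A}(T)\big) \in \mathbb{Z}_p\llbracket T\rrbracket$; by construction $h_{X_n}(1,\psi) = g(\psi(\bar{1}) - 1)$ for all $\psi \in \widehat{\Gamma}_n^{(j)}$ with $n_1 < j \le n$. The key point is that $g$ is independent of $n$ --- the ramified part has disappeared entirely.

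\emph{Nonvanishing of $g$.} For $n \ge n_2$ one has $\chi(X_n) \ne 0$, and $\Gamma_n$ acts without inversion and freely on directed edges, so \cref{order_of_vanishing} gives $h_{X_n}(1,\psi) \ne 0$ for every non-trivial $\psi \in \widehat{\Gamma}_n$. Taking $n$ large, this shows $g(\zeta - 1) \ne 0$ for every primitive $p^j$-th root of unity $\zeta$ with $j \ge 1$, and in particular $g \ne 0$, so that $\mu(g)$ and $\lambda(g)$ are defined. One sets $\mu_{unr} = \mu(g)$ and $\lambda_{unr} = \lambda(g)$.

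\emph{The valuation.} Since $\psi \mapsto \psi(\bar{1})$ identifies $\widehat{\Gamma}_n^{(j)}$ with the set of primitive $p^j$-th roots of unity, $${\rm ord}_p\Big(\prod_{j=n_1+1}^{n} \prod_{\psi \in \widehat{\Gamma}_n^{(j)}} h_{X_n}(1,\psi)\Big) = \sum_{j = n_1+1}^{n} \ \sum_{\substack{\zeta \in \mu_{p^n} \\ {\rm ord}(\zeta) = p^j}} {\rm ord}_p\big(g(\zeta - 1)\big).$$ Writing $g(T) = p^{\mu_{unr}} R(T)$ with $R(T) = \sum_k e_k T^k$ and $\lambda_{unr} = \min\{k : {\rm ord}_p(e_k) = 0\}$, the computation in the proof of \cref{useful_p_s} shows there is $j_2 \ge n_1$ such that ${\rm ord}_p(g(\zeta - 1)) = \mu_{unr} + \lambda_{unr}/\phi(p^j)$ whenever ${\rm ord}(\zeta) = p^j$ with $j > j_2$, so the inner sum over those $\zeta$ equals $\mu_{unr}\phi(p^j) + \lambda_{unr}$. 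Splitting the outer sum at $j_2$: the part with $n_1 < j \le j_2$ is a constant $c$ independent of $n$ (for $n \ge j_2$), and $c \in \mathbb{Z}_{\ge 0}$ because $\prod_{{\rm ord}(\zeta) = p^j} g(\zeta - 1)$ is a nonzero $p$-adic integer (being fixed by $\mathrm{Gal}(\mathbb{Q}_p(\zeta_{p^j})/\mathbb{Q}_p)$ and integral); while, using the identity $\sum_{j=0}^{m} \phi(p^j) = p^m$, the part with $j > j_2$ equals $\sum_{j=j_2+1}^{n}\big(\mu_{unr}\phi(p^j) + \lambda_{unr}\big) = \mu_{unr}(p^n - p^{j_2}) + \lambda_{unr}(n - j_2)$. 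Adding the two contributions gives $\mu_{unr}p^n + \lambda_{unr}n + \nu_{unr}$ with $\nu_{unr} := c - \mu_{unr}p^{j_2} - \lambda_{unr}j_2 \in \mathbb{Z}$, which is the assertion once $n$ is large.

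The main obstacle is the first step: one must check carefully that for $j > n_1$ the ramified vertices drop out of the reduced determinant of \cref{link_zak}, so that $h_{X_n}(1,\psi)$ becomes the value at $T = \psi(\bar{1}) - 1$ of a single power series in $\mathbb{Z}_p\llbracket T\rrbracket$ that does not depend on $n$. Once this uniform interpolation is established, the $p$-adic estimate is essentially the argument in the proof of \cref{useful_p_s} together with the elementary identity $\sum_{j=0}^{m}\phi(p^j) = p^m$.
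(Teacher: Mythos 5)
Your proposal is correct and follows essentially the same route as the paper: reduce to the unramified block of \cref{link_zak}, interpolate the entries by $(1+T)^{\alpha(s)}$ to get an $n$-independent $g(T)\in\mathbb{Z}_p\llbracket T\rrbracket$ with $h_{X_n}(1,\psi)=g(\psi(\bar 1)-1)$, and then run the $p$-adic estimate of \cref{useful_p_s}. The only differences are cosmetic but welcome: you inline the argument of \cref{useful_p_s} (splitting the sum at a threshold $j_2$, which sidesteps any issue with $g(\zeta-1)$ for small-order $\zeta$) and you explicitly verify $g\neq 0$ via \cref{order_of_vanishing}, a point the paper leaves implicit.
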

\begin{proof}
If $\psi \in \widehat{\Gamma}_{n}^{(j)}$ with $n_{1} < j \le n$, then $G_{i} \not \subseteq {\rm ker}(\psi)$ for all $i=t+1,\ldots,g$.  Similarly as in the previous proof, consider the matrices $\widetilde{\mathbf{A}}_{\alpha_{n}}$, and $\widetilde{\mathbf{D}}$ from \cref{link_zak}.  One has
$$h_{X_{n}}(1,\psi) = {\rm det}(\widetilde{\mathbf{D}} - \psi(\widetilde{\mathbf{A}}_{\alpha_{n}})) $$
for all $\psi \in \widehat{\Gamma}_{n}^{(j)}$ provided $n_{1} < j \le n$.  Consider now the group morphism
$$\rho:\mathbb{Z}_{p} \rightarrow \mathbb{Z}_{p}\llbracket T \rrbracket^{\times} $$
given by 
$$a \mapsto \rho(a) = (1+T)^{a} = \sum_{i=0}^{\infty}\binom{a}{i}T^{i} \in \mathbb{Z}_{p}\llbracket T \rrbracket^{\times}, $$
where the $\binom{X}{i}$ are the usual integer-valued polynomials, and define
$$g(T) = {\rm det}(\widetilde{\mathbf{D}} - \widetilde{\mathbf{A}}_{\rho}) \in \mathbb{Z}_{p}\llbracket T \rrbracket, $$
where the matrix $\widetilde{\mathbf{A}}_{\rho} = (\mathbf{a}_{ij}(\rho)) \in M_{t}(\mathbb{Z}_{p}\llbracket T \rrbracket)$ is given by
$$\mathbf{a}_{ij}(\rho) = \sum_{\substack{s \in \mathbf{E}_{X} \\ {\rm inc}(s) = (v_{j},v_{i})}} \rho(\alpha(s))$$
with $1 \le i,j \le t$.  By \cref{link_zak}, the power series $g(T)$ satisfies
$$h_{X_{n}}(1,\psi) = g(\psi(\bar{1}) - 1), $$
for all $\psi \in \widehat{\Gamma}_{n}^{(j)}$ provided $n_{1}< j \le n$ (see for instance the argument in the paragraph preceding \cite[Remark 5.1]{Sage/Vallieres:2022}).  It then follows that
\begin{equation*}
\begin{aligned}
\prod_{j=n_{1}+1}^{n} \prod_{\psi \in \widehat{\Gamma}_{n}^{(j)}} h_{X_{n}}(1,\psi) &= \prod_{j=n_{1}+1}^{n} \prod_{\substack{\zeta \in \mu_{p^{n}}\\{\rm ord}(\zeta) = p^{j}}}g(\zeta - 1), 
\end{aligned}
\end{equation*}
and the result follows from \cref{useful_p_s}.
\end{proof}

As a result, we obtain our main result of this paper.
\begin{theorem} \label{main}
Let $X = (V_{X},\mathbf{E}_{X})$ be a finite connected graph, and let $\alpha:\mathbf{E}_{X} \rightarrow \mathbb{Z}_{p}$ be a voltage assignment.  Consider a family $\mathcal{G}$ of closed subgroups of $\mathbb{Z}_{p}$ indexed by the vertices of $V_{X}$ and consider the branched $\mathbb{Z}_{p}$-tower of graphs
$$X = X_{0} \leftarrow X_{1} \leftarrow X_{2} \leftarrow \ldots \leftarrow X_{n} \leftarrow \ldots, $$
where $X_{n} = X(\Gamma_{n},\mathcal{G}_{n},\alpha_{n})$.  Assume that all finite graphs $X_{n}$ are connected and that $\chi(X_{n}) < 0$ for $n$ large.  Then, there exist $\mu,\lambda \in \mathbb{Z}_{\ge 0}$ and $\nu \in \mathbb{Z}$ such that
\begin{equation} \label{that_one}
{\rm ord}_{p}(\kappa(X_{n})) = \mu p^{n} + \lambda n + \nu 
\end{equation}
when $n$ is large enough.
\end{theorem}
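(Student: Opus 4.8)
The plan is to take $p$-adic valuations on both sides of the product formula (\ref{starting_pt1}) and then read off the asymptotics from the four estimates \cref{ec_i,tc_i,ram_i,unr_i} already in hand. First I would fix $n$ large enough that $\chi(X_n)<0$, so that (\ref{starting_pt1}) applies and $\chi(X_n)\neq 0$, and partition the nontrivial characters of $\Gamma_n$ by order: for $1\le j\le n$ let $\widehat{\Gamma}_n^{(j)}$ denote the characters $\psi\in\widehat{\Gamma}_n$ with ${\rm ord}(\psi)=p^{j}$, and split the product over $\psi\neq\psi_0$ in (\ref{starting_pt1}) into the ``ramified range'' $1\le j\le n_1$ and the ``unramified range'' $n_1<j\le n$ (the ramified range being empty when $V^{ram}=\varnothing$). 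Applying ${\rm ord}_{p}$ and solving for ${\rm ord}_{p}(\kappa(X_n))$ gives
\begin{align*}
{\rm ord}_{p}(\kappa(X_n)) &= -{\rm ord}_{p}(\chi(X_n)) + {\rm ord}_{p}(h_{X_n}'(1,\psi_0)) \\
&\qquad + \sum_{j=1}^{n_1}{\rm ord}_{p}\Big(\prod_{\psi\in\widehat{\Gamma}_n^{(j)}}h_{X_n}(1,\psi)\Big) + {\rm ord}_{p}\Big(\prod_{j=n_1+1}^{n}\prod_{\psi\in\widehat{\Gamma}_n^{(j)}}h_{X_n}(1,\psi)\Big),
\end{align*}
with the constant ${\rm ord}_{p}(2)$ absorbed into $\nu$ at the end.

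Next I would substitute the four propositions, each valid for $n$ large: \cref{ec_i} shows $-{\rm ord}_{p}(\chi(X_n))$ is eventually either $-n-{\rm ord}_{p}(\chi(X))$ (when $V^{ram}=\varnothing$, in which case Riemann--Hurwitz forces $\chi(X)<0$) or a constant; \cref{tc_i} gives ${\rm ord}_{p}(h_{X_n}'(1,\psi_0))=\lambda_0 n+\nu_0$; \cref{ram_i} gives $\lambda_j n+\nu_j$ for each fixed $j$ with $0<j\le n_1$; and \cref{unr_i} gives $\mu_{unr}p^{n}+\lambda_{unr}n+\nu_{unr}$ for the unramified range. Adding everything, there are $\mu:=\mu_{unr}$, an integer $\lambda$, and an integer $\nu$ with ${\rm ord}_{p}(\kappa(X_n))=\mu p^{n}+\lambda n+\nu$ for all large $n$, which is (\ref{that_one}).

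It then remains to check $\mu,\lambda\in\mathbb{Z}_{\ge 0}$. Nonnegativity of $\mu=\mu_{unr}=\mu(g(T))$ is immediate from the definition of the $\mu$-invariant of a power series in $\mathbb{Z}_{p}\llbracket T\rrbracket$. For $\lambda$ I would argue by cases: if $V^{ram}\neq\varnothing$ then $-{\rm ord}_{p}(\chi(X_n))$ is eventually constant and $\lambda=\lambda_0+\sum_{j=1}^{n_1}\lambda_j+\lambda_{unr}$ is a sum of nonnegative integers; if $V^{ram}=\varnothing$ then the ramified range is empty, the matrices $C_n$ are the identity so $h_{X_n}(u,\psi_0)=h_X(u)$ is independent of $n$ (hence $\lambda_0=0$), while the power series $g(T)$ of \cref{unr_i} satisfies $g(0)={\rm det}(D-A)={\rm det}(\mathcal{L}_X)=0$ by (\ref{lap_singular}) since $X$ is connected, so $\lambda_{unr}=\lambda(g(T))\ge 1$ and $\lambda=-1+\lambda_{unr}\ge 0$. (Alternatively one may simply invoke ${\rm ord}_{p}(\kappa(X_n))\ge 0$, which already excludes $\lambda<0$ whenever $\mu=0$, and combine with the casework otherwise.)

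Because the genuinely analytic work is entirely packaged in \cref{tc_i,ram_i,unr_i}---in particular the power-series input \cref{useful_p_s} behind \cref{unr_i}---the proof of the theorem itself is essentially bookkeeping: separating the character product by order, matching each piece to the correct proposition, and collecting constants. I expect the only point requiring a little care to be the nonnegativity of $\lambda$ in the unramified case, which is precisely where the singularity of the Laplacian of the base graph $X$ re-enters.
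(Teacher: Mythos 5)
Your proposal is correct and follows exactly the paper's argument: the proof of \cref{main} is precisely the combination of (\ref{starting_pt1}) with \cref{ec_i,tc_i,ram_i,unr_i}, split by character order into the ramified and unramified ranges. Your extra verification that $\lambda\ge 0$ (via $g(0)=\det(\mathcal{L}_X)=0$ in the unramified case) is a worthwhile detail that the paper leaves implicit, only surfacing later in \cref{lambda explicit}.
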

\begin{proof}
This is a consequence of (\ref{starting_pt1}), and \cref{ec_i,tc_i,ram_i,unr_i}.
\end{proof}

\begin{remark}\label{lambda explicit}
With care, the Iwasawa invariants in \cref{main} can be calculated explicitly.  Indeed, with the notation as in \cref{ec_i,tc_i,ram_i,unr_i}, one has $\mu = \mu_{unr}$ and
\begin{equation*}
\lambda = 
\begin{cases}
\lambda_{0} + \sum_{j=1}^{n_{1}} \lambda_{j} + \lambda_{unr} - 1, & \text{ if } V^{ram} = \varnothing \text{ and } \chi(X) \neq 0;\\
\lambda_{0} + \sum_{j=1}^{n_{1}} \lambda_{j} + \lambda_{unr}, & \text{ if } V^{ram} \neq \varnothing.
\end{cases}
\end{equation*}
With even more care in \cref{ec_i,tc_i,ram_i,unr_i}, one can also find an explicit $n_{3} \ge 0$ such that (\ref{that_one}) holds true for $n \ge n_{3}$.  We leave this to the interested reader.
\end{remark}

\subsection{Revisiting the main conjecture} \label{section:revi}

In this subsection we link the Iwasawa main conjecture proved in \cite[\S 5.2]{Gambheera/Vallieres:2024} to the Ihara $L$-functions attached to each layer of a branched $\mathbb{Z}_{p}$-tower. More precisely, we calculate the characteristic ideal of ${\rm Pic}^{0}_{\Lambda}$, which we are going to recall now, in terms of the power series $g(T)$ that was defined in the previous subsection.  Recall that $\Lambda$ denotes the usual Iwasawa algebra of the profinite group $\mathbb{Z}_{p}$. 

\begin{definition}
Let $X = (V_{X},\mathbf{E}_{X})$ be a finite connected graph, and let $\alpha:\mathbf{E}_{X} \rightarrow \mathbb{Z}_{p}$ be a voltage assignment.  Consider a family $\mathcal{G}$ of closed subgroups of $\mathbb{Z}_{p}$ indexed by the vertices of $V_{X}$ and consider the branched $\mathbb{Z}_{p}$-tower of graphs
$$X = X_{0} \leftarrow X_{1} \leftarrow X_{2} \leftarrow \ldots \leftarrow X_{n} \leftarrow \ldots, $$
where $X_{n} = X(\Gamma_{n},\mathcal{G}_{n},\alpha_{n})$, and where we assume each $X_{n}$ to be connected. Let ${\rm Pic}^0(X_n)$ be the Picard group of degree zero of $X_n$ for each $n$ (see \cite[\S 2.2]{Gambheera/Vallieres:2024} for the precise definition). Then, we define
$$\mathrm{Pic}^{0}_{\Lambda} := \varprojlim_{n \ge 0}\mathrm{Pic}^{0}(X_{n})[p^{\infty}],$$
where the projective limit is taken with respect to the natural projection maps (see \cite[\S 4.3]{Gambheera/Vallieres:2024}).
\end{definition}
It follows from \cite[Theorem 5.6]{Gambheera/Vallieres:2024} that $\mathrm{Pic}^{0}_{\Lambda}$ is a finitely generated torsion $\Lambda$-module, and thus one can talk about its characteristic ideal.  For the following result, we use the usual non-canonical isomorphism $\Lambda \simeq \mathbb{Z}_{p}\llbracket T \rrbracket$.
\begin{theorem}\label{main conjecture}
Let $X = (V_{X},\mathbf{E}_{X})$ be a finite connected graph, and let $\alpha:\mathbf{E}_{X} \rightarrow \mathbb{Z}_{p}$ be a voltage assignment.  Consider a family $\mathcal{G}$ of closed subgroups of $\mathbb{Z}_{p}$ indexed by the vertices of $V_{X}$ and consider the branched $\mathbb{Z}_{p}$-tower of graphs
$$X = X_{0} \leftarrow X_{1} \leftarrow X_{2} \leftarrow \ldots \leftarrow X_{n} \leftarrow \ldots,$$
where $X_{n} = X(\Gamma_{n},\mathcal{G}_{n},\alpha_{n})$.  Assume that all finite graphs $X_{n}$ are connected.  With the notation as above, one has
$$\mathrm{char}_{\mathbb{Z}_{p}\llbracket T \rrbracket}(\mathrm{Pic}^0_{\Lambda}) \cdot (T) = (g(T))\cdot \prod_{v\in V^{ram}} ((1+T)^{p^{k_v}}-1)$$
\end{theorem}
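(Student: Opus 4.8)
The plan is to reduce the statement to a computation of the characteristic ideal of the cokernel of an Iwasawa–theoretic Laplacian, and then to evaluate that cokernel using the block structure coming from the partition $V_{X} = V^{unr} \sqcup V^{ram}$. Write $\Lambda \simeq \mathbb{Z}_{p}\llbracket T \rrbracket$ as in the statement, set $f_{v}(T) = (1+T)^{p^{k_{v}}}-1$ for $v \in V^{ram}$, and let $\mathrm{Div}_{\Lambda} := \varprojlim_{n}\big(\mathrm{Div}(X_{n})\otimes_{\mathbb{Z}}\mathbb{Z}_{p}\big)$, the limit taken along the push-forward maps. Using the voltage description one identifies $\mathrm{Div}_{\Lambda}$ as a $\Lambda$-module with $\bigoplus_{v \in V^{unr}}\Lambda \;\oplus\; \bigoplus_{v \in V^{ram}}\Lambda/(f_{v}(T))$, the ramified summand over $v$ being $\varprojlim_{n}\mathbb{Z}_{p}[\Gamma_{n}/\Gamma_{n,v}] \simeq \mathbb{Z}_{p}[\mathbb{Z}/p^{k_{v}}\mathbb{Z}] \simeq \Lambda/(f_{v}(T))$ (eventually constant, since $n \ge k_{v}$). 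Following the setup of \cite[\S 5.2]{Gambheera/Vallieres:2024}, the Laplacians $\mathcal{L}_{X_{n}}$ induce an operator $\mathcal{L}_{\Lambda}$ on $\mathrm{Div}_{\Lambda}$ with $\mathrm{coker}(\mathcal{L}_{\Lambda}) = \varprojlim_{n}\big(\mathrm{Div}(X_{n})/\mathrm{im}(\mathcal{L}_{X_{n}})\big)\otimes\mathbb{Z}_{p}$, and the degree map yields a short exact sequence of $\Lambda$-modules
$$0 \longrightarrow \mathrm{Pic}^{0}_{\Lambda} \longrightarrow \mathrm{coker}(\mathcal{L}_{\Lambda}) \longrightarrow \mathbb{Z}_{p} \longrightarrow 0,$$
where $\mathbb{Z}_{p} \simeq \Lambda/(T)$ carries the trivial action. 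By additivity of characteristic ideals in exact sequences, the left-hand side of the theorem equals $\mathrm{char}_{\Lambda}(\mathrm{coker}\,\mathcal{L}_{\Lambda})$, so it suffices to prove that this equals $(g(T))\cdot\prod_{v \in V^{ram}}(f_{v}(T))$.

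For the main computation I would show that, with respect to the decomposition $\mathrm{Div}_{\Lambda} = \mathrm{Div}^{unr}_{\Lambda}\oplus\mathrm{Div}^{ram}_{\Lambda}$, the operator $\mathcal{L}_{\Lambda}$ is block lower triangular with \emph{vanishing} lower-right block. This vanishing is the heart of the argument: the Laplacian entries emanating from a ramified vertex $v$ at level $n$ carry a factor equal to the ramification index $m_{v,n}=p^{\,n-k_{v}}$ (equivalently the norm element $N_{\Gamma_{n,v}}$, whose image in $\Lambda/(f_{v}(T))$ is $\equiv p^{\,n-k_{v}}$), and $p^{\,n-k_{v}}\to 0$ in $\mathbb{Z}_{p}$, so in the limit those blocks are zero. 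The upper-left block is then exactly $\widetilde{\mathbf{D}}-\widetilde{\mathbf{A}}_{\rho}\in M_{t}(\Lambda)$ of \cref{unr_i} — the full valency matrix of $X$ on the diagonal for $v\in V^{unr}$, minus the $\rho$-twisted adjacency among unramified vertices — whose determinant is $g(T)$, and which is nonzero (here I would argue as in the proof of \cref{order_of_vanishing}, using that the $X_{n}$ are connected, together with \cref{riemann_hurwitz}). The block-triangular shape, with $\widetilde{\mathbf{D}}-\widetilde{\mathbf{A}}_{\rho}$ injective, then produces the short exact sequence
$$0 \longrightarrow \mathrm{Div}^{ram}_{\Lambda} \longrightarrow \mathrm{coker}(\mathcal{L}_{\Lambda}) \longrightarrow \mathrm{coker}\big(\widetilde{\mathbf{D}}-\widetilde{\mathbf{A}}_{\rho}\big) \longrightarrow 0,$$
whence $\mathrm{char}_{\Lambda}(\mathrm{coker}\,\mathcal{L}_{\Lambda}) = \mathrm{char}_{\Lambda}(\mathrm{Div}^{ram}_{\Lambda})\cdot(\det(\widetilde{\mathbf{D}}-\widetilde{\mathbf{A}}_{\rho})) = \prod_{v\in V^{ram}}(f_{v}(T))\cdot(g(T))$, as desired.

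Two points deserve care and are where I expect the real work to lie. First, the passage to the limit: the $\mathcal{L}_{X_{n}}$ do \emph{not} commute with the push-forward maps on the nose (there is a ramification factor, visible already in the Riemann–Hurwitz computation), so one must verify that they nonetheless induce a well-defined $\mathcal{L}_{\Lambda}$ with the stated cokernel and that the relevant $\varprojlim^{1}$ vanishes (Mittag–Leffler); this is precisely the bookkeeping done in \cite[\S 5.2]{Gambheera/Vallieres:2024}, which I would invoke rather than redo. Second, I would record a specialization cross-check: applying the characters $\psi\in\widehat{\Gamma}_{n}$, i.e.\ the maps $T\mapsto\psi(\bar 1)-1$, the element $g(T)\prod_{v\in V^{ram}}f_{v}(T)$ must reproduce, up to the $(1-u^{2})$-power accounting, the finite-level quantities of \cref{tc_i,ram_i,unr_i} and \cref{link_zak} together with the order-of-vanishing statement of \cref{order_of_vanishing}; this both confirms the identification of $\mathrm{Div}^{ram}_{\Lambda}$ and the vanishing of the ramified block, and, via a Weierstrass-type argument, pins the two ideals down to the same unit. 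If the first route runs into trouble, this specialization argument (combined with the asymptotic formula of \cref{main}) can be developed into an alternative proof of the equality of ideals.
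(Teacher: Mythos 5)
Your argument is essentially correct in outline, but it takes a much longer route than the paper does. The paper's proof is two lines: it invokes \cite[Theorem 6.1]{Gambheera/Vallieres:2024}, which already states $\mathrm{char}_{\mathbb{Z}_p\llbracket T\rrbracket}(\mathrm{Pic}^0_\Lambda)\cdot(T)=(f_{X,\mathcal{I},\alpha}(T))$, and then simply observes that the polynomial $f_{X,\mathcal{I},\alpha}(T)$ defined in \cite[(6.2)]{Gambheera/Vallieres:2024} is, by inspection of the definitions, equal to $g(T)\cdot\prod_{v\in V^{ram}}((1+T)^{p^{k_v}}-1)$. In other words, once the earlier paper's main theorem is granted, the present statement is pure definition-matching, and no new Iwasawa-theoretic input is needed. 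What you propose instead is a reconstruction of the content of that cited theorem: the identification $\mathrm{Div}_\Lambda\simeq\bigoplus_{V^{unr}}\Lambda\oplus\bigoplus_{V^{ram}}\Lambda/(f_v)$, the degree exact sequence, the vanishing of the ramified column block of $\mathcal{L}_\Lambda$ in the limit (because the norm elements $N_{\Gamma_{n,v}}$ have augmentation $p^{n-k_v}\to 0$), and multiplicativity of characteristic ideals. This is a legitimate and more self-contained presentation, and it has the merit of making visible \emph{why} each ramified vertex contributes a full factor $f_v(T)$ while the unramified part contributes $\det(\widetilde{\mathbf{D}}-\widetilde{\mathbf{A}}_\rho)=g(T)$; but it buys nothing for this paper, whose whole point in \cref{section:revi} is to translate the already-proved module-theoretic result into the language of the Ihara $L$-functions of \cref{br_section}.

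Two places in your sketch deserve flagging. First, the two steps you yourself identify as "the real work" — the construction of $\mathcal{L}_\Lambda$, the vanishing of the ramified blocks, and the $\varprojlim^1$ bookkeeping — are precisely the content of \cite[\S 5--6]{Gambheera/Vallieres:2024}; invoking that section and then re-deriving its conclusion is circular as a proof strategy, so you should either carry out those verifications or cite the final result directly (as the paper does). Second, your exact sequence for $\mathrm{coker}(\mathcal{L}_\Lambda)$ requires $\widetilde{\mathbf{D}}-\widetilde{\mathbf{A}}_\rho$ to be injective, i.e.\ $g(T)\neq 0$; the theorem as stated does not assume $\chi(X_n)<0$, so this does not follow from \cref{order_of_vanishing} but rather from the torsionness of $\mathrm{Pic}^0_\Lambda$ established in \cite[Theorem 5.6]{Gambheera/Vallieres:2024}. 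Neither point is fatal, but both should be made explicit.
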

\begin{proof}
By \cite[Theorem 6.1]{Gambheera/Vallieres:2024}, we have 
$$\mathrm{char}_{\mathbb{Z}_{p}\llbracket T \rrbracket}(\mathrm{Pic}_{\Lambda}^{0}) \cdot (T) = (f_{X,\mathcal{I},\alpha}(T)), $$ where $f_{X,\mathcal{I},\alpha}(T)$ is as defined in \cite[(6.2)]{Gambheera/Vallieres:2024}.  (In \cite{Gambheera/Vallieres:2024}, we used $\mathcal{I}$ instead of $\mathcal{G}$ to denote the collection of closed subgroups.)  Now, observe that by the definitions of $f_{X,\mathcal{I},\alpha}(T)$ and $g(T)$, we have
$$f_{X,\mathcal{I},\alpha}(T)= g(T)\cdot \prod_{v\in V^{ram}} ((1+T)^{p^{k_v}}-1).$$
This completes the proof.
\end{proof}

Notice that \cref{lambda explicit} and \cite[Theorem 5.6]{Gambheera/Vallieres:2024} give us two different ways to calculate the $\mu$ and $\lambda$ invariants appearing in \cref{main}.  Comparing the $\mu$ invariants, we have $\mu(\mathrm{Pic}_{\Lambda}^{0})=\mu_{unr}$. On the other hand, by \cref{main conjecture}, we have
\begin{equation}\label{lambda-algebraic}
\begin{aligned}
\lambda({\rm Pic}^0_{\Lambda}) &= -1 +\lambda_{unr}+ \sum_{v\in V^{ram}} p^{k_v}\\
&= -1 + \lambda_{unr}+ \sum_{j=0}^{n_1} p^j \cdot |\{v\in V^{ram}: k_v=j\}|
\end{aligned}
\end{equation}
Now, notice that for a character  $\psi \in \widehat{\Gamma}_{n}^{(j)}$ with $0<j\leq n_{1}$, we have
\begin{equation*} 
\begin{aligned}
V_{j} = V_{\psi} &= \{v\in V^{ram}: \Gamma_{n,v} \not\subseteq {\rm ker}(\psi)\}\\
&= \{v\in V^{ram}: k_v <j\}.
\end{aligned}
\end{equation*}
Moreover, by the proof of \cref{ram_i}, ${\rm ord}_{p}(h_{X_n}(1,\psi))=|V^{ram}\setminus V_{\psi}| \cdot n + C$ for $n$ large so that $\lambda_{j} = |V^{ram}\setminus V_{\psi}|$.  Hence, by \cref{ram_i} we have
\begin{equation}\label{lambda-analytic}
\begin{aligned}
\sum_{j=1}^{n_1} \lambda_j &=\sum_{j=1}^{n_1} \phi(p^j)\cdot |V^{ram}\setminus V_{j}| \\
&=\sum_{j=1}^{n_1} \phi(p^j)\cdot |\{v\in V^{ram} : k_v\geq j\}| \\
&= \sum_{j=1}^{n_1} \sum_{k=1}^j \phi(p^k) \cdot |\{v\in V^{ram} : k_v=j\}| \\
&=  \sum_{j=1}^{n_1} (p^j-1) \cdot |\{v\in V^{ram} : k_v=j\}| \\
&=  \sum_{j=0}^{n_1} (p^j-1) \cdot |\{v\in V^{ram} : k_v=j\}| \\
&= \sum_{j=0}^{n_1} p^j \cdot |\{v\in V^{ram} : k_v=j\}| -|V^{ram}|.
\end{aligned}
\end{equation}
Now, \cref{lambda explicit} together with (\ref{lambda-algebraic}) and (\ref{lambda-analytic}) give us the following corollary.
\begin{corollary}
Let $\lambda_0$ be as defined in \cref{tc_i}. Then, we have
\begin{equation*}
\lambda_0 = 
\begin{cases}
0, &\text{ if } V^{ram}=\varnothing \text{ and } \chi(X)\neq 0; \\
|V^{ram}|-1, &\text{ if } V^{ram} \neq \varnothing.
\end{cases}
\end{equation*}
\end{corollary}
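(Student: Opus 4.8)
The plan is to read off the value of $\lambda_0$ by comparing two independent computations of the Iwasawa $\lambda$-invariant of the tower. On the analytic side, \cref{lambda explicit} assembles this invariant out of the quantities produced by the Ihara $L$-function analysis of \cref{ec_i,tc_i,ram_i,unr_i}. On the algebraic side, \cref{main conjecture} (via \cite[Theorem 6.1]{Gambheera/Vallieres:2024}) gives $\lambda = \lambda({\rm Pic}^0_\Lambda)$, and this was already rewritten in (\ref{lambda-algebraic}) as $\lambda = -1 + \lambda_{unr} + \sum_{v\in V^{ram}}p^{k_v}$. Since both expressions compute the same integer — the $\lambda$ appearing in \cref{main}, which by \cite[Theorem 5.6]{Gambheera/Vallieres:2024} is precisely $\lambda({\rm Pic}^0_\Lambda)$ — I would simply set the two equal and solve for $\lambda_0$, using the combinatorial identity (\ref{lambda-analytic}) to express $\sum_{j=1}^{n_1}\lambda_j$ in terms of the ramification data.

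Concretely, I would split into the two cases of the corollary. If $V^{ram}=\varnothing$ and $\chi(X)\neq 0$, then both $\sum_{j=1}^{n_1}\lambda_j$ and $\sum_{v\in V^{ram}}p^{k_v}$ are empty sums, so \cref{lambda explicit} reads $\lambda = \lambda_0 + \lambda_{unr} - 1$ while (\ref{lambda-algebraic}) reads $\lambda = \lambda_{unr} - 1$; subtracting gives $\lambda_0 = 0$. If $V^{ram}\neq\varnothing$, I would substitute (\ref{lambda-analytic}), namely $\sum_{j=1}^{n_1}\lambda_j = \sum_{v\in V^{ram}}p^{k_v} - |V^{ram}|$, into the second branch of \cref{lambda explicit}, obtaining $\lambda = \lambda_0 + \sum_{v\in V^{ram}}p^{k_v} - |V^{ram}| + \lambda_{unr}$, and compare this with $\lambda = \lambda_{unr} - 1 + \sum_{v\in V^{ram}}p^{k_v}$ from (\ref{lambda-algebraic}); the $\lambda_{unr}$ terms and the ramification sums cancel, leaving $\lambda_0 - |V^{ram}| = -1$, i.e. $\lambda_0 = |V^{ram}| - 1$.

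The main obstacle here is conceptual bookkeeping rather than computation: one must be sure that the integer $\lambda$ coming out of the Ihara $L$-function recipe in \cref{lambda explicit} is literally the same integer as the $\lambda({\rm Pic}^0_\Lambda)$ of \cref{main conjecture}. This is exactly the content of \cref{main} together with the structure-theorem identification of $\mu,\lambda$ with the invariants of ${\rm Pic}^0_\Lambda$ from \cite[Theorem 5.6]{Gambheera/Vallieres:2024}, so once that is invoked the elimination of $\lambda_{unr}$ and of the ramification sums is immediate. I would also verify the degenerate case $n_1=0$ with $V^{ram}\neq\varnothing$ (all $k_v=0$): then $\sum_{j=1}^{n_1}\lambda_j$ is empty, $\sum_{v\in V^{ram}}p^{k_v}=|V^{ram}|$, and the argument still delivers $\lambda_0 = |V^{ram}|-1$, so no separate treatment is needed.
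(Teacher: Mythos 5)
Your proposal is correct and is precisely the argument the paper intends: equate the analytic expression for $\lambda$ from \cref{lambda explicit} with the algebraic expression (\ref{lambda-algebraic}) coming from \cref{main conjecture} and \cite[Theorem 5.6]{Gambheera/Vallieres:2024} (both compute the same coefficient in the asymptotic ${\rm ord}_{p}(\kappa(X_{n})) = \mu p^{n} + \lambda n + \nu$), then substitute (\ref{lambda-analytic}) and cancel. The case split and the resulting values of $\lambda_{0}$ match the paper's (unwritten but clearly indicated) computation exactly.
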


\section{Examples} \label{exa}
In this section, we revisit the three examples of \cite[\S 6]{Gambheera/Vallieres:2024}.
\begin{enumerate}
\item In this example, $p=2$, and one has $n_{1} = k_{1} = 2$.  \cref{riemann_hurwitz} gives $\chi(X_{n}) = 2^{2}(-2^{n-1} +1)$, provided $n \ge 2$, from which it follows that 
$${\rm ord}_{2}(\chi(X_{n})) = 2$$ 
for $n \ge 2$.  The polynomial $Q_{0}(T)$ in the proof of \cref{tc_i} is given by $Q_{0}(T) = T - 2$, that is one has $h_{X_{n}}'(1,\psi_{0}) = Q_{0}(2^{n})$ provided $n \ge 2$.  Thus
$$\lambda_{0} = 0 \text{ and } \nu_{0} = 1. $$ 
Similarly, if one lets $Q_{1}(T) = 2T$, then $h_{X_{n}}(1,\psi) = Q_{1}(2^{n})$ whenever $\psi \in \widehat{\Gamma}_{n}^{(1)}$ and $n \ge 2$.  It follows that \cref{ram_i} gives
$$\lambda_{1} = 1 \text{ and } \nu_{1} = 1. $$
At last, if one lets $Q_{2}(T) = T$, then $h_{X_{n}}(1,\psi) = Q_{2}(2^{n})$ whenever $\psi \in \widehat{\Gamma}_{n}^{(2)}$ and $n \ge 2$.  Since $|\widehat{\Gamma}_{n}^{(2)}| = 2$, \cref{ram_i} gives
$$\lambda_{2} = 2 \text{ and } \nu_{2} = 0. $$
If $2< j \le n$, and $\psi \in \widehat{\Gamma}_{n}^{(j)}$, then $h_{X_{n}}(u,\psi) = 1$, and 
$$\mu_{unr} = \lambda_{unr} = \nu_{unr} = 0. $$ 
Thus (\ref{starting_pt1}) gives
$$1 + 2 + {\rm ord}_{2}(\kappa(X_{n})) = 1 + (n+1) + 2n $$
or in other words
$${\rm ord}_{2}(\kappa(X_{n})) = 3n - 1, $$
provided $n \ge 2$ which is consistent with what was obtained in Example $1$ of \cite[\S 6]{Gambheera/Vallieres:2024}.
\item In this example, $p=3$, and one has $n_{1} = k_{2} = 1$.  \cref{riemann_hurwitz} gives $\chi(X_{n}) = 3(-2 \cdot 3^{n-1} +1)$, provided $n \ge 1$, from which it follows that 
$${\rm ord}_{3}(\chi(X_{n})) = 1$$ 
for $n \ge 1$.  The polynomial $Q_{0}(T)$ in the proof of \cref{tc_i} is given by $Q_{0}(T) = \frac{4}{3}T - 2$, that is one has $h_{X_{n}}'(1,\psi_{0}) = Q_{0}(3^{n})$ provided $n \ge 1$.  Thus
$$\lambda_{0} = 0 \text{ and } \nu_{0} = 0. $$ 
Similarly, if one lets $Q_{1}(T) = 5T$, then $h_{X_{n}}(1,\psi) = Q_{1}(3^{n})$ whenever $\psi \in \widehat{\Gamma}_{n}^{(1)}$ and $n \ge 1$.  Since $|\widehat{\Gamma}_{n}^{(1)}| = 2$, \cref{ram_i} gives
$$\lambda_{1} = 2 \text{ and } \nu_{1} = 0. $$
If $1< j \le n$, then letting 
$$g(T) = 1 - T^{2} + T^{3} - T^{4} + \ldots \in \mathbb{Z}_{3}\llbracket T \rrbracket, $$
one has $h_{X_{n}}(1,\psi) = g(\psi(\bar{1}) - 1)$ for all $\psi \in \widehat{\Gamma}_{n}^{(j)}$, and thus
$$\mu_{unr} = \lambda_{unr} = 0. $$ 
One can also compute $\nu_{unr} = 0$ and the formula from \cref{unr_i} is valid for $n \ge 1$.  Thus (\ref{starting_pt1}) gives
$$1 + {\rm ord}_{3}(\kappa(X_{n})) = 2n $$
or in other words
$${\rm ord}_{3}(\kappa(X_{n})) = 2n - 1, $$
provided $n \ge 1$ which is consistent with what was obtained in Example $2$ of \cite[\S 6]{Gambheera/Vallieres:2024}.
\item In this example, $p=3$, and one has $n_{1} = k_{2} = 1$ again.  \cref{riemann_hurwitz} gives $\chi(X_{n}) = 3(-2 \cdot 3^{n} +1)$, provided $n \ge 1$, from which it follows that 
$${\rm ord}_{3}(\chi(X_{n})) = 1$$ 
for $n \ge 1$.  The polynomial $Q_{0}(T)$ in the proof of \cref{tc_i} is given by $Q_{0}(T) = 12T - 6$, that is one has $h_{X_{n}}'(1,\psi_{0}) = Q_{0}(3^{n})$ provided $n \ge 1$.  Thus
$$\lambda_{0} = 0 \text{ and } \nu_{0} = 1. $$ 
Similarly, if one lets $Q_{1}(T) = 21T$, then $h_{X_{n}}(1,\psi) = Q_{1}(3^{n})$ whenever $\psi \in \widehat{\Gamma}_{n}^{(1)}$ and $n \ge 1$.  Since $|\widehat{\Gamma}_{n}^{(1)}| = 2$, \cref{ram_i} gives
$$\lambda_{1} = 2 \text{ and } \nu_{1} = 2. $$
If $1< j \le n$, then letting 
$$g(T) = 3 - 3T^{2} + 3T^{3} - 3T^{4} + \ldots \in \mathbb{Z}_{3}\llbracket T \rrbracket, $$
one has $h_{X_{n}}(1,\psi) = g(\psi(\bar{1}) - 1)$ for all $\psi \in \widehat{\Gamma}_{n}^{(j)}$, and thus
$$\mu_{unr} = 1 \text{ and } \lambda_{unr} = 0. $$ 
One can also compute $\nu_{unr} = -3$, and the formula from \cref{unr_i} is valid for $n \ge 1$.  Thus (\ref{starting_pt1}) gives
$$1 + {\rm ord}_{3}(\kappa(X_{n})) = 1 + (2n + 2) + (3^{n} - 3) $$
or in other words
$${\rm ord}_{3}(\kappa(X_{n})) = 3^{n} + 2n -1, $$
provided $n \ge 1$ which is consistent with what was obtained in Example $3$ of \cite[\S 6]{Gambheera/Vallieres:2024}.
\end{enumerate}


\begin{thebibliography}{10}

\bibitem{Atiyah/MacDonald:1969}
M.~F. Atiyah and I.~G. Macdonald.
\newblock {\em Introduction to commutative algebra}.
\newblock Addison-Wesley Publishing Co., Reading, Mass.-London-Don Mills, Ont., 1969.

\bibitem{Baker/Norine:2009}
Matthew Baker and Serguei Norine.
\newblock Harmonic morphisms and hyperelliptic graphs.
\newblock {\em Int. Math. Res. Not. IMRN}, 2009(15):2914--2955, 2009.

\bibitem{Bass:1992}
Hyman Bass.
\newblock The {I}hara-{S}elberg zeta function of a tree lattice.
\newblock {\em Internat. J. Math.}, 3(6):717--797, 1992.

\bibitem{Bourbaki:1998}
Nicolas Bourbaki.
\newblock {\em Lie groups and {L}ie algebras. {C}hapters 1--3}.
\newblock Elements of Mathematics (Berlin). Springer-Verlag, Berlin, 1998.
\newblock Translated from the French, Reprint of the 1989 English translation.

\bibitem{Corry:2012}
Scott Corry.
\newblock Harmonic {G}alois theory for finite graphs.
\newblock In {\em Galois-{T}eichm\"{u}ller theory and arithmetic geometry}, volume~63 of {\em Adv. Stud. Pure Math.}, pages 121--140. Math. Soc. Japan, Tokyo, 2012.

\bibitem{Sage/Vallieres:2022}
Sage DuBose and Daniel Valli\`eres.
\newblock On {$\mathbb Z^d_\ell$}-towers of graphs.
\newblock {\em Algebr. Comb.}, 6(5):1331--1346, 2023.

\bibitem{Gambheera/Vallieres:2024}
Rusiru Gambheera and Daniel Valli\`eres.
\newblock Iwasawa theory for branched {$\mathbb{Z}_p$}-towers of finite graphs.
\newblock {\em Doc. Math.}, 29(6):1435--1468, 2024.

\bibitem{Goldman:1961}
Oscar Goldman.
\newblock Determinants in projective modules.
\newblock {\em Nagoya Math. J.}, 18:27--36, 1961.

\bibitem{Gonet:2021a}
Sophia~R. Gonet.
\newblock {\em Jacobians of finite and infinite voltage covers of graphs}.
\newblock ProQuest LLC, Ann Arbor, MI, 2021.
\newblock Thesis (Ph.D.)--The University of Vermont and State Agricultural College.

\bibitem{Gonet:2022}
Sophia~R. Gonet.
\newblock Iwasawa theory of {J}acobians of graphs.
\newblock {\em Algebr. Comb.}, 5(5):827--848, 2022.

\bibitem{Hashimoto:1990}
Ki-ichiro Hashimoto.
\newblock On zeta and {$L$}-functions of finite graphs.
\newblock {\em Internat. J. Math.}, 1(4):381--396, 1990.

\bibitem{Hashimoto:1992}
Ki-ichiro Hashimoto.
\newblock Artin type {$L$}-functions and the density theorem for prime cycles on finite graphs.
\newblock {\em Internat. J. Math.}, 3(6):809--826, 1992.

\bibitem{Iwasawa:1959}
Kenkichi Iwasawa.
\newblock On {$\Gamma $}-extensions of algebraic number fields.
\newblock {\em Bull. Amer. Math. Soc.}, 65:183--226, 1959.

\bibitem{Iwasawa:1972}
Kenkichi Iwasawa.
\newblock {\em Lectures on {$p$}-adic {$L$}-functions}.
\newblock Annals of Mathematics Studies, No. 74. Princeton University Press, Princeton, NJ; University of Tokyo Press, Tokyo, 1972.

\bibitem{Iwasawa:1973}
Kenkichi Iwasawa.
\newblock On {${\bf Z}_{l}$}-extensions of algebraic number fields.
\newblock {\em Ann. of Math. (2)}, 98:246--326, 1973.

\bibitem{Kataoka:2024}
Takenori Kataoka.
\newblock Fitting ideals of {J}acobian groups of graphs.
\newblock {\em Algebr. Comb.}, 7(3):597--625, 2024.

\bibitem{Kleine/Muller:2023}
S\"{o}ren Kleine and Katharina M\"{u}ller.
\newblock On the growth of the {J}acobian in {$\mathbb{Z}_{p}^{l}$}-voltage covers of graphs.
\newblock {\em Algebr. Comb.}, 7(4):1011--1038, 2024.

\bibitem{Kotani/Sunada:2000}
Motoko Kotani and Toshikazu Sunada.
\newblock Zeta functions of finite graphs.
\newblock {\em J. Math. Sci. Univ. Tokyo}, 7(1):7--25, 2000.

\bibitem{Len/Ulirsch/Zakharov:2024}
Yoav Len, Martin Ulirsch, and Dmitry Zakharov.
\newblock Abelian tropical covers.
\newblock {\em Math. Proc. Cambridge Philos. Soc.}, 176(2):395--416, 2024.

\bibitem{Malmskog/Manes:2010}
Beth Malmskog and Michelle Manes.
\newblock ``{A}lmost divisibility'' in the {I}hara zeta functions of certain ramified covers of {$q+1$}-regular graphs.
\newblock {\em Linear Algebra Appl.}, 432(10):2486--2506, 2010.

\bibitem{mcgownvallieresII}
Kevin McGown and Daniel Valli\`eres.
\newblock On abelian {$\ell$}-towers of multigraphs {II}.
\newblock {\em Ann. Math. Qu\'{e}.}, 47(2):461--473, 2023.

\bibitem{mcgownvallieresIII}
Kevin McGown and Daniel Valli\`eres.
\newblock On abelian {$\ell$}-towers of multigraphs {III}.
\newblock {\em Ann. Math. Qu\'{e}.}, 48(1):1--19, 2024.

\bibitem{Ribes:2017}
Luis Ribes.
\newblock {\em Profinite graphs and groups}, volume~66 of {\em Ergebnisse der Mathematik und ihrer Grenzgebiete. 3. Folge. A Series of Modern Surveys in Mathematics}.
\newblock Springer, Cham, 2017.

\bibitem{Sambale:2023}
Benjamin Sambale.
\newblock An invitation to formal power series.
\newblock {\em Jahresber. Dtsch. Math.-Ver.}, 125(1):3--69, 2023.

\bibitem{Serre:1977}
Jean-Pierre Serre.
\newblock {\em Arbres, amalgames, {${\rm SL}_{2}$}}.
\newblock Ast\'{e}risque, No. 46. Soci\'{e}t\'{e} Math\'{e}matique de France, Paris, 1977.
\newblock Avec un sommaire anglais, R\'{e}dig\'{e} avec la collaboration de Hyman Bass.

\bibitem{Silvester:2000}
John~R. Silvester.
\newblock Determinants of block matrices.
\newblock {\em Math. Gaz.}, 84(501):460--467, 2000.

\bibitem{Stark/Terras:1996}
Harold~M. Stark and Audrey~A. Terras.
\newblock Zeta functions of finite graphs and coverings.
\newblock {\em Adv. Math.}, 121(1):124--165, 1996.

\bibitem{Stark/Terras:2000}
Harold~M. Stark and Audrey~A. Terras.
\newblock Zeta functions of finite graphs and coverings. {II}.
\newblock {\em Adv. Math.}, 154(1):132--195, 2000.

\bibitem{Stark/Terras:2007}
Harold~M. Stark and Audrey~A. Terras.
\newblock Zeta functions of finite graphs and coverings. {III}.
\newblock {\em Adv. Math.}, 208(1):467--489, 2007.

\bibitem{Sunada:2013}
Toshikazu Sunada.
\newblock {\em Topological crystallography}, volume~6 of {\em Surveys and Tutorials in the Applied Mathematical Sciences}.
\newblock Springer, Tokyo, 2013.
\newblock With a view towards discrete geometric analysis.

\bibitem{Tate:1984}
John Tate.
\newblock {\em Les conjectures de {S}tark sur les fonctions {$L$} d'{A}rtin en {$s=0$}}, volume~47 of {\em Progress in Mathematics}.
\newblock Birkh\"{a}user Boston, Inc., Boston, MA, 1984.
\newblock Lecture notes edited by Dominique Bernardi and Norbert Schappacher.

\bibitem{Terras:2011}
Audrey Terras.
\newblock {\em Zeta functions of graphs}, volume 128 of {\em Cambridge Studies in Advanced Mathematics}.
\newblock Cambridge University Press, Cambridge, 2011.
\newblock A stroll through the garden.

\bibitem{Vallieres:2021}
Daniel Valli\`eres.
\newblock On abelian {$\ell$}-towers of multigraphs.
\newblock {\em Ann. Math. Qu\'{e}.}, 45(2):433--452, 2021.

\bibitem{Zakharov:2021}
Dmitry Zakharov.
\newblock Zeta functions of edge-free quotients of graphs.
\newblock {\em Linear Algebra Appl.}, 629:40--71, 2021.

\end{thebibliography}
\end{document}